\title{ \Large The Spatially Homogeneous
Boltzmann Equation for Bose-Einstein Particles: Rate of
Strong Convergence to Equilibrium }
\author{ Shuzhe Cai\footnote{ Department of Mathematical Sciences, Tsinghua
University, Beijing 100084, P.R.China; \,\, e-mail address: csz16@mails.tsinghua.edu.cn }\,\,
and  Xuguang Lu\footnote{Department of Mathematical Sciences,
Tsinghua University, Beijing 100084, P.R.China;  e-mail address: xglu@math.tsinghua.edu.cn
} }
\date{}  
\newcommand{\ld}{\lambda}
\newcommand{\vp}{\varphi}
\newcommand{\vep}{\varepsilon}
\newcommand{\og}{\omega}
\newcommand{\Og}{\Omega}
\newcommand{\sg}{\sigma}
\newcommand{\gm}{\gamma}
\newcommand{\Gm}{\Gamma}
\newcommand{\dt}{\delta}
\newcommand{\Dt}{\Delta}
\newcommand{\fr}{\frac}
\newcommand{\wt}{\widetilde}
\newcommand{\wh}{\widehat}
\newcommand{\bR}{{\mathbb R}^3 }
\newcommand{\bS}{{\mathbb S}^2 }
\newcommand{\bSS}{{\bS}\times{\bS}}
\newcommand{\bRR}{{\bR}\times{\bR}}
\newcommand{\bRS}{{\bR}\times {\mathbb S}^2 }
\newcommand{\bRRS}{{\bRR}\times{\mathbb S}^2 }
\newcommand{\bRd}{{\mathbb R}^d}
\newcommand{\la}{\langle}
\newcommand{\ra}{\rangle}
\newcommand{\mR}{{\mathbb R}}
\newcommand{\mN}{{\mathbb N}}
\newcommand{\mS}{{\mathbb S}}
\newcommand{\be}{\begin{myequation}}
\newcommand{\ee}{\end{myequation}}
\newcommand{\bes}{\begin{myeqnarray}}
\newcommand{\ees}{\end{myeqnarray}}
\newcommand{\beas}{\begin{eqnarray*}}
\newcommand{\eeas}{\end{eqnarray*}}
\newcommand{\lb}{\label}
\newcounter{thm}
\newtheorem{theorem}{Theorem}[section]
\newtheorem{proposition}[theorem]{Proposition}
\newtheorem{definition}[theorem]{Definition}
\newtheorem{lemma}[theorem]{Lemma}
\newtheorem{remark}[theorem]{Remark}
\newtheorem{assumption}[theorem]{Assumption}
\newcounter{myequation}[section]
\newenvironment{proof}{{\bf Proof.}}{$\hfill\Box$}
\newenvironment{myequation}{\stepcounter{myequation}\begin{equation}}{\end{equation}}
\newenvironment{myeqnarray}{\stepcounter{myequation}\begin{eqnarray}}{\end{eqnarray}}
\newcommand{\dnumber}{\stepcounter{myequation}}
\begin{document}
\maketitle
\vskip 0.1in \baselineskip 18.2pt
\begin{abstract}
The paper is a continuation of our previous work on the spatially homogeneous
Boltzmann equation for Bose-Einstein particles with quantum collision kernel
that includes the hard sphere model. Solutions $F_t$ under consideration that
conserve the mass, momentum, and energy and converge at least weakly to
equilibrium $F_{{\rm be}}$ as $t\to\infty$ have been proven to exist at least
for isotropic initial data that have positive entropy, and $F_t$ have to be  Borel measures
for the case of low temperature. The new progress is as
follows: we prove that the long time convergence of $F_t(\{0\})$ to the Bose-Einstein
condensation $F_{{\rm be}}(\{0\})$ holds for all isotropic initial data $F_0$ satisfying the low temperature condition.  This immediately implies the long time strong
convergence to equilibrium. We also obtain an algebraic rate of the strong convergence
for arbitrary temperature. Our proofs are based on entropy control, positive lower bound of entropy,  Villani's inequality for entropy dissipation, a suitable time-dependent
convex combination between
the solution and a fixed positive function (in order to deal with
logarithm terms), the convex-positivity of the cubic collision integral, and an iteration technique for obtaining a positive lower bound of condensation.

{\bf Key words}: Bose-Einstein particles, entropy, strong convergence, equilibrium,
low temperature, condensation.

\end{abstract}

\begin{center}\section { Introduction}\end{center}

The quantum Boltzmann equations for Bose-Einstein particles and for Fermi-Dirac particles (which are also called Boltzmann-Nordheim equation, Uehling-Uhlenbeck equation, etc.) were first derived by Nordheim \cite{Nordheim} and Uehling $\&$ Uhlenbeck \cite{Uehling and Uhlenbeck}
 and then taken attention and developed by \cite{weak-coupling},\cite{Chapman and Cowling},\cite{ESY},\cite{LS}. For the case of Bose-Einstein particles and for the spatially homogeneous solutions, the equation under consideration is written
 \be\fr{\partial}{\partial t}f({\bf v},t)=\int_{{\bRS}}B({\bf
{\bf v-v}_*},\og)\big(f'f_*'(1+f)(1+f_*)-ff_*(1+f')(1+f_*')\big) {\rm d}\omega{\rm
d}{\bf v_*}\label{Equation1}\ee
with $({\bf v}, t)\in{\mathbb R}^3\times(0,\infty)$, where the solution $f=f({\bf v},t)\ge 0$ is the  number density of particles at time $t$ with the velocity ${\bf v}$,
and as usual we denote briefly
$f_*=f({\bf v_*},t), f'=f({\bf v'},t),f_*'=f({\bf v_*'},t)$  where ${\bf v},{\bf v_*}$ and ${\bf v'},{\bf v_*'}$ are velocities of two particles before and after
their collision:
\be{\bf v}'={\bf v}- (({\bf v}-{\bf v}_*)\cdot\omega)\omega,\quad {\bf
v}_*'={\bf v}_*+ (({\bf v}-{\bf v}_*)\cdot\omega)\omega, \qquad \omega\in{\mathbb S}^2
\lb{colli}\ee
which conserves the momentum and kinetic energy
\be {\bf v}'+{\bf v}_*'={\bf v}+{\bf v}_*,\quad |{\bf v}'|^2+|{\bf v}_*'|^2=|{\bf v}|^2+|{\bf v}_*|^2.\lb{conser}\ee
The function $B({\bf {\bf v-v}_*},\omega)$ is the collision kernel and is assumed to take the
following general form in order to include possible models:
 \be B({\bf {\bf v-v}_*},\omega)= \fr{1}{(4\pi)^2}|({\bf v-v}_*)\cdot\omega|
\Phi(|{\bf v}-{\bf v}'|, |{\bf {\bf v}-{\bf v}_*'}|)\lb{kernel}\ee where
\be 0\le \Phi\in C_b({\mR}_{\ge 0}^2),\quad  \Phi(r,\rho)=\Phi(\rho, r)\quad \forall\, (r,\rho)\in{\mR}_{\ge 0}^2 \lb{Phi}.\ee
According to \cite{weak-coupling} and \cite{ESY} in the weak-coupling regime, the function $\Phi$  takes the following
form (after normalizing physical parameters)
\be \Phi(r,\rho)=\big(\widehat{\phi}(r)+\widehat{\phi}(\rho)\big)^2,\quad r,\rho\ge 0\label{kernel2}\ee
where $\widehat{\phi}$ is the Fourier transform (in terms of theory of generalized functions ) of a radially symmetric particle interaction potential
$ \phi(|{\bf x}|)\in {\mR}$:
$$\widehat{\phi}(r):=\widehat{\phi}(\xi)|_{|\xi|=r}=
\int_{{\bR}}\phi(|{\bf x}|) e^{-{\rm i}\xi\cdot{\bf x}}{\rm d}{\bf x} \Big|_{|\xi|=r}.$$
In particular if $\phi(|{\bf x}|)=\frac{1}{2}\delta({\bf x})$, where $\delta({\bf x})$ is the three dimensional Dirac delta function concentrating at
${\bf x}=0$, then  $\widehat{\phi}\equiv \fr{1}{2}$ hence $\Phi\equiv 1$ and (\ref{kernel}) becomes the hard sphere model:
\be B({\bf
{\bf v-v}_*},\omega)=\frac{1}{(4\pi)^2}|({\bf v-v}_*)\cdot\omega|\lb{Hard}\ee
which is the only model that has the same form as in the classical Boltzmann equation, and has been mainly concerned in many papers about  Eq.(\ref{Equation1}). In view of physics, the hard sphere model
(\ref{Hard})
can be extended by allowing the interaction potential $\phi({\bf x})$ contains an attractive long-range term $\fr{-1}{2}U(|{\bf x}|)$, i.e.
$\phi(|{\bf x}|)=\fr{1}{2}(\delta({\bf x})-U(|{\bf x}|))$ where $U(|{\bf x}|)\ge 0$ and,
for a technical reason in proving the occurrence of and convergence to the
condensation, we consider such a case that the Fourier transform of $\wh{U}(\xi)$
of $U(|{\bf x}|)$ behaves as
$\widehat{U}(\xi)|_{|\xi|=r}= \fr{1}{1+r^{\eta}}$ with $0<\eta<1$  so that
\be \widehat{\phi}(r)=\fr{1}{2}\big(1-\widehat{U}(\xi)|_{|\xi|=r}\big)= \fr{1}{2}\cdot\fr{r^{\eta}}{1+r^{\eta}}
,\quad r\ge 0\lb{phi}\ee
see Appendix for the existence and positivity of such potentials $U$.

In order to include the hard sphere model and the case (\ref{kernel2}) with (\ref{phi}) we introduce the following assumption:
 \vskip2mm

\begin{assumption}\lb{assp}    $B({\bf {\bf v-v}_*},\omega)$ is given by
(\ref{kernel}),(\ref{Phi}) where $\Phi$ also satisfies
\bes&& r\mapsto \Phi(r,\rho)\,\,\,{\rm is\,\,non-decreasing\,\,on}\,\,{\mR}_{\ge 0}\qquad \forall\, \rho\in{\mR}_{\ge 0}, \lb{Phi*}\\
&& b_0\min\{1,\,(r^2+\rho^2)^{\eta}\}\le \Phi(r,\rho)\le 1\qquad \forall\,
(r,\rho)\in{\mR}^2_{\ge 0} \dnumber \lb {1.6}\ees
for some constants $0<b_0<1, \eta\ge 0.$
\end{assumption}

The monotone assumption (\ref{Phi*}) together with a further restriction $0\le \eta<1/4$ is mainly used to prove the convex positivity of the cubic collision integral (see Proposition \ref{convex-positivity}) and the convergence of
condensation $F_t(\{0\})\to F_{{\rm be}}(\{0\})\,(t\to\infty)$.  Except these,
Assumption \ref{assp} says that the collision kernel $B({\bf {\bf v-v}_*},\omega)$ maybe vanishes at ${\bf {\bf v-v}_*}=0$ but still behaves like the hard sphere model for
large $|{\bf {\bf v-v}_*}|$:
for all ${\bf v},{\bf v}_*\in{\bR}, \og\in {\bS}$
\be  \fr{b_0}{(4\pi)^2}
|({\bf v-v}_*)\cdot\og|\le B({\bf {\bf v-v}_*},\omega)\le \fr{1}{(4\pi)^2}|({\bf v-v}_*)\cdot\omega|\
\quad {\rm with}\,\,\, |v-v_*|\ge 1.\label{kerneB}\ee

Due to the strong nonlinear structure  of the collision integrals and the effect of condensation,
existence and uniqueness of solutions of Eq.(\ref{Equation1}) for anisotropic initial data
have been so far only proven for finite time interval without smallness assumption on the initial data
\cite{Briant-Einav}
and  for global time interval with a smallness assumption on the initial data \cite{LiLu}.
For global in time solutions with general initial data, in particular for the case of low temperature, one has to consider weak solutions $f$ which are solutions of the
following equation
\bes\fr{{\rm d}}{{\rm d}t}\int_{{\bR}}\psi({\bf v})f({\bf v},t){\rm d}{\bf v}&=&
\fr{1}{2}\int_{{\bRRS}}(\psi+\psi_*-\psi'-\psi'_*)B({\bf {\bf v-v}_*},\omega)f'f_*'{\rm d}{\bf v}{\rm d}{\bf v}_*
{\rm d}\og  \qquad \nonumber\\
&
+&\int_{{\bRRS}}(\psi+\psi_*-\psi'-\psi'_*)B({\bf {\bf v-v}_*},\omega)ff'f_*'{\rm d}{\bf v}{\rm d}{\bf v}_*
{\rm d}\og  \qquad \label{weak}\ees
for all test functions $\psi$ and all $t\in [0,\infty)$. Here we have used the fact that the common
quartic terms $f'f_*'ff_*,\, ff_*f'f_*'$ cancel each other:
$$f'f_*'(1+f)(1+f_*)-ff_*(1+f')(1+f_*')=f'f_*'(1+f+f_*)-ff_*(1+f'+f_*').$$
In general however the cubic integral is divergent:
$$\sup_{f}
\int_{{\bRRS}}\Psi({\bf v},{\bf v}',{\bf v}_*')B({\bf v-v}_*,\og)f({\bf v})f({\bf v}')
f({\bf v}_*') {\rm d}{\bf v}{\rm d}{\bf v}_*{\rm d}\og=\infty$$
where $\Psi\in C_b({\mR}^9)$ is an arbitrary positive function and
$f$ under the $\sup$ is taken all nonnegative functions in $C_c^{\infty}({\bR})$
satisfying $\int_{{\bR}}f({\bf v}){\rm d}{\bf v}\le 1$ (see e.g. \cite{Lu2004}).
A subclass of $f$ that has no such divergence is the isotropic (i.e. radially symmetric)
functions: $f({\bf v})=f(|{\bf v}|^2/2)$. By changing variables
$x=|{\bf v}|^2/2, y=|{\bf v}'|^2/2, z=|{\bf v}_*'|^2/2$, one has
$$B({\bf v-v}_*,\og)f(|{\bf v}|^2/2)f(|{\bf v}'|^2/2)
f(|{\bf v}_*'|^2/2) {\rm d}{\bf v}{\rm d}{\bf v}_*{\rm d}\og
=4\pi\sqrt{2}W(x,y,z){\rm d}F(x){\rm d}F(y){\rm d}F(z)$$
where ${\rm d}F(x)=f(x)\sqrt{x}{\rm d}x$, etc., and
$x,y,z\in {\mR}_{\ge 0}$ in the right side are independent variables,
see below for details. This is the main reason that all results obtained so far (except the ones mentioned above) are concerned with isotropic
initial data hence isotropic solutions,
see
e.g.\cite{EMV0}-\cite{JPR},\cite{Lu2000}-\cite{Lu2018},\cite{Nouri},\cite{Semikov and Tkachev1},\cite{Semikov and Tkachev2},\cite{SH}.
Despite this shortage, results obtained so far have
shown that the Eq.(\ref{Equation1}) can be used to describe the formation, transition,
and propagation of the Bose-Einstein condensation of dilute Bose gases at low temperature, see Theorem \ref{theorem1.2} and Proposition \ref{proposition4.6} below, see also (for instance)
 \cite{JPR},\cite{MP2005},\cite{Semikov and Tkachev1},\cite{Semikov and Tkachev2},\cite{SH}  for self-similar structure and deterministic numerical methods;
 \cite{BV},\cite{EMV2},\cite{EV1},\cite{EV2},\cite{Lu2013},
for singular solutions and the formation of blow-up and condensation in finite time;
\cite{Lu2005}, \cite{Lu2016}, \cite{Lu2018} for long time strong and weak convergence to the Bose-Einstein distribution;  \cite{ET} for a linearized model of Eq.(\ref{Equation1}) and rate of convergence to equilibrium; and
\cite{A},\cite{AN},\cite{Nouri} for general discussions and basic results for similar models on low temperature evolution of condensation.

Before stating the main result of the paper we introduce
some notations and the definition of measure-valued isotropic solution of Eq.(\ref{Equation1}).
Let
$L^1_s({\bR})$ with $s\ge 0$ be the linear space of the weighted Lebesgue integrable functions
defined by $L^1_0({\bR})=L^1({\bR})$ and
$$ L^1_s({\bR})=\Big\{f\in L^1({\bR})\,\,\Big|\,\,
\|f\|_{L^1_s}:=\int_{{\bR}}\la {\bf v}\ra^s|f({\bf v})|{\rm d}{\bf v}<\infty\Big\},\quad \la {\bf v}\ra:=(1+|{\bf v}|^2)^{1/2}.$$
Let ${\cal B}_k(X)$ ($k\ge 0$) be the linear space
of signed real Borel measures $F$ on a Borel set $X\subset {\bRd}$ satisfying
$\int_{X}(1+|x|)^k{\rm d}|F|(x)<\infty$,  where
$|F|$ is the total variation of $F$.  Let
$${\cal B}_k^{+}(X)=\{F\in {\cal B}_k(X)\,|\, F\ge 0\}.$$ For the case $k=0$ we also denote
${\cal B}(X)={\cal B}_{0}(X), {\cal B}^{+}(X)={\cal B}_{0}^{+}(X)$.
In this paper we only consider two cases $X={\bR}$ and $X={\mR}_{\ge 0}$,
and in many cases we consider isotropic measures $\bar{F}\in {\cal B}_{2k}({\bR})$, which define and can be defined by
measures $F\in {\cal B}_{k}({\mR}_{\ge 0})$ in terms of the following relations:
\be F(A)=\fr{1}{4\pi\sqrt{2}}\int_{{\bR}}{\bf 1}_{
A}(|{\bf v}|^2/2){\rm d}\bar{F}({\bf v}),\quad A\subset {\mR}_{\ge 0}\lb{m1}\ee
\be \bar{F}(B)=4\pi\sqrt{2}\int_{{\mR}_{\ge 0}}\Big(\fr{1}{4\pi}\int_{{\bS}}{\bf 1}_{
B}(\sqrt{2x}\,\og){\rm d}\og\Big){\rm d}F(x),\quad B\subset {\bR} \lb{m2}\ee
for all Borel measurable sets $A, B$. They are equivalent to functional forms:
\be \int_{{\mR}_{\ge 0}}\vp(x){\rm d}F(x)=
\fr{1}{4\pi\sqrt{2}}\int_{{\bR}}\vp(|{\bf v}|^2/2){\rm d}\bar{F}({\bf v}),\lb{m3}\ee
\be\int_{{\bR}}\psi({\bf v}){\rm d}\bar{F}({\bf v})=
4\pi\sqrt{2}\int_{{\mR}_{\ge 0}}\Big(\fr{1}{4\pi}\int_{{\bS}}\psi(\sqrt{2x}\,\og){\rm d}\og\Big){\rm d}F(x)\lb{m4}\ee
for all bounded Borel measurable functions  $\vp, \psi$.
For any $k\ge 0$ let
\beas\|F\|_k=\int_{{\mathbb R}_{\ge 0}}(1+x)^k{\rm d}|F|(x),\quad F\in {\cal B}_k({\mathbb R}_{\ge 0}).\eeas
The moment $M_p(F)$ of order $p\in [0, k]$ of $F\in {\cal B}_k^{+}({\mathbb R}_{\ge 0})$ is defined by
$$M_p(F)=\int_{{\mathbb R}_{\ge 0}}x^p{\rm d}F(x).$$
Moments of orders $0, 1$ correspond to the mass and energy and are particularly denoted as
$N(F)=M_0(F), E(F)=M_1(F)$, i.e.
$$ N(F)=\int_{{\mathbb R}_{\ge 0}}{\rm d}F(x)
, \quad E(F)=\int_{{\mathbb R}_{\ge 0}}x{\rm d}F(x).$$
Let $C^k_b({\mathbb R}_{\ge 0})$ with $k\in {\mathbb N}$ be the class of bounded continuous functions on
${\mathbb R}_{\ge 0}$ having bounded continuous derivatives on ${\mathbb R}_{\ge 0}$ up to the order $k$.
For isotropic functions $f=f(|{\bf v}|^2/2)\ge 0$,
$\vp=\vp(|{\bf v}|^2/2)$ with $f(|\cdot|^2/2)\in L^1_2({\bR}),
\vp\in  C^2_b({\mathbb R}_{\ge 0})$, and for the measure $F$ defined by ${\rm d}F(x)=f(x)\sqrt{x}{\rm d}x$, the collision integrals in (\ref{weak}) can be rewritten
(see Appendix)
\beas&&
\fr{1}{2}\int_{{\bRRS}}(\vp+\vp_*-\vp'-\vp'_*)Bf'f_*'
{\rm d}{\bf v}{\rm d}{\bf v}_*
{\rm d}\og=4\pi\sqrt{2}\int_{{\mathbb R}_{\ge 0}^2}{\cal J}[\varphi]{\rm d}^2F,\\ \\
&&\int_{{\bRRS}}(\vp+\vp_*-\vp'-\vp'_*)B ff'f_*'
{\rm d}{\bf v}{\rm d}{\bf v}_*
{\rm d}\og
=4\pi\sqrt{2}\int_{{\mathbb R}_{\ge 0}^3}{\cal K}[\varphi]{\rm d}^3F
\eeas
where $B=B({\bf {\bf v-v}_*},\omega)$ is given by (\ref{kernel}) with
(\ref{Phi}), ${\rm d}^2F={\rm d}F(y){\rm d}F(z), {\rm d}^3F={\rm d}F(x){\rm d}F(y){\rm d}F(z)$, and
${\cal J}[\varphi],{\cal K}[\varphi]$ are linear operators
(often used in this paper) defined as follows:
\be{\cal J}[\varphi](y,z)=\frac{1}{2}
\int_{0}^{y+z}{\cal K}[\varphi](x,y,z)
\sqrt{x}{\rm d}x,\quad {\cal K}[\varphi](x, y,z)=W(x,y,z)\Delta\varphi(x,y,z)
,\lb{JKW}\ee
\be \Delta\varphi(x,y,z)=\varphi(x)+\varphi(x_*)-\varphi(y)-\varphi(z)=
(x-y)(x-z)
\int_{0}^1\!\!\!\int_{0}^{1}\vp''(\xi) {\rm d}s {\rm
d}t
\lb{diff}\ee $\xi=y+z-x+t(x-y)+s(x-z)$,
$x,y,z\ge 0,\, x_*=(y+z-x)_{+}$,
\be
W(x,y,z)=\fr{1}{4\pi\sqrt{xyz}}
\int_{|\sqrt{x}-\sqrt{y}|\vee |\sqrt{x_*}-\sqrt{z}|}
^{(\sqrt{x}+\sqrt{y})\wedge(\sqrt{x_*}+\sqrt{z})}{\rm d}s
\int_{0}^{2\pi}\Phi(\sqrt{2}s, \sqrt{2} Y_*){\rm d}\theta
\qquad {\rm if}\quad x_*xyz>0,\lb{W1}\ee
\be
 W(x,y,z)=\left\{\begin{array}
{ll}
 \displaystyle
\fr{1}{\sqrt{yz}}
\Phi(\sqrt{2y}, \sqrt{2z}\,)
\qquad\,\, \qquad\qquad{\rm if}\quad  x=0,\,y>0,\, z>0 \\ \\ \displaystyle
\fr{1}{\sqrt{xz}}
\Phi(\sqrt{2x}, \sqrt{2(z-x)}\,)
\qquad \qquad {\rm if}\quad  y=0,\, z>x>0
\\ \\ \displaystyle
\fr{1}{\sqrt{xy}}
\Phi(\sqrt{2(y-x)}, \sqrt{2x}\,)
\qquad \quad \quad{\rm if}\quad  z=0,\, y>x>0
\\ \\ \displaystyle
0\qquad \quad \qquad {\rm others}\end{array}\right.\lb{W2}\ee

\be Y_*=Y_*(x,y,z,s,\theta)=\left\{\begin{array}
{ll}\displaystyle
\Bigg|\sqrt{\Big(z-\fr{(x-y+s^2)^2}{4s^2}\Big)_{+}
}
+e^{{\rm i}\theta}\sqrt{\Big(x-\fr{(x-y+s^2)^2}{4s^2}
\Big)_{+}}\,\Bigg|\quad {\rm if}\quad s>0\\  \displaystyle
\\
0\qquad\qquad  {\rm if}\quad s=0
\end{array}\right.\lb{Y}\ee
where $\Phi(r,\rho)$ is given in  (\ref{Phi}),$(u)_{+}=\max\{u, 0\}$,
$a\vee b =\max\{a,b\},\, a\wedge b =\min\{a,b\},$
${\rm i}=\sqrt{-1}$.

Based on the existence results,
we introduce directly the concept of measure-valued isotropic solutions of Eq.(\ref{Equation1}) in the weak form:
\vskip2mm

\begin{definition}\label{definition1.1} Let $B({\bf {\bf v-v}_*},\omega)$ be given by (\ref{kernel}), (\ref{Phi}).
Let $F_0\in {\cal B}_{1}^{+}({\mathbb R}_{\ge 0})$. We say that a
family $\{F_t\}_{t\ge 0}\subset {\cal B}_{1}^{+}({\mathbb R}_{\ge 0})$, or simply $F_t$, is a conservative  measure-valued isotropic solution of Eq.(\ref{Equation1}) on the time-interval $[0, \infty)$ with the initial datum $F_t|_{t=0}=F_0$ if

{\rm(i)} $N(F_t)=N(F_0),\,\, E(F_t)=E(F_0)$ for all $t\in [0, \infty)$,

{\rm(ii)} for every $\varphi\in C^2_b({\mathbb R}_{\ge 0})$,  $t\mapsto \int_{{\mathbb R}_{\ge 0}}\varphi(x){\rm d}F_t(x)$ belongs to
$C^1([0, \infty))$,

{\rm(iii)} for every $\varphi\in C^2_b({\mathbb R}_{\ge 0})$
\be\frac{{\rm d}}{{\rm d}t}\int_{{\mathbb R}_{\ge 0}}\varphi{\rm
d}F_t= \int_{{\mathbb R}_{\ge 0}^2}{\cal J}[\varphi]{\rm d}^2F_t+ \int_{{\mathbb R}_{\ge 0}^3}{\cal K}[\varphi]{\rm d}^3F_t\qquad \forall\,t\in[0, \infty). \label{Equation3}\ee
\end{definition}

\begin{remark}\lb{remark}{\rm  (1) The transition from
(\ref{W1}) to (\ref{W2}) in defining $W$ is due to
 the identity
\be (\sqrt{x}+\sqrt{y})\wedge (\sqrt{x_*}+\sqrt{z})-|\sqrt{x}-\sqrt{y}|\vee |\sqrt{x_*}-\sqrt{z}|
=2\min\{\sqrt{x},\sqrt{x_*},\sqrt{y},\sqrt{z}\}\lb{1.difference}\ee
(in case $x_*>0$) from which one sees also that if $\Phi(r,\rho)\equiv 1$,  then $W(x,y,z)$ becomes the function corresponding to the hard sphere model.

(2) As has been proven in \cite{Lu2013} that the test function
space $C_b^2({\mR}_{\ge 0})$ in Definition \ref{definition1.1}  can be weaken to
$$
C^{1,1}_b({\mathbb R}_{\ge 0})=\Big\{ \varphi\in C^1_b({\mathbb R}_{\ge 0})\, \Big|\,
\,
\frac{{\rm d}}{{\rm d}x}\varphi\in {\rm Lip}({\mathbb R}_{\ge 0})\Big\}$$
where ${\rm Lip}({\mathbb R}_{\ge 0})$ is the class of functions
satisfying Lipschitz condition on ${\mathbb R}_{\ge 0}$.  In fact, for the function $\Phi$
satisfying (\ref{Phi}), it is
not difficult to prove that for any $\vp\in C^{1,1}_b({\mathbb R}_{\ge 0})$
the functions $(y,z)\mapsto (1+\sqrt{y}+\sqrt{z})^{-1}{\cal J}[\vp](y,z), (x,y,z)\mapsto {\cal K}[\vp](x,y,z)$ belong to $C_b({\mR}_{\ge 0}^2)$ and $C_b({\mR}_{\ge 0}^3)$
respectively.
Thus there is no problem of integrability in the right hand side of Eq.(\ref{Equation3}).
 A typical example of $\vp\in C^{1,1}_b({\mathbb R}_{\ge 0})$ is $\vp_{\vep}(x)=[(1-x/\vep)_{+}]^2$ (with $\vep>0$) which is often used in Section 5.

(3) It is proved in Appendix that Definition \ref{definition1.1} is
equivalent to Definition \ref{definition5.1}, the latter is established in \cite{Lu2004}.
Thus we conclude from Theorem 1 (Weak Stability), Theorem 2 (Existence) and Theorem 3 in \cite{Lu2004} that for any $F_0\in {\cal B}_{1}^{+}({\mathbb R}_{\ge 0})$,
the Eq.(\ref{Equation1}) has always a conservative measure-valued isotropic solution
 $F_t$ on the time-interval $[0, \infty)$ with the initial datum $F_t|_{t=0}=F_0$.
The reason that we use Definition \ref{definition1.1}
is just because the collision integrals in Definition \ref{definition1.1}
are simpler in structure than those in Definition \ref{definition5.1}.
Note that if we write
$B({\bf {\bf v-v}_*},\omega)$ as
$$B({\bf {\bf v-v}_*},\omega)=B(V,\cos(\theta))= \fr{1}{(4\pi)^2}V\cos(\theta)\Phi(V\cos(\theta),V\sin(\theta)) $$
where $V=|{\bf {\bf v-v}_*}|,  \theta=\arccos(|({\bf v-v}_*)\cdot\omega|/|{\bf {\bf v-v}_*}|)\in [0,\pi/2]$, then it is also required in \cite{Lu2004} that
$\int_{0}^{\pi/2}\sin(\theta)B(V,\cos(\theta)){\rm d}\theta>0$ for all $V>0$. But
in \cite{Lu2004} this strict positivity is just used to guarantee the uniqueness of the equilibrium and
to prove (with a further assumption on $B$) the moment production; it is apparently not used in the proofs of the three theorems in \cite{Lu2004} mentioned above.}
\end{remark}
\vskip2mm

{\bf Kinetic Temperature.}   Let $F\in {\mathcal B}_{1}^{+}({\mathbb R}_{\ge 0})$, $N=N(F), E=E(F)$ and suppose $N>0$. If $m$ is the mass of one particle,
then  $m4\pi\sqrt{2}N$, $m 4\pi\sqrt{2}E$ are total
mass and kinetic energy of the particle system per unite
space volume.
The kinetic temperature $\overline{T}$ and the kinetic
critical temperature $\overline{T}_c$ are defined by (see e.g.\cite{Lu2004} and references therein)
$$\overline{T}=\frac{2m}{3k_{\rm B}}
\frac{E}{N}
,\qquad\overline {T}_c=\frac{\zeta(5/2)}{(2\pi)^{1/3}[\zeta(3/2)]^{5/3}}
\frac{2m}{k_{\rm B}}N^{2/3} $$
where $k_{\rm B}$ is the Boltzmann constant, $\zeta(s)=\sum_{n=1}^{\infty} n^{-s}, s>1$. Some properties involving temperature effect, for instance the Bose-Einstein
condensation at low temperature, are often expressed
in terms of the ratio
$$ \frac{\overline{T}}{\overline {T}_c}= \frac{(2\pi)^{1/3}[\zeta(3/2)]^{5/3}}{3\zeta(5/2)}
\frac{E}{N^{5/3}}= 2.2720\frac{E}{N^{5/3}}. $$
Keeping in mind the constant $m4\pi\sqrt{2}$,
there will be no confusion if we also call $N$ and $E$ the
mass and energy of a particle system.
\vskip2mm

{\bf Regular-Singular Decomposition.}  According to measure theory (see e.g.\cite{Rudin}), every
finite positive Borel measure can be uniquely decomposed into
regular part and singular part with respect to the Lebesgue measure. For instance
if $F\in {\cal B}_1^{+}({\mathbb R}_{\ge 0})$, then
there exist unique $0\le f\in L^1({\mathbb R}_{\ge 0},(1+x)\sqrt{x}{\rm d}x)$, $\nu\in{\cal B}_1^{+}({\mathbb R}_{\ge 0})$ and a Borel set $Z\subset {\mathbb R}_{\ge 0}$
such that
$${\rm d}F(x)=f(x)\sqrt{x}{\rm d}x+{\rm d}\nu(x),\quad mes(Z)=0,\quad \nu({\mR}_{\ge 0}\setminus Z)=0.$$
We call $f$ and $\nu$ the regular part and the singular part of $F$
respectively\footnote{Strictly speaking the product $f(x)\sqrt{x}$
is the regular part of $F$. The reason that we only mention $f$ is because $f(x)\sqrt{x}$ comes from the 3D-isotropic function $f=f(|{\bf v}|^2/2)$. }.  If the regular part $f$ is non-zero, i.e., if  $\int_{{\mathbb R}_{\ge 0}}f(x)\sqrt{x}\,{\rm d}x>0$, then we say that $F$ is non-singular.
\vskip2mm

{\bf Bose-Einstein Distribution.}  According to Theorem 5 of \cite{Lu2004} and
its equivalent version proved in the Appendix of \cite{Lu2013} we know that
for any  $N>0$, $E>0$ the Bose-Einstein distribution $F_{{\rm be}}\in {\mathcal B}_1^{+}({\mathbb R}_{\ge 0})$ given by
\beas {\rm d}F_{\rm be}(x)=f_{\rm be}(x)\sqrt{x}{\rm d}x+\big(1-(\overline{T}/\overline{T}_c)^{3/5}\big)_{+}
N\dt(x){\rm d}x\eeas
is the unique equilibrium solution of Eq.(\ref{Equation3})
satisfying $N(F_{\rm be})=N, E(F_{\rm be})=E$,  where
\be f_{\rm be}(x)=
\left\{\begin{array}{ll}
\displaystyle \frac{1}{Ae^{x/\kappa}-1},\quad A>1,\,\,\qquad {\rm if}
\quad \overline{T}/\overline{T}_c>1,\\ \\
\displaystyle
\frac{1}{e^{x/\kappa}-1},\qquad A=1\,\,\,\qquad  {\rm if}\quad
\overline{T}/ \overline{T}_c\le 1
\end{array}\right.\label{1-E}\ee
$\dt(x)$ is the Dirac
delta function concentrated at $x=0$, i.e.
$\dt(x){\rm d}x={\rm d}\nu_0(x)$  where  $\nu_0$ is the Dirac measure
concentrated at $x=0$, and
functional relations of the coefficients $A=A(N,E)\ge 1, \kappa=\kappa(N,E)>0$
can be found in for instance Proposition 1 in \cite{Lu2005}. From (\ref{1-E}) one sees that
$\overline{T}/\overline{T}_c\ge 1 \, \Longrightarrow\,  {\rm d}F_{\rm be}(x)=f_{\rm be}(x)\sqrt{x}{\rm d}x$, and
$$\overline{T}/\overline{T}_c<1\, \Longleftrightarrow\, F_{{\rm be}}(\{0\})=\big(1-(\overline{T}/\overline{T}_c)^{3/5}\big)N>0.$$
The positive number $(1-(\overline{T}/\overline{T}_c)^{3/5} )N$
is called the Bose-Einstein Condensation (BEC) of the equilibrium state of Bose-Einstein particles at low temperature $\overline{T}<\overline{T}_c$.
\vskip2mm

{\bf Entropy.}  The entropy functional for Eq.(\ref{Equation1}) is
\be S(f)=\int_{{\bR}}\big((1+f({\bf v}))\log(1+f({\bf v}))
-f({\bf v})\log f({\bf v})\big){\rm d}{\bf v},\quad 0\le f\in L^1_2({\bR}).\lb{ent}\ee
$S(f)$ is always finite since for any $f=f({\bf v})\ge 0$ we have (\cite{Lu2005})
\beas 2\fr{f}{1+f}\le  (1+f)\log(1+f)
-f\log f
\le 2\min\{\sqrt{f},\, (1+|{\bf v}|)f+e^{-|{\bf v}|/2}\}.\eeas
Moreover since the function
\be y\mapsto s(y)=(1+y)\log(1+y)
-y\log y, \quad y\in[0,\infty)\lb{entropyfunction}\ee is concave and  non-decreasing  with $s(0)=0$,  it follows that
for all $0\le f,g \in L^1_2({\bR})$
\bes &&
0=S(0)\le S(f)\le S(g)<\infty\quad {\rm if}\quad f\le g,\lb{S1}\\
&&
S((1-\alpha)f+\alpha g)\ge (1-\alpha)S(f)+\alpha S(g)\quad
{\rm if}\quad 0<\alpha<1,\dnumber\lb{S2}\\
&&S(f+g)\le S(f)+ S(g).\dnumber\lb{S3}\ees
To study measure-valued solutions we define the entropy $S(F)$ of a measure
$F\in {\cal B}_2^{+}({\bR})$ by
\be S(F):=\sup_{\{f_n\}_{n=1}^{\infty}}\limsup_{n\to\infty}S(f_n)\lb{ent1}\ee
where $\{f_n\}_{n=1}^{\infty}$ under the sup is taken all sequences in $L^1_2({\bR})$ satisfying
\bes&& f_n\ge 0,\quad \sup_{n\ge 1}\|f_n\|_{L^1_2}<\infty;  \lb{1.20}\\
&&\lim_{n\to\infty}\int_{{\bR}}\psi({\bf v})f_n({\bf v}){\rm d}{\bf v}
=\int_{{\bR}}\psi({\bf v}){\rm d}F({\bf v})\qquad \forall\,\psi\in C_b({\bR}).\dnumber\lb{1.21}\ees
Let $0\le f\in L^1_2({\bR})$ be the regular part of $F$, i.e.
${\rm d}F({\bf v})=f({\bf v}){\rm d}{\bf v}+{\rm d}\nu({\bf v})$ with $\nu\ge 0$
 the singular part of $F$. By Lemma \ref{lemma.entropy} (see Section 3)  we have
\be S(F)=S(f)\lb{ent2}\ee
which shows that the singular part of
$F$ has no contribution
to the entropy $S(F)$ and that $F$ is non-singular if and only if $S(F) > 0.$
A referee of the paper conveyed us
that the convex functions of measures had been defined and studied
in \cite{DT}, and the equality (\ref{ent2}) coincides with the definition in \cite{DT}:
$S(F)=\int_{{\bR}}{\rm d}s(F)({\bf v})$ where
$y\mapsto s(y)$ is the function
 (\ref{entropyfunction}) and, according to \cite{DT}, the transformed measure $s(F)$ is
 defined by ${\rm d}s(F)({\bf v})=s(f({\bf v})){\rm d}{\bf v}+s_{\infty}{\rm d}\nu({\bf v})$ with
 $s_{\infty}:=\lim\limits_{t\to+\infty}\fr{s(t)}{t}=0$, here the zero limit is obvious by definition of $s(y)$. Thus we obtain (\ref{ent2}) again.

For any
$0\le f\in L^1({\mathbb R}_{\ge 0},(1+x)\sqrt{x}\,{\rm d}x)$, the entropy
$S(f)$ is defined by $S(f)=S(\bar{f})$
with $\bar{f}({\bf v}):=f(|{\bf v}|^2/2)$, so that  (using (\ref{ent}) and change of variable)
\be S(f)=S(\bar{f})=4\pi\sqrt{2}\int_{{\mathbb R}_{\ge 0}}\big((1+f(x))\log(1+f(x))-f(x)\log f(x)\big)\sqrt{x}\,{\rm d}x.\lb{ent3}\ee
In general, the entropy $S(F)$  for a measure
$F\in {\cal B}_1^{+}({\mathbb R}_{\ge 0})$ is defined by
$S(F)=S(\bar{F})$ where
$\bar{F}\in {\cal B}_2^{+}({\bR})$ is defined by $F$ through (\ref{m2}) or (\ref{m4}) and
$S(\bar{F})$ is defined by (\ref{ent1}). Accordingly for the regular-singular decomposition ${\rm d}F(x)=f(x)\sqrt{x}\,{\rm d}x+{\rm d}\nu(x)$  with the singular part $\nu$, we have
the regular-singular decomposition ${\rm d}\bar{F}({\bf v})=\bar{f}({\bf v})
{\rm d}{\bf v}+{\rm d}\bar{\nu}({\bf v})$
with $\bar{f}({\bf v})=f(|{\bf v}|^2/2)$ and the singular part $\bar{\nu}$
is expressed by $\nu$ through (\ref{m2}). Thus from
(\ref{ent2}),(\ref{ent3}) we have
$S(F)=S(\bar{F})=S(\bar{f})=S(f)$.

Although the entropy $S(F_t)$ defined above does not provide any information about the singular part of $F_t$
so that one has to consider other methods for proving the convergence of $F_t(\{0\})$ to the condensation $F_{\rm be}(\{0\})$,  the entropy difference $S(F_{\rm be})-S(F_t)$ can still describe and control the convergence to equilibrium in a semi-strong norm
(see (\ref{3.10})),  and thus in an indirect way it gives a rate of strong convergence to equilibrium (see proofs of Theorem \ref{theorem4.8} and Theorem \ref{theorem1.2}).
 \vskip2mm

{\bf Main Result.} The main result of the paper is as follows:
\begin{theorem}\lb{theorem1.2}
Suppose $B({\bf {\bf v-v}_*},\omega)$ satisfy Assumption \ref{assp} with $0\le\eta<1/4$.
Let $F_0\in {\mathcal B}_1^{+}({\mathbb R}_{\ge 0})$ satisfy
$N(F_0)>0, E(F_0)>0$, let $F_{\rm be}$ be
the unique Bose-Einstein distribution with the same mass $N=N(F_0)$ and energy $E=E(F_0)$. Let 
 $\fr{1}{20}<\ld<\fr{1}{19}$. Then
there exists a conservative  measure-valued isotropic solution $F_t$ of
Eq.(\ref{Equation1}) on $[0,\infty)$ with the initial datum $F_0$ such that
$S(F_t)\ge S(F_0)$ and $S(F_t)>0 $ for all $t>0$, and it holds
\beas 0\le S(F_{\rm be})-S(F_t)\le C(1+t)^{-\ld},\quad\|F_t-F_{{\rm be}}\|_1\le C(1+t)^{-\fr{(1-\eta)\ld}{2(4-\eta)}}\qquad \forall\, t\ge 0.\eeas
In particular if\, $\overline{T}/\overline{T}_c<1$  then
\be\big|F_t(\{0\})-(1-(\overline{T}/\overline{T}_c)^{3/5})N\big|\le  C(1+t)^{-\fr{(1-\eta)\ld}{2(4-\eta)}}\qquad \forall\, t\ge 0.\label{rate2}\ee
Here the constant $C>0$ depends only on $N,E, b_0, \eta$ and $\ld.$
\end{theorem}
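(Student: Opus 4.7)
The argument divides naturally into four stages.

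\emph{Stage 1 (construction and H-theorem).} I would first approximate $F_0$ by a sequence $F_0^{(\ep)}$ whose regular parts are positive bounded densities, invoke the existence theory of \cite{Lu2004} to obtain conservative isotropic solutions $F_t^{(\ep)}$, and then pass to the weak limit using the stability theorem referenced in Remark \ref{remark}. The approximating solutions satisfy the formal H-theorem $\fr{d}{dt}S(F_t^{(\ep)}) = D(F_t^{(\ep)}) \ge 0$, where $D$ is the associated dissipation, and by lower semicontinuity of $S$ along weakly convergent sequences (immediate from the definition (\ref{ent1})) the limit satisfies $S(F_t) \ge S(F_0)$. The upper bound $S(F_t) \le S(F_{\rm be})$ is the standard Lagrange multiplier characterization of $F_{\rm be}$ as the unique entropy maximizer under the constraints $N(F)=N, E(F)=E$.

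\emph{Stage 2 (quantitative positivity).} Next I would establish $S(F_t) > 0$ for every $t > 0$, with $S(F_t) \ge c(t_0) > 0$ on $[t_0,\infty)$, uniformly even when $F_0$ has nontrivial singular part. The key is that the gain term in (\ref{Equation3}) converts singular mass into absolutely continuous mass because $W(x,y,z)$ defined by (\ref{W1})--(\ref{W2}) is strictly positive on a set of positive product measure. Combined with the iteration technique mentioned in the abstract, the same mechanism also yields a positive lower bound for $F_t(\{0\})$ in the low temperature regime, which will be needed in Stage 4 to locate the singular mass at the origin.

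\emph{Stage 3 (algebraic decay of the entropy gap).} This is where the bulk of the technical work lies. A Bose--Einstein analogue of Villani's inequality gives $D(F_t) \ge \Psi(S(F_{\rm be})-S(F_t))$ for some explicit convex $\Psi$ tied to moments of $F_t$ and to the kernel lower bound from (\ref{1.6}). The obstruction is the $\log f_t$ term, which is uncontrolled where the regular part of $F_t$ is small. To circumvent this I would replace $f_t$ inside the logarithm by the time-dependent convex combination $(1-\theta_t)f_t+\theta_t g_\ast$, where $g_\ast$ is a fixed rapidly decaying positive profile and $\theta_t\searrow 0$ at a polynomial rate tuned to the Villani exponent; the concavity of $S$ (\ref{S2}) and the convex-positivity of the cubic collision integral (Proposition \ref{convex-positivity}) keep the resulting perturbation controlled. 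This produces a closed differential inequality of the form
\[ \fr{d}{dt}\bigl(S(F_{\rm be})-S(F_t)\bigr) \le -c\bigl(S(F_{\rm be})-S(F_t)\bigr)^{1+1/\ld} + \text{(error depending on } \theta_t\text{)}, \]
which on integration gives the target rate $C(1+t)^{-\ld}$. Balancing $\theta_t$, moment production, and the Villani exponent is precisely what forces the numerical range $1/20 < \ld < 1/19$ and the restriction $\eta < 1/4$.

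\emph{Stage 4 (from entropy to strong convergence).} A Csisz\'ar--Kullback--Pinsker-type inequality adapted to the Bose--Einstein entropy (referred to as (\ref{3.10}) in the paper) converts the entropy gap into control of a weak/semi-strong norm on the regular parts. Interpolating between this weak bound and the uniform moment bound on $F_t$ (whose quality is limited by the $\eta$-loss in (\ref{1.6})) yields the strong rate with the exponent $(1-\eta)\ld/(2(4-\eta))$; the factor $(1-\eta)$ tracks the kernel loss and $1/(4-\eta)$ is the interpolation exponent. For low temperature, since $(1+x)|_{x=0}=1$ in the definition of $\|\cdot\|_1$, the inequality $|F_t(\{0\})-F_{\rm be}(\{0\})| \le \|F_t-F_{\rm be}\|_1$ immediately transfers the rate to (\ref{rate2}). \textbf{The main obstacle is Stage 3}: calibrating the convex-combination parameter $\theta_t$ together with the moment-production and Villani-type estimates so that the logarithm-regularization error does not destroy the algebraic decay, while simultaneously producing an exponent compatible with the interpolation in Stage 4.
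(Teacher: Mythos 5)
There is a genuine gap, and it sits exactly where your proposal moves fastest: Stage 4. The entropy functional is blind to the singular part of $F_t$ (by (\ref{ent2}), $S(F_t)=S(f_t)$ depends only on the regular part), so the Csisz\'ar--Kullback-type inequality (\ref{3.10}) converts the entropy gap only into control of the \emph{energy-weighted} norm $\|F_t-F_{\rm be}\|_1^{\circ}=\int x\,{\rm d}|F_t-F_{\rm be}|$, which annihilates any point mass at $x=0$. No interpolation of this bound with moment bounds can recover the full norm $\|F_t-F_{\rm be}\|_1$ in the low-temperature case: by Lemma \ref{lemma3.1}(II), $\|F_t-F_{\rm be}\|_1\le 2|F_t(\{0\})-F_{\rm be}(\{0\})|+C(\|F_t-F_{\rm be}\|_1^{\circ})^{1/3}$, so one must first prove, by a separate mechanism, that the condensate $F_t(\{0\})$ converges to $F_{\rm be}(\{0\})=(1-(\overline{T}/\overline{T}_c)^{3/5})N$ with a rate. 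Your closing step, deducing (\ref{rate2}) from $\|F_t-F_{\rm be}\|_1$ via $|F_t(\{0\})-F_{\rm be}(\{0\})|\le\|F_t-F_{\rm be}\|_1$, is therefore circular: the strong-norm rate you invoke already presupposes the condensation rate. In the paper this missing piece is the content of the whole of Section 5: the convex-positivity of the cubic integral and the monotonicity of $t\mapsto e^{ct}N_{0,p}(F_t,\vep)$ feed an iteration (Lemma \ref{lemma4.5}, Proposition \ref{proposition4.6}) that produces an explicit positive lower bound on $F_t(\{0\})$ for arbitrary low-temperature data, after which the two-sided estimate (\ref{4.16}) together with the decay of $R(t)=\sup_{\tau\ge t}\|F_\tau-F_{\rm be}\|_1^{\circ}$, and an optimization in the parameters $h,\vep$, yields the exponent $\fr{(1-\eta)\ld}{2(4-\eta)}$ (Theorem \ref{theorem4.8}). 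Your Stage 2 remark that an iteration gives a positive lower bound for $F_t(\{0\})$ points in the right direction, but a lower bound alone is not convergence, and you never close that loop.

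Two smaller corrections. First, your Stage 3 is essentially the paper's route (there is no direct Bose--Einstein Cercignani inequality: the paper applies Villani's classical inequality to $g=f/(1+f)$ of approximate solutions, combined with a H\"older splitting against the truncated kernel and the convex combination $F^K=(1-e^{-t^{\dt}})f^K+e^{-t^{\dt}}{\cal E}$; the resulting differential inequality involves $\Psi^{-1}$ with $\Psi(y)=C(y^{1/(10p)}+y^{1/4})$ rather than a clean power $u^{1+1/\ld}$, but the integration is the same in spirit), so identifying Stage 3 as ``the main obstacle'' misplaces the crux of the theorem. Second, the restriction $\eta<1/4$ is not forced by the entropy stage: it enters only through the condensation iteration, where one needs $\alpha=\fr{1}{10}(1-4\eta)>0$, while the constraint $\ld<\fr{1}{19}$ comes from the admissibility of $p>2$ with $\dt=1-\ld(10p-1)>0$ in Proposition \ref{prop4.2}.
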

\vskip2mm

\begin{remark}\lb{remark1.5}{\rm (1)
Let
${\rm d}F(x)=f(x){\rm d}x+{\rm d}\mu(x), {\rm d}G(x)=g(x){\rm d}x+{\rm d}\nu(x)$ be the
regular-singular decompositions of $F, G\in {\cal B}^{+}(X)$  respectively.
Recall that this means that
$0\le f,g \in L^1(X), \mu, \nu\in {\cal B}^{+}(X)$ and there are Borel sets $Z_F,Z_G\subset X$  such that
$mes(Z_F)=mes(Z_G)=0,
\mu(X\setminus Z_F)=0,\nu(X\setminus Z_G)=0.$
According to measure theory,
the regular-singular decomposition of $|F-G|$ is
\be{\rm d}|F-G|(x)=|f(x)-g(x)|{\rm d}x+{\rm d}|\mu-\nu|(x)\lb{decomp2}\ee
i.e.
\beas |F-G|(A)=\int_{A}|f(x)-g(x)|{\rm d}x+|\mu-\nu|(A)\qquad \forall\,{\rm Borel\,\,set}\,\,
A\subset X\lb{decomp3}\eeas
which is easily proved by using
 the fact that (see e.g.\cite{Rudin}) there is a real Borel function $h$
on $X$ satisfying $h(x)^2\equiv 1$ on $X$ such that
\be {\rm d}(F-G)(x)=h(x){\rm d}|F-G|(x),\quad {\rm i.e.}\quad
{\rm d}|F-G|(x)=h(x){\rm d}(F-G)(x).\lb{1.29}\ee
The same holds also for $\mu-\nu$.
Using (\ref{decomp2}) to the regular-singular decompositions ${\rm d}F_t(x)
=f(x,t)\sqrt{x}{\rm d}x+{\rm d}\nu_t(x)$ and ${\rm d}F_{\rm be}(x)
=f_{\rm be}(x)\sqrt{x}{\rm d}x+{\rm d}\nu_{\rm be}(x)$ we have
\beas&& \|F_t-F_{{\rm be}}\|_1=\int_{{\mR}_{\ge 0}}(1+x)|f(x,t)-f_{\rm be}(x)|\sqrt{x}{\rm d}x+\|\nu_t-\nu_{\rm be}\|_1,\\
&&\|\nu_t-\nu_{\rm be}\|_1=
\int_{{\mR}_{>0}}(1+x){\rm d}\nu_t(x)+|F_t(\{0\})-F_{\rm be}(\{0\})|\eeas
for all $t\ge 0.$
From these and Theorem \ref{theorem1.2} one sees that, as $t\to\infty$, the regular part and the singular part of $F_t$ converge strongly to the regular part and the singular part of $F_{\rm be}$
 respectively.

(2) In comparison with the exponential convergence to equilibrium
for the spatially homogeneous classical Boltzmann equation for hard potentials,
the rate of convergence to equilibrium obtained in Theorem \ref{theorem1.2} is very slow even for the
hard sphere model (i.e. the case $\eta=0$).
This is perhaps not only because of the condensation effect (it is hard to obtain a fast decay
rate for $|F_t(\{0\})-F_{\rm be}(\{0\})|$ for low temperature), but also because of the complicated structure of Eq.(\ref{Equation1}) which leads to complicated structures of entropy and entropy dissipation. In fact from \cite{ET} one sees that, even for a linearized model of Eq.(\ref{Equation1}) for low temperature, it is difficult to obtain a high order algebraic rate of
convergence to equilibrium.

(3) An easy case which is not mentioned in Theorem \ref{theorem1.2}  is that $N>0, E=0$.
In this case, any conservative  measure-valued isotropic solution $F_t$ of
Eq.(\ref{Equation1}) on $[0,\infty)$ with $N(F_t)=N, E(F_t)=0$ is equal to the same
Dirac measure: $F_t=F_{\rm be}$ for all $t\ge 0$, where ${\rm d}F_{\rm be}(x)=N\dt(x){\rm d}x$, and
the Dirac measure $F_{\rm be}$ is also the unique equilibrium solution of Eq.(\ref{Equation3})
satisfying $N(F)=N, E(F)=0$.

(4) Theorem \ref{theorem1.2} shows that the strong convergence to equilibrium
is grossly determined,
which means it depends only on
the mass, energy, and some constants coming from the collision kernel.
 In particular, for the case of low temperature
$\overline{T}/\overline{T}_c<1$, the convergence to BEC does not depend on any local information of initial data. Thus Theorem \ref{theorem1.2} also gives an essential improvement to our previous work \cite{Lu2016},\cite{Lu2018}.

}

\end{remark}
\vskip2mm

 The rest of the paper is organized as follows:
In Section 2 we prove an inequality of entropy and entropy dissipation  for general functions.
In Section 3 we introduce approximate solutions of Eq.(\ref{Equation1}) and prove positive lower bounds of entropy for isotropic approximate solutions and hence for isotropic measure-valued solutions.
In Section 4, using the results of Sections 2 and 3 we obtain an algebraic decay rate of
the entropy difference $S(F_{\rm be})-S(F_t)$.  In Section 5 we
prove the long time convergence of the condensation $F_t(\{0\})$ to $F_{\rm be}(\{0\})$, and at the end of that section we finish the
proof of our main result Theorem \ref{theorem1.2}. Section 6 is an appendix where we
prove some general properties of collision integrals and the equivalence of two definitions of
measure-valued isotropic solutions of Eq.(\ref{Equation1}), we also prove existence and positivity of some interaction potentials mentioned just above
Assumption \ref{assp}.

\begin{center}\section{Inequality of Entropy and Entropy Dissipation}\end{center}

Entropy and entropy dissipation are powerful tools
for investigating long time behavior of solutions of classical and quantum Boltzmann equations.
For the latter case, see for instance \cite{EM}, \cite{Lu2005}, and the derivation below.

 We begin with the following lemma which provides some connections between strong convergence to equilibrium, entropy convergence, and convergence of condensation.

\begin{lemma}\lb{lemma3.1} Given $N>0, E>0$. Let $
 F \in{\cal B}_1^{+}({\mR}_{\ge 0})$ satisfy
 $N(F)=N, E(F)=E$. Then there are finite constants $C>0$ (which may be different in different lines)
depending only on $N, E$  such that

\noindent {\rm(I)}
\be \fr{1}{C}\big(\|F-F_{\rm be}\|_1^{\circ}\big)^2
\le S(F_{\rm be})-S(F)\le C\big(\|F-F_{\rm be}\|_1^{\circ}\big)^{1/2}.\lb{3.10}\ee
(II)
If\, $\overline{T}/\overline{T}_c<1$, then
\be |F(\{0\})-F_{\rm be}(\{0\})|\le \|F-F_{\rm be}\|_{1}\le 2|F(\{0\})-F_{\rm be}(\{0\})|
+C(\|F-F_{\rm be}\|_{1}^{\circ})^{1/3}.\lb{3.11}\ee
If\, $\overline{T}/\overline{T}_c\ge 1$, then
\be \|F-F_{\rm be}\|_{1}\le
C\big(\|F-F_{\rm be}\|_{1}^{\circ}\big)^{1/3}.\lb{3.12}\ee
Here
$$\|\mu\|_{1}^{\circ}=\int_{{\mR}_{\ge 0}}x{\rm d}|\mu|(x),\quad\|\mu\|_{1}=\int_{{\mR}_{\ge 0}}(1+x){\rm d}|\mu|(x),\quad   \mu\in {\cal B}_1({\mR}_{\ge 0}).$$
\end{lemma}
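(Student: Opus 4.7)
The plan is to prove the three inequalities separately, handling the measure-theoretic Part (II) first and then the two entropy bounds in Part (I).

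For Part (II), the easy half of (\ref{3.11}) is $|F(\{0\})-F_{\rm be}(\{0\})|=|(F-F_{\rm be})(\{0\})|\le|F-F_{\rm be}|(\mathbb{R}_{\ge 0})\le\|F-F_{\rm be}\|_1$. For the reverse, mass conservation $(F-F_{\rm be})(\mathbb{R}_{\ge 0})=0$ forces $(F-F_{\rm be})((0,\infty))=-(F-F_{\rm be})(\{0\})$, so writing $a=(F-F_{\rm be})_{+}((0,\infty))$ and $b=(F-F_{\rm be})_{-}((0,\infty))$ yields $|F-F_{\rm be}|((0,\infty))=2\min(a,b)+|F(\{0\})-F_{\rm be}(\{0\})|$. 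To bound $\min(a,b)\le b$, I split at $\epsilon>0$: the explicit form of $f_{\rm be}$ gives $F_{\rm be}((0,\epsilon))\le C\sqrt\epsilon$, while a Markov estimate gives $|F-F_{\rm be}|([\epsilon,\infty))\le\epsilon^{-1}\|F-F_{\rm be}\|_1^\circ$; optimizing at $\epsilon^{3/2}=\|F-F_{\rm be}\|_1^\circ$ produces $\min(a,b)\le C(\|F-F_{\rm be}\|_1^\circ)^{1/3}$. A residual $\|F-F_{\rm be}\|_1^\circ$ is absorbed via $\|F-F_{\rm be}\|_1^\circ\le 2E$. Inequality (\ref{3.12}) uses the same split with $F_{\rm be}(\{0\})=0$, so $F(\{0\})\le b\le C(\|F-F_{\rm be}\|_1^\circ)^{1/3}$.

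For the upper bound in (\ref{3.10}), the starting point is the elementary estimate $|s'(y)|=\log(1+1/y)\le 1/\sqrt{y}$ on $(0,\infty)$, verified by checking that $h(y)=\sqrt{y}\log(1+1/y)$ has its unique interior maximum strictly below $1$. Integrating yields the uniform H\"older-$1/2$ bound $|s(u)-s(v)|\le 2\sqrt{|u-v|}$, so $S(F_{\rm be})-S(F)\le C\int\sqrt{|f-f_{\rm be}|(x)}\sqrt{x}\,dx$. Splitting at a cutoff $R=R(N,E)$ and applying Cauchy--Schwarz with the weight $x^{-1/2}$,
\[ \int_0^R \sqrt{|f-f_{\rm be}|}\sqrt{x}\,dx \le \Big(\int_0^R x|f-f_{\rm be}|\sqrt{x}\,dx\Big)^{1/2}\Big(\int_0^R x^{-1/2}\,dx\Big)^{1/2} \le CR^{1/4}(\|F-F_{\rm be}\|_1^\circ)^{1/2}, \]
while the tail on $[R,\infty)$ is controlled crudely by the exponential decay of $f_{\rm be}$ together with the first-moment bound $E(F)=E$.

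The lower bound in (\ref{3.10}) is the technical heart. I would expand by second-order Taylor around $f_{\rm be}$,
\[ s(f_{\rm be})-s(f)=s'(f_{\rm be})(f_{\rm be}-f)+\tfrac12|s''(g)|(f-f_{\rm be})^2,\quad g\in[\min(f,f_{\rm be}),\max(f,f_{\rm be})]. \]
Since $s'(f_{\rm be})=\log A+x/\kappa$, integration turns the linear piece into $(\log A)\,(N^{\rm sing}(F)-N^{\rm sing}(F_{\rm be}))+\kappa^{-1}E^{\rm sing}(F)$, which is nonnegative in every temperature regime and, squared, dominates the singular contribution $\int x\,d|\nu_F-\nu_{\rm be}|=E^{\rm sing}(F)$ to $\|F-F_{\rm be}\|_1^\circ$. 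The quadratic piece controls the regular contribution via Cauchy--Schwarz,
\[ \Big(\int x|f-f_{\rm be}|\sqrt{x}\,dx\Big)^2 \le \int|s''(g)|(f-f_{\rm be})^2\sqrt{x}\,dx \cdot \int x^2 g(1+g)\sqrt{x}\,dx. \]
The main obstacle is bounding the last factor, since $|s''(g)|^{-1}=g(1+g)$ blows up where $f$ is large and only $M_0(F),M_1(F)$ are a priori available. My fix is to split into $\{f\le 2f_{\rm be}\}$ (where $g\le 3f_{\rm be}$ has all moments bounded in $N,E$) and $\{f>2f_{\rm be}\}$ (where $|f-f_{\rm be}|\ge f/2$ lets the region be absorbed by a Markov estimate against the $x$-weight already present in $\|F-F_{\rm be}\|_1^\circ$).
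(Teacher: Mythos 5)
Your Part (II) argument is correct and in fact self-contained (the paper simply quotes Lemma 2.8 of \cite{Lu2016} for (\ref{3.11}) and proves (\ref{3.12}) by the same $\epsilon$-split you use), and the skeleton of your lower bound in (\ref{3.10}) — Taylor expansion around $f_{\rm be}$, the identity $s'(f_{\rm be})=\log A+x/\kappa$ turning the linear term into $(\log A)(N^{\rm sing}(F)-N^{\rm sing}(F_{\rm be}))+\kappa^{-1}E^{\rm sing}(F)\ge 0$, and Cauchy--Schwarz with weight $1/|s''|$ — is sound. The genuine gap is in your upper bound in (\ref{3.10}). The intermediate estimate $S(F_{\rm be})-S(F)\le C\int\sqrt{|f-f_{\rm be}|}\,\sqrt{x}\,dx$ is too lossy on the tail: the claim that $\int_R^{\infty}\sqrt{|f-f_{\rm be}|}\sqrt{x}\,dx$ is ``controlled by the exponential decay of $f_{\rm be}$ together with $E(F)=E$'' is false, because the $\sqrt{f}$ part is not dominated by any moment you have. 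For instance $f=L^{-5/2}1_{[R,R+L]}$ has energy $O(1)$ and mass $o(1)$ while $\int\sqrt{f}\sqrt{x}\,dx\sim L^{1/4}\to\infty$. The natural repair — discard $\{f\ge f_{\rm be}\}$, where $s(f_{\rm be})-s(f)\le 0$, so that $|f-f_{\rm be}|\le f_{\rm be}$ on the remaining set and the tail is exponentially small — still does not give the stated exponent: balancing $R^{1/4}(\|F-F_{\rm be}\|_1^{\circ})^{1/2}$ against $e^{-cR}$ produces an extra factor $(\log(1/\|F-F_{\rm be}\|_1^{\circ}))^{1/4}$. Reaching exactly the power $1/2$ requires a finer argument; this is precisely what the paper avoids by reducing to the known functional inequality (Lemma 4 of \cite{Lu2005}) through the sandwich (\ref{3.16}): since $E(F)=E(F_{\rm be})$ and the singular part of $F_{\rm be}$ sits at $x=0$ where the weight $x$ vanishes, $\|F-F_{\rm be}\|_1^{\circ}$ is comparable within a factor $2$ to $\int x|f-f_{\rm be}|\sqrt{x}\,dx$, so the measure statement follows from the $L^1$ statement.

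A second, smaller defect is your treatment of the bad set $\{f>2f_{\rm be}\}$ in the lower bound. As written, Cauchy--Schwarz there is unusable, because the complementary factor $\int x^2 g(1+g)\sqrt{x}\,dx$ involves $f^2$-moments that $N,E$ do not control, and a ``Markov estimate against the $x$-weight already present in $\|F-F_{\rm be}\|_1^{\circ}$'' can only reproduce the very quantity you are estimating, which is circular. The step can be closed, but by a different mechanism: energy conservation gives $\int_{\{f>2f_{\rm be}\}}x(f-f_{\rm be})\sqrt{x}\,dx=\int_{\{f\le 2f_{\rm be}\}}x(f_{\rm be}-f)\sqrt{x}\,dx-E^{\rm sing}(F)\le\int_{\{f\le 2f_{\rm be}\}}x|f-f_{\rm be}|\sqrt{x}\,dx$, so the bad-region contribution is dominated by the good-region one, which your Cauchy--Schwarz (with $g\le 2f_{\rm be}$, hence a finite weight integral depending only on $N,E$) does control; together with $E^{\rm sing}(F)\le\kappa\,(S(F_{\rm be})-S(F))$ from the linear term and $E^{\rm sing}(F)\le 2E$, this yields $(\|F-F_{\rm be}\|_1^{\circ})^2\le C(S(F_{\rm be})-S(F))$. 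With that substitution your lower bound is a valid, genuinely different route from the paper's citation of \cite{Lu2005}; the upper bound, however, remains unproven in your proposal.
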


\begin{proof} {\rm(I)}: Let $0\le f\in L^1({\mathbb R}_{+},(1+x)\sqrt{x}{\rm d}x)$, $\nu\in{\cal B}_1^{+}({\mathbb R}_{\ge 0})$
be the regular part and the singular part of $F$.
Let $\bar{f}({\bf v})=f(|{\bf v}|^2/2)$ and recall that $\Og({\bf v})=f_{\rm be}(|{\bf v}|^2/2)$
is given in
(\ref{REE}). We have proved in Lemma 4 of \cite{Lu2005} that
\be \fr{1}{C}\big(\|\bar{f}-\Og\|^{\circ}_{L^1_2}\big)^2\le
S(\Og)-S(\bar{f})\le {C}\big(\|\bar{f}-\Og\|^{\circ}_{L^1_2}\big)^{1/2}\lb{3.13}\ee
where
$$\|\psi\|^{\circ}_{L^1_2}=\int_{{\bR}}|{\bf v}|^2 |\psi({\bf v})|{\rm d}{\bf v},\quad \psi\in L^1_2({\bR}).$$
Let $(F_{\rm be}-F)^{+}=\fr{1}{2}(|F_{\rm be}-F|+ F_{\rm be}-F)$ be the positive part of $ F_{\rm be}-F$.
Then
\bes&& {\rm d}|F_{\rm be}-F|(x)={\rm d}F(x)-{\rm d}F_{\rm be}(x)+2{\rm d}(F_{\rm be}-F)^{+}(x)\lb{3.14}\\
&&
{\rm d}(F_{\rm be}-F)^{+}(x)
=h_{+}(x){\rm d}(F_{\rm be}-F)(x),\quad h_{+}(x)\in[0,1]\dnumber \lb{3.15}\ees
where $h_{+}(x)=\fr{1}{2}(1+h(x))$ (see (\ref{1.29})).
Using (\ref{3.14}) and $E(F)=E(F_{\rm be})$ we have
$$\|F-F_{\rm be}\|_1^{\circ}=2\int_{{\mR}_{\ge 0}}
x{\rm d}(F_{\rm be}-F)^{+}(x)\le 2\int_{{\mR}_{\ge 0}}
x|f_{\rm be}(x)-f(x)|\sqrt{x}{\rm d}x.$$
On the other hand from (\ref{decomp2})we have
$${\rm d}|F-F_{\rm be}|(x)=|f(x)-f_{\rm be}(x)|\sqrt{x}{\rm d}x+
{\rm d}|\nu-\nu_{\rm be}|(x)\ge |f(x)-f_{\rm be}(x)|\sqrt{x}{\rm d}x.$$
Thus
\be \int_{{\mR}_{\ge 0}}
x|f(x)-f_{\rm be}(x)|\sqrt{x}{\rm d}x\le \|F-F_{\rm be}\|_1^{\circ}\le 2\int_{{\mR}_{\ge 0}}
x|f(x)-f_{\rm be}(x)|\sqrt{x}{\rm d}x.\lb{3.16}\ee
Since
$$\int_{{\mR}_{\ge 0}}
x|f(x)-f_{\rm be}(x)|\sqrt{x}{\rm d}x=\fr{1}{8\pi\sqrt{2}}\|\bar{f}-\Og\|^{\circ}_{L^1_2}
,\quad S(F_{\rm be})-S(F)=S(\Og)-S(\bar{f})$$
(\ref{3.10}) follows from (\ref{3.13}) and (\ref{3.16}).

(II): For the case $\overline{T}/\overline{T}_c<1$, the inequality (\ref{3.11})
is a result of Lemma 2.8 in \cite{Lu2016}.
Suppose $\overline{T}/\overline{T}_c\ge 1$. Then
${\rm d}F_{\rm be}(x)=f_{\rm be}(x)\sqrt{x}{\rm d}x=\fr{1}{Ae^{x/\kappa}-1}\sqrt{x}{\rm d}x$ and so
using $N(F)=N(F_{\rm be})$ and (\ref{3.14}),(\ref{3.15}) we have for any $\vep>0$
\beas&&
\|F-F_{\rm be}\|_0=
2\int_{[0,\vep]}{\rm d}(F_{\rm be}-F)^{+}(x)
+2\int_{(\vep,\infty)}{\rm d}(F_{\rm be}-F)^{+}(x),\\
&&\le 2\int_{[0,\vep]}
\fr{1}{e^{x/\kappa}-1}\sqrt{x}{\rm d}x+\fr{1}{\vep}\|F-F_{\rm be}\|_{1}^{\circ}
\le 4\kappa\sqrt{\vep}+\fr{1}{\vep}\|F-F_{\rm be}\|_{1}^{\circ}.
\eeas
Minimizing w.r.t $\vep>0$ gives
$\|F-F_{\rm be}\|_0\le C(\|F-F_{\rm be}\|_{1}^{\circ})^{1/3}$
and thus
$\|F-F_{\rm be}\|_{1}= \|F-F_{\rm be}\|_0
+\|F-F_{\rm be}\|_{1}^{\circ}\le
 C(\|F-F_{\rm be}\|_{1}^{\circ})^{1/3}$.
\end{proof}
\vskip3mm

Since the entropy dissipation
$D(f)$ is monotone non-decreasing with respect to the collision kernel $B$, it suffices to
establish the relevant estimates for a ``minimal" kernel $B_{\rm min}$:
\be B_{\rm min}({\bf v}-{\bf v}_*,\omega)=\fr{b_0}{(4\pi)^2}\Psi_0(|{\bf v-v}_*|)\cos^3(\theta)\sin^3(\theta)
\lb{cutoff}\ee
where $0<b_0<1$ is a constant, $\Psi_0: {\mR}_{\ge 0}\to [0,1]$ is a non-decreasing function
satisfying $0<\Psi_0(r)\le 1$ for all $r>0$, and $\theta=\arccos(|({\bf v-v}_*)\cdot\omega|/|{\bf {\bf v-v}_*}|)\in [0,\pi/2]$
and we define $\theta=0$ for the case ${\bf v}={\bf v}_*$.
The entropy dissipation
corresponding to $B_{\rm min}$ is
\beas&& D_{\rm min}(f)=\fr{1}{4}
\int_{{\bRRS}}B_{\rm min}({\bf
{\bf v-v}_*},\og)\\
&&\times \big(f'f_*'(1+f)(1+f_*)-ff_*(1+f')(1+f_*')\big)
\log\Big(\fr{f'f_*'(1+f)(1+f_*)}{ff_*(1+f')(1+f_*')}\Big)
{\rm d}\omega{\rm
d}{\bf v_*}{\rm
d}{\bf v}. \eeas
If we define
\be \Gm(a,b)=
\left\{\begin{array}{ll}
\displaystyle (a-b)\log\big(\fr{a}{b}\big)\,\qquad {\rm if}
\quad a>0, b>0\\
\displaystyle
\,\infty\qquad \quad\qquad \qquad \,\,  {\rm if}\quad
a>0=b\,\,\,{\rm or}\,\,\, a=0<b\\
\displaystyle
\,0\qquad \qquad \qquad \quad\,\,\,\,\,  {\rm if}\quad a=b=0
\end{array}\right.\lb{2.Gamma}\ee
\be\Pi(f)=(1+f)(1+f_*)(1+f')(1+f_*'),\quad g=\fr{f}{1+f},\quad 0\le f\in L^1_2({\bR})\lb{2.Dmin}\ee
then
$D_{\rm min}(f)$ can be written as a shorter and clear version:
\be D_{\rm min}(f)=\fr{1}{4}
\int_{{\bRRS}}B_{\rm min}({\bf
{\bf v-v}_*},\og)\Pi(f)\Gm(g'g_*',gg_*){\rm d}\omega{\rm
d}{\bf v_*}{\rm
d}{\bf v}. \label{diss}\ee
We will also use
Villani's  inequality (\cite{Villani})
\be H(f|M)\le \fr{1}{c_{H_0}}{\cal D}_2(f)\qquad \forall\,f\in {\cal H}_0 \lb{Villani}\ee
where  ${\cal D}_2(f)$ is
the entropy dissipation  for a ``super hard potential" model
\be {\cal D}_2(f)=\fr{1}{4}\int_{{\bRRS}}\fr{2|({\bf v-v}_*)\cdot\omega|}{|{\bf v-v}_*|}
(1+|{\bf v-v}_*|^2)\Gm(f'f_*',ff_*)
{\rm d}\omega{\rm
d}{\bf v_*}{\rm
d}{\bf v},\lb{vdiss}\ee
$$H(f|M)=H(f)-H(M),\quad H(f)=\int_{{\bR}}f({\bf v})\log f({\bf v}){\rm d}{\bf v},\quad
M({\bf v})=\fr{e^{-|{\bf v}|^2/2}}{(2\pi)^{3/2}},$$
$${\cal H}_0=\Big\{ 0\le f\in L^1_2({\bR})\,\Big|\,
\int_{{\bR}}(1,{\bf v}, |{\bf v}|^2/3)f({\bf v}){\rm d}{\bf v}=(1,0,1),\, H(f|M)\le H_0\Big\},$$
$0\le H_0<\infty$ is a constant, and the constant $c_{H_0}>0$ depends only $H_0$.
See \cite{Villani}, see also pp.724-725 of \cite{CCL} for some estimate of $c_{H_0}$.
In particular for isotropic functions $f\in {\cal H}_0$,
the inequality (\ref{Villani}) holds with $c_{H_0}=\fr{2\pi}{7}$  (\cite{Villani}).
Note that for all $0\le f\in
L^1_2({\bR})$ it holds $\int_{f({\bf v})\le 1}f({\bf v})|\log f({\bf v})|{\rm d}{\bf v}
 <\infty$ and thus the $H$-function $H(f)$ makes sense and $H(f)<\infty$ if and only if
$\int_{{\bR}}f({\bf v})|\log f({\bf v})|{\rm d}{\bf v}<\infty$.
Note also that in the original Villani's inequality (\ref{Villani}),
the entropy dissipation ${\cal D}_2(f)$ is equivalently expressed with the $\sg$-representation
(\ref{colli2}) (see the formula (\ref{3.9})).
\vskip2mm

The aim of this section is to prove the following
inequality between entropy and entropy dissipation:

\begin{proposition}\lb{prop2.1} Given $N>0, E>0$. Let $f_{\rm be}(x)=\fr{1}{Ae^{x/\kappa}-1}$ be the regular part of the unique equilibrium $F_{\rm be}\in {\cal B}_{1}^{+}({\mathbb R}_{\ge 0})$ which has the mass and energy $N, E$. Let
\be\Og({\bf v})=f_{\rm be}(|{\bf v}|^2/2)=\fr{a e^{-b|{\bf v}|^2}}{1-ae^{-b|{\bf v}|^2}},
\quad 0<a=\fr{1}{A}\le 1,\,\, b=\fr{1}{2\kappa}>0\lb{REE}\ee
and let $0\le f\in L^1_2({\bR})$ satisfy
\beas \int_{{\bR}}(1, {\bf v}, |{\bf v}|^2/2)f({\bf v}){\rm d}{\bf v}= 4\pi\sqrt{2}(N,0, E).\eeas
Then $S(f)\le S(\Og)$.  If in addition $f\in L^1_4({\bR})$ and for some $0<S_0, C_0<\infty$,
\beas
S(f)\ge S_0, \quad \|f\|_{L^1_4}\le C_0\eeas
then
\be S(\Og)-S(f)\le C\big( ({\cal D}_2(g))^{1/10}+(D_{\rm min}(f))^{1/4}\,\big)\lb{e-e}\ee
where
$g=\fr{f}{1+f},$  ${\cal D}_2(\cdot), D_{\rm min}(\cdot)$ are entropy dissipations given in (\ref{vdiss}), (\ref{cutoff})-(\ref{diss}), and
the constant $C>0$ depends only on $N, E, S_0, C_0, b_0 $ and $\Psi_0$.
 \end{proposition}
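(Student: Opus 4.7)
The first inequality $S(f)\le S(\Omega)$ is the classical Bose-Einstein maximum-entropy principle. Writing $\phi(u)=u\log u-(1+u)\log(1+u)$ so that $S(f)=-\int_{\mathbb{R}^3}\phi(f)\,d{\bf v}$, one has $\phi'(u)=\log[u/(1+u)]$, and $\phi'(\Omega)=\log g_\Omega=\log a-b|{\bf v}|^2$ is a linear combination of the densities $1$ and $|{\bf v}|^2$. Since $\phi$ is strictly convex and $f,\Omega$ share the same mass, momentum, and energy,
\[
S(\Omega)-S(f)=\int_{\mathbb{R}^3}\!\big[\phi(f)-\phi(\Omega)-\phi'(\Omega)(f-\Omega)\big]\,d{\bf v}=\int_{\mathbb{R}^3}\!\!\int_0^1\!(1-s)\phi''(\Omega+s(f-\Omega))(f-\Omega)^2\,ds\,d{\bf v}\ge 0,
\]
because the first-order term integrates to zero by the conservation laws. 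This gives both the non-negativity and a quadratic upper bound.

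The plan for the quantitative estimate is to transfer the problem from $f$ to its Bose-Einstein transform $g=f/(1+f)$, whose equilibrium $g_\Omega=ae^{-b|{\bf v}|^2}$ is genuinely Gaussian-shaped. The identity $f-\Omega=(g-g_\Omega)/[(1-g)(1-g_\Omega)]$ allows us to bound the integrand in the Taylor remainder by a pointwise-bounded multiple of $(g-g_\Omega)^2$ on a bounded region $\{|{\bf v}|\le R\}$, while the tail $\{|{\bf v}|>R\}$ is controlled by the moment bound $\|f\|_{L^1_4}\le C_0$. The lower bound $S(f)\ge S_0$ prevents the singular denominator $(1-g)$ from degenerating in an averaged sense. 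What remains is to control $\|g-g_\Omega\|_{L^1_k}$ for a suitable weight.

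For this, apply Villani's inequality to a mass/energy-renormalized version $\tilde g$ of $g$, yielding $H(\tilde g\mid M)\lesssim\mathcal{D}_2(\tilde g)$, and hence $H(g\mid g_\Omega)\lesssim\mathcal{D}_2(g)$ up to multiplicative constants depending on the moments of $g$. Csiszar--Kullback--Pinsker then gives $\|g-g_\Omega\|_{L^1}\le C(\mathcal{D}_2(g))^{1/2}$. Interpolating this weak-norm bound against the $L^1_4$ moment estimate on $g$ (inherited from $\|f\|_{L^1_4}\le C_0$) produces the exponent $1/10$ on the weighted $L^1$ norm that appears in the quadratic remainder. A parallel argument using $D_{\rm min}(f)$ in place of $\mathcal{D}_2(g)$, exploiting that $B_{\rm min}\Pi(f)\ge B_{\rm min}$ controls $\Gamma(g'g'_*,gg_*)$ with the weaker kernel $B_{\rm min}$ (which vanishes at ${\bf v}={\bf v}_*$), delivers the $1/4$ exponent after the same type of interpolation.

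The main obstacle I expect is the renormalization step for Villani's inequality: the function $g=f/(1+f)$ has neither unit mass nor the standard energy required, and the change of variables that rescales it to standard form introduces prefactors depending on $\int g\,d{\bf v}$ and $\int|{\bf v}|^2 g\,d{\bf v}$, neither of which is a priori controlled. Here the role of $D_{\rm min}(f)$ and the hypothesis $S(f)\ge S_0$ is crucial: the entropy lower bound bounds $g$ away from concentrating all its mass at either $g\approx 0$ or $g\approx 1$, and $D_{\rm min}(f)$ supplies independent control that ${\cal D}_2(g)$ cannot give alone because the weight in ${\cal D}_2$ and the factor $\Pi(f)$ in $D_{\rm min}$ dominate in different regions of phase space. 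Combining the two dissipations with the Taylor reduction and the moment interpolation then closes the estimate with the stated exponents $1/10$ and $1/4$.
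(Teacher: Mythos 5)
Your first part is fine: expanding $-S$ around $\Og$ with the convex integrand and using that $\phi'(\Og)=\log a-b|{\bf v}|^2$ is a combination of the conserved densities does give $S(f)\le S(\Og)$ in both temperature regimes (when $a=1$ the coefficient $\log a=0$ kills the mass-discrepancy term), which is essentially the content of Lemma 4 of \cite{Lu2005} that the paper invokes. The quantitative part, however, has two genuine gaps. First, the Taylor--remainder reduction does not go through as stated: the identity $f-\Og=(g-g_\Og)/[(1-g)(1-g_\Og)]$ does not yield a pointwise bound of the remainder $\int_0^1(1-s)\phi''(\Og+s(f-\Og))(f-\Og)^2{\rm d}s$ by a bounded multiple of $(g-g_\Og)^2$ on $\{|{\bf v}|\le R\}$, because $f$ is not pointwise bounded (only $\|f\|_{L^1_4}\le C_0$ is assumed); where $f\gg\Og$ the remainder is of order $f$ while $(g-g_\Og)^2\le 1$. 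Second, and more importantly, Villani's inequality compares $g$ with the Maxwellian $M_g$ having the \emph{same} mass, momentum and energy as $g$, not with $g_\Og=ae^{-b|{\bf v}|^2}$; your claim ``hence $H(g\,|\,g_\Og)\lesssim{\cal D}_2(g)$'' conflates the two. This discrepancy is exactly why the paper introduces $M_g$, its truncation $M_g^*=(\alpha\wedge 1)e^{-\beta|{\bf v}-{\bf v}_0|^2}$, the Bose--Einstein comparison state $\Og_g^*=M_g^*/(1-M_g^*)$, and a separate argument (Step 5) for the case $\alpha>1$, which is where the exponent $1/10=\fr{1}{5}\cdot\fr{1}{2}$ actually originates (via $\|g-M_g^*\|_{L^1}\lesssim\|g-M_g\|_{L^1}^{2/5}$ and Csiszar--Kullback), not from interpolation against the $L^1_4$ moment.

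The deeper missing ingredient is the mechanism by which $D_{\rm min}(f)$ enters. Your ``parallel argument using $D_{\rm min}(f)$ in place of ${\cal D}_2(g)$'' cannot run through Villani's inequality: $B_{\rm min}$ is a degenerate kernel (it vanishes like $|{\bf v}-{\bf v}_*|^{3+2\eta}$ at small relative velocities), no Cercignani-type inequality is available for it, and the functional $\fr14\int B_{\rm min}\Pi(f)\Gm(g'g_*',gg_*)$ is annihilated by the whole family of Bose--Einstein states $f=M/(1-M)$, not by Maxwellians, so it cannot by itself force $g$ toward a Gaussian. In the paper this dissipation is used through a completely different tool, the stability estimate of Proposition 2 of \cite{Lu2005},
\begin{equation*}
\|(1-M_g^*)(f-\Og_g^*)\|_{L^1}\le C_{\alpha,\beta}\big(\|f\|_{L^1}+\|M_g^*\|_{L^1}\big)\Big(\|g-M_g^*\|_{L^1}+\sqrt{D_{\rm min}(f)}\Big),
\end{equation*}
which measures how close $f$ itself is to the BE-type state $\Og_g^*$; the entropy gap is then split as $S(\Og)-S(f)=[S(\Og)-S(\Og_g^*)]+[S(\Og_g^*)-S(f)]$ and each piece is estimated by concavity inequalities for $s(y)$ (e.g. $s(y)-s(y_0)\le 5(y-y_0)^{2/3}$) together with Cauchy--Schwarz against the $L^1_4$ moments, producing the square root that gives the final exponent $1/4$ on $D_{\rm min}(f)$. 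Your sketch offers no substitute for this stability inequality, and without it neither the $D_{\rm min}^{1/4}$ term nor the stated constant dependence on $b_0,\eta$ (which enter precisely through $C_{\alpha,\beta}$ and the lower bound (\ref{1.6}) on the kernel) can be obtained; the role of $S(f)\ge S_0$ is also different from what you describe --- it is used in Step 1 to bound $\|g\|_{L^1}$, $M_2(g)$, $\alpha$, $\beta$, $|{\bf v}_0|$ above and below so that all subsequent constants are uniform, not to keep $1-g$ away from zero pointwise.
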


\begin{proof} From the assumption of the proposition we have
$\int_{{\bR}}|{\bf v}|^2f({\bf v}){\rm d}{\bf v}
=\int_{{\bR}}|{\bf v}|^2\Og({\bf v}){\rm d}{\bf v}$ and for the case
$a=1/A<1$ we also have $\int_{{\bR}}f({\bf v}){\rm d}{\bf v}
=\int_{{\bR}}\Og({\bf v}){\rm d}{\bf v}$. Thus it follows from Lemma 4 in \cite{Lu2005}
that $S(f)\le S(\Og)$. Now suppose $f\in L^1_4({\bR})$.
The proof of (\ref{e-e}) is divided into six steps.

{\bf Step1.}
Let
$$M_g({\bf v})=\alpha e^{-\beta|{\bf v-v}_0|^2},\quad M_2(g)=\int_{{\bR}}|{\bf v-v}_0|^2g({\bf v}){\rm d}{\bf v}$$
with $\alpha>0, \beta>0, {\bf v}_0\in {\bR}$ given by $g$ through the moment equations
\be\int_{{\bR}}M_g({\bf v})(1, {\bf v}, |{\bf v}|^2){\rm d}{\bf v}=
\int_{{\bR}}g({\bf v})(1, {\bf v}, |{\bf v}|^2){\rm d}{\bf v}.\lb{moment2}\ee
In this step we prove the lower and upper bounds:
\be C_1\le \|g\|_{L^1},  M_2(g)\le C_2,\quad
C_3\le\alpha, \beta\le C_4,\quad  |{\bf v}_0|\le C_5\lb{2.12}\ee
where here and below the constants $0<C_i<\infty\,(i=1,2,...,16)$ depend only
on $N, E, S_0, C_0, b_0 $ and $\Psi_0$.

By definition of $S(f)$ and using Cauchy-Schwarz inequality we have
\be
S(f)^2
\le \|g\|_{L^1}\int_{f({\bf v})>0}\big(1+\fr{1}{f({\bf v})}\big)
s(f({\bf v}))^2{\rm d}{\bf v}.\lb{2.13}\ee
Then we use the inequality $\log(1+y)\le \sqrt{y}\wedge y$ to get for all
$y>0$
$$
s(y)^2=\big(\log(1+y)+y\log(1+ \fr{1}{y})\big)^2
\le \min\big\{4y,\,2y^2+2y^2\big(\log(1+ \fr{1}{y})\big)^2\big\}$$
from which we deduce
\be
\int_{f({\bf v})>0}\big(1+\fr{1}{f({\bf v})}\big)
s(f({\bf v}))^2{\rm d}{\bf v}
\le 6\|f\|_{L^1}+2
\int_{f({\bf v})>0}f({\bf v})\big(\log\big(1+ \fr{1}{f({\bf v})}\big)\big)^2
{\rm d}{\bf v}.\lb{2.14}\ee
Furthermore using the inequality
$y(\log(1+\fr{1}{y}))^2\le 4 \sqrt{y}\,\,(y>0)$ we have
\beas
f({\bf v})\big(\log\big(1+\fr{1}{f({\bf v})}\big)\big)^2
&\le&4\sqrt{f({\bf v})}1_{\{f({\bf v})\le e^{-|{\bf v}|}\}}+
f({\bf v})\big(\log(1+e^{|{\bf v}|})\big)^2 1_{\{f({\bf v})> e^{-|{\bf v}|}\}}\\
&\le& 4e^{-|{\bf v}|/2}+2
f({\bf v})\la{\bf v}\ra^2.\eeas
Thus
$$
\int_{f({\bf v})>0}
f({\bf v})\big(\log\big(1+\fr{1}{f({\bf v})}\big)\big)^2
{\rm d}{\bf v}\le 2^8\pi+2\|f\|_{L^1_2} $$
which together with (\ref{2.13}), (\ref{2.14}) gives
\be\|f\|_{L^1}\ge \|g\|_{L^1}\ge \fr{S(f)^2}{2^{9}\pi+10\|f\|_{L^1_2}}.\lb{2.15}\ee
Next using Cauchy-Schwarz inequality and $0\le g\le 1$ we have
$$(\|g\|_{L^1})^2\le \int_{{\bR}}|{\bf v-v}_0|^2g({\bf v}) {\rm d}{\bf v}
\int_{{\bR}}\fr{1}{|{\bf v-v}_0|^2}g({\bf v}) {\rm d}{\bf v}\\
\le M_2(g)2(4\pi)^{2/3}(\|g\|_{L^1})^{1/3}$$
$\Longrightarrow$
\be M_2(g)\ge \fr{1}{2(4\pi)^{2/3}} (\|g\|_{L^1})^{5/3}
\ge
\fr{1}{2(4\pi)^{2/3}}\Big(\fr{S(f)^2}{2^{9}\pi+10\|f\|_{L^1_2}}\Big)^{5/3}.\lb{2.16}\ee
Finally from ${\bf v}_0=\fr{1}{\|g\|_{L^1}}\int_{{\bR}}{\bf v} g({\bf v}){\rm d}{\bf v}$ and
Cauchy-Schwarz inequality we have
\be |{\bf v}_0|^2\le \fr{1}{\|g\|_{L^1}}\int_{{\bR}}|{\bf v}|^2 g({\bf v}){\rm d}{\bf v}
\quad{\rm  hence}\quad
 M_2(g)
 \le  4\int_{{\bR}}|{\bf v}|^2g({\bf v}){\rm d}{\bf v}\le 4\int_{{\bR}}|{\bf v}|^2 f({\bf v}){\rm d}{\bf v}.\lb{2.gg}\ee
Since, from the moment equation (\ref{moment2}),
\be\alpha=\pi^{-3/2}\Big(\fr{3}{2}\Big)^{3/2}
\fr{(\|g\|_{L^1})^{5/2}}{(M_2(g))^{3/2}},\quad \beta=\fr{3}{2}\fr{\|g\|_{L^1}}{M_2(g) },\lb{2.19}\ee
the inequalities in (\ref{2.12}) follow from
(\ref{2.15})-(\ref{2.19}),
$ \|f\|_{L^1}=4\pi\sqrt{2}N, \int_{{\bR}}|{\bf v}|^2 f({\bf v}){\rm d}{\bf v}=8\pi\sqrt{2}E$, and
$S(f)\ge S_0$.

{\bf Step2.} Let
$$M_g^*({\bf v})=(\alpha\wedge 1) e^{-\beta |{\bf v-v}_0|^2},\quad  \Og_g^*({\bf v})=\fr{M_g^*({\bf v})}{1-M_g^*({\bf v})}.$$
According to {\bf Proposition 2} in \cite{Lu2005}
(taking $\underline{b}(\cos(\theta))=\fr{b_0}{(4\pi)^2}, \Psi(r)=\Psi_0(r)$ in that proposition) we have
\be \|(1-M_g^*)(f-\Og_g^*)\|_{L^1}
\le C_{\alpha,\beta}(\|f\|_{L^1}+\|M_g^*\|_{L^1})\big( \|g-M_g^*\|_{L^1}
+\sqrt{D_{\rm min}(f)}\,\big)\lb{2005}\ee where
$$C_{\alpha,\beta}=C_{\alpha,\beta, b_0,\eta}=C_{b_0}
\Big(\int_{{\bR}}(\alpha\wedge 1) e^{-\beta|{\bf z}|^2}\Psi_0(|{\bf z}|){\rm  d}{\bf z}
\Big)^{-1}$$
$C_{b_0}>0$ depends only on $b_0$.

{\bf Step3.} Here we prove that
\be S(\Og)-S(\Og_g^*)\le C_6\big(\|g-M_g^*\|_{L^1}+
\sqrt{D_{\rm min}(f)}\,\big)^{1/2}.\lb{2.17}\ee
First from the assumption of the proposition we have
\be \int_{{\bR}}\Og({\bf v}) {\rm d}{\bf v}\le \int_{{\bR}}f({\bf v}){\rm d}{\bf v},\,\,
\int_{{\bR}}({\bf v}, |{\bf v}|^2/2)\Og({\bf v}){\rm d}{\bf v}=\int_{{\bR}}
({\bf v}, |{\bf v}|^2/2)f({\bf v}){\rm d}{\bf v}.\lb{2.new0}\ee
Since the function $y\mapsto s(y)$ given in \ref{entropyfunction} is concave,
it holds the inequality $s(y)-s(y_0)\le (\log(1+\fr{1}{y_0}))(y-y_0), y\ge 0,y_0>0,$ from which
and (\ref{2.new0}) we obtain
\bes
S(\Og)-S(\Og_g^*) &\le & \int_{{\bR}}\Big(\log\big(\fr{1}{\alpha\wedge 1}\big)+\beta|{\bf v-v}_0|^2\Big)(\Og({\bf v})-\Og_g^*({\bf v})){\rm d}{\bf v}\nonumber\\
&\le &\int_{{\bR}}\Big(\log\big(\fr{1}{\alpha\wedge 1}\big)+\beta|{\bf v-v}_0|^2\Big)(f({\bf v})-\Og_g^*({\bf v})){\rm d}{\bf v}.\lb{2.new1}\ees
Next using the inequalities
\be
\log\big(\fr{1}{\alpha\wedge 1}\big)\le \fr{1-\alpha\wedge 1}{\alpha\wedge 1},\quad
\fr{1-\alpha \wedge 1 +\beta|{\bf v-v}_0|^2}{1+\beta|{\bf v-v}_0|^2}\le 1-M_g^*({\bf v})
\lb{2.new3} \ee
it is easily seen that
\beas \log\big(\fr{1}{\alpha\wedge 1}\big)+\beta|{\bf v-v}_0|^2
\le \fr{1}{\alpha\wedge 1}\big(
1-M_g^*({\bf v})\big)\Big(1+\beta|{\bf v-v}_0|^2\Big).\eeas
From this, Cauchy-Schwarz inequality, and $0\le 1-M_g^*\le 1$ we obtain
\bes&&
\int_{{\bR}}\Big(\log\big(\fr{1}{\alpha\wedge 1}\big)+\beta|{\bf v-v}_0|^2\Big)|f({\bf v})-\Og_g^*({\bf v})|{\rm d}{\bf v}\nonumber \\
&&\le
\int_{{\bR}}\fr{1}{\alpha\wedge 1}\big(
1-M_g^*({\bf v})\big)(1+\beta |{\bf v-v}_0|^2)
|f({\bf v})-\Og_g^*({\bf v})|{\rm d}{\bf v}\nonumber\\
&&=\fr{\sqrt{2(\beta^2\vee 1)}}{\alpha\wedge 1}\big(\|(1-M_g^*)(f-\Og_g^*)\|_{L^1}\big)^{1/2}
 \left(\int_{{\bR}}\big(1+|{\bf v-v}_0|^4\big)
|f({\bf v})-\Og_g^*({\bf v})|{\rm d}{\bf v}\right)^{1/2}\nonumber
\\
&&\le C_7\big( \|g-M_g^*\|_{L^1}
+\sqrt{D_{\rm min}(f)}\,\big)^{1/2}\lb{2.new2}\ees
where the constant $C_7$ comes from (\ref{2005}),
$\|f\|_{L^1_4}\le C_0$, $|{\bf v}_0|\le C_5$, and
\beas
\int_{{\bR}}
(1+|{\bf v-v}_0|^4)\Og_g^*({\bf v})
{\rm d}{\bf v}
\le 4\pi
\int_{0}^{\infty}r^2
(1+r^4)\fr{1}{e^{\beta r^2}-1}
{\rm d}r<\infty.\eeas
Combining (\ref{2.new1}) and (\ref{2.new2})  we obtain
(\ref{2.17}).

{\bf Step4.} We next prove that
\be S(\Og_g^*)-S(f)\le C_8\big(\|g-M_g^*\|_{L^1}+
\sqrt{D_{\rm min}(f)}\,\big)^{1/2}.\lb{2.29}\ee
It is  easily seen from $s'(y)=\log(1+1/y)\le 3 y^{-1/3}$ that for all $y> y_0\ge 0$
\be s(y)-s(y_0)=(y-y_0)\int_{0}^{1}\log\Big(1+\fr{1}{y_0+\theta(y-y_0)}\Big){\rm d}\theta\le 5(y-y_0)^{2/3}.\lb{2.31}\ee
From these and Cauchy-Schwarz inequality we have
\bes&& S(\Og_g^*)-S(f)\nonumber\\
&&\le
\int_{\Og_g^*({\bf v})>f({\bf v})}[s(\Og_g^*({\bf v}))-s(f({\bf v}))]{\rm d}{\bf v}
\le 5
\int_{\Og_g^*({\bf v})>f({\bf v})}\big(\Og_g^*({\bf v})-f({\bf v})\big)^{2/3}{\rm d}{\bf v}\nonumber\\
&&\le 5
\Big(\|(1-M_g^*)(\Og_g^*-f)\|_{L^1}\Big)^{1/2}
\Big(\int_{{\bR}}
\fr{1}{1-M_g^*({\bf v})}(\Og_g^*({\bf v}))^{1/3}
{\rm d}{\bf v}\Big)^{1/2}.\lb{2.31}\ees
From the second inequality in (\ref{2.new3}) we have
$\fr{1}{ 1-M_g^*({\bf v})}\le \fr{1}{\beta |{\bf v-v}_0|^2}(1+\beta|{\bf v-v}_0|^2)$, so
the last integral in (\ref{2.31}) is finite
(using change of variable)
\beas&&\int_{{\bR}}
\fr{1}{1-M_g^*({\bf v})}(\Og_g^*({\bf v}))^{1/3}
{\rm d}{\bf v}=\int_{{\bR}}
\fr{\big(M_g^*({\bf v})\big)^{1/3}}{(1-M_g^*({\bf v}))^{4/3}}
{\rm d}{\bf v}\\
&&\le 4\pi\big(1/\beta\big)^{4/3}
\int_{0}^{\infty} \fr{1}{r^{2/3}}(1+\beta r^2)^{4/3}
((\alpha\wedge 1) e^{-\beta r^2})^{1/3}{\rm d}r\le C_{9}.\eeas
The inequality (\ref{2.29}) then follows from this, (\ref{2.31}) and
(\ref{2005}).

{\bf Step5.} Now we prove that
\be S(\Og)-S(f)\le C_{10}\big( (\|g-M_g\|_{L^1})^{1/5}+
(D_{\rm min}(f))^{1/4}\big).\lb{2.34}\ee
In fact first we have from (\ref{2.17}) and (\ref{2.29}) that
\be S(\Og)-S(f)\le
C_{11}\big(\|g-M_g^*\|_{L^1}+
\sqrt{D_{\rm min}(f)}\,\big)^{1/2}.\lb{2.35}\ee
If $\alpha\le 1$, then $M_g^*=M_g$ and so from (\ref{2.35}) we see that (\ref{2.34}) holds.

Suppose $\alpha>1$. From
$\|g\|_{L^1}=\|M_g\|_{L^1}$ and
$0\le g({\bf v})\le 1$ we have
$$
\|g-M_g\|_{L^1}= 2\int_{{\bR}}(M_g({\bf v})-g({\bf v}))_{+}{\rm d}{\bf v}
\ge
8\pi \int_{0}^{\infty}r^2(\alpha e^{-\beta r^2}-1)_{+}{\rm d}r$$
and using the inequality  $
 \alpha e^{-\beta r^2}-1\ge \alpha \beta
\big(\fr{\alpha-1}{\alpha\beta}-r^2\big)$ to the last term we have
$$
8\pi \int_{0}^{\infty}r^2(\alpha e^{-\beta r^2}-1)_{+}{\rm d}r
\ge
8\pi \int_{0}^{\infty}r^2\alpha \beta \Big(\fr{\alpha-1}{\alpha\beta}-r^2\Big)_{+}{\rm d}r
=\fr{16\pi}{15} \fr{(\alpha-1)^{5/2}}{(\alpha\beta)^{3/2}}$$
so that we deduce
\beas&&\alpha-1\le \Big(\fr{15}{16\pi} (\alpha\beta)^{3/2}\Big)^{2/5}
\big(\|g-M_g\|_{L^1}\big)^{2/5},\\
&&\|M_g-M_g^*\|_{L^1}=
(\alpha-1) \int_{{\bR}}e^{-\beta|{\bf v-v}_0|^2}
{\rm d}{\bf v}\le C_{12}\big(\|g-M_g\|_{L^1}\big)^{2/5},\\
&&
\|g-M_g^*\|_{L^1}\le
\|g-M_g\|_{L^1}+
\|M_g-M_g^*\|_{L^1}
\le C_{13}\big(\|g-M_g\|_{L^1}\big)^{2/5}.\eeas
Inserting $\|g-M_g^*\|_{L^1}\le
C_{13}\big(\|g-M_g\|_{L^1}\big)^{2/5}$ into the right hand side of (\ref{2.35}) gives
(\ref{2.34}).

{\bf Step6.}  We will use
 Csiszar-Kullback inequality
\be \|g-M_{g}\|_{L^1}\le \sqrt{2\|g\|_{L^1}}\sqrt{H(g\,|\,M_{g})}\lb{CK}\ee
and Villani's inequality (\ref{Villani}) to finish the proof of (\ref{e-e}). To do this we need to normalize $g, M_g$. Let
$$
\wt{g}({\bf v})=\mu g(\ld {\bf v+v}_0),\quad \wt{M_g}({\bf v})=
\mu M_g(\ld {\bf v+v}_0)=\mu \alpha e^{-\ld\beta |{\bf v}|^2}$$
where
\be \ld=3^{-1/2}
\Big(\fr{M_2(g)}{\|g\|_{L^1}}\Big)^{1/2},\quad \mu= 3^{-3/2} \fr{(M_2(g))^{3/2}}{(\|g\|_{L^1})^{5/2}}.
\lb{LM}\ee
Then $\wt{M_g}({\bf v})=M({\bf v})=(2\pi)^{-3/2}e^{-|{\bf v}|^2/2}$ and
$\wt{g}\in {\cal H}_0$ with the constant
$$H_0= \log\Big(\fr{(C_2)^{3/2}}{3^{3/2}(C_1)^{5/2}}\Big) +\fr{3}{2}\log(2\pi e)$$
where $C_1, C_2$ are the constants in (\ref{2.12}).
In fact from (\ref{LM}) and
$0\le g\le 1$  we have
$\ld^{-3}\mu \|g\|_{L^1}=1$ and $H(g)\le 0$, and so
$0\le H(\wt{g}\,|\wt{M_g}) \le \log(\mu)-H(\wt{M_g})\le H_0.$
Applying Villani's inequality (\ref{Villani}) to
$\wt{g}, \wt{M_g}$ we have
\be \fr{1}{\|g\|_{L^1}}H(g|M_g)=H(\wt{g}\,| \wt{M_g})\le \fr{1}{c_{H_0}}{\cal D}_2(\wt{g}).\lb{Villani2}\ee
Since
\beas
{\cal D}_2(\wt{g})&=&\fr{1}{4}\int_{{\bRRS}}\fr{2|({\bf v-v}_*)\cdot\og|}{|{\bf v-v}_*|}
(1+|{\bf v-v}_*|^2)
\Gm(\wt{g}'\wt{g}_*',\wt{g}\wt{g}_*)
{\rm d}{\bf v}{\rm d}{\bf v}_*{\rm d}\og\\
&=&\fr{\mu^2}{\ld^8}\fr{1}{4}\int_{{\bRRS}}\fr{2|({\bf v-v}_*)\cdot\og|}{|{\bf v-v}_*|}(\ld^2+|{\bf v-v}_*|^2)
\Gm(g'g_*',gg_*){\rm d}{\bf v}{\rm d}{\bf v}_*{\rm d}\og
\\
&\le& \fr{\mu^2}{\ld^8}\max\{\ld^2, 1\}{\cal D}_2(g)\le C_{14}{\cal D}_2(g),\eeas
it follows from (\ref{CK}, (\ref{Villani2}) that
$$
 \|g-M_{g}\|_{L^1}\le
\sqrt{2}\|g\|_{L^1}\sqrt{H(\wt{g}\,|\,\wt{M}_{g})}
\le C_{15}\sqrt{{\cal D}_2(g)}.$$
Inserting this into (\ref{2.34}) gives (\ref{e-e}) with $C=C_{10}\max\{(C_{15})^{1/5}, 1\}$ and
completes the proof.
\end{proof}

\begin{center}\section{Positive Lower Bound of Entropy}\end{center}

In this section we prove that if an initial datum $F_0\in {\cal B}_{1}^{+}({\mR}_{\ge 0})$
has positive energy, $E(F_0)>0,$ then
there is a conservative measure-valued solution $F_t$
of Eq. with $F_{t=0}=F_0$, such that $S(F_t)>0$ for all $t>0$.  This is equivalent to saying that $F_t$ is non-singular for all $t>0$ even if $F_0$ is singular. To do this we first prove that
the entropies of isotropic approximate solutions have a uniform positive lower bound.
For convenience of stating approximate solutions under consideration, we introduce
a definition of a class of approximate  solutions:

\begin{definition}\label{definition3.1} Let $B({\bf {\bf v-v}_*},\omega)$ be given by
(\ref{kernel}), (\ref{Phi}).
We say that $\{B_K({\bf {\bf v-v}_*},\omega)\}_{K\in {\mN}}$ is a sequence of approximation of $B({\bf {\bf v-v}_*},\omega)$
if
$B_K({\bf {\bf v-v}_*},\omega)$ are such Borel measurable functions on ${\bRS}$ that they are functions of
$(| {\bf v}-{\bf v}'|, |{\bf v}-{\bf v}_*'|)$ only and satisfy
$$B_K({\bf v-v}_*,\omega)\ge 0, \quad \lim\limits_{K\to\infty}
B_K({\bf v-v}_*,\omega)=
B({\bf v-v}_*,\omega)$$ for all $({\bf v-v}_*,\omega)
\in {\bR}\times {\bS}$.
Let $Q_K(f)$ be the collision integral operators corresponding to the approximate kernels $B_K$, i.e.
\beas Q_K(f)({\bf v})=
\int_{{\bRS}}B_K({\bf
{\bf v-v}_*},\og)\big(f'f_*'(1+f+f_*)-ff_*(1+f'+f_*')\big) {\rm d}\omega{\rm
d}{\bf v_*}.\eeas
Given any $K\in {\mN}$ and $0\le f^K_0\in L^1_2({\bR})$. We
 say that $f^K=f^K({\bf v},t)$ is a conservative approximate solution of Eq.(\ref{Equation1}) on ${\bR}\times [0,\infty)$ corresponding to the approximate kernel $B_K$  with the initial datum $f^K_0$ if
$({\bf v},t)\mapsto f^K({\bf v},t)$ is a nonnegative Lebesgue measurable function on ${\bR}\times [0,\infty)$ satisfying

{\rm(i)}
$\sup_{t\ge 0}\|f^K(t)\|_{L^1_2}<\infty$ (here and below $f^K(t):=f^K(\cdot,t)$) and
\be\int_{0}^{T}{\rm d}t
\int_{{\bRRS}}B_K({\bf
{\bf v-v}_*},\og)(f^K)'(f^K)_*'\big(1+f^K+f^K_*\big)\sqrt{1+|{\bf v}|^2+|{\bf v}_*|^2}\,{\rm d}\omega{\rm
d}{\bf v}{\rm d}{\bf v_*}<\infty\lb{3.new1}\ee
for all $0<T<\infty$.

{\rm(ii)}  There is a null set
$Z\subset {\bR}$ which is independent of\, $t$ such that
\be
 f^K({\bf v},t)=f^K_0({\bf v})+\int_{0}^{t}Q_K(f^K)({\bf v},\tau){\rm d}\tau\quad \forall\, t\in[0,\infty),\,\,\forall\,{\bf v}\in {\bR}\setminus Z.\lb{3.new2}\ee

{\rm(iii)}  $f^K$ conserves the mass, momentum, and energy, and satisfies the entropy
 equality, i.e.
\bes &&
\int_{{\bR}}(1,{\bf v}, |{\bf v}|^2/2) f^K({\bf v},t){\rm d}{\bf v}=
\int_{{\bR}}(1,{\bf v}, |{\bf v}|^2/2) f^K_0({\bf v}){\rm d}{\bf v}\qquad \forall\, t\ge 0
\label{energyconser} \\
&& S(f^K(t))=S(f^K_0)+\int_{0}^{t}D_K(f^K(\tau)){\rm d}\tau\qquad\forall\, t\ge 0.
\dnumber\label{eelity}\ees
Here $Q_K(f^K)({\bf v},t)=Q_K(f^K(\cdot,t))({\bf v})$, $D_K(f)$ is the entropy dissipation corresponding to the approximate kernel $B_K({\bf
{\bf v-v}_*},\og)$ defined as in (\ref{2.Gamma})-(\ref{diss}), i.e.
\be D_K(f)=\fr{1}{4}
\int_{{\bRRS}}B_K({\bf
{\bf v-v}_*},\og)\Pi(f)\Gm(g'g_*', gg_*) {\rm d}\omega{\rm
d}{\bf v_*}{\rm
d}{\bf v}\label{3.2}\ee
with $g=f/(1+f)$.

\noindent If a conservative approximate  solution $f^K$ is isotropic, i.e. if
$f^K({\bf v},t)\equiv f^K(|{\bf v}|^2/2,t)$,
then $f^K$ is called a conservative isotropic approximate
 solution of Eq.(\ref{Equation1}).
\end{definition}

By using change of variables one sees that the integral in the left hand side of (\ref{3.new1})
is equal to that where
$(f^K)'(f^K)_*'\big(1+f^K+f^K_*\big)$ is replaced by
$f^Kf^K_*\big(1+(f^K)'+(f^K)_*'\big)$. Thus (\ref{3.new1}) not only implies
$Q_K(f^K)\in L^1({\bR}\times [0,T])$ for all $0<T<\infty$ so that the integral in
right hand said of \ref{3.new2}) is absolutely convergent for all ${\bf v}\in {\bR}\setminus Z$ and all
$t\ge 0$,  but also enables us to prove some important relations between
entropy and entropy dissipation for approximate solutions as we will do
in the proof of Proposition \ref{prop4.2} below.

Also as one sees from (\ref{3.new1}) that a main role of an approximation $B_K$ of $B$ is to ensure the absolute convergence of the
cubic collision integrals.
A suitable class of such $B_K$ that had been used before
is
\be B_K({\bf {\bf v-v}_*},\omega)=\min\big\{B({\bf {\bf v-v}_*},\omega),\, K|{\bf v}-{\bf v}'|^2
 |{\bf v}-{\bf v}_*'|\big\},\quad K\ge 1 \label{3.1}\ee
 which works well at least for isotropic approximate
 solutions (see e.g.\cite{Lu2000}).  In fact
using $\sqrt{1+|{\bf v}|^2+|{\bf v}_*|^2}
\le \la {\bf v}\ra \la{\bf v}_*\ra$ one has,
as proved in \cite{Lu2000}, that
for all isotropic functions $f, g\in L^1_1({\bR}), h\in L^1({\bR})$
(recall that $f$ is isotropic means that $f({\bf v})$ depends only on
$|{\bf v}|$ )
\beas&& \int_{{\bRRS}}B_K(
{\bf v-v}_*,\og)|f({\bf v}')g({\bf v}_*')h({\bf v})|\sqrt{1+|{\bf v}|^2+|{\bf v}_*|^2}\,{\rm d}\omega{\rm
d}{\bf v_*}{\rm
d}{\bf v}\\
&&=
 \int_{{\bRRS}}B_K({\bf v-v}_*,\og)|f({\bf v})g({\bf v}_*)h({\bf v}')| \sqrt{1+|{\bf v}|^2+|{\bf v}_*|^2}\,{\rm d}\omega{\rm
d}{\bf v_*}{\rm
d}{\bf v}\\
&&\le 2K\|f\|_{L^1_1}\|g\|_{L^1_1}\|h\|_{L^1}.
\eeas

To prove the main result of this section, we need two  lemmas:

\begin{lemma}\lb{lemma.entropy} Let $F\in {\cal B}_2^{+}({\bR})$ and suppose
\be\rho=\int_{{\bR}}{\rm d}F({\bf v})>0,\quad
{\bf u}=\fr{1}{\rho}\int_{{\bR}}{\bf v}{\rm d}F({\bf v}),\quad
T=\fr{1}{3\rho}\int_{{\bR}}|{\bf v}-{\bf u}|^2{\rm d}F({\bf v})
>0.\lb{5.FFF}\ee
Let $S(F)$ be defined by (\ref{ent1}),(\ref{1.20}),(\ref{1.21}), and let
$0\le f\in L^1_2({\bR})$ be the regular part of $F$. Then
$S(F)=S(f)$.
\, Moreover there is a sequence $0\le f_n\in  L^1_2({\bR})\,(n\in{\mN})$
satisfying
$$\int_{{\bR}}(1, {\bf v}, |{\bf v}|^2/2)f_n({\bf v}){\rm d}{\bf v}=
\int_{{\bR}}(1, {\bf v}, |{\bf v}|^2/2){\rm d}F({\bf v})\quad \forall\,n\in{\mN}$$
such that the weak convergence (\ref{1.21}) holds and
\beas S(f_n)\ge \Big(1-\fr{1}{2n}\Big)S(F)\quad \forall\,n\in {\mN}. \eeas
Consequently  we have $\lim\limits_{n\to\infty}S(f_n)=S(F)$.
Furthermore, if $F$ is also isotropic, i.e. if $F$ is
defined by a measure $\wt{F}\in {\cal B}_1^{+}({\mR}_{\ge 0})$ through
(\ref{m2}) or (\ref{m4}), then all $f_n$ can be also isotropic: $f_n({\bf v})=\wt{f}_n(|{\bf v}|^2/2)$, and thus $\wt{f}_n$ converges weakly to $\wt{F}$, i.e.
\be\lim_{n\to\infty}\int_{{\mR}_{\ge 0}}\vp(x)\wt{f}_n(x)\sqrt{x}{\rm d}x=
\int_{{\mR}_{\ge 0}}\vp(x){\rm d}\wt{F}(x)
\quad \forall\, \vp\in C_b({\mR}_{\ge 0}).\lb{weak-isotropic}\ee
\end{lemma}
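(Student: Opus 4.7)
The plan is to prove $S(F) = S(f)$ via matching upper and lower bounds, with the lower-bound argument simultaneously producing the sequence $\{f_n\}$ required in the second half of the lemma.

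For the upper bound $S(F) \le S(f)$, I would let $\{f_n\}$ be any admissible sequence satisfying (\ref{1.20})--(\ref{1.21}) and show $\limsup_n S(f_n) \le S(f)$. The crucial structural feature is that $y \mapsto s(y) = (1+y)\log(1+y) - y\log y$ is non-negative and concave with vanishing recession $s_\infty := \lim_{y \to \infty} s(y)/y = 0$. This places $S$ in the Demengel--Temam framework of concave functionals on bounded positive measures \cite{DT}, which yields directly that $F \mapsto \int_{{\bR}} s(f)\,{\rm d}{\bf v}$ (depending only on the regular part) is upper semi-continuous under weak convergence of measures with uniformly bounded mass. For a self-contained proof I would use the concavity estimate $s(y) \le s(y \wedge M) + s'(M)\,y$ valid for all $y \ge 0,\ M > 0$, integrate against $f_n$, invoke the uniform bound $\sup_n \|f_n\|_{L^1} < \infty$, apply upper semi-continuity of the bounded concave truncated integrand $y \mapsto s(y \wedge M)$ (whose recession is likewise zero, so the singular part of any weak limit contributes nothing), and finally let $M \to \infty$ using $s'(M) = \log(1+1/M) \to 0$.

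For the construction of $\{f_n\}$, let $\phi \in C_c^\infty({\bR})$ be a nonnegative radially symmetric mollifier with $\int_{{\bR}} \phi\,{\rm d}{\bf v} = 1$, set $\phi_\epsilon({\bf v}) = \epsilon^{-3}\phi({\bf v}/\epsilon)$, and let $\nu$ denote the singular part of $F$. I would define $\tilde f_n := f + \nu \ast \phi_{1/n}$, so that $\tilde f_n \in L^1_2({\bR})$, $\tilde f_n \ge 0$, and $\tilde f_n\,{\rm d}{\bf v} \rightharpoonup {\rm d}F$ weakly by standard mollifier properties. Evenness of $\phi$ ensures that mass and momentum of $\tilde f_n$ coincide exactly with those of $F$, while the energy is perturbed by $\sigma_n := \bigl(\int |{\bf w}|^2 \phi_{1/n}({\bf w})\,{\rm d}{\bf w}\bigr)\cdot\nu({\bR}) = O(1/n^2)$. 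I would correct this by a small dilation-translation $f_n({\bf v}) := \lambda_n^3\, \tilde f_n(\lambda_n {\bf v} + {\bf a}_n)$, with $(\lambda_n, {\bf a}_n)$ determined by the implicit function theorem so that all three moments $(1, {\bf v}, |{\bf v}|^2/2)$ of $f_n$ match those of $F$ exactly; the Jacobian at $(\lambda, {\bf a}) = (1, {\bf 0})$ is non-degenerate because $T > 0$ in (\ref{5.FFF}), so $|\lambda_n - 1| + |{\bf a}_n| = O(1/n^2)$. In the isotropic case $\nu$ is itself isotropic with vanishing momentum, so I take ${\bf a}_n = {\bf 0}$ and only dilate, preserving isotropy.

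Monotonicity (\ref{S1}) yields $S(\tilde f_n) \ge S(f)$ since $\tilde f_n \ge f$ pointwise. The entropy cost of the dilation-translation is $O(1/n^2)$: translation is entropy-invariant, while for the dilation a short calculation gives $\frac{\rm d}{{\rm d}\lambda}\bigl[\lambda^{-3}\int s(\lambda^3 \tilde f_n)\,{\rm d}{\bf v}\bigr]\big|_{\lambda=1} = -3\int \log(1+\tilde f_n)\,{\rm d}{\bf v}$, bounded by $3\|\tilde f_n\|_{L^1} \le 3N$ via $\log(1+y) \le y$. Therefore $S(f_n) \ge S(f) - C/n^2 \ge (1 - 1/(2n))\,S(f)$ for all $n$ large (the case $S(f) = 0$ being trivial since $S(f_n) \ge 0$). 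Since $f_n\,{\rm d}{\bf v} \rightharpoonup {\rm d}F$ weakly with $\sup_n \|f_n\|_{L^1_2} < \infty$, the definition of $S(F)$ gives $S(F) \ge \limsup_n S(f_n) \ge S(f)$; combined with the upper bound of the second paragraph this yields $S(F) = S(f)$, from which $S(f_n) \ge (1 - 1/(2n))\,S(F)$ is immediate. The isotropic weak convergence (\ref{weak-isotropic}) follows from (\ref{1.21}) applied to test functions $\psi({\bf v}) = \varphi(|{\bf v}|^2/2)$ with $\varphi \in C_b({\mathbb R}_{\ge 0})$. The principal obstacle will be the upper semi-continuity of $S$ under weak convergence, which requires careful treatment of the measure-theoretic splitting of the limit into regular and concentrated parts and of the non-Lipschitz behavior of $s$ at the origin; the remaining moment-correction bookkeeping is routine but must be checked as above.
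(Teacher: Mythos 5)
Your proposal is essentially correct, but it takes a genuinely different route from the paper on both halves of the lemma. For the inequality $S(F)\le S(f)$ the paper does not argue at all: it simply invokes Lemma 3 of \cite{Lu2005}, whereas you propose a self-contained truncation argument ($s(y)\le s(y\wedge M)+s'(M)y$ plus upper semicontinuity of the bounded concave truncated functional, in the spirit of \cite{DT}); this is viable, and the uniform $L^1_2$ bound in (\ref{1.20}) together with $\psi=1$ in (\ref{1.21}) does give the tightness you need, but be aware that this u.s.c. step — which you yourself flag as the principal obstacle — is precisely the content the paper outsources, so a self-contained write-up must actually carry out the Demengel--Temam argument (localization to balls plus the tail bound $s(f)\le (1+|{\bf v}|)f+e^{-|{\bf v}|/2}$ off a large ball). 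For the approximating sequence the paper proceeds differently: it sets ${\rm d}\mu_n=\fr{1}{2n}f\,{\rm d}{\bf v}+{\rm d}\nu$, takes $h_n$ to be the Mehler transform of $\mu_n$, and defines $f_n=(1-\fr{1}{2n})f+h_n$; the Mehler transform preserves mass, momentum and energy exactly, so no moment repair is needed, and the lower bound $S(f_n)\ge(1-\fr{1}{2n})S(f)$ follows in one line from concavity (\ref{S2}) and monotonicity (\ref{S1}) with no error term, hence holds for every $n$ and with the exact constant of the statement. Your construction (mollify only the singular part, then fix the $O(n^{-2})$ energy surplus by a dilation--translation determined through the implicit function theorem, with the entropy cost of the dilation controlled by $\fr{{\rm d}}{{\rm d}\ld}\big[\ld^{-3}\int s(\ld^3 g)\big]=-3\ld^{-4}\int\log(1+\ld^3 g)$) is more elementary in that it avoids the Mehler transform, and your moment and entropy computations check out (evenness of the mollifier does fix mass and momentum, $T>0$ does make the $4\times4$ Jacobian invertible, and the isotropic case with ${\bf a}_n=0$ is handled correctly); what it costs you is the IFT bookkeeping, a quantitative entropy-perturbation estimate, and the fact that you only obtain $S(f_n)\ge(1-\fr{1}{2n})S(f)$ for $n$ large, so to match the statement verbatim (all $n\in{\mN}$) you must reindex or shift the sequence — harmless for every later use of the lemma (only $\lim_n S(f_n)=S(F)$ and the exact moments matter), but worth stating explicitly.
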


\begin{proof} Write
${\rm d}F({\bf v})=f({\bf v}){\rm d}{\bf v}+{\rm d}\nu({\bf v})$ with
$0\le f\in L^1_2({\bR})$ and
$\nu\in {\cal B}_2^{+}({\bR})$ is the singular part of $F$.
We have proved in Lemma 3 of \cite{Lu2005} that $S(F)\le S(f)$.
The proof of the converse inequality $S(F)\ge S(f)$ is included in the proof of the
second part of this lemma.

For every $n\in{\mN}$, let $\mu_n\in {\cal B}_2^{+}({\bR})$ be defined by
${\rm d}\mu_n({\bf v})=\fr{1}{2n}f({\bf v})+{\rm d}\nu({\bf v})$.
From the assumption (\ref{5.FFF}) it is easily seen that
$\rho_n:=\int_{{\bR}}{\rm d}\mu_n({\bf v})>0,
T_n=\fr{1}{3\rho_n} \int_{{\bR}}|{\bf v}-{\bf u}_n|^2{\rm d}\mu_n({\bf v})
>0$, where ${\bf u}_n=\fr{1}{\rho_n} \int_{{\bR}}{\bf v}{\rm d}\mu_n({\bf v}).$
Also by considering  $\int_{{\bR}}{\rm d}\nu({\bf v})>0$ and
$\int_{{\bR}}{\rm d}\nu({\bf v})=0$ respectively it is easily seen that
$\sup\limits_{n\ge 1}\fr{1}{\rho_n}\int_{{\bR}}|{\bf v}|^2{\rm d}\mu_n({\bf v})<\infty$ and thus
\be \sup_{n\ge 1}\{\rho_n, |{\bf u}_n|, T_n\}<\infty.\lb{RUT}\ee
Let $0\le h_n\in L^1_2({\bR})$ be the Mehler transform of $\mu_n$, i.e.
\beas h_n({\bf v})=
e^{3n}\int_{{\bR}}M_{1,0,T_n}\Big(e^{n}\big({\bf v}-{\bf u}_n-\sqrt{1-e^{-2n}}
({\bf v}_*-{\bf u}_n)\big)\Big)
{\rm d}\mu_n({\bf v}_*),\quad {\bf v}\in {\bR}\eeas
where
$M_{1,0,T_n}({\bf v})=(2\pi T_n)^{-3/2}\exp(-\fr{|{\bf v}|^2}{2T_n}).$
As is well-known,
for any Borel measurable function $\psi$ on ${\bR}$ satisfying
$\sup_{{\bf v}\in {\bR}}|\psi({\bf v})|(1+|{\bf v}|^2)^{-1}<\infty$, we have (see e.g. \cite{Lu2004})
\beas&&
\int_{{\bR}}\psi({\bf v})h_n({\bf v})
{\rm d}{\bf v}
=\int_{{\bR}}M_{1,0,1}({\bf v})\left(\int_{{\bR}}
\psi\big(e^{-n}T_n^{1/2}{\bf v}+{\bf u}_n+\sqrt{1-e^{-2n}}({\bf v}_*-{\bf u}_n)\big){\rm d}\mu_n({\bf v}_*)\right){\rm d}{\bf v},\\
&&
\int_{{\bR}}(1, {\bf v}, |{\bf v}|^2/2)h_n({\bf v})
{\rm d}{\bf v}=
\int_{{\bR}}(1, {\bf v}, |{\bf v}|^2/2)
{\rm d}\mu_n({\bf v})\quad \forall\, n\in{\mN}.\eeas
Then together with (\ref{RUT}) it is easily proved the weak convergence:
\beas\lim_{n\to\infty}
\int_{{\bR}}\psi({\bf v})h_n({\bf v})
{\rm d}{\bf v}=\lim_{n\to\infty}
\int_{{\bR}}\psi({\bf v})
{\rm d}\mu_n({\bf v})=\int_{{\bR}}\psi({\bf v})
{\rm d}\nu({\bf v})\quad \forall\, \psi\in C_b({\bR}).\eeas
Let
$$f_n({\bf v})=\Big(1-\fr{1}{2n}\Big)f({\bf v})+h_n({\bf v}).$$
Then $0\le f_n\in L^1_2({\bR})$ have the same mass, momentum,
and energy as $F$, and $f_n$ converge weakly
 to $F$ in the sense of (\ref{1.21}). Using the entropy properties (\ref{S2}), (\ref{S1})
 we have
$S(f_n)\ge (1-\fr{1}{2n})S(f)+\fr{1}{2n}S(2n h_n)\ge (1-\fr{1}{2n})S(f)$ for all $n\ge 1$.
 This together with $\limsup\limits_{n\to\infty}
S(f_n)\le S(F)$ implies $S(f)\le S(F)$ and thus we conclude $S(F)=S(f)$ and
 $\lim\limits_{n\to\infty}S(f_n)= S(F)$.

Finally suppose further that $F$ is defined by a measure $\wt{F}\in {\cal B}_1^{+}({\mR}_{\ge 0})$
through (\ref{m2}) or (\ref{m4}). Write ${\rm d}\wt{F}(x)=\wt{f}(x)\sqrt{x}\,{\rm d}x+{\rm d}\wt{\nu}(x)$
where $0\le \wt{f}\in L^1({\mR}_{\ge 0}, (1+x)\sqrt{x}\,{\rm d}x)$ and $\wt{\nu}$
is the singular part of $\wt{F}$. Then, by uniqueness of the regular-singular decomposition,
we have $f({\bf v})=\wt{f}(|{\bf v}|^2/2)$ (up to a null set) and $\nu$ is equal to
the measure defined by $\wt{\nu}$ through (\ref{m2}) or equivalently (\ref{m4}). In this case we have
${\bf u}_n={\bf 0}$ and it is easily seen that
$h_n({\bf v})=: \wt{h}_n(|{\bf v}|^2/2)$ are  isotropic and thus
$f_n({\bf v})= (1-\fr{1}{2n})\wt{f}(|{\bf v}|^2/2)+\wt{
h}_n(|{\bf v}|^2/2)=: \wt{f}_n(|{\bf v}|^2/2)$ are  isotropic. The weak
convergence (\ref{weak-isotropic}) follows from the weak convergence  (\ref{1.21})
of $f_n$ to $F$.
\end{proof}
\vskip2mm

\begin{lemma}\lb{prop3.1}Let $f,g, h:{\mR}_{\ge 0}\to {\mR}_{\ge 0}$ be Lebesgue measurable and
$0<a\le b\wedge  c$.
Then
\bes&&\int_{{\bRRS}}|({\bf v}-{\bf v}_*)\cdot\omega|
1_{\{|{\bf v}|\le a\}}1_{\{|{\bf v}'|\ge b\}}1_{\{|{\bf v}_*'|\ge c\}}f(|{\bf v}|^2/2)
g(|{\bf v}'|^2/2)h(|{\bf v}_*'|^2/2){\rm d}\og{\rm d}{\bf v}_*{\rm d}{\bf v}\nonumber\quad\\
&&
=\left(\int_{|{\bf v}|\le a}f(|{\bf v}|^2/2){\rm d}{\bf v}\right)
\left(\int_{|{\bf v}|\ge b}\fr{1}{|{\bf v}|}g(|{\bf v}|^2/2){\rm d}{\bf v}\right)
\left(\int_{|{\bf v}|\ge c }\fr{1}{|{\bf v}|}h(|{\bf v}|^2/2){\rm d}{\bf v}\right).\lb{entropy.1}\ees
\end{lemma}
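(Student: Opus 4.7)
The plan is to freeze ${\bf v}$ and change variables on the $({\bf v}_*,\og)$-slice so that the integral factors cleanly into one-dimensional pieces. For each fixed $\og\in\bS$ I write ${\bf v}_*-{\bf v}=\ld\og+{\bf u}_\perp$ with $\ld=({\bf v}_*-{\bf v})\cdot\og\in\mR$ and ${\bf u}_\perp\in\og^\perp$; then ${\rm d}{\bf v}_*={\rm d}\ld\,{\rm d}^2{\bf u}_\perp$, and the collision formulas give ${\bf v}'={\bf v}+\ld\og$, ${\bf v}_*'={\bf v}+{\bf u}_\perp$, $|({\bf v}-{\bf v}_*)\cdot\og|=|\ld|$. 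The integrand therefore splits into an $\ld$-factor depending only on $g,b$ and a ${\bf u}_\perp$-factor depending only on $h,c$, and Tonelli reduces the triple integral to
\begin{equation*}
\int_{\{|{\bf v}|\le a\}}f(|{\bf v}|^2/2)\,{\rm d}{\bf v}\int_{\bS}I_g(\og;{\bf v})\,I_h(\og;{\bf v})\,{\rm d}\og.
\end{equation*}
The task becomes showing that $I_g$ and $I_h$ are in fact independent of ${\bf v}$ and $\og$.

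For $I_h$ I decompose ${\bf v}=({\bf v}\cdot\og)\og+{\bf v}_\perp^\og$ with ${\bf v}_\perp^\og\in\og^\perp$, so that $|{\bf v}+{\bf u}_\perp|^2=({\bf v}\cdot\og)^2+|{\bf v}_\perp^\og+{\bf u}_\perp|^2$. Translating ${\bf u}_\perp\mapsto{\bf u}_\perp-{\bf v}_\perp^\og$, passing to polar coordinates on $\og^\perp$, and substituting $t=({\bf v}\cdot\og)^2+r^2$ reduces $I_h$ to $\pi\int_{\max\{c^2,({\bf v}\cdot\og)^2\}}^{\infty}h(t/2)\,{\rm d}t$. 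Since $|{\bf v}\cdot\og|\le|{\bf v}|\le a\le c$, the maximum is $c^2$, so $I_h(\og;{\bf v})=2\pi\int_c^{\infty}r\,h(r^2/2)\,{\rm d}r$.

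The $\ld$-integral is the main obstacle. Substituting $s={\bf v}\cdot\og+\ld$ gives $|{\bf v}+\ld\og|^2=|{\bf v}_\perp^\og|^2+s^2$ and $|\ld|=|s-{\bf v}\cdot\og|$, so
\begin{equation*}
I_g(\og;{\bf v})=\int_{\{s^2\ge s_0^2\}}|s-{\bf v}\cdot\og|\,g\bigl((|{\bf v}_\perp^\og|^2+s^2)/2\bigr)\,{\rm d}s,\qquad s_0:=\sqrt{b^2-|{\bf v}_\perp^\og|^2}.
\end{equation*}
The hypothesis $|{\bf v}|\le a\le b$ is decisive here: it gives $({\bf v}\cdot\og)^2\le b^2-|{\bf v}_\perp^\og|^2=s_0^2$, i.e.\ $s_0\ge|{\bf v}\cdot\og|$. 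Consequently on $\{s\ge s_0\}$ one has $|s-{\bf v}\cdot\og|=s-{\bf v}\cdot\og$, and on $\{s\le -s_0\}$ the substitution $s\mapsto-s$ yields $|s-{\bf v}\cdot\og|=s+{\bf v}\cdot\og$; adding the two pieces cancels the $\pm{\bf v}\cdot\og$ terms and leaves
\begin{equation*}
I_g(\og;{\bf v})=2\int_{s_0}^{\infty}s\,g\bigl((|{\bf v}_\perp^\og|^2+s^2)/2\bigr)\,{\rm d}s=\int_{b^2}^{\infty}g(t/2)\,{\rm d}t=2\int_b^{\infty}r\,g(r^2/2)\,{\rm d}r,
\end{equation*}
again independent of ${\bf v}$ and $\og$.

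Since both $I_g$ and $I_h$ are independent of $\og$, the $\og$-integration contributes the trivial factor $|\bS|=4\pi$. Combining with the identities $\int_{\{|{\bf v}|\ge b\}}|{\bf v}|^{-1}g(|{\bf v}|^2/2)\,{\rm d}{\bf v}=4\pi\int_b^{\infty}r\,g(r^2/2)\,{\rm d}r$ and the analogue for $h$ produces exactly the right-hand side of (\ref{entropy.1}). The single delicate point in the whole argument is the cancellation of the $\pm{\bf v}\cdot\og$ terms in $I_g$, which is exactly what the hypothesis $0<a\le b\wedge c$ is there to permit.
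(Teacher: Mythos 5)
Your proof is correct, and it takes a genuinely different route from the paper's. The paper proves (\ref{entropy.1}) in one stroke from its Appendix formula (\ref{5.4new}): it takes $\Phi\equiv(4\pi)^2$ and $\Psi(x,y,z)=1_{\{\sqrt{2x}\le a\}}1_{\{\sqrt{2y}\ge b\}}1_{\{\sqrt{2z}\ge c\}}f(x)g(y)h(z)$, and then uses the identity (\ref{1.difference}) together with the fact that $\min\{\sqrt{x},\sqrt{x_*},\sqrt{y},\sqrt{z}\}=\sqrt{x}$ on the support (where $x\le y\wedge z$), so the inner $(s,\theta)$-integral collapses to a constant times $\sqrt{x}$ and the triple integral factorizes in the energy variables. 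You instead work directly in velocity variables: for fixed $({\bf v},\og)$ you decompose ${\bf v}_*-{\bf v}=\ld\og+{\bf u}_\perp$, read off ${\bf v}'={\bf v}+\ld\og$, ${\bf v}_*'={\bf v}+{\bf u}_\perp$, $|({\bf v}-{\bf v}_*)\cdot\og|=|\ld|$, and compute the $\ld$- and ${\bf u}_\perp$-integrals explicitly; the hypothesis $0<a\le b\wedge c$ enters exactly twice, once to turn $|s\mp{\bf v}\cdot\og|$ into a clean $2s$ after symmetrizing in $s$, and once to force $\max\{c^2,({\bf v}\cdot\og)^2\}=c^2$, which is what makes $I_g$ and $I_h$ independent of $({\bf v},\og)$. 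I checked the details, including the constant bookkeeping ($4\pi\cdot 2\cdot 2\pi=16\pi^2$ matching the two spherical integrals on the right-hand side), and they are right. What your argument buys is self-containedness and transparency: it uses only Tonelli and an orthogonal change of variables, with no appeal to the kernel $W$ or the Carleman-type representation, and it exposes precisely where the smallness condition on $a$ is used. What the paper's route buys is economy: the Appendix lemma is needed elsewhere anyway, so the identity follows as the special case $\Phi\equiv\mathrm{const}$ of the general reduction to energy variables, with no separate computation.
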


\begin{proof} Using the first equality in (\ref{5.4new}) (see Appendix) to $\Phi(r,\rho)\equiv (4\pi)^2,
\Psi(x,y,z)=1_{\{\sqrt{2x}\le a\}}1_{\{\sqrt{2y}\ge b\}}1_{\{\sqrt{2z}\ge c\}}f(x)g(y)h(z)$ and
using (\ref{1.difference}) and
noting that $\min\{\sqrt{x},\sqrt{x_*},\sqrt{y},\sqrt{z}\}=\sqrt{x}$ for
$0\le x\le y\wedge z$, we have
\beas&&{\rm the\, \,l.h.s.\,\,of\,(\ref{entropy.1})}
=\sqrt{2}(4\pi)^3 \int_{{\mR}_{\ge 0}^3}\sqrt{x}
1_{\{\sqrt{2x}\le a\}}1_{\{\sqrt{2y}\ge b\}}1_{\{\sqrt{2z}\ge c\}}f(x)g(y)h(z)
{\rm d}x{\rm d}y{\rm d}z\\
&&={\rm the\, \,r.h.s.\,\,of\,(\ref{entropy.1})}.
\eeas
\end{proof}
\vskip1mm

To prove the main result Proposition \ref{prop3.2},  we will also use the following inequalities:
if $0\le f\in L^1_2({\bR})$, then
\be mes(\{{\bf v}\in{\bR}\,\,|\,\, f({\bf v})> 1/3\})\le 2S(f),\quad
\int_{f({\bf v})< 9}f({\bf v}){\rm d}{\bf v}
\le 5S(f).\lb{entropy.2}\ee
In fact for the function $s(y)=(1+y)\log(1+y)
-y\log y$ we have $s(y)\ge 2\fr{y}{1+y}, y\ge0.$
So
$1_{\{f({\bf v})> 1/3\}}s(f({\bf v})) \ge \fr{1}{2}1_{\{f({\bf v})>1/3\}}$,
 $ 1_{\{f({\bf v})< 9\}}s(f({\bf v}))\ge \fr{1}{5}1_{\{f({\bf v})< 9\}} f({\bf v}),$
hence (\ref{entropy.2}) follows.
\vskip2mm

\begin{proposition}\lb{prop3.2} Let $B({\bf {\bf v-v}_*},\omega)$ satisfy Assumption \ref{assp}. Given any $N>0, E>0$.

{\rm(I)} Let $B_K({\bf {\bf v-v}_*},\omega)$
be given by (\ref{3.1}) (or equivalently (\ref{3.kernel1}),(\ref{3.kernel2})),
let $\{f_0^K=f^K_0(|{\bf v}|^2/2)\}_{K\in{\mN}} $ be any sequence of nonnegative
isotropic functions in $L^1_2({\bR})$  satisfying
\be \int_{{\bR}}(1, |{\bf v}|^2/2)f^K_0(|{\bf v}|^2/2){\rm d}{\bf v}= 4\pi\sqrt{2}(N, E)\qquad
\forall\, K\in{\mN}.\lb{entropy.3}\ee
Then for every $K\in{\mN}$, there exists a unique conservative isotropic approximate  solution
$f^K=f^K(|{\bf v}|^2/2,t)$ of
Eq.(\ref{Equation1}) on ${\bR}\times [0,\infty)$  corresponding to the approximate kernel
 $B_K$ such that $f^K|_{t=0}=f^K_0$, and
it holds the moment production:
\be \sup_{K\in{\mN}}\|f^K(t)\|_{L^1_s}\le C_s(1+1/t)^{s-2}\quad \forall\, t>0,\,\,\forall\, s>2 \label{mprod}\ee
where the constant $0<C_s<\infty$ depends only on $N, E,b_0,\eta$ and $s$.

Moreover for any $t_0>0$, let
\be S_{*}(t_0)=\min\Bigg\{\fr{7\pi a^3}{24},\,\fr{4\pi^2 E^2}{5C(1+2/t_0)^2},\,
b_0\min\{1, (2a^2)^{1\vee \eta}\}
\fr{7\pi^4 \sqrt{2}a^3E^5 t_0}{96C^3(1+2/t_0)^6}
\Bigg\}\lb{entropy.4}\ee
where $a=\fr{1}{2}\sqrt{E/N},$ $0<C=C_4<\infty$ is the constant in (\ref{mprod}) for $s=4$ so that
$C$ depends only on $N, E,b_0,\eta$.
Then
\be S(f^K(t))\ge S(f^K(t_0))\ge S_{*}(t_0)\qquad \forall\, t\ge t_0,\,\,\forall\, K\in {\mN}.
\lb{entropy3.5}\ee

{\rm(II)}
Let $F_0\in {\mathcal B}_1^{+}({\mathbb R}_{\ge 0})$ satisfy
$N(F_0)=N, E(F_0)=E$.  Then there exists a conservative  measure-valued isotropic solution $F_t$ of Eq.(\ref{Equation1}) on $[0,\infty)$ with the initial datum $F_0$, such that
\be S(F_t)\ge S_{*}(t_0)\qquad \forall\, t\ge t_0\lb{entropy3.6}\ee
for all $t_0>0$, and
\be M_{p}(F_t)\le C_p(1+1/t)^{2p-2}\quad \forall\,t>0,\,\,\forall\, p>1\lb{3.moment}\ee
where the constant $0<C_p<\infty$ depends only on
$N, E,b_0,\eta$ and $p$.

Moreover there exists a sequence $\{f^K\}_{K\in{\mN}}$
of a conservative isotropic approximate solutions of  Eq.(\ref{Equation1}) on ${\bR}\times [0,\infty)$ presented in part {\rm(I)}  such that their initial data sequence $\{f^K_0\}_{K\in{\mN}}$ satisfies
\be \lim_{K\to\infty}S(f^K_0)=S(F_0)\lb{entropy3.7}\ee
and there is a subsequence $\{f^{K_n}\}_{n\in{\mN}}$ such that $F_t$ and the measure
$\bar{F}_t$  defined by $F_t$ through (\ref{m2}) or (\ref{m4}) are weak limits of $f^{K_n}(x,t)$ and $f^{K_n}(|{\bf v}|^2/2,t)$ respectively,  i.e.
\be\int_{{\mR}_{\ge 0}}\vp(x){\rm d}F_t(x)= \lim_{n\to\infty}\int_{{\mR}_{\ge 0}}
\vp(x)f^{K_n}(x,t)\sqrt{x}{\rm d}x
\quad \forall\, \vp\in C_b({\mR}_{\ge 0}),\,\,\forall\, t\ge 0\lb{3.weak1}\ee
\be\int_{{\bR}}\psi({\bf v}){\rm d}\bar{F}_t({\bf v})= \lim_{n\to\infty}\int_{{\bR}}\psi({\bf v})f^{K_n}(|{\bf v}|^2/2,t){\rm d}{\bf v}
\quad \forall\, \psi\in C_b({\bR}),\,\,\forall\, t\ge 0.\lb{3.weak2}\ee
\end{proposition}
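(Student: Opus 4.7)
The plan is to establish Part (I) (existence, moment production, and the entropy lower bound for isotropic approximate solutions) and then transfer everything to Part (II) by a weak-compactness passage to the limit.

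\textbf{Part (I), existence and moment production.} For fixed $K$, the approximate kernel $B_K$ in (\ref{3.1}) is bounded by a polynomial in $|{\bf v}|,|{\bf v}_*|$, and by the cited calculation just above Lemma \ref{lemma.entropy} the isotropy of the initial datum makes the cubic collision integral absolutely convergent against any $L^1_1$ isotropic density. A standard fixed-point argument on isotropic densities in $L^1_2({\bR})$, continued globally by mass--energy conservation, yields a unique $f^K$ satisfying (\ref{3.new1})--(\ref{eelity}); the entropy equality comes by choosing the formal test function $\log(f^K/(1+f^K))$, rigorously justified by truncation together with the $L^1$ bound (\ref{3.new1}). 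The moment production estimate (\ref{mprod}) follows from a Povzner-type inequality using the lower bound $B_K\geq B_{\rm min}$ from (\ref{cutoff}): one obtains a differential inequality of the form $\tfrac{d}{dt}M_s(f^K)\le C_1 M_s-c_1 M_{s+\eta'}$ with $\eta'>0$, which yields the Desvillettes-type decay $(1+1/t)^{s-2}$ uniformly in $K$, since the lower bound on $B_K$ is $K$-independent.

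\textbf{Part (I), positive lower bound of the entropy.} This is the central new piece. Since $S(f^K(\cdot))$ is nondecreasing by (\ref{eelity}) it suffices to exhibit \emph{some} time $\tau_0\in(0,t_0]$ with $S(f^K(\tau_0))\ge S_\ast(t_0)$. I argue by contradiction: assume $S(f^K(\tau))<S_\ast(t_0)$ for every $\tau\in[0,t_0]$. By the inequalities (\ref{entropy.2}) and the first component of $S_\ast(t_0)$, the set $\{f^K>1/3\}$ has Lebesgue measure at most $2S_\ast(t_0)\le 7\pi a^3/12$, so a definite fraction of the ball $\{|{\bf v}|\le a\}$ lies in $\{f^K\le 1/3\}$. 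By the second component of $S_\ast(t_0)$ combined with (\ref{mprod}) for $s=4$ and Chebyshev, almost all of the mass $N$ is carried by the set $\{f^K\ge 9\}$ and lies in a velocity region $\{|{\bf v}|\ge b\}$ with $b\gg a$; similarly energy conservation forces a definite share of $E$ to be carried at scale $\ge c$ comparable to $b$. I then use Lemma \ref{prop3.1} to estimate the cubic gain term in (\ref{weak}) on the region $\{|{\bf v}|\le a\}\cap\{f^K\le 1/3\}$ from below: after subtracting the loss term (which is controlled by $f^K\le 1/3$ on this set), the net creation rate of mass in $\{|{\bf v}|\le a\,,\ f^K\le 1/3\}$ is bounded below by a product
\[
c\cdot\min\!\left\{\Big(\tfrac{2b_0 a^{\eta}}{1+a^{\eta}}\Big)^{2},(4\pi)^{2}a^{2}\right\}\cdot \Big(\int_{|{\bf v}|\le a}f^K d{\bf v}\Big)\Big(\int_{|{\bf v}|\ge b}\tfrac{f^K}{|{\bf v}|}d{\bf v}\Big)\Big(\int_{|{\bf v}|\ge c}\tfrac{f^K}{|{\bf v}|}d{\bf v}\Big),
\]
integrating over $[0,t_0]$ produces a mass increment there that contradicts the constraint $\int_{f^K<9}f^K\le 5S_\ast(t_0)$ coming from (\ref{entropy.2}). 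The three components of $S_\ast(t_0)$ are calibrated precisely so that each of these structural, mass-distribution, and rate inequalities fails simultaneously, completing the contradiction.

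\textbf{Part (II), passage to the limit.} Given $F_0\in\mathcal{B}_1^{+}({\mR}_{\ge 0})$ with the prescribed $(N,E)$, apply Lemma \ref{lemma.entropy} (isotropic case) to $\bar F_0$ to produce isotropic densities $\wt f_K$ with the same $(N,E)$ and $\lim_{K\to\infty}S(\wt f_K)=S(F_0)$, so (\ref{entropy3.7}) holds with $f_0^K=\wt f_K$. Part (I) then yields conservative isotropic $f^K$ with uniform-in-$K$ moment production (\ref{mprod}) and entropy bound (\ref{entropy3.5}). The uniform $L^1_s$ bound for any $s>2$ gives tightness and equi-integrability at infinity, while the equation (\ref{3.new2}) together with the weak formulation provides equicontinuity in $t$ for test functions in $C_b^2$. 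A standard Arzel\`a--Ascoli / Prokhorov diagonal extraction produces a subsequence $f^{K_n}$ and a family $F_t\in\mathcal{B}_1^{+}({\mR}_{\ge 0})$ satisfying (\ref{3.weak1})--(\ref{3.weak2}); passing to the limit in the weak formulation of Eq.(\ref{Equation1}) (here the continuity of $(y,z)\mapsto(1+\sqrt{y}+\sqrt{z})^{-1}\mathcal{J}[\varphi]$ and $\mathcal{K}[\varphi]$ noted in Remark \ref{remark}(2) is used) verifies that $F_t$ solves (\ref{Equation3}). The moment bound (\ref{3.moment}) follows from (\ref{mprod}) by Fatou/weak lower semicontinuity. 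Finally, (\ref{entropy3.6}) is obtained by applying Lemma \ref{lemma.entropy} at each fixed $t\ge t_0$: if $f_t$ is the regular part of $F_t$, then $S(F_t)=S(f_t)\ge\limsup_n S(f^{K_n}(\cdot,t))\ge S_\ast(t_0)$, where the first inequality is the lower semicontinuity built into the definition (\ref{ent1}) of $S(F_t)$.

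\textbf{Main obstacle.} The quantitative entropy-production argument in Part (I) is the main difficulty: one must combine the inequalities (\ref{entropy.2}) with energy conservation, the $L^1_4$ moment bound, and the cubic lower bound provided by Lemma \ref{prop3.1} in a way that makes the three competing scales in $S_\ast(t_0)$ emerge consistently. The remaining steps are essentially standard, once the appropriate approximating sequences and weak-compactness framework are set up.
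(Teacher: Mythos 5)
Your treatment of existence, uniqueness, moment production, and all of Part (II) is in line with the paper (the paper passes to the limit by invoking the weak stability theorem of \cite{Lu2004} through the equivalence of the two solution definitions, and obtains (\ref{entropy3.6}) exactly as you do, from $S(F_t)=S(\bar F_t)\ge\limsup_n S(f^{K_n}(t))\ge S_*(t_0)$). The problem is the central new step, the lower bound (\ref{entropy3.5}). The paper's mechanism is the entropy \emph{dissipation}: assuming $S(f^K(t_0))$ is below the first two components of $S_*(t_0)$, it shows that for $t\in[t_0/2,t_0]$ the set where $a/2\le|{\bf v}|\le a$, $f\le 1/3$ has measure $\gtrsim a^3$, the set where $2a\le|{\bf v}|\le b$, $f\ge 9$ carries mass $\gtrsim E^2/(C(1+2/t_0)^2)$, hence on the corresponding collision set $\Gm(g'g_*',gg_*)\ge 1/2$ and $\Pi(f)\ge f'f_*'$, so $D_K(f^K(t))$ is bounded below by an explicit constant; the entropy \emph{equality} (\ref{eelity}) then gives $S(f^K(t_0))\ge\int_{t_0/2}^{t_0}D_K\,{\rm d}t$, which is the third component of $S_*(t_0)$. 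Your replacement mechanism — a mass-flux contradiction — does not close.

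Concretely: (i) the mass your cubic gain term pumps into $\{|{\bf v}|\le a\}$ can accumulate entirely on the set $\{f^K\ge 9\}$, whose Lebesgue measure is small ($\le 2S_*$) but whose mass is not constrained by the entropy at all; low entropy is perfectly compatible with arbitrarily large mass concentrating at low velocities — this is precisely the condensation phenomenon the equation is designed to allow — so no contradiction with $\int_{f^K<9}f^K\le 5S_*(t_0)$ from (\ref{entropy.2}) can be extracted. (ii) Your displayed lower bound for the creation rate carries the factor $\int_{|{\bf v}|\le a}f^K\,{\rm d}{\bf v}$, which under your contradiction hypothesis is itself essentially bounded by $5S_*(t_0)$ plus mass on a null-in-measure set and may be zero at $t=0$; so the rate has no uniform positive lower bound, and the factor is in any case unnecessary — in the paper's use of Lemma \ref{prop3.1} the first factor is the Lebesgue measure of $\{a/2\le|{\bf v}|\le a,\ f\le 1/3\}$, coming from the ``$1$'' in $(1+f)(1+f_*)$, not an $f$-weighted integral. (iii) Tracking the mass of the time-dependent set $\{f^K(\cdot,t)\le 1/3\}$ is not licensed by the weak formulation, which admits only fixed test functions. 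Finally, the location of the heavy set is an annulus $2a\le|{\bf v}|\le b$ (forced by the $L^1_4$ bound), not $\{|{\bf v}|\ge b\}$ as you state; this boundedness is needed both for the kernel lower bound ($|{\bf v}-{\bf v}'|,|{\bf v}-{\bf v}_*'|\ge a$) and for the factor $b^{-2}|{\bf v}'||{\bf v}_*'|$ that produces the constants in $S_*(t_0)$. To repair the argument you should abandon the mass-transport contradiction and, as in the paper, convert the structural picture into a lower bound for $D_K$ and feed it into (\ref{eelity}).
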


\begin{proof} (I): Let us rewrite (\ref{3.1}) as
\bes&& B_K({\bf v},-{\bf v}_*,\og)=
\fr{|({\bf v-v}_*)\cdot\omega|}{(4\pi)^2}\Phi_K(|{\bf v}-{\bf v}'|, |{\bf {\bf v}-{\bf v}_*'}|),\lb{3.kernel1}\\
&&{\rm with}\quad \Phi_K(r,\rho)=\min\big\{
\Phi(r,\rho),\,
(4\pi)^2K r\rho\big\}.\dnumber\lb{3.kernel2}\ees
The existence, uniqueness and
the uniform moment production (\ref{mprod}) of the
conservative isotropic approximate  solutions
$f^K$ have been essentially proven in the proofs of
Theorem 2, Theorem 3 in \cite{Lu2000} and
Theorem 4 in \cite{Lu2004}.  The only difference
from the present case is that \cite{Lu2000}, \cite{Lu2004}
consider those approximate kernels $B_K({\bf v-v}_*,\og)$ in (\ref{3.1}) where
$B({\bf v-v}_*,\og)=b(\cos(\theta))|{\bf v-v}_*|^{\gm}$
(with $\theta =\arccos(|({\bf v-v}_*)\cdot\omega|/|{\bf {\bf v-v}_*}|)$
with
$0<\gm\le 1 $ and $0\le b(\cdot)\in C([0,1]), b(0)=0, \int_{0}^{1}b(\tau){\rm d}\tau>0$.
However, for the present case, one sees from (\ref{kerneB}) that for the case $|{\bf v-v}_*|\ge 1$, the
the lower bound $\fr{b_0}{(4\pi)^2}\cos(\theta)
|{\bf v-v}_*|$ and the  upper bound
$\fr{1}{(4\pi)^2}\cos(\theta)|{\bf v-v}_*|$ of
$B({\bf {\bf v-v}_*},\omega)$ have the same form as used in \cite{Lu2004}
with $\gm=1$, and thus
the proof of  (\ref{mprod})  is completely the same as the proof of
Theorem 4 in \cite{Lu2004} (and thus the proof of the uniqueness of $f^K$ is also
valid for the present kernel $B_K$.)
In fact to see this is really the case, one needs only to prove
the following inequality
\be \int_{{\bRRS}}B_K({\bf v-v}_*,\og) ff_*[\kappa(\theta)]^s
|{\bf v}|^s {\rm d}\og{\rm d}{\bf v}{\rm d}{\bf v}_*\ge
 \fr{A_s}{2}\Big( \|f\|_{L^1}M_{s+1}(f)-(\|f\|_{L^1})^2\Big)\lb{MM}\ee
where
$$A_s=4\pi\int_{0}^{\pi/2}\min\big\{\fr{b_0}{(4\pi)^2},\, \cos(\theta)\sin(\theta)\big\}\sin(\theta)\cos(\theta) [\kappa(\theta)]^s{\rm d}\theta,$$
$\kappa(\theta)=\min\{\cos(\theta), 1-\cos(\theta)\}$, and
$M_s(f)=\int_{{\bR}}|{\bf v}|^s f({\bf v}){\rm d}{\bf v}$.
To do this we first note that from (\ref{3.1}) and Assumption \ref{assp} we have
$$B_K({\bf v-v}_*,\og)\ge |{\bf v}-{\bf v}_*|\cos(\theta)\min\big\{\fr{b_0}{(4\pi)^2},\,\cos(\theta)\sin(\theta)\big\}
\min\{1, |{\bf v-v}_*|^{2\vee 2\eta}\}.$$
Then
for all $s>0$, $0\le f({\bf v})=f(|{\bf v}|^2/2)\in L^1_{s+1}({\bR})$, we have
\bes&& \int_{{\bRRS}}B_K({\bf v-v}_*,\og) ff_*[\kappa(\theta)]^s
|{\bf v}|^s {\rm d}\og{\rm d}{\bf v}{\rm d}{\bf v}_*\nonumber\\
&&\ge
A_s\int_{{\bRR}}ff_*|{\bf v}|^s|{\bf v-v}_*|\min\{1, |{\bf v-v}_*|^{2\vee 2\eta}\}
{\rm d}{\bf v}{\rm d}{\bf v}_*\lb{AP}\ees
To estimate the right-hand side of (\ref{AP}), we will use the following inequality (the proof is esiy): if $\Psi$ is nonnegative Lebesgue
measurable on ${\mR}_{\ge 0}^3$ satisfying that  $\rho\mapsto \Psi(r,r_*, \rho)$ is non-decreasing on $[0,\infty)$
for all $r,r_*\ge 0$,  then (using spherical coordinates transform)
\beas \int_{{\bRR}}\Psi(|{\bf v}|,|{\bf v}_*|, |{\bf v- v}_*|){\rm d}{\bf v}{\rm d}{\bf v}_*
\ge \fr{1}{2}\int_{{\bRR}}\Psi(|{\bf v}|,|{\bf v}_*|,|{\bf v}|){\rm d}{\bf v}{\rm d}{\bf v}_*.\eeas
Using this inequality and noting that $\min\{1,|{\bf v}|^{2\vee 2\eta}\}=1-(1-|{\bf v}|^{2\vee 2\eta})_{+}$ we obtain
(\ref{MM}):
\beas&&
\int_{{\bRR}}ff_*|{\bf v}|^s|{\bf v-v}_*|\min\{1, |{\bf v-v}_*|^{2\vee 2\eta}\}
{\rm d}{\bf v}{\rm d}{\bf v}_*
\\
&&\ge \fr{1}{2}\int_{{\bRR}}ff_*|{\bf v}|^{s+1}|\min\{1, |{\bf v}|^{2\vee 2\eta}\}
{\rm d}{\bf v}{\rm d}{\bf v}_*\ge
 \fr{1}{2}\Big( \|f\|_{L^1}M_{s+1}(f)-(\|f\|_{L^1})^2\Big).
\eeas
\vskip2mm

Now we are going to prove the positive lower bound (\ref{entropy3.5}) of entropy, which is the new thing of
the proposition. Given any $t_0>0, K\in {\mN}$.
For notation convenience we denote
$$f(|{\bf v}|^2/2,t):=f^K(|{\bf v}|^2/2,t).$$
Since $t\mapsto S(f(t))$ is non-decreasing, we need only
prove that $S(f(t_0))\ge S_{*}(t_0)$.  To do this we may assume that (recall
$S_{*}(t_0)$ in (\ref{entropy.4}))
\be S(f(t_0))\le \min\Big\{\fr{7\pi a^3}{24},\,\fr{4\pi^2 E^2}{5C(1+2/t_0)^2}\Big\}.\lb{entropy3.8} \ee
Recall $a= \fr{1}{2}\sqrt{E/N}$ and let
$$b=\Big(\fr{C (1+2/t_0)^2}{2\pi\sqrt{2} E}\Big)^{1/2}.$$
 By conservation of mass and energy and moment production (\ref{mprod}) we have
\beas&&
\int_{|{\bf v}|\le 2a}\fr{1}{2}|{\bf v}|^2 f(|{\bf v}|^2/2,t){\rm d}{\bf v}
\le \fr{4a^2}{2}4\pi\sqrt{2}N=2\pi\sqrt{2}E,\\
&&
\int_{|{\bf v}|> 2a}\fr{1}{2}|{\bf v}|^2 f(|{\bf v}|^2/2,t){\rm d}{\bf v}
\ge
2\pi\sqrt{2}E,\\
&&
\int_{{\bR}}|{\bf v}|^4 f(|{\bf v}|^2/2,t){\rm d}{\bf v}\le C \big(1+2/t_0\big)^2\quad
\forall\, t\ge t_0/2.\eeas
Since, by Cauchy-Schwarz inequality,  $E^2/N\le \fr{1}{16\pi\sqrt{2}}
\int_{{\bR}}|{\bf v}|^4 f(|{\bf v}|^2/2,t){\rm d}{\bf v}$, it follows that
$b\ge 4\sqrt{2}a>2a$. Also since for all $t\ge t_0/2$
\beas
 \int_{|{\bf v}|\ge b}\fr{1}{2}|{\bf v}|^2 f{\rm d}{\bf v}
\le\fr{1}{2b^2}\int_{{\bR}}|{\bf v}|^4 f{\rm d}{\bf v}
\le \fr{1}{2b^2} C \big(1+2/t_0\big)^2
=\pi\sqrt{2}E\eeas
it follows that
\bes && \int_{2a\le |{\bf v}|\le b}f(|{\bf v}|^2/2,t){\rm d}{\bf v}
\ge   \fr{2}{b^2}\Big(\int_{ |{\bf v}|\ge 2a}\fr{1}{2}|{\bf v}|^2f{\rm d}{\bf v}
-\int_{ |{\bf v}|\ge b}\fr{1}{2}|{\bf v}|^2f{\rm d}{\bf v}
\Big)\nonumber
\\
&&\ge \fr{2}{b^2}\big(2\pi\sqrt{2}E-\pi\sqrt{2}E\big)
=\fr{8\pi^2 E^2}{C(1+2/t_0)^2}.\lb{entropy3.9}\ees
Let
\beas&& {\cal V}_t
=\Big\{({\bf v},{\bf v}_*, \og)\in {\bRRS}\,\,\Big|\,\, a/2\le |{\bf v}|\le a, 2a\le
|{\bf v}'|\le b, 2a\le |{\bf v}_*'|\le b,\\
&& f(|{\bf v}|^2/2,t)\le 1/3,
f(|{\bf v}'|^2/2,t)\ge 9,  f(|{\bf v}_*'|^2/2,t)\ge 9
\Big\},\quad t\ge t_0/2.
\eeas
Then for all $({\bf v},{\bf v}_*, \og)\in {\cal V}_t$ we have:
$|{\bf v}-{\bf v}'|\ge a,|{\bf v}-{\bf v}_*'|\ge a$
 and so
\beas&& B_K({\bf {\bf v-v}_*},\og)
\ge \fr{|({\bf v}-{\bf v}_*)\cdot\og|}{(4\pi)^2}\min\Big\{
b_0\min\{1, (2a^2)^{\eta}\},\, (4\pi)^2 a^2\Big\}\\
&&\ge
\fr{|({\bf v}-{\bf v}_*)\cdot\og|}{(4\pi)^2}b_0\min\{1, (2a^2)^{1\vee \eta}\},\\
&&
\Pi(f)\ge f(|{\bf v}'|^2/2,t)f(|{\bf v}_*'|^2/2,t)\ge \fr{1}{b^2}
 |{\bf v}'||{\bf v}_*'|f(|{\bf v}'|^2/2,t)f(|{\bf v}_*'|^2/2,t),\eeas
and,  for $g(\cdot,t):=f(\cdot,t)/(1+f(\cdot,t))\,(\le 1 )$,
\beas g=g(|{\bf v}|^2/2,t)\le 1/4,\quad
g'=g(|{\bf v}'|^2/2,t)
\ge 9/10,\quad g_*'=g(|{\bf v}_*'|^2/2,t)
\ge 9/10,\eeas
\beas\Gm\big({g}'{g}_*',\, g{g}_*\big)=
\big(g'g_*'
-gg_*\big)\log\Big(
\fr{g'g_*'}{gg_*}
\Big)\ge \big((9/10)^2
-1/4\big)\log\big((9/10)^2 4\big)>\fr{1}{2}.\eeas
Thus
$$\Pi(f)
\Gm\big({g}'{g}_*',\, g{g}_*\big)
\ge \fr{1}{2b^2}|{\bf v}'|
f(|{\bf v}'|^2/2,t)|{\bf v}_*'|f(|{\bf v}_*'|^2/2,t)$$
and so for all $t\ge t_0/2$
\beas&&
D_K(f(t))\ge \fr{1}{4}
\int_{{\cal V}_t}B_K({\bf
{\bf v-v}_*},\og)\Pi(f)
\Gm\big({g}'{g}_*',\, g{g}_*\big) {\rm d}\og{\rm
d}{\bf v_*}{\rm
d}{\bf v}
\\
&&\ge\fr{b_0\min\{1, (2a^2)^{1\vee \eta}\}}{8(4\pi)^2b^2}
\int_{{\cal V}_t}|({\bf v}-{\bf v}_*)\cdot\og||{\bf v}'|
f(|{\bf v}'|^2/2,t)|{\bf v}_*'|f(|{\bf v}_*'|^2/2,t) {\rm d}\og{\rm
d}{\bf v_*}{\rm
d}{\bf v}
.\eeas
We then compute using the formula (\ref{entropy.1}) that
\beas&&
\int_{{\cal V}_t}|({\bf v}-{\bf v}_*)\cdot\og||{\bf v}'|f(|{\bf v}'|^2/2,t)|{\bf v}_*'|f(|{\bf v}_*'|^2/2,t)
{\rm d}\og{\rm
d}{\bf v_*}{\rm
d}{\bf v}
\\
&&=\int_{{\bRRS}}|({\bf v}-{\bf v}_*)\cdot\og|1_{\{a/2\le |{\bf v}|\le a\}}1_{\{f(|{\bf v}|^2/2,t)\le 1/3\}}
1_{\{2a\le |{\bf v}'|\le b\}}
1_{\{2a\le |{\bf v}_*'|\le b\}}\\
&&\times
1_{\{f(|{\bf v}'|^2/2,t)\ge 9\}}1_{\{f(|{\bf v}_*'|^2/2,t)\ge 9\}}
|{\bf v}'|f(|{\bf v}'|^2/2,t)|{\bf v}_*'|f(|{\bf v}_*'|^2/2,t)  {\rm d}\og{\rm
d}{\bf v_*}{\rm
d}{\bf v}
\\
&&=
mes\big(\{{\bf v}\in {\bR}\,|\,a/2\le |{\bf v}|\le a, f(|{\bf v}|^2/2,t)\le 1/3\}\big)
\Big(\int_{
2a\le |{\bf v}|\le b,\,f(|{\bf v}|^2/2,t)\ge 9}
f(|{\bf v}|^2/2,t){\rm d}{\bf v}\Big)^2.\quad
\eeas
Also using (\ref{entropy.2}) and the non-decrease of the entropy  we have for all $t\in [t_0/2, t_0]$
\beas&& mes(\{{\bf v}\in{\bR}\,\,|\,\, f(|{\bf v}|^2/2, t)> 1/3\})\le 2S(f(t))
\le 2S(f(t_0))
,\\
&&
\int_{f(|{\bf v}|^2/2, t)< 9}f(|{\bf v}|^2/2,t){\rm d}{\bf v}
\le 5S(f(t))\le 5S(f(t_0))\eeas
and so, using (\ref{entropy3.8}),(\ref{entropy3.9}),
\beas&&mes\big(\{{\bf v}\in {\bR}\,|\, a/2
\le |{\bf v}|\le a, f(|{\bf v}|^2/2,t)\le 1/3\}\big)
\\
&& \ge mes\big(\{{\bf v}\in {\bR}\,|\,a/2
\le |{\bf v}|\le a\}\big)
-mes\big(\{{\bf v}\in {\bR}\,|\, f(|{\bf v}|^2/2,t)> 1/3\}\big)
\\
&&\ge \fr{7\pi}{6} a^3-2S(f(t_0))\ge  \fr{7\pi}{12} a^3,\\ \\
&&
\int_{
2a\le |{\bf v}|\le b,\,f(|{\bf v}|^2/2,t)\ge 9}
f(|{\bf v}|^2/2,t){\rm d}{\bf v}
\ge \int_{
2a\le |{\bf v}|\le b}
f{\rm d}{\bf v}-
\int_{f(|{\bf v}|^2/2,t)< 9}
f{\rm d}{\bf v}
\\
&&\ge \fr{8\pi^2 E^2}{C_4(1+2/t_0)^2}-5S(f(t_0))
\ge\fr{4\pi^2 E^2}{C_4(1+2/t_0)^2}.\eeas
Thus for all $t\in [t_0/2, t_0]$ we have
\beas&&
\int_{{\cal V}_t}|({\bf v}-{\bf v}_*)\cdot\og||{\bf v}'|f(|{\bf v}|^2/2,t)|{\bf v}_*'|
f(|{\bf v}_*|^2/2,t)
{\rm d}\og{\rm
d}{\bf v_*}{\rm
d}{\bf v}
\ge
 \fr{7\pi a^3}{12}\Big(\fr{4\pi^2 E^2}{C_4(1+2/t_0)^2}
\Big)^2\eeas
and so
\beas&&
D_K(f(t))
\ge \fr{b_0\min\{1, (2a^2)^{1\vee \eta}\}}{8(4\pi)^2b^2}\fr{7\pi a^3}{12}\Big(\fr{4\pi^2 E^2}{C(1+2/t_0)^2}
\Big)^2\\
&&=b_0\min\{1, (2a^2)^{1\vee \eta}\}
\fr{7\pi^4 \sqrt{2}a^3E^5}{48 C^3(1+2/t_0)^6},\\ \\
&&
S(f(t_0))=
S(f(t_0/2))
+\int_{t_0/2}^{t_0}
D_K(f(t)){\rm d}t\ge \int_{t_0/2}^{t_0}
D_K(f(t)){\rm d}t\\
&&\ge b_0\min\{1, (2a^2)^{1\vee \eta}\}
\fr{7\pi^4 \sqrt{2}a^3E^5}{48 C^3(1+2/t_0)^6}\fr{t_0}{2}
\ge S_{*}(t_0).\eeas
This proves (\ref{entropy3.5}).

(II): Let $\bar{F}_0\in B_{2}^{+}({\bR})$ be the isotropic measure defined by $F_0$ through (\ref{m2}) or
(\ref{m4}). For each $K\in {\mN}$, let $0\le f^K_0=f^K_0(|{\bf v}|^2/2)\in L^1_2({\bR})$
be obtained in Lemma \ref{lemma.entropy} for $\bar{F}_0$, so that $f^K_0$ satisfy
(\ref{entropy.3}), (\ref{entropy3.5}) (because $S(\bar{F_0})=S(F_0)$) and (\ref
{weak-isotropic}) with $\wt{f}_K(x)=f^K_0(x), \wt{F}=F$.
Let $f^K=f^K(|{\bf v}|^2/2,t)$ be the conservative isotropic approximate  solution
of
Eq.(\ref{Equation1}) on ${\bR}\times [0,\infty)$  corresponding to the approximate kernel
 $B_K$ obtained in part (I) of the proposition satisfying $f^K|_{t=0}=f^K_0$.
 Let $F^K_t, F^K_0\in {\cal B}_1^{+}({\mR}_{\ge 0})$
 be defined by ${\rm d}F^K_t(x)=f^K(x,t)\sqrt{x}{\rm d}x,
 {\rm d}F^K_0(x)=f^K_0(x)\sqrt{x}{\rm d}x$,
 then the measure $F^K_t$ with the initial datum $F_0^K$
 is a conservative measure-valued isotropic solution of
Eq.(1.1) on $[0,\infty)$ in the sense of Definition \ref{definition1.1} corresponding to the collision kernel $B_K({\bf v-v}_*,\og)$ given by
(\ref{3.kernel1}),(\ref{3.kernel2}). Since Definition \ref{definition1.1} is
equivalent to Definition \ref{definition5.1} (see Appendix), it follows from Theorem 1 (Weak Stability)
in \cite{Lu2004} that
 there exist a conservative  measure-valued isotropic solution $F_t$ of Eq.(\ref{Equation1}) on $[0,\infty)$ (corresponding to the kernel $B$) with the initial datum $F_0$,
and a subsequence $\{f^{K_n}\}_{n\in{\mN}}$,  such that
$F_t$ is the weak limit of $f^{K_n}(\cdot,t)$, i.e.
(\ref{3.weak1}) holds true. This implies that the weak convergence
(\ref{3.weak2}) also holds true. Thus by definition of entropy of measures and (\ref{entropy3.5}) we obtain
$$S(F_t)=S(\bar{F}_t)\ge \limsup_{n\to\infty}S(f^{K_n}(t))\ge S_{*}(t_0)\qquad \forall\, t\ge t_0$$
for all $t_0>0$. Finally since $M_p(F_t)=
(4\pi\sqrt{2})^{-1}\int_{{\bR}}(|{\bf v}|^2/2)^p{\rm d}\bar{F}_t({\bf v})$, the moment production (\ref{3.moment}) follows easily from the weak convergence (\ref{3.weak2}) and (\ref{mprod}).
\end{proof}

\begin{center}{}\end{center}
\begin{center}\section{Rate of Entropy Convergence}\end{center}

In this section we prove the first part of Theorem \ref{theorem1.2}: algebraic rate of entropy convergence for measure-valued solutions. As usual,
we start to work  with approximate solutions then take weak limit to passing the result to a true solution.

For convenience of proof and in order to connect some known results as mentioned in Section 2 for ${\cal D}_2(f)$, we will also use the $\sg$-representation of $({\bf v}', {\bf v}_*'):$
\be {\bf v}'=\fr{{\bf v+v}_*}{2}+\fr{|{\bf v-v}_*|}{2}\sg,\quad
{\bf v}'_*=\fr{{\bf v+v}_*}{2}-\fr{|{\bf v-v}_*|}{2}\sg,\quad \sg\in{\bS},\quad {\bf v},{\bf v}_*\in {\bR}.\lb{colli2}\ee
It is not difficult to deduce the following relation between the
$\og$-representation (\ref{colli}) and the $\sg$-representation (\ref{colli2}) (see e.g. Section 4 of Chapter 1 in \cite{Villani2}):
\be
\int_{{\bS}}\Psi({\bf v}', {\bf v}_*')
\big|_{\og-{\rm rep.}}{\rm d}\og=
\int_{{\bS}}\fr{|{\bf v-v}_*|}{2|{\bf v}-{\bf v}'|}\Psi({\bf v}', {\bf v}_*')\big|_{\sg-{\rm rep.}}{\rm d}\sg. \lb{3.9}
\ee In particular we have
\be
\int_{{\bS}}B_K({\bf {\bf v-v}_*},\omega)\Psi({\bf v}', {\bf v}_*')\big|_{\og-{\rm rep.}}{\rm d}\og=
\int_{{\bS}}\bar{B}_K({\bf {\bf v-v}_*},\sg)\Psi({\bf v}', {\bf v}_*')\big|_{\sg-{\rm rep.}}{\rm d}\sg
\lb{3.new3}\ee
where $\bar{B}_K$ is defined through $B_K$ as follows (recall that $B_K$ is
given by (\ref{3.kernel1}),(\ref{3.kernel2})):
\bes&&
B_K({\bf {\bf v-v}_*},\omega)=
\fr{|{\bf v}-{\bf v}'|}{(4\pi)^2}\Phi_K(|{\bf v}-{\bf v}'|, |{\bf {\bf v}-{\bf v}_*'}|)\big|_{\og-{\rm rep.}},
\lb{3.new4}\\
&&
\bar{B}_K({\bf {\bf v-v}_*},\sg)=\fr{|{\bf v}-{\bf v}_*|}{2(4\pi)^2}\Phi_K(|{\bf v}-{\bf v}'|, |{\bf v}-{\bf v}_*'|)\big|_{\sg-{\rm rep.}}.\dnumber\lb{3.new5}\ees

The $\sg$-representation (\ref{colli2}) in many cases is convenient than the $\og$-representation (\ref{colli}), but since it is non-linear and non-smooth in $({\bf v},{\bf v}_*)$, one needs to go back to $\og$-representation (\ref{colli}) when proving some integral identities.
For instance this can be seen in the proof of the following formula of change of variables for $\sg$-representation:
\bes&&\int_{{\bRS}}\Psi(|{\bf v-v}_*| , {\bf n}\cdot\sg , {\bf v}', {\bf v}_*' )
{\rm d}\sg{\rm d}{\bf v}_*
\qquad \nonumber\\
&&=
\int_{0}^{\pi}\fr{\sin(\theta)}{\cos^3(\theta/2)}{\rm d}\theta
\int_{{\bR}}{\rm d}{\bf v}_*\int_{{\mS}^1({\bf
n})}\Psi\Big(\fr{|{\bf v-v}_*|}{\cos(\theta/2)},\,\cos(\theta),\,
{\bf v}-|{\bf v-v}_*|\fr{\sin(\theta/2)}{\cos(\theta/2)}{\tilde \sg}, {\bf v}_*\Big){\rm d}{\tilde \sg}
\qquad  \quad \lb{PSI}\ees
where ${\bf n}=\fr{{\bf v-v}_*}{|{\bf v-v}_*|}$, $\Psi$ is any nonnegative Lebesgue measurable functions
on ${\mR}_{\ge 0}\times [-1,1]\times {\bRR}$,
${\mathbb S}^1({\bf n})=\{\tilde{\sg}\in {\bS}\,\,|\,\,\tilde{\sg}\,\bot\, {\bf n}\,\}$
and ${\rm d}{\tilde \sg}$ is the circle measure element on ${\mathbb S}^1({\bf n})$.

The proof of (\ref{PSI}) is just several elementary changes of variables:

\noindent(1)  take reflection
$\sg\to -\sg$, then use
$|{\bf v-v}_*|=|{\bf v}'-{\bf v}_*'|$, ${\bf v}_*={\bf v}'+{\bf v}_*'-{\bf v}$ to
 write the integrand as a function
of $({\bf v}', {\bf v}_*') $
(with ${\bf v}$ fixed): $\Psi=\Psi\big(|{\bf v}'-{\bf v}_*'| , \fr{({\bf v}'-{\bf v}_*')\cdot({\bf v}'+{\bf v}_*'-2{\bf v})}{|{\bf v}'-{\bf v}_*'|^2} , {\bf v}'_*, {\bf v}'\big)$;

\noindent (2) use the formula
(\ref{3.9}),  then change variables ${\bf v}_*={\bf v}-{\bf z},
{\bf z}=r\sg$;

\noindent (3) change variables $r=\fr{\rho}{\sg\cdot\og}$ for $\sg\cdot\og>0$,
$\rho\og={\bf z}={\bf v}-{\bf v}_*$, then denote again ${\bf n}=\fr{{\bf v-v}_*}{|{\bf v-v}_*|}$;

\noindent (4)  change variables $\sg={\bf n}\cos(\theta)+\sin(\theta)\wt{\sg},\,\, \wt{\sg}
\in {\mS}^1({\bf n})$,
etc.

\noindent Accordingly we compute
\beas&&\int_{{\bRS}}\Psi\big(|{\bf v-v}_*| , {\bf n}\cdot\sg , {\bf v}', {\bf v}_*'\big)
{\rm d}\sg{\rm d}{\bf v}_*
=\int_{{\bRS}}2|{\bf n}\cdot\omega|\Psi\big(|{\bf v}-{\bf v}_*| ,
2({\bf n}\cdot\og)^2-1, {\bf v}'_*, {\bf v}'\big)
{\rm d}\og{\rm d}{\bf v}_*\\
&&=4
\int_{{\bSS}}{\bf 1}_{\{\sg\cdot\og>0\}} |\sg\cdot\omega|{\rm d}\og{\rm d}\sg \int_{0}^{\infty}r^2\Psi\big(r,
2(\sg\cdot\og)^2-1, {\bf v}-r\sg +r(\sg\cdot\og)\og,
{\bf v}-r (\sg\cdot \og)\og\big){\rm d}r\\
&&=4
\int_{{\bR}}{\rm d}{\bf v}_*\int_{{\bS}}{\bf 1}_{\{\sg\cdot {\bf n}>0\}}
\fr{1}{(\sg\cdot {\bf n})^2}
\Psi\Big(\fr{|{\bf v-v}_*|}{\sg\cdot{\bf n}},
2(\sg\cdot{\bf n})^2-1, {\bf v}-\fr{|{\bf v-v}_*|}{\sg\cdot{\bf n}}\sg +{\bf v}-{\bf v}_*,
{\bf v}_*\Big){\rm d}\sg
\\
&&=4
\int_{{\bR}} {\rm d}{\bf v}_*\int_{0}^{\pi/2}\fr{\sin(\theta)}{\cos^2(\theta)}
{\rm d}\theta\int_{{\mS}^1({\bf n})}
\Psi\Big(\fr{|{\bf v-v}_*|}{\cos(\theta)},
2\cos^2(\theta)-1, {\bf v}-|{\bf v-v}_*|\fr{\sin(\theta)}{\cos(\theta)}\wt{\sg},
{\bf v}_*\Big){\rm d}\wt{\sg}
\eeas
which is equal to the right hand side of (\ref{PSI}).

As an application of the formula (\ref{PSI}) we prove the following lemma which is also used in the proof of
Proposition \ref{prop4.2}.

\begin{lemma}\label{lemma3.5} Let $\alpha>0$ be a constant and $0\le f\in L^1({\bR})$.
Then
\be \int_{{\bRS}}|{\bf v-v}_*||{\bf v}-{\bf v}'|f({\bf v}')e^{-\alpha|{\bf v}_*'|}{\rm d}\sg {\rm d}{\bf v}_*
\le 8\pi\fr{1+2\alpha}{\alpha^2}\la {\bf v}\ra\|f\|_{L^1}\lb{3.21}\ee
\end{lemma}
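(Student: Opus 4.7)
The plan is to pass from the $\sg$-representation to the $\og$-representation, where $|{\bf v}-{\bf v}'|$ simplifies to $|({\bf v}-{\bf v}_*)\cdot\og|$. Writing the integrand as $\tilde\Psi({\bf v}',{\bf v}_*')=|{\bf v-v}_*|\,|{\bf v-v}'|\,f({\bf v}')e^{-\alpha|{\bf v}_*'|}$ (treating ${\bf v}$ as a parameter and using ${\bf v}_*={\bf v}'+{\bf v}_*'-{\bf v}$ to re-express $|{\bf v-v}_*|$), the identity (\ref{3.9}) run backwards gives
$$\int_{\bS}\tilde\Psi\big|_{\sg\text{-rep.}}{\rm d}\sg=\int_{\bS}\frac{2|{\bf v-v}'|}{|{\bf v-v}_*|}\tilde\Psi\big|_{\og\text{-rep.}}{\rm d}\og,$$
so the factor $\frac{2|{\bf v-v}'|}{|{\bf v-v}_*|}\cdot|{\bf v-v}_*|\,|{\bf v-v}'|=2|{\bf v-v}'|^2$ collapses the integral to
$$I=2\int_{{\bR}}\int_{{\bS}}|({\bf v-v}_*)\cdot\og|^2\,f\big({\bf v}-(({\bf v-v}_*)\cdot\og)\og\big)\,e^{-\alpha|{\bf v}_*+(({\bf v-v}_*)\cdot\og)\og|}{\rm d}\og{\rm d}{\bf v}_*.$$

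Next I would substitute ${\bf u}={\bf v}-{\bf v}_*$ and, for each fixed $\og$, decompose ${\bf u}=t\og+\eta$ with $t\in{\mR}$ and $\eta\in\og^{\perp}$, so that ${\rm d}{\bf u}={\rm d}t\,{\rm d}\eta$. The key observation, which is really the only non-routine step, is the decoupling: in these variables ${\bf v}'={\bf v}-t\og$ and ${\bf v}_*'={\bf v}-\eta$, so $f({\bf v}')$ depends only on $(t,\og)$ while $e^{-\alpha|{\bf v}_*'|}$ depends only on $\eta$, and $|{\bf v-v}'|^2=t^2$. Thus
$$I=2\int_{{\bS}}{\rm d}\og\int_{{\mR}}t^2 f({\bf v}-t\og)\,{\rm d}t\int_{\og^{\perp}}e^{-\alpha|{\bf v}-\eta|}{\rm d}\eta.$$

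To handle the $\eta$-integral, decompose ${\bf v}=v_\og\og+{\bf v}^{\perp}$ with $v_\og={\bf v}\cdot\og$ and ${\bf v}^{\perp}\in\og^{\perp}$, so $|{\bf v}-\eta|^2=v_\og^2+|{\bf v}^{\perp}-\eta|^2$; shifting $\eta\mapsto\eta+{\bf v}^{\perp}$ and using planar polar coordinates gives
$$\int_{\og^{\perp}}e^{-\alpha|{\bf v}-\eta|}{\rm d}\eta=2\pi\int_0^{\infty}re^{-\alpha\sqrt{v_\og^2+r^2}}{\rm d}r=\frac{2\pi(1+\alpha|v_\og|)e^{-\alpha|v_\og|}}{\alpha^2}\le\frac{2\pi(1+\alpha|{\bf v}|)}{\alpha^2}.$$
For the remaining $(t,\og)$-integral, splitting $t$ into its positive and negative parts and applying spherical coordinates centered at ${\bf v}$ (i.e.\ ${\bf w}={\bf v}-t\og$, ${\rm d}{\bf w}=t^2{\rm d}t\,{\rm d}\og$) yields $\int_{{\bS}}\int_{{\mR}}t^2f({\bf v}-t\og){\rm d}t{\rm d}\og=2\|f\|_{L^1}$. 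Combining these estimates gives $I\le\frac{8\pi(1+\alpha|{\bf v}|)}{\alpha^2}\|f\|_{L^1}$, and the elementary bound $1+\alpha|{\bf v}|\le(1+2\alpha)\la{\bf v}\ra$, verified separately for $|{\bf v}|\le 2$ and $|{\bf v}|>2$, produces the stated inequality (\ref{3.21}).
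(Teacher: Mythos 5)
Your proof is correct, and it takes a genuinely different route from the paper's. The paper first establishes the change-of-variables formula (\ref{PSI}) for the $\sg$-representation (parametrized by ${\bf v}_*$, the angle $\theta$ and a circle variable $\tilde{\sg}$), applies it to the integrand, bounds $|{\bf v}-{\bf u}|\ge \bigl||{\bf v}|-|{\bf u}|\bigr|$, and evaluates the resulting inner $\theta$-integral through the substitution $\theta=\arctan(t)$. You instead return directly to the $\og$-representation via (\ref{3.9}) — valid here, since $|{\bf v}-{\bf v}_*|$ is constant in the angular integral and $|{\bf v}-{\bf v}'|$ is a function of ${\bf v}'$ with ${\bf v}$ as parameter — so the weight collapses to $2|({\bf v}-{\bf v}_*)\cdot\og|^2$, and then you exploit the Carleman-type orthogonal splitting ${\bf v}-{\bf v}_*=t\og+\eta$ with $\eta\perp\og$, under which ${\bf v}'={\bf v}-t\og$ and ${\bf v}_*'={\bf v}-\eta$ decouple. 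The planar integral $\int_{\og^{\perp}}e^{-\alpha|{\bf v}-\eta|}{\rm d}\eta=2\pi\alpha^{-2}(1+\alpha|v_{\og}|)e^{-\alpha|v_{\og}|}$ and the identity $\int_{\bS}\int_{\mR}t^2f({\bf v}-t\og)\,{\rm d}t\,{\rm d}\og=2\|f\|_{L^1}$ are both exact, so you arrive at the slightly sharper intermediate bound $8\pi\alpha^{-2}(1+\alpha|{\bf v}|)\|f\|_{L^1}$, from which (\ref{3.21}) follows at once because $1+\alpha|{\bf v}|\le(1+\alpha)\la{\bf v}\ra\le(1+2\alpha)\la{\bf v}\ra$ (your case split on $|{\bf v}|$ is unnecessary). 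What your route buys is independence from (\ref{PSI}) — which the paper derives essentially for this lemma — together with exact evaluation of both factors and a marginally better constant; what the paper's route buys is the reusable general formula (\ref{PSI}), stated for arbitrary integrands $\Psi(|{\bf v}-{\bf v}_*|,{\bf n}\cdot\sg,{\bf v}',{\bf v}_*')$, at the cost of a less transparent angular computation.
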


\begin{proof} Using the formula (\ref{PSI}) and
$|{\bf v}-{\bf u}|\ge ||{\bf v}|-|{\bf u}||$ we have
\beas&&
\int_{{\bRS}}|{\bf v-v}_*||{\bf v}-{\bf v}'|f({\bf v}')e^{-\alpha|{\bf v}_*'|}{\rm d}\sg {\rm d}{\bf v}_*
\\
&&=
\int_{0}^{\pi}\fr{\sin(\theta)}{\cos^3(\theta/2)}{\rm d}\theta
\int_{{\bR}}{\rm d}{\bf v}_*\int_{{\mS}^1({\bf
n})}\fr{|{\bf v-v}_*|^2}{\cos(\theta/2)}f({\bf v}_*)
\exp\Big(-\alpha\Big|{\bf v}-|{\bf v-v}_*|\fr{\sin(\theta/2)}{\cos(\theta/2)}{\tilde \sg}\Big|\Big){\rm d}{\tilde \sg}\\
&&\le 8\pi\int_{{\bR}}|{\bf v-v}_*|^2f({\bf v}_*){\rm d}{\bf v}_*
\int_{0}^{\pi/2}\fr{\sin(\theta)}{\cos^3(\theta)}
\exp\Big(-\alpha\Big||{\bf v}|-|{\bf v-v}_*|\fr{\sin(\theta)}{\cos(\theta)}
\Big|\Big){\rm d}\theta,\eeas
and for the inner integral we compute (changing variable $\theta=\arctan(t)$, etc.)
$$
\int_{0}^{\pi/2}\{\cdots\}{\rm d}\theta=\fr{1}{|{\bf v-v}_*|^2}\int_{-|{\bf v}|}^{\infty}(t+|{\bf v}|)
e^{-\alpha|t|}{\rm d}t
\le \fr{1}{|{\bf v-v}_*|^2}\Big(\fr{1}{\alpha^2}
+2|{\bf v}|\fr{1}{\alpha}\Big).$$
Inserting this into the above inequality gives (\ref{3.21}).
\end{proof}
\vskip3mm

The following proposition is established for approximate solutions of Eq.(\ref{Equation1}) for
arbitrary initial data, in particular
it includes anisotropic initial data.

\begin{proposition}\label{prop4.2} Let $B({\bf {\bf v-v}_*},\omega)$ satisfy Assumption \ref{assp} and let
$B_K({\bf {\bf v-v}_*},\omega)$ $(K\in {\mN})$ be the approximations of $B({\bf {\bf v-v}_*},\omega)$
defined by (\ref{3.kernel1}),(\ref{3.kernel2}) so that  
\be  \min\big\{B({\bf {\bf v-v}_*},\omega),\, K|{\bf v}-{\bf v}'|^2
 |{\bf v}-{\bf v}_*'|\big\}
=B_K({\bf {\bf v-v}_*},\omega)\le B({\bf {\bf v-v}_*},\omega)\lb{3.new6}\ee
for all $K\in{\mN}, ({\bf v},{\bf v}_*,\og)\times {\bRRS}$.
Given any $N>0, E>0$, let $\Og({\bf v})=f_{\rm be}(|{\bf v}|^2/2)$ be
given in Proposition \ref{prop2.1} with $f_{\rm be}(x)$
the regular part of the equilibrium $F_{\rm be}$ that has the mass and energy $N,E$.
Let $\{f^K_0\}_{K\in{\mN}} \subset L^1_2({\bR})$ be a nonnegative sequence satisfying
\be \int_{{\bR}}(1,{\bf v}, |{\bf v}|^2/2) f_0^K({\bf v}){\rm d}{\bf v}= 4\sqrt{2}(N,0,E)\quad
\forall\, K\in{\mN},\lb{3.new7}\ee
let $\{f^K\}_{K\in {\mN}}\subset L^{\infty}([0,\infty); L^1_2({\bR}))\cap
L^{\infty}([1,\infty); L^1_4({\bR}))$ be a sequence of approximate solutions
(in terms of Definition \ref{definition3.1})
of Eq.(\ref{Equation1}) on ${\bR}\times [0,\infty)$ corresponding to the approximate kernels $\{B_K\}_{K\in{\mN}}$
satisfying $f^K|_{t=0}=f^K_0$ for all $K\in{\mN}$, and suppose for some constants $S_0>0, 0<C_0<\infty$,
\be \inf_{K\in{\mN}, t\ge 1}S(f^K(t))\ge S_0,\quad \sup_{K\in{\mN}, t\ge 1}\|f^K(t)\|_{L^1_{4}}\le C_0.\lb{3.new8}\ee
Given any $\ld$ satisfying $\fr{1}{10\max\{2, \fr{4+2\eta}{3}\}} <\ld<\fr{1}{10\max\{2, \fr{4+2\eta}{3}\}-1}$.
Let $p=p_{\ld}, \dt=\dt_{\ld}$ satisfy
\be \max\big\{2, \fr{4+2\eta}{3}\big\}<p<\fr{1}{10}\big(1+\fr{1}{\ld}\big),\quad \dt=1-\ld (10p -1).
\lb{abd}\ee
Let ${\cal E}({\bf v})=\alpha_0 e^{-\beta_0|{\bf v}|}$ where
$\alpha_0=\alpha_0(N,E)>0, \beta_0=\beta_0(N,E)>0$ are given through the
moment equations
$$
 \int_{{\bR}}(1, |{\bf v}|^2/2){\cal E}({\bf v}){\rm d}{\bf v}=
4\pi\sqrt{2}(N, E).$$
Let
\be F^K({\bf v},t)=(1-e^{-t^{\dt}})f^K({\bf v},t)+
e^{-t^{\dt}}{\cal E}({\bf v}),\quad ({\bf v},t)\in{\bR}\times [0,\infty).\lb{3.22}\ee
Then there are constants $0<C_i<\infty\,(i=1,2,3,4)$ that depend only on $N,E, S_0, C_0,
b_0,\eta$ and $\ld$ such that

{\rm(I)}  For any $K\in{\mN}$, $0<T<\infty$,
the entropy $t\mapsto S(F^K(t))$ is absolutely continuous on $[0,T]$ and
\be
\fr{{\rm d}}{{\rm d}t}\big(S(\Og)-S(F^K(t))
\big)\le -D_K(F^K(t))+C_{1} t^{\dt}e^{-t^{\dt}},\qquad {\rm a.e.}\,\,\,t\in[1,\infty).\lb{3.26}\ee

{\rm(II)}
Let $G^K=\fr{F^K}{1+F^K}$. Then for any $K\in {\mN}$
\be {\cal D}_2(G^K(t))
\le C_{2}\Big(\big(t^{\dt(p-1)} K^{-1}D_K(F^K(t)\big)^{\fr{1}{p}}
+\big(t^{\dt}D_K(F^K(t))\big)^{\fr{1}{p}}\Big)\quad \forall\, t\in[1,\infty).
\lb{3.38}\ee

{\rm(III)}  There is
a constant $C_3>0$ such that for the function
$\Psi(y)= C_{3}\big( y^{\fr{1}{10p}}
+y^{\fr{1}{4}}\big), y\in [0,\infty),$
it holds
\be \fr{{\rm d}}{{\rm d}t}\big(S(\Og)-S(F^K(t))
\big)\le  - t^{-\dt}\Psi^{-1}\big(S(\Og)-S(F^K(t))\big)
+C_{4}t^{\dt}e^{-t^{\dt}}\qquad {\rm a.e.}\,\,\,\, t\in [1, T]\lb{3.46}\ee
for all $1<T<\infty$ and all $K\ge  T^{\dt(p-2)}$,
where $\Psi^{-1}(u)$ is the inverse function of $\Psi(y).$
\end{proposition}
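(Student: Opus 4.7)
The plan is to prove (I), (II), (III) in sequence: (III) combines (I), (II), and Proposition \ref{prop2.1}, while (I) and (II) carry the analytical weight.

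For (I), the decisive pointwise bound is $F^K\ge e^{-t^{\dt}}{\cal E}$, which yields $\log(1+1/F^K)\le C(t^{\dt}+|{\bf v}|+1)$; combined with (\ref{3.new8}) this justifies differentiating $S(F^K(t))=\int_{{\bR}}s(F^K({\bf v},t)){\rm d}{\bf v}$ under the integral sign and shows absolute continuity on $[0,T]$. Substituting
$$\p_t F^K=\dt t^{\dt-1}e^{-t^{\dt}}(f^K-{\cal E})+(1-e^{-t^{\dt}})Q_K(f^K)$$
splits the derivative into an explicit time-derivative piece, easily bounded by $C\dt t^{\dt-1}e^{-t^{\dt}}(1+t^{\dt})(\|f^K\|_{L^1_1}+\|{\cal E}\|_{L^1_1})\le Ct^{\dt}e^{-t^{\dt}}$, and a collision piece. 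To the latter I would apply the standard H-theorem symmetrization with test function $\log(1+1/F^K)=\log((1-G^K)/G^K)$, and then use $f^K=(F^K-e^{-t^{\dt}}{\cal E})/(1-e^{-t^{\dt}})$ to rewrite the quadratic/cubic ``flux'' factor. The leading term reproduces $(1-e^{-t^{\dt}})^{-2}D_K(F^K)\ge D_K(F^K)$, and each remaining term carries a factor of $e^{-t^{\dt}}$; estimating these using the logarithm bound together with the $L^1_2$ conservation laws shows they contribute at most $C_1 t^{\dt}e^{-t^{\dt}}$, which gives (\ref{3.26}).

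For (II), both ${\cal D}_2(G^K)$ and $D_K(F^K)$ share the common positive factor $\Gm(G^{K\prime}G^{K\prime}_*,G^K G^K_*)$, so the comparison reduces to estimating the ratio of the kernel weights $W_2:=2|({\bf v-v}_*)\cdot\og|(1+|{\bf v-v}_*|^2)/|{\bf v-v}_*|$ and $W_1:=B_K\Pi(F^K)$. The two separate exponents $1/p$ and $1/2$ in (\ref{3.38}) come from a two-region H\"older split. On the region where the $K$-cutoff in (\ref{3.new6}) is active (small $|{\bf v}-{\bf v}'||{\bf v}-{\bf v}'_*|$) I would apply H\"older with exponents $p$ and $p/(p-1)$ to $\int (W_2/W_1)\cdot W_1\Gm\,{\rm d}\og{\rm d}{\bf v}_*{\rm d}{\bf v}$, extracting a factor $K^{-1/p}$ together with an integral of $(W_2/W_1)^{p-1}$ against the measure $W_1\Gm\,{\rm d}\og{\rm d}{\bf v}_*{\rm d}{\bf v}$ that is controlled by a finite moment of $\Pi(F^K)$; on the complementary region where $B_K$ behaves like the hard-sphere kernel, Cauchy--Schwarz produces the $(D_K)^{1/2}$ term. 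The $t^{\dt}$ factors enter because the bounds on $1/\Pi(F^K)$ on each region use $F^K\ge e^{-t^{\dt}}{\cal E}$, and the finite moment integrals are uniform in $K$ by the $L^1_4$ assumption in (\ref{3.new8}).

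For (III), Proposition \ref{prop2.1} applied to $f=F^K$ (whose mass, energy, $L^1_4$ moment, and positive entropy lower bound are all inherited from those of $f^K$ via linearity and concavity of $S$) gives
$$S(\Og)-S(F^K)\le C\big(({\cal D}_2(G^K))^{1/10}+(D_K(F^K))^{1/4}\big),$$
where $D_{\rm min}(F^K)\le D_K(F^K)$ from $B_{\rm min}\le B_K$. Substituting (\ref{3.38}) and using $K\ge T^{\dt(p-2)}$ to bound $t^{\dt(p-1)}K^{-1}\le t^{\dt}$ on $t\in[1,T]$ collapses the estimate to $S(\Og)-S(F^K)\le C_3[(t^{\dt}D_K)^{1/(10p)}+(t^{\dt}D_K)^{1/4}]=\Psi(t^{\dt}D_K(F^K))$, where the intermediate term $(t^{\dt}D_K)^{1/20}$ is absorbed into a sum of the two surviving powers. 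Inverting $\Psi$ gives $t^{\dt}D_K(F^K)\ge \Psi^{-1}(S(\Og)-S(F^K))$, and plugging this into (\ref{3.26}) from part (I) produces (\ref{3.46}). The main obstacle is the error bookkeeping in (I) and the H\"older split in (II); the specific range $\fr{1}{20}<\ld<\fr{1}{19}$ together with $\dt=1-\ld(10p-1)$ and $2<p<\fr{1}{10}(1+1/\ld)$ is exactly what guarantees $\dt\in(0,1/20)$ and $\dt(p-2)>0$, which both validates the condition $K\ge T^{\dt(p-2)}$ and makes $\Psi^{-1}$ have the algebraic growth needed so that integrating (\ref{3.46}) later will deliver the target $(1+t)^{-\ld}$ decay of $S(\Og)-S(F_t)$.
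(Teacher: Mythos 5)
Your overall route is the paper's: in (I) differentiate $S(F^K(t))$ using the lower bound $F^K\ge e^{-t^{\dt}}{\cal E}$ to control $\log(1/G^K)$, expand the collision flux of the convex combination to isolate $(1-e^{-t^{\dt}})^{-2}D_K(F^K)$ plus $e^{-t^{\dt}}$-weighted remainders; in (II) split according to whether the $K$-cutoff in (\ref{3.new6}) is active and use H\"older with exponents $p,\,p/(p-1)$ on that region and Cauchy--Schwarz on its complement (with the $b_0,\eta$ lower bound of $\Phi$); in (III) combine Proposition \ref{prop2.1}, $D_{\rm min}\le D_K$, $K\ge T^{\dt(p-2)}$ and $p>2$ to absorb the $y^{1/20}$ power, then feed $t^{-\dt}\Psi^{-1}(\cdot)\le D_K$ back into (I). Parts (II) and (III) of your sketch are essentially complete modulo routine computation (one small imprecision: the $t^{\dt}$ factors in (II) come from bounding the logarithm $\log\frac{G'G'_*}{GG_*}$ by $Ct^{\dt}\sqrt{1+|{\bf v}|^2+|{\bf v}_*|^2}$ via $G^K\gtrsim e^{-t^{\dt}}{\cal E}$, not from bounds on $1/\Pi(F^K)$, which is anyway $\le 1$).

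The genuine gap is in part (I), in the sentence claiming that all remainder terms are handled ``using the logarithm bound together with the $L^1_2$ conservation laws.'' After expanding $F'F'_*(1+F+F_*)-FF_*(1+F'+F'_*)$ in powers of $e^{-t^{\dt}}$, one of the cross terms is of the form $(f'{\cal E}'_*+f'_*{\cal E}')(f+f_*)-(f{\cal E}_*+f_*{\cal E})(f'+f'_*)$, i.e.\ it contains products such as $f({\bf v})f({\bf v}'){\cal E}({\bf v}_*')$ in which the two copies of $f$ sit at \emph{mismatched} (pre- and post-collisional) velocities. Bounding $B_K$ by the hard-sphere kernel and the logarithm by $Ct^{\dt}\sqrt{1+|{\bf v}|^2+|{\bf v}_*|^2}$ leaves the cubic collision integral $\int_{\bRRS}|({\bf v}-{\bf v}_*)\cdot\og|\sqrt{1+|{\bf v}|^2+|{\bf v}_*|^2}\,f\,f'\,{\cal E}'_*\,{\rm d}\og{\rm d}{\bf v}_*{\rm d}{\bf v}$, and this is \emph{not} controlled by $\|f\|_{L^1_2}$ or even $\|f\|_{L^1_4}$ uniformly over anisotropic densities (this is precisely the divergence phenomenon for cubic integrals recalled in the introduction, cf.\ \cite{Lu2004}); since the constant $C_1$ in (\ref{3.26}) must depend only on $N,E,S_0,C_0,b_0,\eta,\ld$, a moment bound alone cannot close this term. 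The paper's proof needs two extra ideas here: first a symmetrization in $({\bf v},{\bf v}_*)$ and $\sg\to-\sg$ reducing the term to $ff'({\cal E}'_*-{\cal E}_*)$, so that the Lipschitz/monotonicity cancellation $({\cal E}'_*-{\cal E}_*)_{+}\le \alpha_0\beta_0 e^{-\beta_0|{\bf v}'_*|}|{\bf v}-{\bf v}'|$ produces both an extra factor $|{\bf v}-{\bf v}'|$ and exponential decay in the post-collisional variable; second, the Carleman-type change of variables (formula (\ref{PSI}) and Lemma \ref{lemma3.5}) which converts $\int_{\bRS}|{\bf v}-{\bf v}_*||{\bf v}-{\bf v}'|f({\bf v}')e^{-\alpha|{\bf v}'_*|}{\rm d}\sg{\rm d}{\bf v}_*$ into a quantity $\le C\la{\bf v}\ra\|f\|_{L^1}$. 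Without this mechanism the error estimate in (\ref{3.26}) does not follow, so you should either supply an equivalent argument or incorporate this cancellation-plus-change-of-variables step explicitly; the remaining cross terms (those linear in $f$, quadratic in $f$ with matched velocities, or with ${\cal E}'{\cal E}'_*f$, handled via $|{\bf v}'|+|{\bf v}'_*|\ge|{\bf v}_*|$) are indeed elementary as you assert.
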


\begin{proof}  First we see from the above assumptions that $F^K$ satisfy also (\ref{3.new7}), i.e.
\be \int_{{\bR}}(1,{\bf v}, |{\bf v}|^2/2)F^K({\bf v},t){\rm d}{\bf v}= 4\sqrt{2}(N,0,E)\quad
\forall\, t\ge 0,\,\,\forall\, K\in{\mN}\lb{3.conserv}\ee
and so by Proposition \ref{prop2.1} we have
$ S(\Og)-S(F^K(t))\ge 0$  for all $t\ge 0, K\in {\mN}$.
Next  by (\ref{3.22}), concaveness (\ref{S2}), increase of $t\mapsto S(f^K(t))$,
and (\ref{3.new8})  we have
\beas S(F^k(t))\ge (1-e^{-t^{\dt}})S(f^K(t))+
e^{-t^{\dt}}S({\cal E})\ge (1-e^{-t^{\dt}})S_0+
e^{-t^{\dt}}S({\cal E})\quad \forall\, t\ge 1\eeas
which implies that
\be
\inf_{K\in {\mN},t\ge 1}S(F^k(t))\ge \min\{S_0, S({\cal E})\}>0.\lb{3.lbd}\ee
Also we have
\be \sup_{K\in{\mN}, t\ge 1}\|F^K(t)\|_{L^1_4}\le \max\{C_0, \|{\cal E}\|_{L^1_4}\}<\infty.
\lb{3.ubd}\ee
We note also that, since
 $f^K$ satisfy the integrability assumption (\ref{3.new1}),
there are no problems of integrability in
the following derivation.
To simplify notations we denote for any given $K\in{\mN}$
$$ f({\bf v},t)=f^K({\bf v},t),\quad
F({\bf v},t)=F^K({\bf v},t)
,\quad G({\bf v},t)=\fr{F({\bf v},t)}{1+F({\bf v},t)}.$$
In the following all constants $0<C_i<\infty\,(i=1,2,...,27)$ depend only on $N,E, S_0, C_0,
b_0,\eta, p_1, p_2,$ and $\dt$.

{\rm(I)}: Since
\be 0\le \log\Big(\fr{1}{G({\bf v}, t)}\Big)\le\log\Big( 1+\fr{1}{\alpha_0}e^{t^{\dt}+\beta_0|{\bf v}|}\Big)
\le C_{5} (1+t^{\dt})\la {\bf v}\ra\lb{3.new10}\ee
it follows that for almost every ${\bf v}\in {\bR}$,
$t\mapsto s(F({\bf v},t))$ is absolutely continuous on $[0,T]$
(for all $0<T<\infty$)
and so for all $0\le t_1<t_2<\infty$
\beas&& s(F({\bf v},t_2))- s(F({\bf v},t_1))\\
&& =
\int_{t_1}^{t_2}\log\Big(\fr{1}{G({\bf v}, \tau)}\Big)
\Big(\dt \tau^{\dt-1} e^{-\tau^{\dt}}(f({\bf v},\tau)-{\cal E}({\bf v}))
+(1-e^{-\tau^{\dt}})Q_K(f)({\bf v},\tau)
\Big){\rm d}\tau,\\ \\
&&
S(F(t_2))-S(F(t_1))\\
&&=
\int_{t_1}^{t_2}{\rm d}\tau
\int_{{\bR}}\log\Big(\fr{1}{G({\bf v}, \tau)}\Big)
\Big(\dt \tau^{\dt-1} e^{-\tau^{\dt}}(f({\bf v},\tau)-{\cal E}({\bf v}))
+(1-e^{-\tau^{\dt}})Q_K(f)({\bf v},\tau)
\Big){\rm d}{\bf v}.
\eeas
This implies that for any $0<T<\infty$ the entropy $t\mapsto S(F^K(t))$ is absolutely
continuous on $[0,T]$ and for almost every $t\in [0,\infty)$
\be
\fr{{\rm d}}{{\rm d}t}\big(S(\Og)-S(F(t))
\big)=\dt t^{\dt-1} e^{-t^{\dt}}
\int_{{\bR}}({\cal E}-f)\log\big(\fr{1}{G}\big)
{\rm d}{\bf v}-(1-e^{-t^{\dt}})
\int_{{\bR}}Q_K(f)\log\big(\fr{1}{G}\big)
{\rm d}{\bf v}. \lb{3.28}\ee
With the $\sg$-representation we have
\beas&&
\int_{{\bR}}Q_K(f)({\bf v},t)\log\big(\fr{1}{G({\bf v},t)}\big)
{\rm d}{\bf v}\\
&&=\fr{1}{4}\int_{{\bRRS}}\bar{B}_K({\bf v-v}_*,\sg)
\big(f'f_*'(1+f+f_*)-ff_*(1+f'+f_*')\big)
\log\Big(\fr{G'G_*'}{GG_*}\Big)
{\rm d}{\bf v}{\rm d}{\bf v}_*{\rm d}\sg.\eeas
Now we will prove that with an error $O(t^{\dt}e^{-t^{\dt}})$ the right hand side of (\ref{3.28}) is approximately less than $-D_K(F(t))$.
Recalling (\ref{3.2}),(\ref{3.new3}),(\ref{3.new4}),(\ref{3.new5})
we write (with the $\sg$-representation)
$$ D_K(F)=\fr{1}{4}\int_{{\bRRS}}\bar{B}_K\cdot
\big(F'F_*'(1+F+F_*)-FF_*(1+F'+F_*')\big)\log\Big(\fr{G'G_*'}{GG_*}\Big)
{\rm d}{\bf v}{\rm d}{\bf v}_*{\rm d}\sg.$$
Before going on we note that from (\ref{3.new5}) and (\ref{3.kernel2})  we have
\be\fr{|{\bf {\bf v-v}_*}|}{2(4\pi)^2}\min\Big\{
\Phi(|{\bf v-v'}|, |{\bf {\bf v-v}_*'}|),\, (4\pi)^2K|{\bf v-v'}||{\bf {\bf v-v}_*'}|\Big\}
=\bar{B}_K({\bf {\bf v-v}_*},\sg)\le \fr{|{\bf v-v}_*|}{2(4\pi)^2}
\label{3.new11}\ee
For the convex combination  $F=(1-e^{-t^{\dt}})f+ e^{-t^{\dt}}{\cal E}$ we compute
\beas&& F'F_*'(1+F+F_*)- FF_*(1+F'+F_*')
\\
&&=(1-e^{-t^{\dt}})^3\big(f'f_*'(1+f+f_*)-ff_*(1+f'+f_*')\big)
\\
&&+(1-e^{-t^{\dt}})^2e^{-t^{\dt}}\big(f'f_*'(1+{\cal E}+{\cal E}_*)
-ff_*(1+{\cal E}'+{\cal E}_*')\big)
\\
&&+(1-e^{-t^{\dt}})^2e^{-t^{\dt}}\big((f'{\cal E}_*'+f_*'{\cal E}')
-(f{\cal E}_*+f_*{\cal E})\big)
\\
&&+(1-e^{-t^{\dt}})^2e^{-t^{\dt}}\big((f'{\cal E}_*'+f_*'{\cal E}')(f+f_*)
-(f{\cal E}_*+f_*{\cal E})(f'+f_*')\big)
\\
&&+(1-e^{-t^{\dt}})e^{-2t^{\dt}}\big((f'{\cal E}_*'+f_*'{\cal E}')(1+{\cal E}+{\cal E}_*)
-(f{\cal E}_*+f_*{\cal E})(1+{\cal E}'+{\cal E}_*')\big)
\\
&&+(1-e^{-t^{\dt}})e^{-2t^{\dt}}\big({\cal E}'{\cal E}_*'-{\cal E}{\cal E}_*
\big)\\
&&+(1-e^{-t^{\dt}})e^{-2t^{\dt}}\big({\cal E}'{\cal E}_*'(f+f_*)-{\cal E}{\cal E}_*(f'+f_*')
\big)\\
&&+e^{-3t^{\dt}}\big({\cal E}'{\cal E}_*'(1+{\cal E}+{\cal E}_*)-{\cal E}{\cal E}_*(1+{\cal E}'+{\cal E}_*')\big)
\\
&&:=(1-e^{-t^{\dt}})^3\big(f'f_*'(1+f+f_*)-ff_*(1+f'+f_*')\big)
+\Psi_1+\Psi_2+\cdots+\Psi_7\eeas
\beas\Longrightarrow\quad
D_K(F) &=&(1-e^{-t^{\dt}})^3
\int_{{\bR}}Q_K(f)({\bf v},t)\log\big(\fr{1}{G({\bf v},t)}\big)
{\rm d}{\bf v}\\
&+&\fr{1}{4}\sum_{j=1}^7\int_{{\bRRS}}\bar{B}_K\cdot
\Psi_j\log\Big(\fr{G'G_*'}{GG_*}\Big)
{\rm d}{\bf v}{\rm d}{\bf v}_*{\rm d}\sg.\eeas
Comparing this with (\ref{3.28}) we obtain
\bes&&
\fr{{\rm d}}{{\rm d}t}\big(S(\Og)-S(F(t))
\big)=
\dt t^{\dt-1} e^{-t^{\dt}}
\int_{{\bR}}({\cal E}-f)\log\big(\fr{1}{G}\big)
{\rm d}{\bf v}-\fr{1}{(1-e^{-t^{\dt}})^{2}}D_K(F)\nonumber \\
&&
+\fr{1}{4(1-e^{-t^{\dt}})^{2}}\sum_{j=1}^7\int_{{\bRRS}}\bar{B}_K\cdot
\Psi_j\log\Big(\fr{G'G_*'}{GG_*}\Big)
{\rm d}{\bf v}{\rm d}{\bf v}_*{\rm d}\sg,\quad {\rm a.e.}\,\,\, t\ge 1.\lb{3.29}\ees
By definition of $G$ we have
\be  t^{\dt-1} e^{-t^{\dt}}
\int_{{\bR}}({\cal E}-f)\log(\fr{1}{G})
{\rm d}{\bf v}\le C_{6}t^{2\dt-1} e^{-t^{\dt}}
\int_{{\bR}}{\cal E}({\bf v})(1+|{\bf v}|)
{\rm d}{\bf v}\le C_{7}t^{\dt} e^{-t^{\dt}},\quad t\ge 1.\lb{3.30}\ee
From (\ref{3.new10}) we have
\be \Big|\log\Big(\fr{G({\bf v}',t)G({\bf v}_*',t)}{G({\bf v},t)G({\bf v}_*,t)}\Big)\Big|
\le  C_{8}t^{\dt}\sqrt{1+|{\bf v}|^2+|{\bf v}_*|^2},\quad t\ge 1.\lb{3.31}\ee
Also from $|{\bf v-v}_*|=|{\bf v}'-{\bf v}_*'|, |{\bf v}|^2+|{\bf v}_*|^2
=|{\bf v}'|^2+|{\bf v}_*'|^2$ we have
\be
|{\bf v-v}_*|\sqrt{1+|{\bf v}|^2+|{\bf v}_*|^2}\le
\min\big\{\la {\bf v}\ra^2\la {\bf v}_*\ra^2,
\la {\bf v}'\ra^2\la {\bf v}_*'\ra^2\big\}.\lb{3.32}\ee
From (\ref{3.new11}), (\ref{3.31}),(\ref{3.32}) we obtain
\bes&&\fr{1}{4(1-e^{-t^{\dt}})^{2}}
\sum_{1\le j\le 7, j\neq 3}\int_{{\bRRS}}\bar{B}_K|
\Psi_j|\Big|\log\Big(\fr{G'G_*'}{GG_*}\Big)
\Big|{\rm d}{\bf v}{\rm d}{\bf v}_*{\rm d}\sg \nonumber \\
&&\le C_{9}e^{-t^{\dt}}t^{\dt}\Big((\|f(t)\|_{L^1_2}+\|{\cal E}\|_{L^1_2})^2+
\int_{{\bRRS}}
\wt{{\cal E}}'\wt{{\cal E}}_*'f
{\rm d}{\bf v}{\rm d}{\bf v}_*{\rm d}\sg\Big),\quad t\ge 1
\lb{3.33}\ees
where
$\wt{{\cal E}}({\bf v})=\la {\bf v}\ra^2{\cal E}({\bf v})$.
By definition of ${\cal E}$ we have
$\wt{{\cal E}}({\bf v})\le \alpha_0(1+4/\beta_0)^2 e^{-\fr{\beta_0}{2}|{\bf v}|}$.
Since $|{\bf v}'|+|{\bf v}_*'|
 \ge \sqrt{|{\bf v}'|^2+|{\bf v}_*'|^2}=\sqrt{|{\bf v}|^2+|{\bf v}_*|^2}
\ge |{\bf v}_*|$, this gives
$\wt{{\cal E}}'\wt{{\cal E}}_*'\le C_{10} e^{-\fr{\beta_0}{2}|{\bf v}_*|}$. So
\be\int_{{\bRRS}}
\wt{{\cal E}}'\wt{{\cal E}}_*'f
{\rm d}{\bf v}{\rm d}{\bf v}_*{\rm d}\sg
\le C_{10}4\pi\int_{{\bRR}}e^{-\fr{\beta_0}{2}|{\bf v}_*|}
f({\bf v},t){\rm d}{\bf v}{\rm d}{\bf v}_*
=C_{11}\|f(t)\|_{L^1}.\lb{3.34}\ee
Now we estimate the integral involving $\Psi_3$. We write
\beas&&(f'{\cal E}_*'+f_*'{\cal E}')(f+f_*)-(f{\cal E}_*+f_*{\cal E})(f'+f_*')
\\
&&=ff'({\cal E}_*'-{\cal E}_*)+f_*f'({\cal E}_*'-{\cal E})
+ff_*'({\cal E}'-{\cal E}_*)+f_*f_*'({\cal E}'-{\cal E}).
\eeas
By change of variables $({\bf v}, {\bf v}_*)\to
({\bf v}_*, {\bf v})$ and
$\sg\to -\sg$ and notice that $\bar{B}_K({\bf v-v}_*,\sg)=\bar{B}_K({\bf v}_*-{\bf v},\sg), \bar{B}_K({\bf v-v}_*,-\sg)=\bar{B}_K({\bf v-v}_*,\sg)$
(see (\ref{3.new5}) and recall that $\Phi(r,\rho)=\Phi(\rho,r)$), we have
\beas&& \fr{1}{4(1-e^{-t^{\dt}})^{2}}\int_{{\bRRS}}\bar{B}_K\cdot
\Psi_3\log\Big(\fr{G'G_*'}{GG_*}\Big)
{\rm d}{\bf v}{\rm d}{\bf v}_*{\rm d}\sg\\
&&=e^{-t^{\dt}}
\int_{{\bRRS}}\bar{B}_K\cdot ff'({\cal E}_*'-{\cal E}_*)
\log\Big(\fr{G'G_*'}{GG_*}\Big)
{\rm d}{\bf v}{\rm d}{\bf v}_*{\rm d}\sg
\\
&&\le 2e^{-t^{\dt}}
\int_{{\bRRS}}\bar{B}_K\cdot ff'({\cal E}_*'-{\cal E}_*)_{+}
\log\Big(\fr{G'G_*'}{GG_*}\Big)\Big|_{G'G_*'>GG_*}
{\rm d}{\bf v}{\rm d}{\bf v}_*{\rm d}\sg
\\
&&\le \fr{C_{12}}{(4\pi)^2}t^{\dt}e^{-t^{\dt}}
\int_{{\bRRS}}|{\bf v-v}_*|ff'({\cal E}_*'-{\cal E}_*)_{+}\sqrt{1+|{\bf v}|^2+|{\bf v}_*|^2}\,
{\rm d}{\bf v}{\rm d}{\bf v}_*{\rm d}\sg.\eeas
Observe that ${\cal E}_*'-{\cal E}_*>0 \Longrightarrow |{\bf v}_*'|<|{\bf v}_*| \Longrightarrow $
$${\cal E}_*'-{\cal E}_*\le \alpha_0 e^{-\beta_0|{\bf v}_*'|}\beta_0(|{\bf v}_*|-|{\bf v}_*'|)
\le \alpha_0\beta_0 e^{-\beta_0|{\bf v}_*'|}|{\bf v}'-{\bf v}|.$$
This together with
$\sqrt{1+|{\bf v}|^2+|{\bf v}_*|^2}\le 1+|{\bf v}'|+|{\bf v}_*'|$ and $ 1+\fr{\beta_0}{2}|{\bf v}_*'|\le  e^{\fr{\beta_0}{2}|{\bf v}_*'|} $ gives
$$({\cal E}_*'-{\cal E}_*)_{+}\sqrt{1+|{\bf v}|^2+|{\bf v}_*|^2}
\le C_{13}|{\bf v}'-{\bf v}|\la {\bf v}'\ra e^{-\fr{\beta_0}{2}|{\bf v}_*'|}.$$
Then using Lemma \ref{lemma3.5} we have
\beas&&
\int_{{\bRRS}}|{\bf v-v}_*|ff'({\cal E}_*'-{\cal E}_*)_{+} \sqrt{1+|{\bf v}|^2+|{\bf v}_*|^2}
{\rm d}{\bf v}{\rm d}{\bf v}_*{\rm d}\sg\\
&&
\le C_{13}
\int_{{\bR}}f({\bf v},t)\Big(
\int_{{\bRS}}|{\bf v-v}_*||{\bf v}'-{\bf v}|\la {\bf v}'\ra f({\bf v}', t)
 e^{-\fr{\beta_0}{2}|{\bf v}_*'|}
{\rm d}{\bf v}_*{\rm d}\sg\Big){\rm d}{\bf v}
\\
&&\le C_{14}(\|f(t)\|_{L^1_1})^2\le C_{14}\|f(t)\|_{L^1}
\|f(t)\|_{L^1_2}\eeas
and thus
\be\fr{1}{4(1-e^{-t^{\dt}})^{2}}\int_{{\bRRS}}\bar{B}_K\cdot
\Psi_3\log\Big(\fr{G'G_*'}{GG_*}\Big)
{\rm d}{\bf v}{\rm d}{\bf v}_*{\rm d}\sg
\le  C_{15}t^{\dt}e^{-t^{\dt}}\|f(t)\|_{L^1}
\|f(t)\|_{L^1_2}.\lb{3.35}\ee
Combining all estimates (\ref{3.29}), (\ref{3.30}),(\ref{3.33}), (\ref{3.34})
 (\ref{3.35}), and recalling the conservation of mass and energy
 $\|f(t)\|_{L^1}=4\pi\sqrt{2}N, \|f(t)\|_{L^1_2}=4\pi\sqrt{2}(N+2E)$ we obtain
(\ref {3.26}).
\vskip2mm

(II):
In the following we will use $\sg$-representation  (\ref{3.new5}) of $\bar{B}_K({\bf v-v}_*,\sg)$
and the bounds
\be b_0\min\{1,\,|{\bf {\bf v-v}_*}|^{2\eta}\}\le
\Phi(|{\bf v-v'}|, |{\bf {\bf v-v}_*'}|)\le 1\lb{kernel4}\ee
which comes from the assumption (\ref{1.6}).
Let
$${\cal V}=\Big\{({\bf v},{\bf v}_*,\sg)\in {\bRRS}\,\Big|\, \Phi(|{\bf v-v'}|,
 |{\bf {\bf v-v}_*'}|)\ge (4\pi)^2K|{\bf v-v'}||{\bf {\bf v-v}_*'}|\Big\},$$
${\cal V}^c={\bRRS}\setminus {\cal V}$, $q=p/(p-1)$, and recall
$ \la {\bf v} \ra=(1+|{\bf v}|^2)^{1/2}$. Then
using H\"{o}lder inequality we have
\bes&&{\cal D}_2(G(t))=
\fr{1}{4}\int_{{\bRRS}}\la {\bf v-v}_*\ra^2
\Gm(G'G_*', GG_*)
{\rm d}{\bf v}{\rm d}{\bf v}_*{\rm d}\sg \nonumber
\\
&&\le\fr{1}{4}
\Big(\int_{{\cal V}}
\fr{\la {\bf v-v}_*\ra^{2q}}{(\bar{B}_K)^{{q}/{p}}}\Gm(G'G_*', GG_*)
{\rm d}{\bf v}{\rm d}{\bf v}_*{\rm d}\sg\Big)^{1/{q}} \nonumber
\Big(\int_{{\cal V}}\bar{B}_K\Gm(G'G_*', GG_*)
{\rm d}{\bf v}{\rm d}{\bf v}_*{\rm d}\sg\Big)^{1/{p}} \nonumber
\\
&&+ \fr{1}{4}
\Big(\int_{{\cal V}^c}
\fr{\la {\bf v-v}_*\ra^{2q}}{(\bar{B}_K)^{{q}/{p}}}\Gm(G'G_*', GG_*)
{\rm d}{\bf v}{\rm d}{\bf v}_*{\rm d}\sg\Big)^{1/{q}}\Big(
\int_{{\cal V}^c }
\bar{B}_K \Gm(G'G_*', GG_*)
{\rm d}{\bf v}{\rm d}{\bf v}_*{\rm d}\sg
\Big)^{1/{p}} \nonumber\\
&&\le (I_{q}(t))^{1/{q}}(D_K(F(t))^{1/{p}}+(J_{q}(t))^{1/{q}}(D_K(F(t))^{1/{p}}\lb{DI}\ees
where we have used
$$\fr{1}{4}\int_{{\bRRS}}\bar{B}_K({\bf v-v}_*,\sg)\Gm(G'G_*', GG_*)
{\rm d}{\bf v}{\rm d}{\bf v}_*{\rm d}\sg
\le D_K(F(t)).$$
Let us compute using the equality in (\ref{3.new11}) that
\beas
I_{q}(t) &=& \fr{1}{4}\int_{\fr{1}{(4\pi)^2}\Phi(|{\bf v-v'}|, |{\bf {\bf v-v}_*'}|)\ge K|{\bf v-v'}||{\bf {\bf v-v}_*'}|}
\fr{\la{\bf v-v}_*\ra^{2q}}{(\bar{B}_K({\bf v-v}_*,\sg))^{q/{p}}}\Gm(G'G_*', GG_*)
{\rm d}{\bf v}{\rm d}{\bf v}_*{\rm d}\sg
\\
&=&\fr{1}{2}\int_{{\bRRS}}1_{\{GG_*>G'G_*'\}}
\fr{\la{\bf v-v}_*\ra^{2q}}{[\fr{ K}{2}|{\bf v-v}_*||{\bf v-v'}||{\bf {\bf v-v}_*'}|]^{q/p}}
\Gm(G'G_*', GG_*)
{\rm d}{\bf v}{\rm d}{\bf v}_*{\rm d}\sg.
\eeas
By definition of $\Gm(\cdot,\cdot)$ and using (\ref{3.31}) we have
\be GG_*>G'G_*'\,\Longrightarrow\,
\Gm(GG_*,G'G_*')\le C_{16} t^{\dt}GG_*\sqrt{1+|{\bf v}|^2+|{\bf v}_*|^2}.
\lb{log}\ee
Also we have
$$|{\bf v-v'}||{\bf {\bf v-v}_*'}|=\fr{1}{2}|{\bf v-v}_*|^2\sqrt{1-({\bf n}\cdot\sg)^2},\quad
{\bf n}=\fr{{\bf v-v}_*}{|{\bf v-v}_*|}$$
and $\fr{q}{p}=q-1<1$. So
\bes I_{q}(t) &\le & \fr{1}{2}C_{17}t^{\dt}\big(\fr{4}{K}\big)^{q-1}
\int_{{\bRRS}}
\fr{\la {\bf v-v}_*\ra^{2q}GG_*\sqrt{1+|{\bf v}|^2+|{\bf v}_*|^2}}{[|{\bf v-v}_*|^3
\sqrt{1-({\bf n}\cdot\sg)^2 }]^{q-1}}
{\rm d}{\bf v}{\rm d}{\bf v}_*{\rm d}\sg\nonumber
\\
&=& C_{18}t^{\dt}K^{-(q-1)}
\int_{{\bRR}}
\fr{\la {\bf v-v}_*\ra^{2q}GG_*\sqrt{1+|{\bf v}|^2+|{\bf v}_*|^2}}{|{\bf v-v}_*|^{3(q-1)}}
{\rm d}{\bf v}{\rm d}{\bf v}_*.\lb{3.39}
\ees
Next we have
\be \sqrt{1+|{\bf v}|^2+|{\bf v}_*|^2}
\le 2(\la{\bf v}\ra+ \la {\bf v-v}_*\ra)\le 4\la{\bf v}\ra \la {\bf v-v}_*\ra,\lb{3.40}\ee
\be
\int_{{\bR}}\fr{\la{\bf v-v}_*\ra^{2q+1}}{|{\bf v-v}_*|^{3(q-1)}}
G({\bf v}_*,t){\rm d}{\bf v}_*
\le C_{19}\big(1+\la {\bf v}\ra^{4-q}\|G(t)\|_{L^1_{4-q}}\big)\lb{3.41}
\ee
where we have used $1<q<2$ and $0<G(\cdot)<1$.  From
(\ref{3.39}), (\ref{3.40}), and (\ref{3.41}) we obtain
\be I_{q}(t)\le
C_{20}t^{\dt} K^{-(q-1)}\big(1+\|G(t)\|_{L^1_{4-q}}\big)
\|G(t)\|_{L^1_{5-q}}\le C_{21}t^{\dt}K^{-(q-1)},\quad t\ge 1
\lb{3.44}\ee
where we used $0\le G(\cdot,t)\le F(\cdot,t)\le f(\cdot, t)+{\cal E}$
so that
$$
\|G(t)\|_{L^1_s}\le
\|f(t)\|_{L^1_s}+\|{\cal E}\|_{L^1_s}\le C_{22}\qquad \forall\, t\ge 1,\,\, 0\le s\le 4.$$
Next using (\ref{kernel4}),
(\ref{log}), the first inequality in (\ref{3.40}), and the inequality
$\Phi(|{\bf v-v'}|, |{\bf {\bf v-v}_*'}|)\ge b_0\min\{1, |{\bf v-v}_*|^{2\eta}\}$ we compute as the above that
\bes&& J_{q}(t)=
\fr{1}{4}\int_{\fr{1}{(4\pi)^2}\Phi(|{\bf v-v'}|, |{\bf {\bf v-v}_*'}|)< K|{\bf v-v'}||{\bf {\bf v-v}_*'}|}
\fr{\la {\bf v-v}_*\ra^{2q}}{(\bar{B}_K({\bf v-v}_*,\sg))^{q/p}}
\Gm(G'G_*',GG_*)
{\rm d}{\bf v}{\rm d}{\bf v}_*{\rm d}\sg\nonumber
\\
&&\le C_{23}\int_{{\bRRS}}\fr{\la {\bf v-v}_*\ra^{2q}}{
\big(|{\bf v-v}_*|\min\{1, |{\bf v-v}_*|^{2\eta}\}\big)^{q/p}}
\Gm(G'G_*',GG_*){\rm d}{\bf v}{\rm d}{\bf v}_*{\rm d}\sg\nonumber
\\
&&=2C_{23}\int_{{\bRRS}, GG_*>G'G_*'}\fr{\la {\bf v-v}_*\ra^{2q}}{
\big(|{\bf v-v}_*|\min\{1, |{\bf v-v}_*|^{2\eta}\}\big)^{q/p}}
\Gm(G'G_*',GG_*){\rm d}{\bf v}{\rm d}{\bf v}_*{\rm d}\sg\nonumber
\\
&&\le C_{24}t^{\dt}\int_{{\bRRS}}\fr{\la {\bf v-v}_*\ra^{2q}(\la {\bf v}\ra+
\la {\bf v-v}_*\ra)}{\big(|{\bf v-v}_*|\min\{1, |{\bf v-v}_*|^{2\eta}\}\big)^{q/p}}GG_*
{\rm d}{\bf v}{\rm d}{\bf v}_*{\rm d}\sg\nonumber
\\
&&\le
C_{25}t^{\dt}\big(\|G(t)\|_{L^1_1}+\|G(t)\|_{L^1}\|G(t)\|_{L^1_{2+q}}\big)\le C_{26}t^{\dt},\quad t\ge 1\lb{3.45}\ees
where we have used the condition $(1+2\eta)\fr{q}{p}<3$ and $0<G(\cdot)<1$. Inserting (\ref{3.44}), (\ref{3.45}) into (\ref{DI}) and using
$\fr{q-1}{q}=\fr{1}{p}$ we obtain (\ref{3.38}).
\vskip2mm

(III): Let $B_{\rm min}({\bf v-v}_*,\og)$ be given by (\ref{cutoff}) with
$\Psi_0(r)=\min\{1, r^{1+2\eta}\}$, and let $D_{\rm min}(\cdot)$ be the corresponding
entropy dissipation. By Assumption \ref{assp}  and (\ref{3.new6}),
it is easily seen that $B_{\rm min}({\bf v-v}_*,\og)\le B_{K}({\bf v-v}_*,\og)$ so that
$D_{\rm min}(\cdot)\le D_K(\cdot)$ for all $K\ge 1$.

From (\ref{3.conserv}) and the lower and upper bounds  (\ref{3.lbd}), (\ref{3.ubd})
we see that Proposition \ref{prop2.1} applies to
$F(\cdot,t)=F^K(\cdot,t), G(\cdot,t)=G^K(\cdot,t)$ (for any $t\ge 1, K\in{\mN}$) so that
\be S(\Og)-S(F(t)))\le
C_{27}\Big( \big({\cal D}_2(G(t))\big)^{\fr{1}{10}}+\big(D_{\rm min}(F(t))\big)^{\fr{1}{4}}
\,\Big).\lb{3.47}\ee
For any $1<T<\infty$ and any $K\ge  T^{\dt(p-2)}$ we have from
part {\rm(II)} that for any $t\in [1,T]$
$$({\cal D}_2(G(t)))^{\fr{1}{10}}
\le (C_{2})^{\fr{1}{10}}2\big(t^{\dt}D_K(F(t))\big)^{\fr{1}{10p}}$$
hence from (\ref{3.47}) and $D_{\rm min}(\cdot)\le D_K(\cdot)$ we obtain
$$S(\Og)-S(F(t))\le
C_{3}\Big(\big(t^{\dt}D_K(F(t))\big)^{\fr{1}{10p}}+
\big(t^{\dt}D_K(F(t))\big)^{\fr{1}{4}}\Big)=\Psi\big(t^{\dt}D_K(F(t))\big)$$
that is,
$$t^{-\dt}\Psi^{-1}\big(S(\Og)-S(F(t))\big)\le D_K(F(t)),\quad t\in [1,T]$$
Inserting this inequality into (\ref{3.26}) in part {\rm(I)} gives (\ref{3.46}). The proof is complete.
\end{proof}
\vskip3mm

The next lemma deals with a differential inequality that implies an algebraic rate of decay.

\begin{lemma}\lb{lemma3.2}Let $ a>0, b>0, \alpha>0, 0<\beta\le \gm< 1, \dt<1,
k\ge 0$ and $u_0\ge 0$ be finite constants,
$ \Psi(y)=a( y^{\beta}+y^{\gm}), y\in[0,\infty)$, and
let $\Psi^{-1}(u), u\in [0,\infty), $ be the inverse function of $ \Psi(y)$.
Given any $T>1$. Assume that $t\mapsto u(t)\in [0,\infty)$ is absolutely continuous on $[1, T]$ satisfying
$u(1)\le u_0$ and
$$\fr{{\rm d}}{{\rm d}t}u(t)\le - t^{-\dt}\Psi^{-1}(u(t))+b t^{k} e^{-t^{\alpha}}\quad {\rm a.e.}
\quad t\in [1,T].$$
Then
$$u(t)\le C t^{-\ld}\qquad \forall\, t\in [1,T]$$
where $\ld=\fr{\beta }{1-\beta}(1-\dt)$ and
$0<C<\infty$ depends only on $a,b,\alpha, \beta, \gm, \dt, k $ and $u_0$.
In particular $C$ does not depend on $T$.
\end{lemma}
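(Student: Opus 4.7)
The plan is to build a supersolution of the form $\bar u(t) = C t^{-\ld}$ and compare. The exponent $\ld = \fr{\beta(1-\dt)}{1-\beta}$ is dictated by the algebraic identity
$$\ld + 1 = \dt + \ld/\beta,$$
which makes the two powers of $t$ coming from $\bar u'(t) = -C\ld t^{-\ld-1}$ and from $t^{-\dt}\Psi^{-1}(\bar u(t)) \sim t^{-\dt-\ld/\beta}$ match exactly; this is the only place the specific value of $\ld$ plays a role.

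Two preliminary reductions will simplify the inequality. First, dropping the non-positive term $-t^{-\dt}\Psi^{-1}(u(t))$ and integrating from $1$ to $t$ yields the crude uniform bound $u(t) \le u_0 + b\int_{1}^{\infty} s^{k} e^{-s^{\alpha}}\,{\rm d}s =: M$ on $[1, T]$, with $M$ finite and independent of $T$ because $\alpha>0$. Second, since $\beta\le\gm$, one has $\Psi(y) = a(y^{\beta} + y^{\gm}) \sim a y^{\beta}$ as $y \to 0^+$, so $\Psi^{-1}(u)/u^{1/\beta}$ is continuous and positive on $(0, M]$ with positive limit at $0^+$, and is therefore bounded below on $(0, M]$ by some constant $c_{*}>0$. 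Consequently,
$$u'(t) \le - c_{*} t^{-\dt} u(t)^{1/\beta} + b t^{k} e^{-t^{\alpha}} \qquad \text{a.e.}\ t\in [1, T].$$

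To construct the supersolution, plug $\bar u(t) = C t^{-\ld}$ into the left-hand side of the strengthened inequality and use the identity above:
$$\bar u'(t) + c_{*} t^{-\dt} \bar u(t)^{1/\beta} = \bigl(c_{*} C^{1/\beta} - C\ld\bigr)\, t^{-(\ld+1)}.$$
Because $1/\beta > 1$, first choose $C_{1}$ so large that $c_{*} C_{1}^{1/\beta} - C_{1}\ld \ge 1$; then pick $t_{1}\ge 1$ so large that $b t_{1}^{k+\ld+1} e^{-t_{1}^{\alpha}} \le 1$ (possible since the exponential beats any polynomial). On $[t_{1}, T]$ this secures the supersolution inequality $\bar u'(t) + c_{*} t^{-\dt} \bar u(t)^{1/\beta} \ge b t^{k}e^{-t^{\alpha}}$. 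Enlarging the constant to $C := \max(C_{1}, M t_{1}^{\ld})$ preserves the supersolution property (its left-hand side is non-decreasing in $C$ at each fixed $t$) and secures the initial ordering $\bar u(t_{1}) = C t_{1}^{-\ld} \ge M \ge u(t_{1})$.

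The main subtlety is the comparison step, because the nonlinearity $u\mapsto -c_{*} t^{-\dt} u^{1/\beta}$ is strictly \emph{decreasing} in $u$, so the textbook ``supersolution lies above subsolution'' result does not apply verbatim; one needs an ad hoc argument. Set $w := u - \bar u$ and suppose for contradiction that $w(s_{0}) > 0$ at some $s_{0}\in (t_{1}, T]$. Let $\tau := \sup\{t\in[t_{1}, s_{0}]: w(t)\le 0\}$, so that $w(\tau)=0$ and $w > 0$ on $(\tau, s_{0}]$. On that subinterval, strict monotonicity gives $w'(t) \le -c_{*} t^{-\dt}\bigl(u(t)^{1/\beta} - \bar u(t)^{1/\beta}\bigr) < 0$ a.e., hence $w(s_{0}) = \int_{\tau}^{s_{0}} w'(s)\,{\rm d}s < 0$, contradicting $w(s_{0}) > 0$. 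Therefore $u(t) \le C t^{-\ld}$ on $[t_{1}, T]$; and on $[1, t_{1}]$ the same bound follows trivially from $u(t) \le M \le M t_{1}^{\ld}\cdot t^{-\ld} \le C t^{-\ld}$. All constants depend only on $a, b, \alpha, \beta, \gm, \dt, k, u_{0}$, and in particular not on $T$.
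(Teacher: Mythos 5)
Your proof is correct, and it rests on the same core device as the paper's: the barrier $U(t)=Ct^{-\lambda}$, whose exponent is forced by matching $t^{-\lambda-1}$ against $t^{-\delta-\lambda/\beta}$, followed by a comparison. The implementations differ in a way worth noting. You first integrate the crude inequality $u'\le bt^{k}e^{-t^{\alpha}}$ to get a uniform bound $M$, use it to replace $\Psi^{-1}(u)$ by a single power $c_{*}u^{1/\beta}$, run the barrier only on $[t_{1},T]$ for a large threshold $t_{1}$, cover $[1,t_{1}]$ by the crude bound, and compare by a first-crossing contradiction; this works precisely because the right-hand side is non-increasing in $u$ (contrary to your remark, that monotonicity is the favorable, one-sided-Lipschitz case for the textbook comparison, so your worry is unfounded, though your ad hoc argument is fine). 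The paper instead keeps $\Psi^{-1}$ intact, absorbs both powers $y^{\beta},y^{\gamma}$ and the forcing term into one explicit condition on $C$ (through the maxima $m^{*}(\beta),m^{*}(\gamma)$ of $t^{\,\cdot}e^{-\,\cdot\,t^{\alpha}}$ over $t\ge1$), so that $U$ is a supersolution on all of $[1,T]$ with no threshold time, and concludes by the integrated positive-part inequality $(u-U)_{+}(t)\le\int_{1}^{t}(\cdots)\,1_{\{u>U\}}\,{\rm d}\tau\le 0$, using only that $\Psi^{-1}$ is increasing. Your route is more modular and makes the origin of $\lambda$ transparent; the paper's gives an explicit admissible $C$ and avoids splitting the time interval. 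Two small points in your write-up should be tightened (both are one-line fixes, not gaps): choose $t_{1}$ so that $bt^{k+\lambda+1}e^{-t^{\alpha}}\le 1$ for \emph{all} $t\ge t_{1}$, not merely at $t=t_{1}$, since this function is not monotone near $t=1$; and justify enlarging the constant to $\max(C_{1},Mt_{1}^{\lambda})$ by choosing $C_{1}$ so large that $c_{*}C^{1/\beta}-C\lambda\ge 1$ for every $C\ge C_{1}$, because the map $C\mapsto c_{*}C^{1/\beta}-C\lambda$ is only eventually, not globally, non-decreasing.
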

\begin{proof}  Let
$$m^*({\beta})=\max_{t\ge 1}t^{\beta(k+\dt)+\ld} e^{-\beta t^{\alpha}},\quad
m^*({\gm})=\max_{t\ge 1}t^{\gm(k+\dt)+\ld} e^{-\gm t^{\alpha}}$$
and choose a least constant $C>0$ such that $C\ge u_0$ and
\be\fr{{\ld}^{\beta}}{C^{1-\beta}}+
\fr{{\ld}^{\gm}}{C^{1-\gm}}+ \fr{{b}^{\beta}m^*({\beta})}{C}
+ \fr{{b}^{\gm}m^*(\gm)}{C}
\le \fr{1}{a}.\lb{3.50}\ee
Let
$U(t)=C t^{-\ld}.$
It is easily checked that (\ref{3.50}) and $\ld=\fr{\beta}{1-\beta}(1-\dt)>0$
imply
\beas\fr{{\rm d}}{{\rm d}t}U(t)+t^{-\dt}\Psi^{-1}(U(t))-bt^{k} e^{-t^{\alpha}}\ge 0\qquad \forall\, t\ge 1.
\eeas
From this and $u(1)\le U(1)$ and that $u(t)$ is absolutely continuous on $[1, T]$, we have
\beas&& \big(u(t)-U(t)\big)_{+}
=\int_{1}^{t}\Big(\fr{{\rm d}}{{\rm d}\tau}u(\tau)-
\fr{{\rm d}}{{\rm d}\tau}U(\tau)\Big)1_{\{u(\tau)>U(\tau)\}}{\rm d}\tau
\\
&&\le\int_{1}^{t}\Big(- \tau^{-\dt}\Psi^{-1}(u(\tau))+b \tau^{k} e^{-\tau^{\alpha}}
-\fr{{\rm d}}{{\rm d}\tau}U(\tau)\Big)1_{\{u(\tau)>U(\tau)\}}{\rm d}\tau
\\
&&\le\int_{1}^{t}\Big(- \tau^{-\dt}\Psi^{-1}(U(\tau))+b\tau^{k} e^{-\tau^{\alpha}}
-\fr{{\rm d}}{{\rm d}\tau}U(\tau)\Big)1_{\{u(\tau)>U(\tau)\}}{\rm d}\tau
\le 0\qquad \forall\, t\in[1,T].\eeas
Here we used the increase of $u\mapsto \Psi^{-1}(u) $ on $[0,\infty)$. Thus $u(t)\le U(t)$ for all $t\in [1,T].$
\end{proof}
\vskip3mm

Finally we can state and prove the main result of this section:

\begin{theorem}\label{theorem3.2} Let $B({\bf {\bf v-v}_*},\omega)$ satisfy Assumption \ref{assp}, let $F_0\in {\mathcal B}_1^{+}({\mathbb R}_{\ge 0})$ satisfy
$N(F_0)>0, E(F_0)>0$, and let $F_{\rm be}$ be
the unique Bose-Einstein distribution with the same mass $N=N(F_0)$ and energy $E=E(F_0)$.
Let $\ld$ satisfy $\fr{1}{10\max\{2, \fr{4+2\eta}{3}\}}<\ld<\fr{1}{10\max\{2, \fr{4+2\eta}{3}\}-1}$.
Let $F_t$ be a conservative measure-valued isotropic solution $F_t$ obtained in
Proposition \ref{prop3.2} with the initial datum $F_0$.
Then
$S(F_t)\ge S(F_0)$, $S(F_t)>0 $ for all $t>0$, and 
\be 0\le S(F_{\rm be})-S(F_t)\le C(1+t)^{-\ld}\quad \forall\, t\ge 0\lb{e-conv}\ee
where  $C\in(0,\infty)$ depends only on $N,E, b_0, \eta$
and $\ld$.
\end{theorem}

\begin{proof}  
As mentioned in the theorem, $F_t$
is a weak limit of a subsequence $f^{K_n}(\cdot,t)$ of  $f^K(\cdot,t)$ which are the
isotropic approximate  solutions of  Eq.(\ref{Equation1}) on
${\bR}\times [0,\infty)$  obtained in
Proposition \ref{prop3.2} with the initial data $f^K_0$
which together with $f^K$
have all properties in  part (I) and part (II) of Proposition \ref{prop3.2}.
Since $t_0$ in (\ref{entropy3.6}) is arbitrary, this implies that $S(F_t)>0$ for all $t>0$.
We also recall part {\rm(I)} of Lemma \ref{lemma3.1} that ensures
$S(F_t)\le S(F_{{\rm be}})$ for all $t\in[0,\infty)$.
From (\ref{mprod}) and (\ref{entropy3.5}) and taking $t_0=1$ we have
$$\inf\limits_{K\in {\mN}, t\ge 1}S(f^K(t)\ge S_0:=S_*(1)>0,\quad
\sup\limits_{K\in{\mN}, t\ge 1}\|f^K(t)\|_{L^1_{4}}\le C_0<\infty.$$
Note that since these constants $S_0, C_0$ depend only on $N, E,b_0$ and $\eta$, it follows that
$f^K_0, f^K$ satisfy all assumptions in Proposition \ref{prop4.2}
and, for the present case, the constants $C_1,C_2,...,C_{27}$ in
the proof of Proposition \ref{prop4.2} and thus the constants $C_{28},C_{29}, C_{30}\in (0,\infty)$ appeared below, all depend only on $N,E,
b_0,\eta $ and $\ld$.
Let $F^K({\bf v},t)=(1-e^{-t^{\dt}})f^K(|{\bf v}|^2/2,t)+
e^{-t^{\dt}}{\cal E}({\bf v})$ be defined in Proposition \ref{prop4.2}. Then
with the notations in Proposition \ref{prop4.2} and applying
Lemma \ref{lemma3.2} to part {\rm(III)} of Proposition \ref{prop4.2}  with functions $0\le u(t)=S(\Og)-S(F^K(t))\le S(\Og)$ and $\Psi(y)=C_{3}\big( y^{\fr{1}{10p}}
+y^{\fr{1}{4}}\big)$  we have
$$S(\Og)-S(F^K(t))\le  C_{28}t^{-\ld},\qquad \forall\, t\in [1, T].$$
On the other hand, from (\ref{S3}), (\ref{S1}) we have
\beas S(F^K(t))\le S\big((1-e^{-t^{\dt}})f^K(t)\big)+S\big(e^{-t^{\dt}}{\cal E}\big)\le
S(f^K(t))+C_{29} t^{\dt} e^{-t^{\dt}}\le \quad \forall\, t\ge 1.\eeas
Thus, for any $T>1$,
\be S(\Og)-S(f^K(t))
\le C_{28}t^{-\ld}+C_{29} t^{\dt} e^{-t^{\dt}}\le
C_{30} t^{-\ld}
\quad \forall\,t\in [1, T],\,\,\forall\, K\ge T^{\dt(p-2)}.\lb{3.SKT}\ee
Also, from the convergence (\ref{entropy3.7}), (\ref{3.weak2}),
the non-decrease of $t\mapsto S(f^K(t))$ (see (\ref{eelity})), the definition of entropy $S(\bar{F}_t)$, and
 $S(\bar{F}_t)=S(F_t)$, we have
$$S(F_0)=\lim_{n\to\infty}S(f_0^{K_n})=\limsup_{n\to\infty}S(f^{K_n}(0))\le \limsup\limits_{n\to\infty}S(f^{K_n}(t))\le S(\bar{F}_t)=S(F_t)$$
for all $t\in[0,\infty)$. Thus for any $T>1$, applying (\ref{3.SKT}) to $f^{K_n}(\cdot,t)$ with $K_n> T^{\dt(p-2)}$, we obtain (because $S(F_{\rm be})=S(\Og)$)
\beas&&
S(F_{{\rm be}})-S(F_t)\le \limsup_{n\to\infty}\big(
S(\Og)-S(f^{K_n}(t))\big)\le C_{30} t^{-\ld}
\qquad \forall\,t\in [1, T].\eeas
Since $C_{30}$ is independent of $T$ and $T$ can be  arbitrarily large,  we conclude
$$
0\le S(F_{{\rm be}})-S(F_t)\le C_{30} t^{-\ld}
\qquad \forall\,t\in [1, \infty).$$
This gives (\ref{e-conv}) and finishes the proof of the theorem.
\end{proof}

\begin{center}\section{Rate of Convergence to BEC}\end{center}

In this section we prove the second part of the main result Theorem \ref{theorem1.2}: an algebraic rate of
convergence of $F_t(\{0\})$ to the Bose-Einstein condensation
$F_{\rm be}(\{0\})=(1-(\overline{T}/\overline{T}_c)^{3/5})N$.
The completion of the proof of Theorem \ref{theorem1.2}
is given at the end of this section.
We first prove some properties of the collision integrals
that have been proven to hold for the hard sphere model,
then we present our new progress on proving lower bounds of condensation $F_t(\{0\})$
without any additional assumption on the initial data $F_0$.

The monotone assumption (\ref{Phi*}) is now used to prove the following

\begin{proposition}
\lb{convex-positivity}({\bf Convex-Positivity}). Let $W(x,y,z), {\cal K}[\vp](x,y,z)$ be defined in
(\ref{JKW})-(\ref{Y}) with $\Phi$ satisfying (\ref{Phi}),(\ref{Phi*}). Then for any $F\in {\cal B}_{1}^{+}({\mathbb R}_{\ge 0})$
and any convex function $\vp \in C^{1,1}_b({\mathbb R}_{\ge 0})$ we have
\be
\int_{{\mathbb R}_{\ge 0}^3}{\cal K}[\vp]{\rm d}^3F
\ge \int_{{\mathbb R}_{\ge 0}^3}{\cal K}_1[\vp]{\rm d}^3F+\int_{{\mathbb R}_{>0}^3}{\cal K}_2[\vp]{\rm d}^3F\ge 0 \lb{4.4}
\ee
where
\bes&&{\cal K}_1[\vp](x,y,z)={\bf 1}_{\{0\le x<y\le z\}}
\chi_{y,z}W(x,y,z)\Dt_{\rm sym}\vp(x,y,z)\ge 0, \lb{4.5}
\lb{}\\
&&{\cal K}_2[\vp](x,y,z)={\bf 1}_{\{0<y\le z<
x<y+z\}}
\chi_{y,z}W(x,y,z)\Dt \vp(x,y,z)\ge 0,\dnumber \lb{4.3}\\
&& \chi_{y,z} =\left\{\begin{array}{ll} 2
 \,\,\,\,\qquad\quad {\rm if} \quad y< z,\\
1\,\,\,\,\qquad\quad {\rm if} \quad y=z
\end{array}\right.\dnumber \lb{chi}\ees
for all $(x,y,z)\in{\mR}_{\ge 0}^3$, $ \Dt \vp(x,y,z)$ is
given in (\ref{diff}), and
\bes&& \Dt_{\rm sym}\vp(x,y,z)= \vp(z+y-x)+\vp(z+x-y)-2\vp(z)\nonumber\\
&&=(y-x)^2\int_{0}^1\!\!\!\int_{0}^1\vp''
(z+(s-t)(y-x))
{\rm d}s{\rm d}t,\quad 0\le x, y\le z. \lb{4.7}\ees
\end{proposition}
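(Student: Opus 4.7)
My plan is to reduce the triple integral to the regions $\{x\le y\le z\}$ and $\{y\le z<x<y+z\}$, handle each by convexity of $\vp$, and combine the contributions via an $x\leftrightarrow y$ swap of dummy variables together with the algebraic identity
\[
\Delta\vp(x,y,z) + \Delta\vp(y,x,z) = \vp(z+y-x) + \vp(z+x-y) - 2\vp(z) = \Delta_{\rm sym}\vp(x,y,z),
\]
which follows immediately from the definition of $\Delta\vp$.

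First I would use the $(y,z)$-symmetry of $W(x,y,z)$, which is inherent because the kernel arises from a collision integral symmetric in the two post-collision labels, to rewrite
\[
\int_{{\mathbb R}_{\ge 0}^3} {\cal K}[\vp]\,{\rm d}^3F = \int_{\{y\le z\}} \chi_{y,z}\, W(x,y,z)\Delta\vp(x,y,z)\,{\rm d}^3F,
\]
and then split the $x$-support of $W$ into the three pieces $[0,y)$, $[y,z]$, $(z,y+z]$. On the top piece $\{y\le z<x<y+z\}$ one has $x_*=y+z-x\in(0,y)$, so $\{x_*,x\}$ straddles $\{y,z\}$ and $x+x_*=y+z$; convexity of $\vp$ forces $\vp(y)+\vp(z)\le\vp(x)+\vp(x_*)$, i.e.\ $\Delta\vp\ge 0$ pointwise. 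This piece reproduces the ${\cal K}_2$ contribution (the restriction $y>0$ is automatic since $y=0$ empties the top region).

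The core step combines the bottom piece $\{x<y\le z\}$ with the middle piece $\{y\le x\le z\}$. Exchanging the dummy variables $x\leftrightarrow y$ in the middle integral (valid since ${\rm d}F(x)\,{\rm d}F(y)$ is symmetric) turns it into $\int_{\{x\le y\le z\}}\chi_{y,z}\,W(y,x,z)\Delta\vp(y,x,z)\,{\rm d}^3F$, so the combined integrand on $\{x\le y\le z\}$ becomes $W(x,y,z)\Delta\vp(x,y,z)+W(y,x,z)\Delta\vp(y,x,z)$. On this region $\Delta\vp(y,x,z)\le 0$ by the same convexity argument, since $y$ and $x+z-y$ both lie in $[x,z]$ with sum $x+z$. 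Granting the pointwise bound $W(y,x,z)\le W(x,y,z)$ on $\{x\le y\le z\}$, the bracket is then at least $W(x,y,z)\bigl[\Delta\vp(x,y,z)+\Delta\vp(y,x,z)\bigr]=W(x,y,z)\Delta_{\rm sym}\vp(x,y,z)$, which yields exactly the ${\cal K}_1$ contribution (the diagonal $x=y$ is harmless because $\Delta_{\rm sym}\vp(y,y,z)=0$).

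The hard step is verifying $W(y,x,z)\le W(x,y,z)$ on $\{x\le y\le z\}$ from the representation (\ref{W1})-(\ref{Y}). The prefactor $1/(4\pi\sqrt{xyz})$ is symmetric in $(x,y)$, and the $s$-interval endpoints $L,U$ agree in both cases since, using $\sqrt{y+z-x}+\sqrt{z}\ge\sqrt{x}+\sqrt{y}$ and $\sqrt{x+z-y}+\sqrt{z}\ge\sqrt{x}+\sqrt{y}$, one finds $L=\sqrt{y}-\sqrt{x}$ and $U=\sqrt{x}+\sqrt{y}$ for both $W(x,y,z)$ and $W(y,x,z)$, consistently with the identity (\ref{1.difference}). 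The only discrepancy is in $Y_*$: with $R=(x-y+s^2)^2/(4s^2)$ and $R'=(y-x+s^2)^2/(4s^2)$, direct computation gives $R'-R=y-x\ge 0$ together with the identity $y-R'=x-R$, so $Y_*(x,y,z,s,\theta)$ and $Y_*(y,x,z,s,\theta)$ take the forms $|\sqrt{(z-R)_+}+e^{i\theta}\sqrt{(x-R)_+}|$ and $|\sqrt{(z-R')_+}+e^{i\theta}\sqrt{(x-R)_+}|$ with $z-R\ge z-R'$. A stochastic-dominance comparison of the random variable $|B+e^{i\theta}A|$ in $B$ for $\theta$ uniform on $[0,2\pi]$, combined with the monotonicity $\rho\mapsto\Phi(r,\rho)$ from (\ref{4.1}), then yields the required ordering of the $\theta$-integrals and hence of $W$. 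This is precisely where the non-decrease of $\widehat{\phi}$ in Assumption~\ref{assp} is genuinely used.
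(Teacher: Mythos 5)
Your proposal follows essentially the same route as the paper's proof: the reduction to $\{y\le z\}$, the $x\leftrightarrow y$ swap combined with the identity $\Dt\vp(x,y,z)+\Dt\vp(y,x,z)=\Dt_{\rm sym}\vp(x,y,z)$, the treatment of $\{y\le z<x<y+z\}$ by convexity as the ${\cal K}_2$ term, and the key monotonicity $W(y,x,z)\le W(x,y,z)$ on $\{x\le y\le z\}$ obtained by comparing the $Y_*$ arguments and invoking the monotonicity (\ref{4.1}) of $\rho\mapsto\Phi(r,\rho)$. (The cosmetic point that after the swap the weight is $\chi_{x,z}$ rather than $\chi_{y,z}$ is harmless: the two differ only on $\{x<y=z\}$, where $\Dt\vp(y,x,z)=0$.) Two steps, however, need repair.

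First, the appeal to stochastic dominance of $\theta\mapsto|B+e^{{\rm i}\theta}A|$ in $B$ is both unnecessary and, as a blanket principle, false: for $B'=0<B<A$ the variable $|B'+e^{{\rm i}\theta}A|\equiv A$ is not dominated by $|B+e^{{\rm i}\theta}A|$, which dips below $A$ with positive probability. What saves the argument is the regime you are actually in: with $a=x-R=y-R'$, $b=z-R$, $b'=z-R'$ one has $b\ge b'\ge a\ge 0$ (since $R'\ge R$, $b'-a=z-y\ge 0$, and $a\ge 0$ by (\ref{5.pp})), and then $\bigl|\sqrt{b'}+e^{{\rm i}\theta}\sqrt{a}\bigr|\le\bigl|\sqrt{b}+e^{{\rm i}\theta}\sqrt{a}\bigr|$ holds \emph{pointwise} in $\theta$, because the difference of squares equals $(\sqrt{b}-\sqrt{b'})(\sqrt{b}+\sqrt{b'}+2\sqrt{a}\cos\theta)\ge (\sqrt{b}-\sqrt{b'})\,2\sqrt{a}(1+\cos\theta)\ge 0$. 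This is exactly the paper's claim $Y_*(y,x,z,s,\theta)\le Y_*(x,y,z,s,\theta)$, after which (\ref{4.1}) orders the $\theta$-integrals with no probabilistic input; you should state the condition $b,b'\ge a$ explicitly, as it is what makes the comparison valid. Second, your comparison of $W(y,x,z)$ with $W(x,y,z)$ uses the representation (\ref{W1}) and therefore only covers $x>0$; the slice $x=0$ cannot be discarded, since $F$ may charge $\{0\}$ (this is precisely the case of interest for condensation). There $W$ is given by (\ref{W2}), and the needed inequality $W(y,0,z)\le W(0,y,z)$ for $0<y\le z$ reduces to $\Phi(\sqrt{2y},\sqrt{2(z-y)}\,)\le\Phi(\sqrt{2y},\sqrt{2z}\,)$, again by (\ref{4.1}); this is how the paper closes that case. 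With these two fixes your proof coincides with the paper's.
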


\begin{proof} The positivity in (\ref{4.5}),(\ref{4.3})
follows from the convexity of $\vp$ and (\ref{4.7}), (\ref{diff}).
To prove the inequality (\ref{4.4}), we first use the
symmetry
$W(x,y,z)= W(x,z,y)$ on ${\mR}_{\ge 0}^3$
(see Remark \ref{remark6.1} in Appendix) and (\ref{diff}) to get
\be {\cal K}[\vp](x,y,z)={\cal K}[\vp](x,z,y),\qquad
 {\cal K}[\vp](x,y,z)|_{x=y}={\cal K}[\vp](x,y,z)|_{x=z}=0\lb{sym}\ee
and then we make a decomposition according to
$x$ lies in the left side, between, and right side of $y$ and $z$ respectively (using (\ref{sym}) and recalling that
${\rm d}^3F={\rm d}F(x){\rm d}F(y){\rm d}F(z)$):
\beas \int_{{\mathbb R}_{\ge 0}^3}{\cal K}[\vp]{\rm d}^3F
&=&\left(2\int_{0\le x<y<z}+2\int_{0\le y<x<z}+
\int_{0\le x<y=z}+\int_{0\le y, z<x}
\right)W(x,y,z)\Dt\vp(x,y,z){\rm d}^3F
\\
&:=& I_1+I_2+I_3+I_4.\eeas
After exchanging notations $x\leftrightarrow y$ for the integrand in $I_2$
we have
\bes
I_1+I_2 &=&
2\int_{0\le x<y<z}W(x,y,z)\Dt_{{\rm sym}}\vp(x,y,z){\rm d}^3F
\nonumber\\
&+&
2\int_{0\le x<y<z}\big(W(y,x,z)-W(x,y,z)\big)\Dt\vp(y,x,z){\rm d}^3F
\lb{IIW}\ees
where we used $\Dt\vp(x,y,z)+\Dt\vp(y,x,z)=\Dt_{{\rm sym}}\vp(x,y,z)$
for $0\le x,y\le z$. We need to prove
\be \big(W(y,x,z)-W(x,y,z)\big)\Dt\vp(y,x,z)\ge 0\qquad \forall\,0\le x<y<z.\lb{WP}\ee
By convexity of $\vp$ and (\ref{diff}) we have
\beas \Dt\vp(y,x,z)=\vp(y)+\vp(x+z-y)-\vp(x)-\vp(z)\le 0
\qquad \forall\, 0\le x<y\le z.\eeas
Therefore to prove (\ref{WP}) we need only prove
\be W(y,x,z)\le W(x,y,z)\qquad \forall\,0\le x<y\le z.\lb{WL}\ee
For any $0< x<y\le z$ we have
\be|\sqrt{x}-\sqrt{y}|\vee |\sqrt{x_*}-\sqrt{z}|
=|\sqrt{x}-\sqrt{y}|,\quad (\sqrt{x}+\sqrt{y})\wedge(\sqrt{x_*}+\sqrt{z})=\sqrt{x}+\sqrt{y}
\lb{minmax}\ee
hence
\beas&&
 W(x,y,z)=\fr{1}{4\pi\sqrt{xyz}}
\int_{|\sqrt{x}-\sqrt{y}|}
^{\sqrt{x}+\sqrt{y}}{\rm d}s
\int_{0}^{2\pi}\Phi(\sqrt{2}s, \sqrt{2} Y_*){\rm d}\theta,
\\
&&
 W(y,x,z)=\fr{1}{4\pi\sqrt{xyz}}
\int_{|\sqrt{y}-\sqrt{x}|}
^{\sqrt{y}+\sqrt{x}}{\rm d}s
\int_{0}^{2\pi}\Phi(\sqrt{2}s, \sqrt{2} Y_*^{\sharp}){\rm d}\theta\eeas
where $Y_*=Y_*(x,y,z,s,\theta)$ is defined in (\ref{Y}) and
$Y_*^{\sharp}=Y_*(y,x,z,s,\theta)$.
By calculation (with the relation (\ref{5.pp}) in Appendix) it is easily checked that
$$ Y_*^{\sharp}\le Y_*\qquad \forall\, s\in [\sqrt{y}-\sqrt{x},
\sqrt{y}+\sqrt{x}]\qquad (0<x<y\le z).$$
Since $\rho\to \Phi(r,\rho)$ is non-decreasing, it follows that
$W(y,x,z)\le W(x,y,z)$.
For the case $0=x<y\le z$, we have
$\Phi(\sqrt{2y}, \sqrt{2(z-y)}\,)\le \Phi(\sqrt{2y}, \sqrt{2z}\,)$ which
means (by definition of $W$) that the inequality $W(y,0,z)\le W(0,y,z)$
holds also true.  This proves (\ref{WL}).

Now from (\ref{IIW}),(\ref{WP}) and $\Dt\vp(x,y,z)|_{y=z}=\Dt_{{\rm sym}}\vp(x,y,z)|_{y=z}$, and
recalling definition of $\chi_{y,z}$  we obtain
\beas&&
I_1+I_2+I_3  \ge
2\int_{0\le x<y<z}W(x,y,z)\Dt_{{\rm sym}}\vp(x,y,z){\rm d}^3F+
I_3\\
&&=\int_{0\le x<y\le z}\chi_{y,z}W(x,y,z)\Dt_{\rm sym}\vp(x,y,z){\rm d}^3F
=\int_{{\mathbb R}_{\ge 0}^3}{\cal K}_1[\vp]
{\rm d}^3F.\eeas
For the last term $I_4$, observe  that
$0\le y, z<x$ and $W(x,y,z)>0$ imply $y>0, z>0$ and $x<y+z$.
Thus
$$I_4=\int_{0<y, z<x} W(x,y,z)
\Dt\vp(x,y,z){\rm d}^3F
=\int_{{\mathbb R}_{> 0}^3}{\cal K}_2[\vp]
{\rm d}^3F.$$
\end{proof}

{\bf Notation}. To study local behavior of a measure $F\in {\cal B}^{+}({\mathbb R}_{\ge 0})$
near the origin, we introduce the following integrals. For
$p>0, \varepsilon>0, \alpha\ge 0$,   define
\beas&& N_{0,p}(F,\varepsilon)=\int_{{\mathbb R}_{\ge 0}}\big[\big(1-\frac{x}{\varepsilon}\big)_{+}\big]^p{\rm d}F(x)=\int_{[0, \varepsilon]}\big(1-\frac{x}{\varepsilon}\big)^p{\rm d}F(x),\\
&&
N_{\alpha, p}(F,\varepsilon)=\frac{1}{\varepsilon^{\alpha}}N_{0,p}(F,\varepsilon),\quad
\,\quad \underline{N}_{\alpha, p}(F,\varepsilon)=\inf_{0<\delta\le \varepsilon}N_{\alpha,p}(F,\delta),\\
&&
A_{0, p}(F,\varepsilon)=\int_{[0,\varepsilon]}\big(\frac{x}{\varepsilon}\big)^{p}
{\rm d}F(x),\quad A_{\alpha, p}(F,\varepsilon)=\fr{1}{\vep^{\alpha}}\int_{[0,\varepsilon]}
\big(\frac{x}{\varepsilon}\big)^{p}
{\rm d}F(x).\eeas

\begin{lemma}\lb{lemma4.2}Let $B({\bf {\bf v-v}_*},\omega)$ be given by (\ref{kernel}),
 (\ref{Phi}), (\ref{Phi*}) where
$\Phi$  also satisfies $0\le \Phi\le 1$ on
${\mR}_{\ge 0}^2$.
Let $F_t\in {\cal B}_{1}^{+}({\mR}_{\ge 0})$ be  a conservative  measure-valued isotropic solution of Eq.(\ref{Equation1}) on
$[0, \infty)$ with the initial datum $F_0$
satisfying $N=N(F_0)>0, E=E(F_0)>0$.
Then with
$c=\sqrt{NE}$ we have:

{\rm (I)} For any convex function $0\le \vp\in C^{1,1}_b({\mR}_{\ge 0})$
\be e^{ct}\int_{{\mR}_{\ge 0}}\vp{\rm d}F_t-e^{cs}
\int_{{\mR}_{\ge 0}}\vp{\rm d}F_s\ge \int_{s}^{t}e^{c\tau}{\rm d}\tau \int_{{\mR}_{\ge 0}^3}
{\cal K}[\vp]{\rm d}^3F_{\tau}\ge 0\quad \forall\, 0\le s<t.\lb{4.12}\ee

{\rm (II)}  For any convex function $0\le \vp\in C_b({\mR}_{\ge 0})$, the function
$t\mapsto e^{ct}\int_{{\mR}_{\ge 0}}\vp{\rm d}F_t$ is non-decreasing on $[0,\infty)$, and thus
 for any $p\ge 1, \vep>0, \alpha>0$,  the functions
$t\mapsto e^{ct}N_{0,p}(F_t,\vep), \\
t\mapsto e^{ct}\underline{N}_{\alpha,p}(F_t,\vep),$ and
$t\mapsto e^{ct}F_t(\{0\})$ are all non-decreasing on $[0,\infty)$.
\end{lemma}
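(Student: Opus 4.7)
The plan is to work from the weak formulation
$$\frac{d}{dt}\int\vp\,dF_t = \int{\cal J}[\vp]\,d^2F_t + \int{\cal K}[\vp]\,d^3F_t$$
supplied by Definition \ref{definition1.1}(iii). Multiplying by $e^{ct}$, the first inequality of (I) reduces to the pointwise-in-$t$ estimate
$$c\int\vp\,dF_t+\int{\cal J}[\vp]\,d^2F_t\ge 0,$$
while the second ($\ge 0$) follows at once from Proposition \ref{convex-positivity}, which furnishes $\int{\cal K}[\vp]\,d^3F_t\ge 0$ for convex $\vp\in C^{1,1}_b$.

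To prove the key estimate on ${\cal J}$, I would first establish the pointwise bound $\int_0^{y+z}W(x,y,z)\sqrt{x}\,dx\le \sqrt{y}+\sqrt{z}$. Since $\Phi\le 1$ forces $W\le W_{\rm HS}$, this reduces to a direct three-piece integration on the hard-sphere kernel, giving the explicit value $\sqrt{\max(y,z)}+\tfrac{\min(y,z)/3}{\sqrt{\max(y,z)}}$. Because $\vp\ge 0$ yields $\Delta\vp(x,y,z)\ge -(\vp(y)+\vp(z))$, one obtains
$${\cal J}[\vp](y,z)\ge -\tfrac{1}{2}(\vp(y)+\vp(z))(\sqrt{y}+\sqrt{z}),$$
and integrating against $dF_t(y)dF_t(z)$ with the $y\leftrightarrow z$ symmetry gives
$$\int{\cal J}[\vp]\,d^2F_t\ge -N\int\vp\sqrt{y}\,dF_t-\int\vp\,dF_t\cdot\int\sqrt{y}\,dF_t.$$

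The decisive observation is that any convex bounded $\vp\ge 0$ on $[0,\infty)$ must be non-increasing --- otherwise convexity would force linear growth at infinity, contradicting boundedness. Chebyshev's rearrangement inequality, applied to the positive measure $dF_t$ and the comonotone pair $\vp$ (non-increasing), $\sqrt{y}$ (non-decreasing), then yields
$$N\int\vp\sqrt{y}\,dF_t\le\int\vp\,dF_t\cdot\int\sqrt{y}\,dF_t,$$
and Cauchy--Schwarz combined with the conservation laws $N(F_t)=N$, $E(F_t)=E$ supplies $\int\sqrt{y}\,dF_t\le\sqrt{NE}=c$. Combining these produces $\int{\cal J}[\vp]\,d^2F_t\ge -2c\int\vp\,dF_t$. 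The main delicate point will be squeezing out the exact constant $c=\sqrt{NE}$ stated in the lemma rather than a multiple thereof; I expect this will require exploiting the $x\leftrightarrow y+z-x$ symmetry of $\sqrt{x}\,W(x,y,z)$ (which one can verify from the explicit formulas, particularly in the hard-sphere case where $\sqrt{x}\,W_{\rm HS}=\min(\sqrt{x},\sqrt{x_*},\sqrt{y},\sqrt{z})/\sqrt{yz}$) to tighten one of the two loss contributions. Once the inequality is secured, $\tfrac{d}{dt}(e^{ct}\int\vp\,dF_t)\ge e^{ct}\int{\cal K}[\vp]\,d^3F_t$, and integrating over $[s,t]$ yields (I).

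For part (II), I would approximate an arbitrary convex $\vp\in C_b({\mR}_{\ge 0})$ by one-sided mollifications $\vp_n(x)=\int_0^{1/n}\vp(x+y)\rho_n(y)\,dy$: each $\vp_n$ is convex (as a weighted average of convex translates), lies in $C^{1,1}_b$, satisfies $0\le\vp_n\le\vp\le\|\vp\|_\infty$, and converges pointwise to $\vp$. Applying (I) to each $\vp_n$ and passing to the limit via dominated convergence yields the monotonicity of $t\mapsto e^{ct}\int\vp\,dF_t$. The specific corollaries then follow: $\vp_{\vep,p}(x)=[(1-x/\vep)_+]^p$ is convex and bounded for $p\ge 1$ (and in $C^{1,1}_b$ already for $p\ge 2$), so $e^{ct}N_{0,p}(F_t,\vep)$ is non-decreasing; taking the infimum over $0<\delta\le\vep$ of the non-decreasing family $\delta^{-\alpha}e^{ct}N_{0,p}(F_t,\delta)$ preserves non-decrease, giving the same for $e^{ct}\underline{N}_{\alpha,p}(F_t,\vep)$; and $e^{ct}F_t(\{0\})$ arises as the $\vep\downarrow 0$ monotone decreasing limit of $e^{ct}N_{0,p}(F_t,\vep)$ (since $\vp_{\vep,p}\downarrow\mathbf{1}_{\{0\}}$), hence is itself non-decreasing as a pointwise limit of non-decreasing functions.
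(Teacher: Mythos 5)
Your overall architecture is the same as the paper's: a pointwise lower bound for ${\cal J}[\vp]$, Cauchy--Schwarz giving $M_{1/2}(F_t)\le\sqrt{N E}=c$, Proposition \ref{convex-positivity} for $\int{\cal K}[\vp]\,{\rm d}^3F_t\ge 0$, the exponential weight $e^{ct}$, and a mollification argument to pass from $C^{1,1}_b$ to $C_b$ in part (II). Part (II) as you present it is fine (your one-sided mollification is a concrete version of the approximation the paper delegates to a citation), and your auxiliary facts (convex bounded $\Rightarrow$ non-increasing, the hard-sphere computation $\int_0^{y+z}W_{\rm HS}\sqrt{x}\,{\rm d}x=\sqrt{y\vee z}+\fr{y\wedge z}{3\sqrt{y\vee z}}$, the Chebyshev/rearrangement step) are all correct.

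The genuine gap is the constant. Your estimate uses $\Dt\vp(x,y,z)\ge-(\vp(y)+\vp(z))$ over the whole range $0\le x\le y+z$, which keeps the diagonal terms $\vp(y)\sqrt{y}$, $\vp(z)\sqrt{z}$; after Chebyshev these are only controlled by the cross terms, so you end with $\int{\cal J}[\vp]\,{\rm d}^2F_t\ge-2c\int\vp\,{\rm d}F_t$ and hence prove the lemma with $c$ replaced by $2c$, not the stated $c=\sqrt{NE}$. You acknowledge this, but the repair you sketch does not close it: the $x\leftrightarrow y+z-x$ symmetry of $\sqrt{x}\,W(x,y,z)$ is only verified by you in the hard-sphere case, and even granting it for general $\Phi$ (it follows from $\Phi(r,\rho)=\Phi(\rho,r)$), it only symmetrizes the gain term $\int\sqrt{x}\,W(\vp(x)+\vp(x_*))\,{\rm d}x$, which your argument discards anyway; it does not shrink the loss term. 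The missing idea, which is how the paper gets the sharp constant, is to exploit convexity of $\vp$ pointwise in $x$: for $y\le z$ one has $\Dt\vp(x,y,z)\ge 0$ for $x\in[0,y]\cup[z,y+z]$, so only $x\in(y,z)$ contributes negatively; there $\Dt\vp\ge-\vp(y)$ (monotonicity) and $\sqrt{x}\,W(x,y,z)\le 1/\sqrt{z}$ (from $\Phi\le 1$ and the interval length $2\min\{\sqrt{x},\sqrt{x_*},\sqrt{y},\sqrt{z}\}\le 2\sqrt{y}$), giving the pure cross-term bound ${\cal J}[\vp](y,z)\ge-\fr{1}{2}\big(\vp(y)\sqrt{z}+\vp(z)\sqrt{y}\big)$ and hence $\int{\cal J}[\vp]\,{\rm d}^2F_t\ge-M_{1/2}(F_t)\int\vp\,{\rm d}F_t\ge-c\int\vp\,{\rm d}F_t$. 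Without this (or an equivalent) refinement your proof establishes a weaker statement than the one asserted, and the specific constant $c$ is what is propagated into the explicit thresholds of Section 5.
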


\begin{proof}  {\rm(I)}: We first prove that
\be {\cal J}[\vp](y,z)\ge -\fr{1}{2}\vp(y)\sqrt{z}-\fr{1}{2}\vp(z)\sqrt{y}\lb{4.10}\ee
Since $\vp\in C^{1,1}_b({\mR}_{\ge 0})$ is convex, it is easily seen that $\vp$ is non-increasing on ${\mathbb R}_{\ge 0}$.
Given any $y,z\ge 0$. By symmetry ${\cal J}[\vp](y,z)={\cal J}[\vp](z,y)$ we may assume that $y\le z$.
By (\ref{diff}) we have $\Dt\vp(x,y,z)\ge 0$ for all $x\in [0,y]\cup [z,y+z]$ and so
  ${\cal J}[\vp]\ge \fr{1}{2}\int_y^z \sqrt{x}W(x,y,z)\Dt\vp(x,y,z){\rm d}x$.
  To prove (\ref{4.10}) we can assume that $y<z$. By the assumption on $\Phi$, the definition of $W(x,y,z)$, and that $\vp$ is non-negative and non-increasing, we deduce $0\le W(x,y,z)\le  1/{\sqrt{xz}}$ and $\Dt\vp(x,y,z)\ge-\vp(y)$ for all $x\in (y,z)$. Thus
  $${\cal J}[\vp]\ge -\fr{1}{2\sqrt{z}}\int_y^z \Dt\vp(x,y,z)dx\ge -\fr{1}{2\sqrt{z}}\vp(y)(z-y)\ge -\fr{1}{2}\vp(y)\sqrt{z}$$
 and so (\ref{4.10}) holds true.

 From (\ref{4.10}) we have
 \beas
\int_{{\mathbb R}_{\ge 0}^2}{\cal J}[\vp]{\rm d}^2F_t\ge-M_{1/2}(F_t)\int_{{\mathbb R}_{\ge 0}}\vp{\rm d}F_t
\ge -c \int_{{\mathbb R}_{\ge 0}}\vp{\rm d}F_t
\eeas
 where we used Cauchy-Schwarz inequality and the conservation of mass and energy
 to get $M_{1/2}(F_t)\le \sqrt{N(F_t)E(F_t)}=\sqrt{NE}=c$. So by definition of  measure-valued isotropic solutions we obtain
$$\frac{{\rm d}}{{\rm d}t}\int_{{\mathbb R}_{\ge 0}}\varphi{\rm
d}F_t\ge-c \int_{{\mathbb R}_{\ge 0}}\vp{\rm d}F_t+ \int_{{\mathbb R}_{\ge 0}^3}{\cal K}[\varphi]{\rm d}^3F_t\qquad \forall\,t\in[0, \infty)$$
i.e.
$$\frac{{\rm d}}{{\rm d}t}\Big(e^{ct}\int_{{\mathbb R}_{\ge 0}}\varphi{\rm
d}F_t\Big)\ge e^{ct}\int_{{\mathbb R}_{\ge 0}^3}{\cal K}[\varphi]{\rm d}^3F_t\ge 0\quad \forall\,
t\in[0,\infty).$$
This implies (\ref{4.12}), here  `` $\ge 0$" is due to the convex-positivity
(Proposition \ref{convex-positivity}).

(II): From {\rm(I)} we see that for any convex function $0\le \vp\in C_b^{1,1}({\mR}_{\ge 0})$,
the function $t\mapsto e^{ct}\int_{{\mR}_{\ge 0}}\vp{\rm d}F_t$ is non-decreasing on $[0,\infty)$.
Using approximation as did in \cite{Lu2016},
this monotone property holds also for any convex function $0\le \vp\in C_b({\mR}_{\ge 0})$.
Applying this to the convex function $\vp(x)=[(1-x/\vep)_{+}]^p\,(\vep>0,p\ge 1)$ we deduce that $e^{ct}N_{0,p}(F_t,\vep), e^{ct}N_{\alpha,p}(F_t,\vep)$, and thus $e^{ct}\underline{N}_{\alpha,p}(F_t,\vep)$
are all non-decreasing on $t\in [0,\infty)$. Since
 $F_t(\{0\})=\lim\limits_{\vep\to 0+}N_{0,p}(F_t,\vep)$
 for all $t\ge 0$, it follows that $t\mapsto e^{ct}F_t(\{0\})
$ is also non-decreasing on $[0,\infty)$.
 \end{proof}
\vskip2mm

\begin{lemma} \lb{lemma4.3} Let ${\cal J}[\vp], {\cal K}[\vp]$\
be defined in (\ref{JKW})-(\ref{Y}) where $\Phi(r,\rho)$ satisfies
 Assumption \ref{assp} with $0\le\eta<1$.
Let $F\in{\cal B}_1^{+}({\mathbb R}_{\ge 0}) $, $\vp_{\vep}(x)=[(1-x/\vep)_{+}]^2, 0<\vep\le 1$, $0\le \alpha<1-\eta$.
Then
\bes &&
\int_{{\mathbb R}_{\ge 0}^2}{\cal J}[\vp_{\vep}]{\rm d}^2F\ge
\frac{b_0}{34}\vep^{3/2}
\Big(\int_{[\fr{1}{2}\vep,1]}y^{-\fr{1-\eta}{2}}{\rm d}F(y)\Big)^2
- 2M_{1/2}(F) N_{0,2}(F,\vep),\lb{JJ}\\
&& \int_{{\mathbb R}_{\ge 0}^3}{\cal K}[\vp_{\vep}]{\rm d}^3F
\ge \fr{b_0}{2}\underline{N}_{\alpha,2}(F,\vep)\big(A_{\beta,p}(F,\vep)\big)^2
\dnumber\lb{KK}\ees
where $p=\fr{3}{2}+\alpha, \beta=\fr{1-\alpha-\eta}{2}$.
Also if $0<\vep\le 2/3$ and $0<\gm\le (24)^{-1/2}$, then
\be
 \int_{{\mathbb R}_{\ge 0}^3}{\cal K}[\vp_{\vep}]{\rm d}^3F
 \ge \fr{b_0}{4}\fr{\gm^{2}}{\vep^{1-\eta}}
N_{0,2}(F,\gm \vep)\big(F\big(\big[\gamma \vep,\fr{3}{2}\vep\big]\big)\big)^2.\lb{KK2}
\ee
\end {lemma}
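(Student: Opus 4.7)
The proof splits into three pieces, one for each inequality, but all rest on two common ingredients: the explicit structure $\vp_\vep''(x) = 2\vep^{-2}\mathbf{1}_{[0,\vep]}(x)$, and the lower bound $\hat\phi(r)\ge b_0 r^\eta/(1+r^\eta)$ from Assumption \ref{assp}, which yields $\Phi(r,\rho)\ge b_0^2 (r\vee\rho)^{2\eta}/(1+(r\vee\rho)^{2\eta})^2$ and hence a lower bound on $W(x,y,z)$ with the scaling dictated by the support of the relevant mass.

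For (\ref{JJ}), I will split
\[
{\cal J}[\vp_\vep](y,z) = \tfrac12\!\int_0^{y+z}\!\!\sqrt x\,W(x,y,z)\bigl[\vp_\vep(x)+\vp_\vep(x_*)\bigr]dx \;-\; \tfrac{\vp_\vep(y)+\vp_\vep(z)}{2}\!\int_0^{y+z}\!\!\sqrt x\,W(x,y,z)\,dx.
\]
Using $\Phi\le 1$ one has $W\le 1/\sqrt{yz}$, so the $x$-integral in the second piece is bounded by $\tfrac{2}{3\sqrt{yz}}(y+z)^{3/2}\le 2(\sqrt y+\sqrt z)$; integrating against $dF(y)dF(z)$ produces exactly the $-2M_{1/2}(F)N_{0,2}(F,\vep)$ term. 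For the first piece I restrict to $(y,z)\in[\vep/2,1]^2$ and to values of $x$ near $0$ so that $x_*=y+z-x\approx y+z\ge\vep$; by the symmetry $x\leftrightarrow x_*$ (or rather by keeping the $\vp_\vep(x)$ contribution directly) one obtains a lower bound of the form $\vep^{3/2}\,y^{-(1-\eta)/2}z^{-(1-\eta)/2}\,b_0^2/c$ per pair $(y,z)$ in the specified range, which after integration yields the squared integral term.

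For (\ref{KK}) and (\ref{KK2}) I apply Proposition \ref{convex-positivity}, which because $\vp_\vep$ is convex reduces matters to bounding $\int {\cal K}_1[\vp_\vep]d^3F$. Using (\ref{4.7}) and the explicit form of $\vp''_\vep$, on the region $\{0\le x< y\le z\}$ with $z+(y-x)\le \vep$ I get $\Delta_{\rm sym}\vp_\vep(x,y,z)\ge 2(y-x)^2/\vep^2$, while on $\{0\le x\le \gm\vep,\ \gm\vep\le y\le z\le \tfrac32\vep\}$ the constraint $\gm\le(24)^{-2/3}$ guarantees the segment joining $z-(y-x)$ and $z+(y-x)$ still lies in $[0,\vep]$, so that $\Delta_{\rm sym}\vp_\vep\ge 2(y-x)^2/\vep^2\ge \gm^2/2$. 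Coupled with the lower bound $W(x,y,z)\gtrsim b_0^2\sqrt{x}/(\vep\cdot\sqrt{yz})\cdot \vep^{2\eta}/(1+\vep^{2\eta})$ (obtained from the integrated form of $\Phi$ using the monotonicity of $\hat\phi$) and ${\cal K}_1=\chi_{y,z}W\Delta_{\rm sym}\vp_\vep$, the triple integral factors: the $x$-integration yields $\underline{N}_{\alpha,2}(F,\vep)$ or $N_{0,2}(F,\gm\vep)$ (the infimum in $\underline N$ is used because the scale of the medium support varies with $y,z$), and the $(y,z)$-integration yields $A_{\beta,p}(F,\vep)^2$ or $F([\gm\vep,\tfrac32\vep])^2$ depending on the case; the exponents $p=\tfrac32+\alpha$, $\beta=(1-\alpha-\eta)/2$ come from balancing the $\sqrt{yz}$ denominator in $W$ against the weights $x^{\alpha}$ and the $\vep^{\eta-1}$ scale.

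\textbf{Main obstacle.} The main technical difficulty is not the symmetrization nor the convexity estimate, but the sharp lower bound on $W(x,y,z)$ matching the claimed $\eta$-exponent. One must integrate $\Phi(\sqrt 2 s,\sqrt 2 Y_*)$ over $s\in[|\sqrt x-\sqrt y|,\sqrt x+\sqrt y]$ and $\theta\in[0,2\pi]$ in (\ref{W1}), picking up a positive measure of $(s,\theta)$ on which $Y_*$ is comparable to the chosen scale ($\sqrt{y}$ in (\ref{JJ})/(\ref{KK}), $\sqrt\vep$ in (\ref{KK2})); only then does the monotonicity of $\hat\phi$ in Assumption \ref{assp} supply the requisite power, and keeping the constants $b_0^2/134$, $b_0^2/8$, $b_0^2/16$ explicit requires careful accounting of the geometric factors coming from (\ref{Y}) and (\ref{5.pp}).
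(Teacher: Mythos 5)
Your overall strategy is the same as the paper's (gain/loss splitting of ${\cal J}[\vp_\vep]$, reduction of ${\cal K}[\vp_\vep]$ to ${\cal K}_1$ via Proposition \ref{convex-positivity}, and a pointwise lower bound on $W$ from Assumption \ref{assp}), but several of your concrete steps fail. First, in the loss term of (\ref{JJ}) the inequality $\fr{2}{3\sqrt{yz}}(y+z)^{3/2}\le 2(\sqrt y+\sqrt z)$ is false: letting $y\to 0$ with $z$ fixed, the left side behaves like $\fr{z}{3\sqrt y}\to\infty$. The crude bound $W\le 1/\sqrt{yz}$ is not enough near $x=y+z$; one needs the sharper consequence of (\ref{1.difference}), namely $W(x,y,z)\le \min\{\sqrt x,\sqrt{x_*},\sqrt y,\sqrt z\}/\sqrt{xyz}$, which gives $\fr12\int_0^{y+z}W\sqrt x\,{\rm d}x\le \fr13\fr{y\wedge z}{\sqrt{y\vee z}}+\sqrt{y\vee z}\le \sqrt y+\sqrt z$, and in addition the rearrangement $\vp_\vep(y)\sqrt y+\vp_\vep(z)\sqrt z\le\vp_\vep(y)\sqrt z+\vp_\vep(z)\sqrt y$ (monotonicity of $\vp_\vep$). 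Even if your numerical bound were true, integrating $\fr{\vp_\vep(y)+\vp_\vep(z)}2\cdot 2(\sqrt y+\sqrt z)$ against ${\rm d}^2F$ produces an extra term $2N(F)\int\vp_\vep(y)\sqrt y\,{\rm d}F(y)$, so you would not land on the stated $-2M_{1/2}(F)N_{0,2}(F,\vep)$.

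Second, your regions for the cubic term are wrong. For (\ref{KK2}), on $\{0\le x\le\gm\vep,\ \gm\vep\le y\le z\le\fr32\vep\}$ the segment $[z-(y-x),z+(y-x)]$ need not lie in $[0,\vep]$, and $\Dt_{\rm sym}\vp_\vep$ can vanish there (take $x=0$, $y=\gm\vep$, $z=\fr32\vep$: then $z+x-y>\vep$, so all three terms of $\Dt_{\rm sym}\vp_\vep$ are zero, while you claim a bound $\ge\gm^2/2$); the working choice is $x\le\fr{\vep}4$ and $\vep<y\le z\le\fr32\vep$, where one keeps only the term $\vp_\vep(z+x-y)\ge\fr1{16}$, and then combines this region with $\{0\le x<y\le z\le\vep\}$ via $a^2+b^2\ge\fr12(a+b)^2$ to obtain $\big(F([\gm\vep,\fr32\vep])\big)^2$. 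Likewise, for (\ref{KK}) the restriction $z+(y-x)\le\vep$ is harmful: on $0\le x\le y\le z\le\vep$ one still has $\Dt_{\rm sym}\vp_\vep\ge(y-x)^2/\vep^2$ (only the portion $\{s<t\}$ of (\ref{4.7}) contributes), and it is the full range $x\in[0,y]$ that makes the inner integral equal to $N_{0,2}(F,y)\ge y^{\alpha}\underline{N}_{\alpha,2}(F,\vep)$; cutting it to $x\ge y+z-\vep$ destroys that factorization. Finally, the estimate you yourself flag as the main obstacle is left open and guessed in the wrong form: what is needed is $W(x,y,z)\ge\fr{b_0^2}8\fr{z^{\eta}}{\sqrt{yz}}$ for all $0\le x<y\le z\le 1$, not $W\gtrsim b_0^2\sqrt x\,\vep^{2\eta}/(\vep\sqrt{yz})$, whose extra factor $\sqrt x/\vep$ and power $\vep^{2\eta}$ do not close to the exponent $\beta=\fr{1-\alpha-\eta}2$. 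The correct bound follows from $\Phi(r,\rho)\ge b_0^2\big(\fr{(r+\rho)^{\eta}}{1+(r+\rho)^{\eta}}\big)^2$, the inequality $Y_*\ge\sqrt z-\sqrt x$ (via (\ref{5.pp})), and restricting $s$ to $[\sqrt y,\sqrt x+\sqrt y]$ so that $s+Y_*\ge\sqrt z$; without this step none of the three inequalities is proved.
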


\begin{proof}  We first prove that
\be W(x,y,z)\ge \fr{b_0}{2}\fr{z^{\eta}}{\sqrt{yz}}
\qquad \forall\, 0\le x<y\le z\le 1.\lb{W}\ee
First of all from the assumption on $\Phi$  we have (see (\ref{1.6}))
\be\Phi(r,\rho)
\ge b_0\min\{1,\,(r^2+\rho^2)^{\eta}\}.\lb{Phi2}\ee
Take any $0\le x< y\le z\le 1$.

\noindent If $x=0$, then (recall (\ref{W2})) $W(0,y,z)=\fr{1}{\sqrt{yz}}
\Phi(\sqrt{2y}, \sqrt{2z}\,)\ge b_0\fr{z^{\eta}}{\sqrt{yz}}$.

\noindent Suppose $x>0$. By (\ref{minmax}) we see that if
 $s\in[\sqrt{y}-\sqrt{x}, \sqrt{x}+\sqrt{y}]$, then using definition (\ref{Y}) of $Y_*$ and the property (\ref{5.pp}) in Appendix we have for all $\theta\in [0,2\pi]$
$$Y_*\ge \Big|\sqrt{z-\fr{(x-y+s^2)^2}{4s^2}
}
-\sqrt{x-\fr{(x-y+s^2)^2}{4s^2}
}\,\Big|
\ge  \fr{z-x}{ \sqrt{z} +\sqrt{x}}
=\sqrt{z}-\sqrt{x}
$$
from which we see that if $s\in [\sqrt{y}, \sqrt{x}+\sqrt{y}]$ then $
s+Y_*\ge \sqrt{z}$ and so using (\ref{Phi2}) gives
$$\Phi(\sqrt{2}\,s, \sqrt{2}\,Y_*)
\ge b_0\min\{1,\, (s+Y_*)^{2\eta}\}\ge b_0z^{\eta}\qquad \forall\, \theta\in[0,2\pi]$$
and thus
$$W(x,y,z)
=\fr{1}{4\pi\sqrt{xyz}}
\int_{\sqrt{y}-\sqrt{x}}
^{\sqrt{x}+\sqrt{y}}{\rm d}s
\int_{0}^{2\pi}\Phi(\sqrt{2}s, \sqrt{2} Y_*){\rm d}\theta
\ge
\fr{b_0}{2} \fr{z^{\eta}}{\sqrt{yz}}.$$
This proves (\ref{W}).

According to (\ref{JKW}),(\ref{diff}) we have a
decomposition
${\cal J}[\vp_{\vep}]={\cal J}^{+}[\vp_{\vep}]-{\cal J}^{-}[\vp_{\vep}]$
where
\beas &&{\cal J}^{+}[\vp_{\vep}](y,z)=\frac{1}{2}
\int_{0}^{y+z}W(x,y,z)(\vp_{\vep}(x)+\vp_{\vep}(y+z-x))
\sqrt{x}{\rm d}x,\\
&&{\cal J}^{-}[\vp_{\vep}](y,z)=\frac{1}{2}
\int_{0}^{y+z}W(x,y,z)
\sqrt{x}{\rm d}x (\varphi(y)+\varphi(z)).\eeas
Using the symmetry ${\cal J}^{+}[\vp_{\vep}](y,z)={\cal J}^{+}[\vp_{\vep}](z,y)$
and (\ref{W}) and then omitting $\vp_{\vep}(y+z-x)$
we compute
 \beas&&\int_{{\mathbb R}_{\ge 0}^2}{\cal J}^{+}[\vp_{\vep}](y,z){\rm d}F(y){\rm d}F(z)
 \ge \int_{ \fr{1}{2}\vep\le y\le z\le 1}\chi_{y,z}{{\cal J}^+}[\vp_{\vep}](y,z){\rm d}F(y){\rm d}F(z)\\
 &&\ge \fr{b_0}{4}\int_{ \fr{1}{2}\vep\le y\le z\le 1}\chi_{y,z}\fr{z^{\eta}}{\sqrt{yz}}\Big(\int_{0}^{\fr{1}{2}\vep}\vp_{\vep}(x)\sqrt{x}\,{\rm d}x \Big){\rm d}F(y){\rm d}F(z)\\
&&\ge \fr{b_0}{4}\vep^{3/2}\int_{ \fr{1}{2}\vep\le y\le z\le 1}\chi_{y,z}\fr{y^{\fr{\eta}{2}}z^{\fr{\eta}{2}}}{\sqrt{yz}}\Big(\int_{0}^{\fr{1}{2}}
(1-u)^2\sqrt{u}\,{\rm d}u \Big){\rm d}F(y){\rm d}F(z)\\
&&\ge \fr{b_0}{34}\vep^{3/2}\Big(\int_{[\fr{1}{2}\vep, 1]}
y^{-\fr{1-\eta}{2}}{\rm d}F(y)\Big)^2.\eeas
Next from $\Phi(r,\rho)\le 1$ we have
\beas
{{\cal J}^-}[\vp_{\vep}](y,z) &\le& \fr{\vp_{\vep}(y)+\vp_{\vep}(z)}{2}\Big(\fr{1}{3}\cdot\fr{y\wedge
z}{\sqrt{y\vee z}} {\bf 1}_{\{y\vee z>0\}}+ \sqrt{y\vee z}\Big)
\\
&\le &\fr{\vp_{\vep}(y)+\vp_{\vep}(z)}{2}(\sqrt{y}+\sqrt{z})
\le \vp_{\vep}(y)\sqrt{z}+\vp_{\vep}(z)\sqrt{y}\eeas
where we used the inequality
$\varphi_\varepsilon(y)\sqrt{y}+\varphi_\varepsilon(z)\sqrt{z}\le \varphi_\varepsilon(y)\sqrt{z}+\varphi_\varepsilon(z)\sqrt{y}$
which is because $(\varphi_\varepsilon(y)-\varphi_\varepsilon(z))(\sqrt{y}-\sqrt{z})\le 0$
since the function $x\mapsto \varphi_{\vep}(x) $ is non-increasing.
Thus we obtain
\beas \int_{{\mathbb R}_{\ge 0}^2}{{\cal J}^-}[\vp_{\vep}](y,z){\rm d}F(y){\rm d}F(z)
&\le&
\int_{{\mathbb R}_{\ge 0}^2}\big(
\vp_{\vep}(y)\sqrt{z}+\vp_{\vep}(z)\sqrt{y}\big){\rm d}F(y){\rm d}F(z)
 \\
 &=& 2M_{1/2}(F)N_{0,2}(F,\varepsilon).\eeas
This together with the above estimate proves the first inequality (\ref{JJ}).

Now we prove the second inequality (\ref{KK2}). It is easily seen that the function $x\mapsto \vp_{\vep}(x)$ is convex, belongs to
$C_b^{1,1}({\mathbb R}_{\ge 0})$, and holds the inequality
\be\Dt_{\rm sym}\vp_{\vep}(x,y,z)\ge \fr{(y-x)^2}{\vep^2} \quad {\rm for\,\,all}\,\,\, 0\le x\le y\le z\le \vep.\lb{vp}\ee
Let $0<\vep\le 1$. By Proposition \ref{convex-positivity} and (\ref{W}), (\ref{vp}) we have
\beas
\int_{{\mathbb R}_{\ge 0}^3}{\cal K}[\vp_{\vep}]{\rm d}^3F
&\ge &
\int_{0\le x\le y\le z\le \vep,\, y>0}\chi_{y,z}
\fr{b_0}{2}\fr{z^{\eta}}{\sqrt{yz}}
\fr{(y-x)^2}{\vep^2}{\rm d}F(x){\rm d}F(y){\rm d}F(z)
\\
&=&\fr{b_0}{2\vep^2}\int_{0<y\le z\le \vep}\chi_{y,z}
\fr{y^{3/2} z^{\eta}}{\sqrt{z}}\Big(\int_{[0,y]}(1-x/y)^2{\rm d}F(x)\Big)
{\rm d}F(y){\rm d}F(z)
\\
&=&\fr{b_0}{2\vep^2}\int_{0<y\le z\le \vep}\chi_{y,z}
\fr{y^{\fr{3}{2}+\alpha}z^{\fr{3}{2}+\alpha}}{z^{2-\eta+\alpha}}
N_{\alpha,2}(F,y)
{\rm d}F(y){\rm d}F(z)
\\
&\ge &\fr{b_0}{2}\underline{N}_{\alpha,2}(F,\vep)\fr{1}{
\vep^{4+\alpha-\eta}}\int_{0<y\le z\le \vep}\chi_{y,z}
y^{\fr{3}{2}+\alpha}z^{\fr{3}{2}+\alpha}
{\rm d}F(y){\rm d}F(z)
\\
&= &\fr{b_0}{2}\underline{N}_{\alpha,2}(F,\vep)\fr{1}{\vep^{1-\alpha-\eta}}
\big(A_{0,p}(F,\vep)\big)^2=
\fr{b_0}{2}\underline{N}_{\alpha,2}(F,\vep)\big(A_{\beta,p}(F,\vep)\big)^2.
\eeas
Finally, assuming $0<\vep\le 2/3, 0<\gm\le (24)^{-1/2}$, we compute
\beas&&  \int_{{\mR}^3_{\ge 0}}  {\cal K}[\varphi_{\vep}]\,{\rm d}^3F\ge \int_{{\mR}^3_{\ge 0}}  {\cal K}_1[\varphi_{\vep}]\,{\rm d}^3F \\
&&\ge \int_{0\le x<y\le z\le \vep}  \chi_{y,z}\fr{b_0}{2}\fr{z^{\eta}}{\sqrt{yz}}\fr{(y-x)^{2}}{{\vep}^{2}}\,{\rm d}^3F+\int_{0\le x\le \fr{\vep}{4},\vep< y\le z \le \fr{3}{2}\vep}  \chi_{y,z}
 \fr{b_0}{2}\fr{z^{\eta}}{\sqrt{yz}}\Big(1-\frac{z+x-y}{\vep}\Big)_{+}^2\,{\rm d}^3F
\quad \\&&
\ge \fr{b_0}{2\vep^2}\int_{\gamma \vep\le y\le z\le \vep} \chi_{y,z} \frac {y^{3/2}z^{1/2}
N_{0,2}(F,y)}{z^{1-\eta}}\,{\rm d}^2F+\fr{b_0}{2}\frac{1}{16}F([0,\frac{\vep}{4}])\int_{\vep < y\le z\le \fr{3}{2}\vep} \chi_{y,z} \fr{1}{z^{1-\eta}}\sqrt{\fr{z}{y}}{\rm d}^2 F \\
&&\ge \fr{b_0}{2\vep^2} N_{0,2}(F,\gm \vep)\fr{(\gm\vep)^{2}}{\vep^{1-\eta}}
\int_{\gm \vep\le y\le z\le  \vep}  \chi_{y,z}{\rm d}^2F +\fr{b_0}{2}\fr{1}{16}N_{0,2}(F,\gm \vep)
\fr{1}{(\fr{3}{2}\vep)^{1-\eta} }\int_{\vep < y\le z\le \fr{3}{2}\vep}  \chi_{y,z}{\rm d}^2 F \\
&&=\fr{b_0}{2} N_{0,2}(F,\gm \vep)\fr{\gm^{2}}{\vep^{1-\eta}}
\int_{\gm \vep\le y\le z\le  \vep}  \chi_{y,z}{\rm d}^2F +\fr{b_0}{2}\fr{1}{16}N_{0,2}(F,\gm \vep)
\big(\fr{2}{3}\big)^{1-\eta}\fr{1}{\vep^{1-\eta}}\int_{\vep < y\le z\le \fr{3}{2}\vep}  \chi_{y,z}{\rm d}^2 F \\
&&\ge \fr{b_0}{2}
N_{0,2}(F,\gm \vep)\gm^2\fr{1}{\vep^{1-\eta}}
\Big(\big(F([\gamma \vep,\vep])\big)^2+\big(F((\vep,\fr{3}{2}\vep])\big)^2\Big) \\
&&\ge
\fr{b_0}{4}\fr{\gm^{2}}{\vep^{1-\eta}}
N_{0,2}(F,\gm \vep)
\big(F([\gamma \vep, \fr{3}{2}\vep])\big)^2.\eeas
Here we have used $\fr{1}{16}\big(\fr{2}{3}\big)^{1-\eta}\ge \fr{1}{24}\ge \gm ^{2} $.
This completes the proof.
\end{proof}
\vskip3mm

In the following we will use a convention: in the set ${\mR}_{\ge 0}$ we define the arithmetic operation
\be \fr{b}{a}= \infty \,\,(=+\infty)\quad {\rm if}\quad a=0<b.\lb{convention}\ee

\begin{lemma}\lb{lemma4.4} Let $B({\bf {\bf v-v}_*},\omega)$ satisfy Assumption \ref{assp} with $0\le\eta<1$.
Let
$F_t\in {\cal B}_{1}^{+}({\mR}_{\ge 0})$ be  a conservative  measure-valued isotropic solution of Eq.(\ref{Equation1}) on $[0, \infty)$ with the initial datum $F_0$
satisfying $N=N(F_0)>0, E=E(F_0)>0$. Let
$c=\sqrt{NE}, 0<\vep\le 1$. Then

{\rm (I)} For any $\tau\ge 0, T>0$
\be N_{0,2}(F_{\tau+t},\vep)\ge
\frac{b_0}{34}{\vep}^{3/2}\inf_{s\in [0, T]}
\Big(\int_{[\fr{1}{2}\vep,1]}y^{-\fr{1-\eta}{2}}{\rm d}F_{\tau+s}(y)\Big)^2 e^{-ct}t\quad \forall\, t\in [0,T].
\lb{inf2}\ee
Moreover if $0<\vep\le 2/3, 0<\gm\le (24)^{-1/2}$, then
\be N_{0,2}(F_{\tau+t},\vep)\ge
\fr{b_0}{4}\fr{\gm^{2}}{\vep^{1-\eta}}\inf_{s\in[0, T]}
\Big(N_{0,2}(F_{\tau+s},\gm \vep)\big(F_{\tau+s}\big(\big[\gamma \vep,\fr{3}{2}\vep\big]\big)\big)^2\Big) e^{-ct}t \quad \forall\, t\in[0,T].
\lb{inf3}\ee

{\rm (II)} Let $0\le\alpha<1-\eta, p=\fr{3}{2}+\alpha,\beta=\fr{1-\alpha-\eta}{2},$ and
$h>0$. Then
\be e^{2ch}F_{t+h}(\{0\})
\ge N_{0,p}(F_{t},\vep)-\Big(\fr{2e^{3ch} N}{hb_0\underline{N}_{\alpha, 2}(F_t,\vep)}\Big)^{1/2}
\Big(\fr{p}{\beta}\Big)^p\vep^{\beta}\quad \forall\, t\ge 0.\lb{4.15}\ee
In particular for $\alpha=0$ we have
\be e^{2ch}F_{t+h}(\{0\})\ge N_{0,3/2}(F_{t},\vep)
-\Big(\fr{2e^{3c h} N}{hb_0F_t(\{0\})}\Big)^{1/2}
\Big(\fr{3}{1-\eta}\Big)^{3/2}\vep^{\fr{1-\eta}{2}}\quad \forall\, t\ge 0.\lb{4.16}\ee
\end{lemma}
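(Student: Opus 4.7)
\noindent The plan is to derive both parts from the weak formulation of Eq.~(\ref{Equation1}) applied to the convex test functions $\vp_\vep^{(q)}(x):=[(1-x/\vep)_+]^q$, combined with the lower bounds in Lemma~\ref{lemma4.3}, the convex-positivity of Proposition~\ref{convex-positivity}, and the monotonicity results in Lemma~\ref{lemma4.2}.

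\medskip
\noindent\textbf{Part (I).} First I would take $\vp_\vep=\vp_\vep^{(2)}\in C_b^{1,1}({\mR}_{\ge 0})$ in Definition~\ref{definition1.1}(iii), giving
$$\fr{{\rm d}}{{\rm d}t}N_{0,2}(F_t,\vep)=\int_{{\mR}_{\ge 0}^2}{\cal J}[\vp_\vep]\,{\rm d}^2F_t+\int_{{\mR}_{\ge 0}^3}{\cal K}[\vp_\vep]\,{\rm d}^3F_t.$$
For (\ref{inf2}) I would lower-bound the ${\cal J}$-integral by (\ref{JJ}), drop the nonnegative cubic integral (Proposition~\ref{convex-positivity}), and use $M_{1/2}(F_t)\le c=\sqrt{NE}$ from Cauchy--Schwarz and conservation. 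This yields
$$\fr{{\rm d}}{{\rm d}t}\big(e^{2ct}N_{0,2}(F_t,\vep)\big)\ge\fr{b_0^2}{134}\vep^{3/2}e^{2ct}\Big(\int_{[\vep/2,1]}y^{-(1-\eta)/2}\,{\rm d}F_t(y)\Big)^2.$$
Integrating on $[\tau,\tau+t]$, dropping the nonnegative boundary term, pulling out the infimum over $s\in[0,T]$, dividing by $e^{2c(\tau+t)}$, and applying the elementary inequality $(1-e^{-2ct})/(2c)\ge te^{-ct}$ (checked by differentiating the difference and using $e^{-ct}\ge 1-ct$) delivers (\ref{inf2}). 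For (\ref{inf3}) I would instead discard the positive part of (\ref{JJ}) and use only $\int{\cal J}[\vp_\vep]\,{\rm d}^2F_t\ge-2cN_{0,2}(F_t,\vep)$, while lower-bounding the cubic integral by (\ref{KK2}); the same integrating-factor computation then yields (\ref{inf3}).

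\medskip
\noindent\textbf{Part (II).} I would apply Lemma~\ref{lemma4.2}(I) with $\vp=\vp_\vep^{(p)}$ (convex since $p=3/2+\alpha>1$, approximated by $C_b^{1,1}$ functions when $p<2$) between times $t$ and $t+h$ to get
$$e^{c(t+h)}\int_{{\mR}_{\ge 0}}\vp_\vep^{(p)}\,{\rm d}F_{t+h}\ge e^{ct}N_{0,p}(F_t,\vep).$$
Decomposing the left-hand integral as $F_{t+h}(\{0\})+R_{t+h}$ with $R_{t+h}:=\int_{(0,\vep]}(1-x/\vep)^p\,{\rm d}F_{t+h}$, the task reduces to producing the stated upper bound for $R_{t+h}$. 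For this I would first use the elementary pointwise inequality
$$(1-x/\vep)^p\le (p/\beta)^p\,(\vep/x)^\beta,\quad x\in(0,\vep],$$
which follows from $\sup_{u\in(0,1]}(1-u)^pu^\beta=p^p\beta^\beta/(p+\beta)^{p+\beta}\le(p/\beta)^p$, so that $R_{t+h}\le (p/\beta)^p\vep^\beta\int_{(0,\vep]}x^{-\beta}\,{\rm d}F_{t+h}$. The remaining $x^{-\beta}$-moment is then controlled by combining Cauchy--Schwarz with a time-integrated version of the cubic bound (\ref{KK}) applied to $\vp_\vep^{(2)}$ over $[t,t+h]$, invoking the monotonicity $e^{cs}\underline N_{\alpha,2}(F_s,\vep)\ge e^{ct}\underline N_{\alpha,2}(F_t,\vep)$ (Lemma~\ref{lemma4.2}(II)) to freeze $s$-dependent factors at their values at time $t$. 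This produces the factor $(8e^{3ch}N/(hb_0^2\underline N_{\alpha,2}(F_t,\vep)))^{1/2}$. The case (\ref{4.16}) is the specialization $\alpha=0$, $p=3/2$, $\beta=(1-\eta)/2$.

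\medskip
\noindent\textbf{Main obstacle.} The hard part is the residual estimate in Part (II): translating the two-factor cubic lower bound (\ref{KK})---which simultaneously involves $\underline N_{\alpha,2}(F_s,\vep)$ (the near-origin reservoir) and $A_{\beta,p}(F_s,\vep)^2$ (the layer of mass slightly further out)---into an upper bound on $R_{t+h}$ that extracts exactly the combinatorial constant $(p/\beta)^p$ from $\sup_{u\in(0,1]}(1-u)^pu^\beta\le(p/\beta)^p$, the scaling $\vep^\beta$, and the $1/\sqrt{h}$ decay through Cauchy--Schwarz in time, while preserving the $\sqrt{N/\underline N_{\alpha,2}(F_t,\vep)}$ structure so that the bound remains informative even when the non-singular reservoir at time $t$ is small.
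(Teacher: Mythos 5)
Your Part (I) is correct and follows essentially the paper's own route: test with $\vp_{\vep}(x)=[(1-x/\vep)_{+}]^2$, use (\ref{JJ}) for (\ref{inf2}) (dropping the nonnegative cubic term via Proposition \ref{convex-positivity}), respectively the crude bound $\int{\cal J}[\vp_{\vep}]\,{\rm d}^2F_t\ge -2cN_{0,2}(F_t,\vep)$ together with (\ref{KK2}) for (\ref{inf3}), then integrate with the factor $e^{2ct}$ and use $1-e^{-x}\ge xe^{-x/2}$.

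Part (II), however, has a genuine gap at exactly the step you single out as the main obstacle. After writing $\int\vp^{(p)}_{\vep}\,{\rm d}F_{t+h}=F_{t+h}(\{0\})+R_{t+h}$ you bound $R_{t+h}\le (p/\beta)^p\vep^{\beta}\int_{(0,\vep]}x^{-\beta}\,{\rm d}F_{t+h}$ and claim this negative moment is controlled by the time-integrated cubic bound (\ref{KK}) at the single scale $\vep$. This cannot work. What (\ref{KK}) combined with (\ref{4.12}) and $N_{0,2}\le N$ yields is a bound on $\int_t^{t+h}\big(A_{\beta,p}(F_{\tau},\vep)\big)^2{\rm d}\tau$, i.e.\ on a time average of the positive-power moment $\vep^{-\beta}\int_{[0,\vep]}(x/\vep)^p\,{\rm d}F_{\tau}$. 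Such a quantity is blind to mass sitting at very small positive $x$: if $F_{\tau}$ carries an atom of fixed mass $m_1$ at some $x_0\ll\vep$ for all $\tau\in[t,t+h]$, then $A_{\beta,p}(F_{\tau},\vep)$ is of order $(x_0/\vep)^p$ and hence negligible, while $\int_{(0,\vep]}x^{-\beta}\,{\rm d}F_{t+h}\ge m_1x_0^{-\beta}$ is huge (for a general finite measure it can even be $+\infty$), and $R_{t+h}$ itself is of order $m_1$. So no Cauchy--Schwarz-in-time manipulation of the single-scale quantity $A_{\beta,p}(F_{\tau},\vep)$ can dominate your residual, and the proposed estimate collapses precisely in the regime the lemma is designed for.

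The missing ingredient is a multi-scale bound: the paper invokes Lemma 2.3 of \cite{Lu2013}, namely $N_{0,p}(F_{\tau},\vep)\le F_{\tau}(\{0\})+\big(\fr{p}{\beta}\vep^{\beta/p}\big)^{p-1}\int_0^{\vep}\vep_1^{-1+\beta/p}A_{\beta,p}(F_{\tau},\vep_1)\,{\rm d}\vep_1$, which captures the mass near the origin at every scale $\vep_1\le\vep$ through the positive weight $(x/\vep_1)^p$ and thereby avoids negative powers of $x$ altogether. One then averages this in $\tau$ over $[t,t+h]$, bounds the time average of $A_{\beta,p}(F_{\tau},\vep_1)$ for each $\vep_1$ by the Cauchy--Schwarz argument you describe, using $\underline{N}_{\alpha,2}(F_t,\vep_1)\ge\underline{N}_{\alpha,2}(F_t,\vep)$ for $\vep_1\le\vep$ (this is where the infimum over scales in $\underline{N}_{\alpha,2}$ is indispensable), performs the $\vep_1$-integration to produce the factor $(p/\beta)^p\vep^{\beta}$, and finally transfers from the intermediate times $\tau$ back to $t$ and $t+h$ via the monotonicity of $e^{ct}N_{0,p}(F_t,\vep)$ and $e^{ct}F_t(\{0\})$, which is what generates the $e^{2ch}$ and $e^{3ch}$ factors in (\ref{4.15}). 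Without this multi-scale step your Part (II) does not go through.
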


\begin{proof}  {\rm(I)}:
Since $\vp_{\vep}(x)=[(1-x/\vep)_+]^2$ is convex and belongs to
$C^{1,1}_b({\mR}_{\ge 0})$, we deduce from Proposition \ref{convex-positivity},
Lemma \ref{lemma4.3}, and $M_{1/2}(F_{\tau+t})\le \sqrt{NE}=c$
that
\beas&&\frac{{\rm d}}{{\rm d}t}N_{0,2}(F_{t},\vep)=
 \int_{{\mathbb R}_{\ge 0}^2}{\cal J}[\vp_{\vep}]{\rm d}^2F_{t}+ \int_{{\mathbb R}_{\ge 0}^3}{\cal K}[\vp_{\vep}]{\rm d}^3F_{t}
 \\
 &&\ge
\frac{b_0}{34}\vep^{3/2}
\Big(\int_{[\fr{1}{2}\vep,1]}y^{-\fr{1-\eta}{2}}{\rm d}F_{t}(y)\Big)^2- 2M_{1/2}(F_{t}) N_{0,2}(F_{t},\vep)
 \\
&&\ge
\frac{b_0}{34}\vep^{3/2}\inf_{s\in [\tau, \tau+T]}
\Big(\int_{[\fr{1}{2}\vep,1]}y^{-\fr{1-\eta}{2}}{\rm d}F_{s}(y)\Big)^2- 2c N_{0,2}(F_{t},\vep)
,\quad t\in [\tau, \tau+T]\eeas
and so
\beas&&
N_{0,2}(F_{\tau+t},\vep)
\ge
N_{0,2}(F_{\tau},\vep)e^{-2ct}+
\frac{b_0}{34}\vep^{3/2}\inf_{s\in [\tau, \tau+T]}
\Big(\int_{[\fr{1}{2}\vep,1]}y^{-\fr{1-\eta}{2}}{\rm d}F_{s}(y)\Big)^2\fr{1-e^{-2ct}}{2c}
\\
&&\ge
\frac{b_0}{34}\vep^{3/2}\inf_{s\in [0, T]}
\Big(\int_{[\fr{1}{2}\vep,1]}y^{-\fr{1-\eta}{2}}{\rm d}F_{\tau+s}(y)\Big)^2e^{-ct}
t,\quad t\in [0, T]
\eeas
where we used
$1-e^{-x}\ge x e^{-x/2}, x\ge 0.$ This proves the first inequality.

Next suppose that
$0<\vep\le 2/3, 0<\gm\le (24)^{-1/2}$. Then using Lemma \ref{lemma4.2} and Lemmas \ref{lemma4.3} we have
$$\frac{{\rm d}}{{\rm d}t}N_{0,2}(F_{t},\vep)
\ge -2cN_{0,2}(F_{t},\vep)+
\fr{b_0}{4}\fr{\gm^{2}}{\vep^{1-\eta}}\inf_{s\in[\tau, \tau+T]}\Big(
N_{0,2}(F_{s},\gm \vep)
\big(F_{s}\big(\big[\gamma \vep, \fr{3}{2}\vep\big]\big)\big)^2\Big)
$$ for all $t\in[\tau, \tau+T].$
As shown above we also obtain
$$ N_{0,2}(F_{\tau+t},\vep)\ge
\fr{b_0}{4}\fr{\gm^{2}}{\vep^{1-\eta}}
\inf_{s\in[0, T]}
\Big(N_{0,2}(F_{\tau+s},\gm \vep)\big(F_{\tau+s}\big(\big[\gamma \vep,\fr{3}{2}\vep\big]\big)\big)^2\Big) e^{-ct}t,\quad t\in[0, T].
$$

(II): By Lemma \ref{lemma4.2}, (\ref{KK}) and that $t\mapsto c^{ct}\underline{N}_{\alpha,2}(F_t,\vep)$ is non-decreasing, we have
\beas e^{ct}N_{0,2}(F_t,\vep)- e^{cs}N_{0,2}(F_s,\vep)\ge \fr{b_0}{2}e^{cs}\underline{N}_{\alpha, 2}(s,\vep)\int_{s}^{t}\big(A_{\beta,p}(F_{\tau},\vep)\big)^2 {\rm d}\tau,
\quad 0\le s<t.\eeas
Now for any $h>0$, letting $s$ and $t$ be replaced with $t$ and $t+h$ respectively and
noting that $ N_{0,2}(F_{t+h},\vep)\le N$, we obtain
$$e^{ch}N\ge \fr{b_0}{2}\underline{N}_{\alpha,2}(F_t,\vep)\int_{t}^{t+h}
\big(A_{\beta,p}(F_{\tau},\vep)\big)^2 {\rm d}\tau.$$
Then, using Cauchy-Schwarz inequality,
$$\fr{1}{h}\int_{t}^{t+h}A_{\beta, p}(F_\tau,\vep) {\rm d}\tau\le \Big(\fr{1}{h}\int_{t}^{t+h}
\big(A_{\beta, p}(F_\tau,\vep)\big)^2 {\rm d}\tau\Big)^{1/2}
\le\Big(\fr{2e^{c h} N}{hb_0\underline{N}_{\alpha, 2}(F_t,\vep)}\Big)^{1/2} .$$
Note that according to the convention (\ref{convention}), this inequality
still holds when $\underline{N}_{\alpha, 2}(F_t,\vep)=0$.
On the other hand, applying Lemma 2.3 in \cite{Lu2016} to the measure $F=F_{\tau}$ we have
$$ N_{0,p}(F_{\tau},\vep)\le F_{\tau}(\{0\})+\Big(\fr{p}{\beta}\vep^{\fr{\beta}{p}}\Big)^{p-1}
\int_{0}^{\vep}\vep_1^{-1+\fr{\beta}{p}} A_{\beta,p}(F_{\tau},\vep_1){\rm d}\vep_1
,\quad \vep>0.$$
Taking integration and using the fact that $0<\vep_1\le \vep\, \Longrightarrow\,\underline{N}_{\alpha,2}(F_t,\vep_1)\ge \underline{N}_{\alpha,2}(F_t,\vep)
$ we have
\beas&&
\fr{1}{h}\int_{t}^{t+h}N_{0,p}(F_{\tau},\vep){\rm d}\tau\le \fr{1}{h}\int_{t}^{t+h}F_{\tau}(\{0\}){\rm d}\tau+\Big(\fr{p}{\beta}\vep^{\fr{\beta}{p}}\Big)^{p-1}
\int_{0}^{\vep}\vep_1^{-1+\fr{\beta}{p}} \fr{1}{h}\int_{t}^{t+h}A_{\beta,p}(F_{\tau},\vep_1){\rm d}\tau{\rm d}\vep_1
\\
&&\le \fr{1}{h}\int_{t}^{t+h}F_{\tau}(\{0\}){\rm d}\tau+\Big(\fr{p}{\beta}\vep^{\fr{\beta}{p}}\Big)^{p-1}
\int_{0}^{\vep}\vep_1^{-1+\fr{\beta}{p}}\Big(\fr{2e^{c h} N}{hb_0\underline{N}_{\alpha, 2}(F_t,\vep_1)}\Big)^{1/2}{\rm d}\vep_1
\\
&&\le\fr{1}{h}
\int_{t}^{t+h}F_{\tau}(\{0\}){\rm d}\tau+
\Big(\fr{2e^{c h} N}{hb_0\underline{N}_{\alpha, 2}(F_t,\vep)}\Big)^{1/2}
\Big(\fr{p}{\beta}\Big)^p\vep^{\beta}.
\eeas
Since
$t\mapsto e^{ct}N_{0,p}(F_t,\vep), t\mapsto e^{ct}F_t(\{0\})$ are  non-decreasing on $[0,\infty)$,
it follows that
$ e^{-ch}N_{0,p}(F_{t},\vep)\le N_{0,p}(F_{\tau},\vep),
F_{\tau}(\{0\})\le e^{ch} F_{t+h}(\{0\})$ for all $\tau\in [t,t+h]$
and so
$$ e^{-ch} N_{0,p}(F_{t},\vep)
\le e^{ch}F_{t+h}(\{0\})+
\Big(\fr{2e^{c h} N}{hb_0\underline{N}_{\alpha, 2}(F_t,\vep)}\Big)^{1/2}
\Big(\fr{p}{\beta}\Big)^p\vep^{\beta}.$$
This gives (\ref{4.15})

Finally for the case $\alpha=0$, i.e. $\beta=(1-\eta)/2, p=3/2$,  we have $\underline{N}_{0, 2}(F_t,\vep)
=F_t(\{0\})$ and thus (\ref{4.16}) holds true.
\end{proof}
\\

The following lemma and proposition are key steps for obtaining lower bounds of
$F_t(\{0\})$ and for the convergence of $F_t(\{0\})$ to BEC without additional condition on the initial data.

\begin{lemma}\lb{lemma4.5} Let $B({\bf {\bf v-v}_*},\omega)$ satisfy Assumption \ref{assp} with $0\le\eta<1/4$.
Given any $N>0, E>0$.  Let $\alpha=\fr{1}{10}(1-4\eta)$,
\be A_{\alpha}^*=\Big(
\fr{b_0^2}{136}\big(\fr{2}{3}\big)^{1-\eta}\big(1-\big(\fr{2}{3}\big)^{\alpha/4}\big)^{8}
\Big)^{-1/3},\quad \vep_{\alpha}^*=\min\Big\{
\Big(\fr{\alpha \log(\fr{3}{2})}{16\sqrt{NE}}\Big)^{1/\alpha},\, \fr{2}{3}
\Big\}\lb{4.20}\ee
and let $F_t\in {\cal B}_{1}^{+}({\mR}_{\ge 0})$ be
a conservative measure-valued isotropic solution of Eq.(\ref{Equation1}) on $[0,\infty)$
with initial datum $F_0$ satisfying $N(F_0)=N, E(F_0)=E$. 
If $0<\vep\le \vep_{\alpha}^*, \tau\ge 0$ satisfy
\be N_{0,2}\big(F_{\tau},\fr{3}{2}\vep\big)\ge 
A_{\alpha}^*\vep^{\alpha} \lb{BEC4}\ee
then 
\be N_{0,2}(F_{\tau+2\vep^{\alpha}},\vep)\ge \big(\fr{2}{3}\big)^{\alpha}
N_{0,2}\big(F_{\tau},\fr{3}{2}\vep\big).\lb{4.21}\ee
\end{lemma}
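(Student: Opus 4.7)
The plan is a dichotomy argument on the behavior of $N_{0,2}(F_{\tau+s},\vep)$ over the time window $s\in[0,h]$ with $h=2\vep^\alpha$. Write $Y:=N_{0,2}(F_\tau,\fr{3}{2}\vep)$. By Lemma \ref{lemma4.2}(II), the map $t\mapsto e^{ct}N_{0,2}(F_t,\delta)$ is non-decreasing for every fixed $\delta>0$; applied at $\delta=\fr{3}{2}\vep$ this gives $N_{0,2}(F_{\tau+s},\fr{3}{2}\vep)\ge e^{-cs}Y\ge e^{-ch}Y$ throughout $[0,h]$, and applied at $\delta=\vep$ it will later let us propagate any lower bound from an intermediate time $s_0$ to $t=h$ with loss only $e^{-c(h-s_0)}\ge e^{-ch}$.

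First I would handle the favorable case: if there exists $s_0\in[0,h]$ with $N_{0,2}(F_{\tau+s_0},\vep)\ge (\fr{2}{3})^\alpha Y$, then the monotonicity above immediately yields $N_{0,2}(F_{\tau+h},\vep)\ge e^{-ch}(\fr{2}{3})^\alpha Y\ge e^{-2ch}(\fr{2}{3})^\alpha Y$, which is the claim. Otherwise $N_{0,2}(F_{\tau+s},\vep)<(\fr{2}{3})^\alpha Y$ for all $s\in[0,h]$. In this case I would combine this upper bound with the Chebyshev-type inequality $F_{\tau+s}([0,r\vep])\le (1-r)^{-2}N_{0,2}(F_{\tau+s},\vep)$ (which comes from $(1-x/\vep)^2\ge (1-r)^2$ on $[0,r\vep]$) and the layering decomposition
\[
N_{0,2}(F_{\tau+s},\tfrac{3}{2}\vep)\le F_{\tau+s}([0,r\vep])+\bigl(1-\tfrac{2r}{3}\bigr)^{2}F_{\tau+s}\bigl((r\vep,\tfrac{3}{2}\vep]\bigr),
\]
to extract a uniform-in-$s$ lower bound on the annulus mass $F_{\tau+s}((r\vep,\fr{3}{2}\vep])$ of the form $c_1(r,\alpha)Y$, using the smallness of $\vep^\alpha$ from (\ref{4.20}) to make $(2/3)^{\alpha}/(1-r)^2<e^{-ch}$ for an appropriate $r$ tied to the $(2/3)^{\alpha/4}$ factor in $A_\alpha^*$.

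The final step feeds this annulus mass into Lemma \ref{lemma4.4}(I): on the annulus one has $y^{-(1-\eta)/2}\ge (\fr{3}{2}\vep)^{-(1-\eta)/2}$, so the quadratic integral in (\ref{inf2}) (or alternatively the product in (\ref{inf3}) with $\gamma=r$, taking $N_{0,2}(F_{\tau+s},\gamma\vep)$ handled similarly) contributes a factor $\vep^{-(1-\eta)}\bigl(c_1 Y\bigr)^{2}$; multiplying by $\vep^{3/2}$ from (\ref{inf2}) and by $h=2\vep^\alpha$ produces the bound $N_{0,2}(F_{\tau+h},\vep)\gtrsim b_0^2\,\vep^{1/2+\eta+\alpha}Y^{2}e^{-O(c\vep^\alpha)}$. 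Absorbing one factor of $Y$ via the hypothesis $Y\ge (\fr{2}{3})^{\alpha/4}A_\alpha^{*}\vep^\alpha$ converts this into $A_\alpha^{*}b_0^{2}\vep^{1/2+\eta+2\alpha}Y$, which is compared against the target $(\fr{2}{3})^\alpha Y$. The hard part will be exactly the bookkeeping of constants in this last step: the specific exponent $\alpha=(1-4\eta)/10$ is the value that makes the powers of $\vep$ balance under the upper bound $\vep\le (3/2)^{1/4}(N/A_\alpha^*)^{1/\alpha}$, and the unusual form $A_\alpha^{*}=\bigl(4\,b_0^{-4/7}/(1-(\fr{2}{3})^{\alpha/4})\bigr)^{7/3}$ emerges precisely from the algebraic equation that equates the annulus-mass bound (with its $(1-(\fr{2}{3})^{\alpha/4})$ factor from the dichotomy threshold) with the requirement that the resulting $N_{0,2}(F_{\tau+h},\vep)$ dominates $(\fr{2}{3})^\alpha Y$ with the prescribed $e^{-2c\vep^\alpha}$ loss.
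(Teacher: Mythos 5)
Your first half — the dichotomy/monotonicity reduction and the extraction of a uniform annulus mass $F_{\tau+s}((r\vep,\fr{3}{2}\vep])\gtrsim \gm(1-\gm)Y$ with $\gm=1-(\fr{2}{3})^{\alpha/4}$ — is exactly the paper's argument (run there as a contradiction). The gap is in the final quantitative step. A single pass through (\ref{inf2}) gives, after the time factor $h\sim\vep^{\alpha}$, only
$N_{0,2}(F_{\tau+h},\vep)\gtrsim b_0^2\,\vep^{1/2+\eta+\alpha}\,Y^2$,
and after absorbing one factor of $Y$ via (\ref{BEC4}) you are left needing $b_0^2 A_{\alpha}^*\,\vep^{1/2+\eta+2\alpha}\gtrsim (\fr{2}{3})^{\alpha}$. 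Since $1/2+\eta+2\alpha>0$ and the lemma must hold for arbitrarily small $\vep$ (indeed the iteration in Proposition \ref{proposition4.6} sends $\vep_k=(\fr{2}{3})^k\vep\to 0$), this inequality fails: an \emph{upper} bound on $\vep$ such as the one in (\ref{4.20}) can never produce a lower bound on a positive power of $\vep$. The exponent $\alpha=\fr{1}{10}(1-4\eta)$ balances the powers only when \emph{four} factors of the mass appear, because $\fr12-2\eta-2\alpha=3\alpha$, so one needs $Y^4$ (hence $Y^3\gtrsim\vep^{3\alpha}$ after absorbing) against a prefactor $\vep^{-1/2+2\eta+2\alpha}=\vep^{-3\alpha}$; your single-stage bound carries only $Y^2$ and a positive power of $\vep$, so no choice of $\alpha$ or of $A_{\alpha}^*$ can rescue it. (A secondary issue: with the threshold $(\fr{2}{3})^{\alpha}$ and loss $e^{-ch}\ge(\fr{2}{3})^{\alpha/4}$, the radius $r$ is forced below $1-(\fr{2}{3})^{3\alpha/8}$, which is tiny, so the annulus is not contained in $[\vep/2,1]$ and (\ref{inf2}) must anyway be applied at the smaller scale $\sim\gm\vep$, yielding a lower bound on $N_{0,2}(F,\gm\vep)$ rather than on $N_{0,2}(F,\vep)$ directly — harmless by monotonicity in $\vep$, but it changes nothing about the deficient exponent.)

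Your parenthetical alternative — using (\ref{inf3}) with ``$N_{0,2}(F_{\tau+s},\gamma\vep)$ handled similarly'' — is precisely where the missing idea sits. The dichotomy hypothesis gives an upper bound on $N_{0,2}(F_{\tau+s},\vep)$ and a lower bound on the annulus mass, but no lower bound whatsoever on $N_{0,2}(F_{\tau+s},\gm\vep)$: this quantity may be zero at $s=0$ (e.g.\ if $F_{\tau}$ charges nothing below $\gm\vep$) and only becomes positive through the collision dynamics. The paper therefore runs a two-stage bootstrap inside the window $[0,2\vep^{\alpha}]$: on the first half it uses (\ref{inf2}) at scale $\gm\vep$, fed by the annulus mass, to manufacture $N_{0,2}(F_{\tau+\vep^{\alpha}+s},\gm\vep)\gtrsim b_0^2 M^2\vep^{1/2+\eta+\alpha}$; on the second half it feeds this, together with the annulus mass squared, into (\ref{inf3}) (whose prefactor carries the crucial $\vep^{-(1-\eta)}$), producing $\gtrsim b_0^4\gm^{7}M^4\vep^{-3\alpha}$, which after $M^3\ge(1-\gm)^3(A_{\alpha}^*)^3\vep^{3\alpha}$ beats $(\fr{2}{3})^{\alpha}M$ by the choice of $A_{\alpha}^*$. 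Without this intermediate creation of mass at scale $\gm\vep$ and the splitting of the time window, the comparison you describe cannot close.
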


\begin{proof}  Let $c=\sqrt{NE}$.
Since $t\mapsto e^{ct}N_{0,2}(F_t,\vep)$ is non-decreasing on $[0,\infty)$,
to prove (\ref{4.21}) it suffices to prove that
\be \exists\, s \in [\tau, \tau+2\vep^{\alpha}]\,\,\,
{\rm s.t.}\,\,\,
N_{0,2}(F_{s},\vep)\ge  e^{c2\vep^{\alpha}}\big(\fr{2}{3}\big)^{\alpha} N_{0,2}\big(F_{\tau},\fr{3}{2}\vep\big).
\lb{4.22}\ee
In fact if (\ref{4.22}) holds, then 
\beas&& e^{c(\tau+2\vep^{\alpha})}
N_{0,2}(F_{\tau+2\vep^{\alpha}},\vep)
\ge e^{c s}
N_{0,2}(F_{s},\vep)\ge  e^{c s}
e^{c2\vep^{\alpha}}\big(\fr{2}{3}\big)^{\alpha} N_{0,2}\big(F_{\tau},\fr{3}{2}\vep\big)
\\
&&= e^{c(s+2\vep^{\alpha})}\big(\fr{2}{3}\big)^{\alpha} N_{0,2}\big(F_{\tau},\fr{3}{2}\vep\big)
\ge e^{c(\tau+2\vep^{\alpha})}\big(\fr{2}{3}\big)^{\alpha} N_{0,2}\big(F_{\tau},\fr{3}{2}\vep\big)
\eeas
and so
$
N_{0,2}(F_{\tau+\vep^{\alpha}},\vep)\ge \big(\fr{2}{3}\big)^{\alpha} N_{0,2}\big(F_{\tau},\fr{3}{2}\vep\big).$

We use contradiction argument. Suppose that (\ref{4.22}) does not hold. Then we have
\be N_{0,2}(F_{\tau+t}, \vep)< e^{c2\vep^{\alpha}}\big(\fr{2}{3}\big)^{\alpha} N_{0,2}\big(F_{\tau},\fr{3}{2}\vep\big)
\qquad \forall\, t   \in [0,2{\vep}^{\alpha}].\lb{contradict}\ee
For convenience we denote
$$ M = N_{0,2}\big(F_{\tau},\fr{3}{2}\vep\big),\quad \gm =1-  \big(\fr{2}{3}\big)^{\alpha/4}.$$
We have $\gm<1-\big(\fr{2}{3}\big)^{1/8}<(24)^{-1/2}$  (because $\alpha<1/2$),  and
\be N_{0,2}(F_{\tau+t},\vep)\ge  \int_{[0,\gamma\vep]}\Big(1-\fr{x}{\vep}\Big)^2{\rm d}F_{\tau+t}(x)
\ge (1-\gamma)^{2} F_{\tau+t}([0,\gamma \vep]).\lb{4.25}\ee
Noting that
\bes&& 0\le t\le  2\vep^{\alpha}\le \frac {\alpha \log(\fr{3}{2})}{8c}\quad \Longrightarrow\quad
e^{c t}\ge  e^{c 2\vep^{\alpha}}\le
\big(\fr{3}{2}\big)^{\fr{\alpha}{8}}=(1-\gm)^{-\fr{1}{2}}
\nonumber \\
&&  \Longrightarrow\quad
e^{-c t}\ge  e^{-c 2\vep^{\alpha}}\ge
\big(\fr{2}{3}\big)^{\fr{\alpha}{8}}=(1-\gm)^{\fr{1}{2}} \lb{4.ee}\ees
and using (\ref{4.25}) and (\ref{contradict}) we have
\bes&& F_{\tau+t}\big(\big[\gamma \vep,\fr{3}{2}\vep\big]\big)=
F_{\tau+t}\big(\big[0,\fr{3}{2}\vep\big]\big)
  -F_{\tau+t}\big([0, \gamma \vep)\big)\nonumber \\
  &&\ge N_{0,2}\big(F_{\tau+t},\fr{3}{2}\vep\big)
  -\frac {1}{(1-\gamma)^{2}}N_{0,2}(F_{\tau+t},\vep)
\nonumber \\
 && \ge e^{-ct}N_{0,2}(F_{\tau},\fr{3}{2}\vep)
  -\frac {1}{(1-\gamma)^{2}}e^{c2\vep^{\alpha}}\big(\fr{2}{3}\big)^{\alpha} N_{0,2}\big(F_{\tau},\fr{3}{2}\vep\big)
  \nonumber \\
  &&
 \ge (1-\gm)^{\fr{1}{2}}M
  -\frac {1}{(1-\gamma)^{2}}(1-\gm)^{-\fr{1}{2}}(1-\gm)^4M=(1-\gm)^{\fr{1}{2}}\gm M
\quad \forall\, t\in[0,2\vep^{\alpha}].\lb{4.26}\ees
Combining this with the assumption $\fr{3}{2}\vep\le 1$ gives
\beas\Big(\int_{[\fr{1}{2}\gm\vep, 1]}y^{-\fr{1-\eta}{2}}{\rm d}F_{\tau+t}(y)\Big)^2
\ge\Big(\fr{3}{2}\vep\Big)^{-(1-\eta)}\Big(F_{\tau+t}
\big(\big[\fr{1}{2}\gamma\vep, \fr{3}{2}\vep\big]\big)\Big)^2 \ge
\Big(\frac{2}{3\vep }\Big)^{1-\eta}[(1-\gm)^{\fr{1}{2}}\gm M]^{2}\eeas
for all $t\in [0,2\vep^{\alpha}].$
Inserting this into (\ref{inf2}) in Lemma \ref{lemma4.4}  we obtain
\be N_{0,2}(F_{\tau+t},\gm\vep)
 \ge \fr{b_0}{34}\big(\fr{2}{3}\big)^{1-\eta}
(1-\gm){\gamma}^{\fr{3}{2}+2}M^{2}\vep^{\fr{1}{2}+\eta} te^{-ct}\qquad \forall\, t\in [0, 2\vep^{\alpha}]. \lb{4.40}\ee
Using (\ref{4.26}),(\ref{4.40}) and (\ref{4.ee})
to $t=\vep^{\alpha}+s, s\in [0,\vep^{\alpha}]$, we have
\beas&&
N_{0,2}(F_{\tau+\vep^{\alpha}+s},\gm \vep)
\Big(F_{\tau+\vep^{\alpha}+s}\big(\big[\gamma \vep,\fr{3}{2}\vep\big])\Big)^2
\\
&&\ge\fr{b_0}{34}\big(\fr{2}{3}\big)^{1-\eta}
(1-\gm){\gamma}^{\fr{3}{2}+2}M^{2}\vep^{\fr{1}{2}+\eta}(\vep^{\alpha}+ s)
e^{-c(\vep^{\alpha}+ s)}\big((1-\gm)^{\fr{1}{2}}\gm M\big)^2
\\
&&\ge \fr{b_0}{34}\big(\fr{2}{3}\big)^{1-\eta}
(1-\gm)^{\fr{5}{2}}{\gamma}^{\fr{3}{2}+4}M^{4}\vep^{\fr{1}{2}+\eta+\alpha}\qquad \forall\, s\in [0,\vep^{\alpha}]
\quad ({\rm because}\,e^{-c 2\vep^{\alpha}}\ge
(1-\gm)^{\fr{1}{2}} ).\eeas
Then using (\ref{inf3}) in Lemma \ref{lemma4.4} where $\tau$ is replaced by $\tau+\vep^{\alpha}$
and taking $t=T=\vep^{\alpha}$, we compute
\beas&& N_{0,2}(F_{\tau+2\vep^{\alpha}},\vep)\ge
\fr{b_0}{4}\fr{\gm^{2}}{\vep^{1-\eta}} e^{-c\vep^{\alpha}}\vep^{\alpha} \inf_{s\in[0,\vep^{\alpha}]}
\Big(N_{0,2}(F_{\tau+\vep^{\alpha}+s},\gm \vep)\Big(F_{\tau+\vep^{\alpha}+s}
\big(\big[\fr{1}{2}\gamma\vep, \fr{3}{2}\vep\big]\big)\Big)^2\Big)\\
&&\ge
\fr{b_0}{4}\fr{\gm^{2}}{\vep^{1-\eta}} e^{-c\vep^{\alpha}}\vep^{\alpha}
\fr{b_0}{34}\big(\fr{2}{3}\big)^{1-\eta}
(1-\gm)^{\fr{5}{2}}{\gamma}^{\fr{3}{2}+4}M^{4}\vep^{\fr{1}{2}+\eta+\alpha}
\\
&&
=
\fr{b_0}{4}\fr{\gm^{2}}{\vep^{1-\eta}} e^{-3c\vep^{\alpha}}\vep^{\alpha}
\fr{b_0}{34}\big(\fr{2}{3}\big)^{1-\eta}
(1-\gm)^{\fr{5}{2}}{\gamma}^{\fr{3}{2}+4}M^{4}\vep^{\fr{1}{2}+\eta+\alpha}e^{2c\vep^{\alpha}}
\\
&&
\ge
\fr{b_0}{4}\fr{\gm^{2}}{\vep^{1-\eta}} (1-\gm)^{\fr{3}{4}}\vep^{\alpha}
\fr{b_0}{34}\big(\fr{2}{3}\big)^{1-\eta}
(1-\gm)^{\fr{5}{2}}{\gamma}^{\fr{3}{2}+4}M^{4}\vep^{\fr{1}{2}+\eta+\alpha}e^{2c\vep^{\alpha}}
\\
&&=\fr{b_0^2}{136}\big(\fr{2}{3}\big)^{1-\eta}(1-\gm)^{2.75}{\gamma}^{7.5}
\fr{1}{\vep^{\fr{1}{2}-2\eta-2\alpha}}M^{4}e^{2c\vep^{\alpha}}\\
&&=\fr{b_0^2}{136}\big(\fr{2}{3}\big)^{1-\eta-\alpha }(1-\gm)^{3.25}{\gamma}^{7.5}
\Big(\fr{M}{\vep^\alpha}\Big)^3 \big(\fr{2}{3}\big)^{\alpha} e^{2c\vep^{\alpha}}M
\quad ({\rm because}, \, \fr{1}{2}-2\eta-2\alpha=3\alpha)\\
&&\ge \fr{b_0^2}{136}\big(\fr{2}{3}\big)^{1-\eta-\alpha }(1-\gm)^{3.25}{\gamma}^{7.5}
(A_{\alpha}^*)^3 \big(\fr{2}{3}\big)^{\alpha} e^{2c\vep^{\alpha}}M
\\
&&>\fr{b_0^2}{136}\big(\fr{2}{3}\big)^{1-\eta-\alpha }(1-\gm)^{4}{\gamma}^{8}
(A_{\alpha}^*)^3 \big(\fr{2}{3}\big)^{\alpha} e^{2c\vep^{\alpha}}M
\\
&&=\fr{b_0^2}{136}\big(\fr{2}{3}\big)^{1-\eta-\alpha }\big(\fr{2}{3}\big)^{\alpha}\Big(1-\big(\fr{2}{3}\big)^{\fr{\alpha}{4}}\Big)^{8}
(A_{\alpha}^*)^3 \big(\fr{2}{3}\big)^{\alpha} e^{2c\vep^{\alpha}}M
\\
&&=\fr{b_0^2}{136}\big(\fr{2}{3}\big)^{1-\eta}\Big(1-\big(\fr{2}{3}\big)^{\fr{\alpha}{4}}\Big)^{8}
(A_{\alpha}^*)^3 \big(\fr{2}{3}\big)^{\alpha} e^{2c\vep^{\alpha}}M
=\big(\fr{2}{3}\big)^{\alpha} e^{2c\vep^{\alpha}}M\eeas
This contradicts (\ref{contradict}).
 Thus (\ref{4.21}) holds true.
\end{proof}
\vskip2mm

\begin{proposition}\lb{proposition4.6} Let $B({\bf {\bf v-v}_*},\omega)$ satisfy the Assumption \ref{assp} with $0\le\eta<1/4$, and let $\alpha=\fr{1}{10}(1-4\eta)$, $A_{\alpha}^*$ be given in (\ref{4.20}). 
Given any $N>0, E>0$. Let
\be \vep_{\alpha}^{\sharp}= \min\Big\{\Big(\Big(1-\big(\fr{2}{3}\big)^{\alpha}\Big)\fr{\alpha \log(\fr{3}{2})}{8\sqrt{NE}}\Big)^{1/\alpha},\, 
\Big(\fr{\sqrt{b_0}(A_{\alpha}^*)^{3/2}}{87 N^{3/4}E^{1/4}}
\Big)^{\fr{2}{1-3\alpha-\eta}},\,\fr{2}{3}\Big\}
\lb{ABE}\ee
and 
let $F_t\in {\cal B}_{1}^{+}({\mR}_{\ge 0})$ be
a conservative measure-valued isotropic solution of Eq.(\ref{Equation1}) on $[0,\infty)$ with initial datum $F_0$ satisfying $N(F_0)=N, E(F_0)=E$. If $\tau\ge 0$ and $0<\vep\le \vep_{\alpha}^{\sharp}$ satisfy
\be
N_{0,2}\big(F_{\tau}, \fr{3}{2}\vep\big)\ge A_{\alpha}^* \vep^{\alpha}\lb{BEC1}\ee
then, at the time $t_{\vep}=\tau+2 \big(1-\big(\fr{2}{3}\big)^{\alpha}\big)^{-1}{\vep}^{\alpha}+\fr{1}{3\sqrt{NE}}$, it holds
\be F_{t_{\vep}}(\{0\})>\fr{1}{5}A_{\alpha}^*\vep^{\alpha}\lb{lbdBEC} \ee
hence
\be F_{t}(\{0\})\ge e^{-c(t-t_{\vep})}F_{t_{\vep}}(\{0\})>0\qquad \forall\, t\ge t_{\vep}\lb{BEC6}\ee
where $c=\sqrt{NE}$.
\end{proposition}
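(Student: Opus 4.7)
The plan is to iterate Lemma~\ref{lemma4.5} dyadically to propagate the lower bound $N_{0,2}(F_\tau,\tfrac{3}{2}\vep)\ge A_\alpha^*\vep^\alpha$ down to all smaller scales, and then to apply Lemma~\ref{lemma4.4}(II) once at the limiting time $\tau_\infty:=\tau+2\vep^\alpha/(1-(2/3)^\alpha)$ to convert the resulting lower bound on $\underline{N}_{\alpha,2}(F_{\tau_\infty},\tfrac{3}{2}\vep)$ into one on $F_{t_\vep}(\{0\})$; note that $t_\vep=\tau_\infty+1/(3c)$, so the extra $1/(3\sqrt{NE})$ in the statement matches exactly the $h$ to be used in (\ref{4.15}).

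Set $\vep_k=(2/3)^k\vep$ and $\tau_k=\tau+2\sum_{j=0}^{k-1}\vep_j^\alpha$, so that $\tfrac{3}{2}\vep_{k+1}=\vep_k$ and $\tau_k\nearrow\tau_\infty$. By induction on $k$ I will establish
\[
N_{0,2}(F_{\tau_k},\tfrac{3}{2}\vep_k)\;\ge\; A_\alpha^*\vep_k^\alpha\exp\Bigl(-2c\sum_{j=0}^{k-1}\vep_j^\alpha\Bigr),\qquad k\ge 0.
\]
The base case $k=0$ is the hypothesis (\ref{BEC1}). For the inductive step, the condition $\vep^\alpha\le B_\alpha^*/c$ in (\ref{ABE}) with $B_\alpha^*=(1-(2/3)^\alpha)\alpha\log(3/2)/8$ forces $2c\sum_{j=0}^{\infty}\vep_j^\alpha=2c\vep^\alpha/(1-(2/3)^\alpha)\le\tfrac{\alpha}{4}\log(3/2)$, so the inductive hypothesis upgrades to the stronger bound $N_{0,2}(F_{\tau_k},\tfrac{3}{2}\vep_k)\ge(2/3)^{\alpha/4}A_\alpha^*\vep_k^\alpha$ required as the hypothesis of Lemma~\ref{lemma4.5}. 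Since all conditions (\ref{4.20}) on $\vep$ are implied by (\ref{ABE}) and are preserved on passing from $\vep$ to $\vep_k\le\vep$, Lemma~\ref{lemma4.5} applies at $(\tau_k,\vep_k)$, and its conclusion (\ref{4.21}), combined with $\tfrac{3}{2}\vep_{k+1}=\vep_k$ and $(2/3)^\alpha\vep_k^\alpha=\vep_{k+1}^\alpha$, yields exactly the induction for $k+1$.

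Using the monotonicity of $t\mapsto e^{ct}N_{0,2}(F_t,\delta)$ (Lemma~\ref{lemma4.2}(II)) together with $c(\tau_\infty-\tau_k)=2c\vep_k^\alpha/(1-(2/3)^\alpha)\le\tfrac{\alpha}{4}\log(3/2)$, the inductive bound transfers to
\[
N_{0,2}(F_{\tau_\infty},\tfrac{3}{2}\vep_k)\;\ge\;(2/3)^{\alpha/2}A_\alpha^*\vep_k^\alpha\qquad\forall\,k\ge 0.
\]
Since $\delta\mapsto N_{0,2}(F,\delta)$ is non-decreasing, interpolating between the dyadic scales gives $N_{0,2}(F_{\tau_\infty},\delta)\ge(2/3)^{5\alpha/2}A_\alpha^*\delta^\alpha$ for every $\delta\in(0,\tfrac{3}{2}\vep]$, hence $\underline{N}_{\alpha,2}(F_{\tau_\infty},\tfrac{3}{2}\vep)\ge(2/3)^{5\alpha/2}A_\alpha^*$. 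I now invoke (\ref{4.15}) with $t=\tau_\infty$, $h=1/(3c)$, and $\vep$ there replaced by $\tfrac{3}{2}\vep$: because $p=\tfrac{3}{2}+\alpha<2$ one has $N_{0,p}\ge N_{0,2}$, so the $k=0$ case of the previous display lower-bounds the first term on the right of (\ref{4.15}) by $(2/3)^{\alpha/2}A_\alpha^*\vep^\alpha$, while the $\underline{N}_{\alpha,2}$ bound controls the error term by a constant multiple of $\vep^\beta$ with $\beta=(1-\alpha-\eta)/2$. The conclusion (\ref{BEC6}) then follows immediately from the monotonicity of $t\mapsto e^{ct}F_t(\{0\})$ (Lemma~\ref{lemma4.2}(II)).

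The main obstacle is the concluding numerical check underpinning (\ref{lbdBEC}): the constants $A_\alpha^*,B_\alpha^*,C_\alpha^*$ in (\ref{AABC}) must be calibrated so that, after absorbing the $(2/3)^{\alpha/2}$ lost to the iteration plus time-monotonicity and the $e^{2/3}$ prefactor coming from $2ch=2/3$, the first term of (\ref{4.15}) beats the $\vep^\beta$ error term by exactly the margin needed to leave $F_{t_\vep}(\{0\})>\tfrac{1}{5}A_\alpha^*\vep^\alpha$, uniformly over the admissible range $\eta\in[0,1/4)$, $\alpha=(1-4\eta)/10$; the third entry in (\ref{ABE}), $\vep^{\beta-\alpha}\le C_\alpha^*/(N^{3/4}E^{1/4})$ with $\beta-\alpha=(1-3\alpha-\eta)/2$, is precisely the quantitative smallness that makes this margin positive, while the factor $1/174$ hidden in $C_\alpha^*$ is what finally yields the clean numerical coefficient $\tfrac{1}{5}$.
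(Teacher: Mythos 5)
Your proposal is correct and is essentially the paper's own proof: the same dyadic iteration of Lemma \ref{lemma4.5} at the scales $\vep_k=(2/3)^k\vep$ and times $\tau_k=\tau+2\sum_{j<k}\vep_j^{\alpha}$, the same transfer by monotonicity of $t\mapsto e^{ct}N_{0,2}(F_t,\cdot)$ plus interpolation across scales to lower-bound $\underline N_{\alpha,2}$ at time $\tau_\infty$, and the same single application of (\ref{4.15}) with $h=\fr{1}{3\sqrt{NE}}$ followed by monotonicity of $t\mapsto e^{ct}F_t(\{0\})$. The numerical check you flag as the remaining obstacle does close, even with the extra factor $(3/2)^{\beta}\le(3/2)^{1/2}$ you incur by taking scale $\fr{3}{2}\vep$ in (\ref{4.15}): since $(p/\beta)^p\le 4^{8/5}$ and $\underline N_{\alpha,2}(F_{\tau_\infty},\fr{3}{2}\vep)\ge (2/3)^{1/4}A_{\alpha}^*$, the error term is at most $\big(24e/(2/3)^{1/4}\big)^{1/2}\,4^{8/5}\,(3/2)^{0.45}\,\fr{1}{174}\,A_{\alpha}^*\vep^{\alpha}<0.54\,A_{\alpha}^*\vep^{\alpha}$, while your first term is at least $(2/3)^{1/20}A_{\alpha}^*\vep^{\alpha}>0.97\,A_{\alpha}^*\vep^{\alpha}$, so $F_{t_{\vep}}(\{0\})\ge e^{-2/3}(0.97-0.54)A_{\alpha}^*\vep^{\alpha}>\fr{1}{5}A_{\alpha}^*\vep^{\alpha}$ (the paper avoids the $(3/2)^{\beta}$ loss by applying (\ref{4.15}) at scale $\vep$ with $N_{0,p}(F_{\tau_{\vep}},\vep)\ge 0.9A_{\alpha}^*\vep^{\alpha}$, which is why its margin looks tighter).
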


 \begin{proof}  First of all we have $\vep_{\alpha}^{\sharp}\le \vep_{\alpha}^*$ where 
 $\vep_{\alpha}^*$ is given in (\ref{4.20}).
 
 {\bf Step1.} Let $h_{\vep}^*=2\big(1-\big(\fr{2}{3}\big)^{\alpha}\big)^{-1}{\vep}^{\alpha}$. We prove that
\be\underline N_{\alpha,2}(F_{\tau+h_{\vep}^*},\vep)\ge \fr{0.9}{{\vep}^{\alpha}}
N_{0,2}\big(F_{\tau},\fr{3}{2}\vep\big).\lb{BEC3}\ee
We will use Lemma \ref{lemma4.5} with an iteration argument. Let
$$ \vep_k= \big(\fr{2}{3}\big)^{k}\vep,\quad
 h_n=2\sum\limits_{k = 0}^n (\vep_k)^{\alpha}=2\sum\limits_{k = 0}^n \big(\fr{2}{3}\big)^{k\alpha}{\vep}^{\alpha},\quad k,n=0,1,2,....\,.$$
and let $M = N_{0,2}\big(F_{\tau},\fr{3}{2}\vep\big)$.
We first prove that for all $n=0,1,2,...$,
\be N_{0,2}(F_{\tau+h_n}, \vep_n)\ge \big(\fr{2}{3}\big)^{\alpha(n+1)}M.
\lb{4.31}\ee
From (\ref{4.21}) in Lemma \ref{lemma4.5} we see that (\ref{4.31}) holds for $n=0$.
Suppose that (\ref{4.31}) holds for some $n\in{\mN}\cup\{0\}$.
Then (using $\fr{3}{2}\vep_{n+1}=\vep_n$)
\beas&& N_{0,2}\big(F_{\tau+h_n}, \fr{3}{2}\vep_{n+1}\big)=N_{0,2}(F_{\tau+h_n},\vep_{n})\ge
\big(\fr{2}{3}\big)^{\alpha(n+1)}  M
\ge \big(\fr{2}{3}\big)^{\alpha(n+1)}  A_{\alpha}^* \vep^{\alpha}
\\
&&= A_{\alpha}^*\big(\big(\fr{2}{3}\big)^{\alpha(n+1)}\vep^{\alpha}
\big)=A_{\alpha}^*\big(\big(\fr{2}{3}\big)^{n+1}\vep\big)^{\alpha}
=A_{\alpha}^*(\vep_{n+1})^{\alpha}
\\
&&
\mbox{and}\quad \vep_{n+1}=\big(\fr{2}{3}\big)^{n+1}\vep<\vep\le \vep_{\alpha}^{\sharp}\le \vep_{\alpha}^*.\eeas
Then using Lemma \ref{lemma4.5}
and noting that $h_{n+1}=h_n+2(\vep_{n+1})^{\alpha}$ we have (with the inductive hypotheses) 
\beas&& N_{0,2}(F_{\tau+h_{n+1}}, \vep_{n+1})=N_{0,2}(F_{\tau+h_n+2(\vep_{n+1})^{\alpha}}, \vep_{n+1})\\
&&\ge \big(\fr{2}{3}\big)^{\alpha}N_{0,2}\big(F_{\tau+h_n}, \fr{3}{2}\vep_{n+1}\big)={\big(\fr{2}{3}\big)^{\alpha}}N_{0,2}(F_{\tau+h_n},\vep_{n})
\\
&&\ge  \big(\fr{2}{3}\big)^{\alpha}\big(\fr{2}{3}\big)^{\alpha(n+1)}{M}
=\big(\fr{2}{3}\big)^{\alpha(n+2)}{M}.\eeas
Therefore (\ref{4.31}) holds also for $n+1$ and thus, by induction,
(\ref{4.31}) holds for all $n\in{\mN}\cup\{0\}$.

By definition of $\vep_{\alpha}^{\sharp}$ in (\ref{ABE}) we have
 $$h_n\le h_{\vep}^*=2\Big(1-\big(\fr{2}{3}\big)^{\alpha}\Big)^{-1}{\vep}^{\alpha}\le \fr{\alpha \log(\fr{3}{2})}{4c},\quad  e^{-ch_{\vep}^*}\ge \big(\fr{2}{3}\big)^{\alpha/4}.$$Using the non-decrease of
$t\mapsto e^{ct}N_{0,2}(F_t,\vep)$ and (\ref{4.31}) we have
\beas&& N_{0,2}(F_{\tau+h_{\vep}^*}, \big(\fr{2}{3}\big)^{n}\vep)\ge
e^{-c(h_{\vep}^*-h_n)}N_{0,2}\big(F_{\tau+h_n}, \big(\fr{2}{3}\big)^{n}\vep\big)\nonumber\\
&&
\ge e^{-c(h_{\vep}^*-h_n)}{\big(\fr{2}{3}\big)^{\alpha(n+1)}}{M}\ge e^{-ch_{\vep}^*}{\big(\fr{2}{3}\big)^{\alpha(n+1)}}{M}
,\quad n=0,1,2,...\,.\eeas
Now for any $0<\dt\le \vep$, there is  $n\in {\mN}$
 such that $\big(\fr{2}{3}\big)^{n}\vep<\dt \le \big(\fr{2}{3}\big)^{n-1}\vep$,  so we have
\beas&& \fr{1}{{\dt}^{\alpha}}N_{0,2}(F_{\tau+h_{\vep}^*},\dt)\ge \fr{1}{{\dt}^{\alpha}} N_{0,2}\big(F_{\tau+h_{\vep}^*},\big(\fr{2}{3}\big)^{n}\vep\big)\\
&&  \ge
\fr{1}{\big(\big(\fr{2}{3}\big)^{n-1}\vep\big)^{\alpha}}
 e^{-ch_{\vep}^*} \big(\fr{2}{3}\big)^{\alpha(n+1)}M\ge \big(\fr{2}{3}\big)^{(2+\fr{1}{4})\alpha}\fr{M}
 {{\vep}^{\alpha}}>
 0.9\fr {M}{{\vep}^{\alpha}}
\eeas
where the last inequality is due to $0<\alpha\le \fr{1}{10}$.
Thus
$$\underline N_{\alpha, 2}(F_{\tau+h_{\vep}^*},\vep)=
\inf_{0<\dt\le \vep}\fr {N_{0,2}(F_{\tau+h_{\vep}^*},\dt)}{{\dt}^{\alpha}}
\ge  0.9\fr {M}{{\vep}^{\alpha}}$$
i.e. (\ref{BEC3}) holds true.

{\bf Step2.}
Let $\tau_{\vep}=\tau+h_{\vep}^*.$
Using Lemma \ref{lemma4.4} (recall there $p=\fr{3}{2}+\alpha,\beta=\fr{1-\alpha-\eta}{2}$)
with $h=\fr{1}{3c}=\fr{1}{3\sqrt{NE}}, t=\tau_{\vep}$, and using $p<2$ and the inequality (\ref{BEC3}) to deduce
$$\fr{1}{\vep^{\alpha}}N_{0,p}(F_{\tau_{\vep}},\vep)
\ge \fr{1}{\vep^{\alpha}}N_{0,2}(F_{\tau_{\vep}},\vep)
\ge \underline N_{\alpha,2}(F_{\tau_{\vep}},\vep)\ge
\fr{0.9}{\vep^{\alpha}}N_{0,2}(F_{\tau}, \fr{3}{2}\vep)\ge 0.9 A_{\alpha}^*
$$
we obtain
\beas&& e^{2/3}F_{\tau_{\vep}+h}(\{0\})
\ge N_{0,p}(F_{\tau_{\vep}},\vep)
 -\Big(\fr{2e^{3c h} N}{hb_0\underline{N}_{\alpha, 2}(F_{\tau_{\vep}},\vep)
}\Big)^{1/2}\Big(\fr{p}{\beta}\Big)^{p}\vep^{\beta}
\\
&&\ge  0.9 A_{\alpha}^*\vep^{\alpha}
-\Big(\fr{6e N^{3/2}E^{1/2}}{b_00.9
A_{\alpha}^*}\Big)^{1/2}
\Big(\fr{3+2\alpha}{1-\alpha-\eta}\Big)^{\fr{3}{2}+\alpha}
\vep^{\beta}\ge \fr{0.9}{2} A_{\alpha}^*\vep^{\alpha}\eeas
where for the last inequality we used $\beta-\alpha=\fr{1-3\alpha-\eta}{2}>0, \vep^{\beta-\alpha}\le 
(\vep_{\alpha}^{\sharp})^{\beta-\alpha}\le \fr{\sqrt{b_0}(A_{\alpha}^*)^{3/2}}{87 N^{3/4}E^{1/4}}$, and
$$\Big(\fr{6e}{0.9}\Big)^{1/2}
\Big(\fr{3+2\alpha}{1-\alpha-\eta}\Big)^{\fr{3}{2}+\alpha}
\fr{1}{87}\le\Big(\fr{6e}{0.9}\Big)^{1/2}
\fr{4^{8/5}}{87}<\fr{0.9}{2}.$$
Thus, at the time $t_{\vep}:=\tau_{\vep}+h=\tau+2 \big(1-\big(\fr{2}{3}\big)^{\alpha}\big)^{-1}{\vep}^{\alpha}+\fr{1}{3\sqrt{NE}}$,  we obtain
$$F_{t_{\vep}}(\{0\})=F_{\tau_{\vep}+h}(\{0\})\ge \fr{0.9}{2 e^{2/3}} A_{\alpha}^*\vep^{\alpha}
 >\fr{1}{5}A_{\alpha}^*\vep^{\alpha}.$$
The inequality (\ref{BEC6}) follows from this and the non-decrease of
 $t\mapsto F_{t}(\{0\}) e^{ct}$ on $[0,\infty)$. This completes the proof.
 \end{proof}
\vskip2mm

\begin{remark}\lb{remark4.7}{\rm In comparison with the previous results in \cite{EV2}, \cite{Lu2013} on
the occurrence of condensation (i.e. $F_t(\{0\})>0$) in finite time,
Proposition \ref{proposition4.6} not only provides a simple condition (\ref{BEC1}) with $\tau=0$ for the initial data $F_0$, but
also gives an explicit and  useful lower bound (\ref{lbdBEC}) for condensation.}
\end{remark}

\begin{remark}\lb{remark4.7*}{\rm 
Since  $N_{0,2}\big(F_{\tau},\fr{3}{2}\vep)\le N$, a necessary for $\vep$ satisfying 
(\ref{BEC4}) is that $\vep\le 
\big(\fr{N}{A_{\alpha}^*}\big)^{1/\alpha}$. In applications of Proposition \ref{proposition4.6} the number $\vep$ will be chosen much less than
$\big(\fr{N}{A_{\alpha}^*}\big)^{1/\alpha}$. See the following example and the proof of Theorem \ref{theorem4.8}.}
\end{remark}
\vskip2mm

\noindent {\bf Example of bounded initial data.} Here we show that for any $N>0, E>0$,
there are many bounded and smooth initial data that satisfy the condition in
Proposition \ref{proposition4.6} for $\tau=0$.  Given any $N>0, E>0$. Let
$\alpha, A_{\alpha}^*, \vep_{\alpha}^{\sharp}$ be given in
Proposition \ref{proposition4.6}, let
\beas&& 0<\vep\le \left\{\vep_{\alpha}^{\sharp},\,
\fr{E}{2N},\,\Big(\fr{N}{27 A_{\alpha}^*} \Big)^{1/\alpha}
\right\},\\
&&\dt=3\fr{E}{N},\quad
a=\fr{2E}{\vep(\dt-\vep)},\quad b=\fr{2N(\fr{2E}{N}-\vep)}{\dt(\dt-\vep)},\\
&& g_0(x)=a{\bf 1}_{[\fr{1}{4}\vep,\fr{3}{4}\vep]}(x)
  +b{\bf 1}_{[\fr{1}{4}\dt,\fr{3}{4}\dt]}(x),\quad {\rm d}G_0(x)=g_0(x){\rm d}x.\eeas
We compute (notice that $\vep\le \fr{1}{6}\dt$)
\beas&& N(G_0)=\fr{1}{2}(a\vep+b\dt)=N,\quad  E(G_0)=\fr{1}{4}(a{\vep}^{2}+b\dt^{2})=E,\\
&&
G_0([0,\vep])=\fr{1}{2}a\vep=\fr{E}{3\fr{E}{N}-\vep}>\fr{N}{3}\ge 9 A_{\alpha}^*\vep^{\alpha}. \eeas
Let
$J(x) = c_1e^{-\fr{1}{1-x^2}}{\bf 1}_{(-1,1)}(x)$ where $c_1>0$ is
such that $\int_{{\mR}}J(x){\rm d}x=1$. Let $J_{\ld}(x)=\fr{1}{\ld}J(\fr{x}{\ld})\,(\ld>0)$,
choose $\ld=\fr{1}{8}\vep$, and consider $f_0(x)=\fr{1}{\sqrt{x}}(J_{\ld}*g_0)(x)$ (convolution).
It is easily seen that $0\le f_0\in C^{\infty}_c({\mR})$ and ${\rm supp}f_0\subset [\fr{1}{4}\vep-\ld, \fr{3}{4}\dt+\ld]=[\fr{1}{8}\vep, \fr{3}{4}\dt+\fr{1}{8}\vep]$.
Let $F_0\in {\cal B}_{1}^{+}({\mR}_{\ge 0})$ be
defined by ${\rm d}F_0(x)=f_0(x)\sqrt{x}\,{\rm d}x$.
By simple calculation (using $\dt\ge 6\vep$) we have
$$N(F_0)=N(G_0)=N,\quad E(F_0)=E(G_0)=E,\quad F_0([0,\vep])=G_0([0,\vep])\ge 9 A_{\alpha}^*\vep^{\alpha}.$$
Thus (since $(1-\fr{x}{\fr{3}{2}\vep})^2\ge \fr{1}{9} $ for all $x\in [0,\vep]$ )
$$ N_{0,2}(F_0,\fr{3}{2}\vep)\ge \fr{1}{9}F_0([0,\vep])\ge
A_{\alpha}^*\vep^{\alpha}.$$
So $F_0$ (with the number $\vep$) satisfies the condition (\ref{BEC1}) in Proposition \ref{proposition4.6} for $\tau=0$.
$\hfill\Box$
\vskip2mm

\begin{theorem}\lb{theorem4.8}
Let $B({\bf {\bf v-v}_*},\omega)$ satisfy Assumption \ref{assp} with $0\le\eta<1/4$, and
let $F_0\in {\mathcal B}_1^{+}({\mathbb R}_{\ge 0})$ with
$N:=N(F_0)>0, E:=E(F_0)>0$ satisfy the low temperature condition
$\overline{T}/\overline{T}_c=2.2720\frac{E}{N^{5/3}}<1$. 
Let $\fr{1}{20}<\ld<\fr{1}{19}$ and let $F_t\in {\cal B}_{1}^{+}({\mR}_{\ge 0})$
with the initial datum $F_0$  be a conservative  measure-valued
isotropic solution of Eq.(\ref{Equation1}) on $[0, \infty)$ obtained in Theorem \ref{theorem3.2}.
Then, for any  we have
\beas \big|F_t(\{0\})-(1-(\overline{T}/\overline{T}_c)^{3/5})N\big|\le  C(1+t)^{-\fr{1-\eta}{2(4-\eta)}\ld}\qquad \forall\, t\ge 0\eeas
where the constant $C>0$ depends only on $N, E, b_0,\eta$ and $\ld$.
 \end{theorem}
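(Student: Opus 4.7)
The plan is to convert the entropy-convergence rate of Theorem \ref{theorem3.2} into a rate for $|F_t(\{0\})-F_{\rm be}(\{0\})|$ through a pair of one-sided estimates. The starting point is that by the first inequality of (\ref{3.10}),
$$
\|F_t-F_{\rm be}\|_1^{\circ}\le C\sqrt{S(F_{\rm be})-S(F_t)}\le C(1+t)^{-\ld/2},\qquad t\ge 0.
$$
The universal tool is the test function $\vp_\vep(x)=[(1-x/\vep)_+]^2$: since $\int d(F_t-F_{\rm be})=\int x\,d(F_t-F_{\rm be})=0$ by mass and energy conservation, one can subtract the affine $1-2x/\vep$ from $\vp_\vep$, split the integral into $[0,\vep]$ and $(\vep,\infty)$, and use $\int_0^\vep x^2\,d|F_t-F_{\rm be}|\le \vep\|F_t-F_{\rm be}\|_1^{\circ}$ together with $\int_{(\vep,\infty)} x\,d|F_t-F_{\rm be}|\le\|F_t-F_{\rm be}\|_1^{\circ}$ to obtain the localization bound
$$
\Big|\int\vp_\vep\,d(F_t-F_{\rm be})\Big|\le\fr{C}{\vep}\|F_t-F_{\rm be}\|_1^{\circ}.
$$
An analogous treatment of $(1-x/\vep)_+^{3/2}$ yields $|N_{0,3/2}(F_t,\vep)-N_{0,3/2}(F_{\rm be},\vep)|\le C\vep^{-1}\|F_t-F_{\rm be}\|_1^{\circ}$.

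For the upper bound I use $F_t(\{0\})\le\int\vp_\vep\,dF_t$, combined with $\int\vp_\vep\,dF_{\rm be}-F_{\rm be}(\{0\})\le C\sqrt{\vep}$ (from the low-temperature asymptotic $f_{\rm be}(x)\sim\kappa/x$ as $x\to 0^+$), to get
$$
F_t(\{0\})-F_{\rm be}(\{0\})\le C\sqrt{\vep}+\fr{C}{\vep}\|F_t-F_{\rm be}\|_1^{\circ}.
$$
Optimizing in $\vep$ gives a one-sided bound at rate $(1+t)^{-\ld/6}$, which is more than enough. Before the matching lower bound, I need a uniform $c_0>0$ with $F_t(\{0\})\ge c_0$ for $t\ge T_0$: choosing $\vep_0$ small enough that $A_\alpha^*\vep_0^\alpha\le\frac{1}{2}F_{\rm be}(\{0\})$ and $\vep_0$ lies in the window (\ref{ABE}), the localization bound applied to $\vp_{3\vep_0/2}$ gives $N_{0,2}(F_\tau,\frac{3}{2}\vep_0)\ge F_{\rm be}(\{0\})-C\vep_0^{-1}(1+\tau)^{-\ld/2}\ge A_\alpha^*\vep_0^\alpha$ once $\tau\ge T_0$ is large. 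Invoking Proposition \ref{proposition4.6} at such $\tau$ and combining with the monotonicity of $t\mapsto e^{ct}F_t(\{0\})$ from Lemma \ref{lemma4.2}(II) produces the required uniform positive lower bound.

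With $F_t(\{0\})\ge c_0$ available for $t\ge T_0$, the matching lower bound is extracted from (\ref{4.16}) in Lemma \ref{lemma4.4}(II). Substituting $N_{0,3/2}(F_t,\vep)\ge F_{\rm be}(\{0\})-C\vep^{-1}\|F_t-F_{\rm be}\|_1^{\circ}$ and expanding $e^{-2ch}=1-O(h)$, (\ref{4.16}) rearranges to
$$
F_{\rm be}(\{0\})-F_{t+h}(\{0\})\le C\,h+\fr{C}{\vep}(1+t)^{-\ld/2}+C\,\fr{\vep^{(1-\eta)/2}}{\sqrt{h}}.
$$
Setting $h=(1+t)^{-\theta_1}$ and $\vep=(1+t)^{-\theta_2}$, equating the three exponents gives the system $\theta_1=\ld/2-\theta_2=\theta_2(1-\eta)/2-\theta_1/2$, whose unique solution is $\theta_1=\fr{(1-\eta)\ld}{2(4-\eta)}$ and $\theta_2=\fr{3\ld}{2(4-\eta)}$. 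Since $h=O(1)$ so $t+h\sim t$, this yields $F_{\rm be}(\{0\})-F_t(\{0\})\le C(1+t)^{-\frac{(1-\eta)\ld}{2(4-\eta)}}$, which together with the upper bound finishes the proof. The main obstacle is this three-way balance, from which the precise exponent $\frac{(1-\eta)\ld}{2(4-\eta)}$ emerges; the subsidiary difficulty is the iteration of Proposition \ref{proposition4.6} in the second paragraph, needed to maintain the positive uniform lower bound on the condensate throughout the time window in which (\ref{4.16}) is being applied.
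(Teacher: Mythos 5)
Your proposal is correct and follows essentially the same route as the paper: the entropy rate is converted into $\|F_t-F_{\rm be}\|_1^{\circ}\le C(1+t)^{-\ld/2}$, a localization estimate of order $\vep^{-1}\|F_t-F_{\rm be}\|_1^{\circ}$ feeds Proposition \ref{proposition4.6} to secure a positive lower bound on $F_t(\{0\})$ for large $t$, and then (\ref{4.16}) is optimized in $h$ and $\vep$ (the paper balances in two sequential steps, you in one three-way balance, giving the same exponent $\fr{(1-\eta)\ld}{2(4-\eta)}$), while the upper bound comes from the same $N_0+C\sqrt{\vep}+\vep^{-1}\|\cdot\|_1^{\circ}$ estimate. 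One caution: the monotonicity of $t\mapsto e^{ct}F_t(\{0\})$ by itself only yields the exponentially decaying bound $F_t(\{0\})\ge e^{-c(t-\tau)}F_{\tau}(\{0\})$, so the uniform-in-time lower bound must instead come from re-invoking Proposition \ref{proposition4.6} at every $\tau\ge T_0$ — which your verification of the hypothesis (\ref{BEC1}) for all such $\tau$ already permits, and which is exactly what the paper does.
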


\begin{proof}  Let $F_{\rm be}$ be
the unique Bose-Einstein distribution having the mass and energy $N, E$, let
$N_0=F_{\rm be}(\{0\})=(1-(\overline{T}/\overline{T}_c)^{3/5})N$.
From Lemma \ref{lemma3.1}, Theorem\ref{theorem3.2} we have
\be R(t):=\sup_{\tau\ge t}\|F_{\tau}-F_{\rm be}\|_{1}^{\circ}\le C_1\sup_{\tau\ge t}\sqrt{S(F_{\rm be})-S(F_{\tau})}\le
C_2(1+t)^{-\ld/2}\qquad \forall\, t\ge 0\lb{RRR}\ee
where here and below $C>0, C_i>0\,(i=1,2,...,8), \tau_0>0$ and $t_0>1$ are finite constants that depend only on  $N, E, b_0,\eta$ and $\ld$.

{\bf Step1.} Let  $\alpha, A_{\alpha}^*,\vep_{\alpha}^{\sharp}$ be given in
Proposition \ref{proposition4.6}, let
\beas&& \vep_0=\min\left\{\vep_{\alpha}^{\sharp},\,
\Big(\fr{N_0}{2A_{\alpha}^*}\Big)^{1/\alpha}
\right\},\quad k_0=2 \Big(1-\big(\fr{2}{3}\big)^{\alpha}\Big)^{-1}{\vep_0}^{\alpha}+\fr{1}{3\sqrt{NE}}.\eeas
We prove that there is  $\tau_0>0$
such that
\be F_{t}(\{0\})\ge \fr{1}{5}A_{\alpha}^*\vep_0^{\alpha}\qquad \forall\, t\ge \tau_0+k_0.
\lb{lbdF}\ee
For any $\vep>0, p\ge 1$, using the conservation of mass and $0\le 1- [(1-x/\vep)_{+}]^p
\le \fr{p}{\vep} x$ we have
\be|N_{0,p}(F_t,\vep)-N_{0,p}(F_{\rm be},\vep)|
=\Big|\int_{{\mR}_{\ge 0}}(1-[(1-x/\vep)_{+}]^p){\rm d}(F_t-F_{\rm be})(x)
\Big|\le \fr{p}{\vep}\|F_t-F_{\rm be}\|_{1}^{\circ}.
\lb{4.49}\ee
From this and $N_{0,p}(F_{\rm be},\vep)\ge F_{\rm be}(\{0\})=N_0$ we obtain
\be N_{0,p}(F_t,\vep)\ge
N_{0,p}(F_{\rm be},\vep)-\fr{p}{\vep}\|F_t-F_{\rm be}\|_{1}^{\circ}\ge N_0-\fr{p}{\vep}\|F_t-F_{\rm be}\|_{1}^{\circ}.\lb{NpN}\ee
Since $\|F_t-F_{\rm be}\|_{1}^{\circ}\le C_2(1+t)^{-\ld/2}$,
there is $\tau_0> 0$   such that
$$\fr{2}{\vep_0}\|F_t-F_{\rm be}\|_{1}^{\circ}<\fr{N_0}{2}\quad \forall\, t\ge \tau_0
\quad {\rm hence}\quad
 N_{0,p}(F_t,\vep_0)\ge \fr{1}{2}N_0
\quad \forall\, t\ge \tau_0,\,\,\forall\, 1\le p\le 2. $$
Note that $\vep\mapsto N_{0,2}(F_t,\vep)$
is non-decreasing. We then deduce from the choice of $\vep_0$ that
\beas&& N_{0,2}(F_t,\fr{3}{2}\vep_0)
\ge N_{0,2}(F_t,\vep_0)
\ge \fr{1}{2}N_0
\ge A_{\alpha}^*\vep_0^{\alpha}\qquad \forall\, t\ge \tau_0.\eeas
Thus by Proposition \ref{proposition4.6} we conclude
$F_{t+k_0}(\{0\})\ge \fr{1}{5}A_{\alpha}^*\vep_0^{\alpha}$. Since this holds for all $t\ge \tau_0$,
we obtain (\ref{lbdF}).

{\bf Step2.} According to (\ref{RRR}),
we need only prove that there is $t_0>1$ such that
\be |F_{t}(\{0\})-N_0|\le C\big(R(t-1)\big)^{\fr{1-\eta}{4-\eta}}\qquad \forall\, t\ge t_0.
\lb{4.38}\ee
Using (\ref{4.16}) (see Lemma \ref{lemma4.4}), (\ref{NpN}) for $p=3/2$, the lower bound (\ref{lbdF}), and denoting $c=\sqrt{NE}$,
$C_{3}=\fr{1}{5}A_{\alpha}^*\vep_0^{\alpha}$, we have for all $0<h\le 1, 0<\vep\le 1$
\bes F_{t}(\{0\})&\ge& e^{-2ch}N_{0,3/2}(F_{t-h},\vep)
-e^{-2ch}\Big(\fr{2e^{3c h} N}{hb_0F_{t-h}(\{0\})}\Big)^{1/2}
\Big(\fr{3}{1-\eta}\Big)^{3/2}\vep^{\fr{1-\eta}{2}}\nonumber\\
&\ge& (1-2ch)
N_0- \fr{3}{2\vep}\|F_{t-h}-F_{\rm be}\|_{1}^{\circ}
-\Big(\fr{2N}{hb_0C_{3} }\Big)^{1/2}
\Big(\fr{3}{1-\eta}\Big)^{3/2}\vep^{\fr{1-\eta}{2}}\nonumber\\
&\ge & (1-2ch)N_0 - \fr{3}{2\vep}R(t-h) -C_{4} h^{-\fr{1}{2}}\vep^{\fr{1-\eta}{2}}
\quad \forall\, t\ge \tau_0+k_0+h.
\lb{4.54}\ees
Here we used
$e^{-2ch}\ge 1-2ch$, $ e^{-2ch} e^{3ch/2}<1$. Now we consider
\beas 0<\vep\le C_5:=\min\Big\{1,\, \Big(\fr{2cN_0}{C_4 }\Big)^{\fr{2}{1-\eta}}\Big\},\quad
h=h_{\vep}=\Big(\fr{C_4 }{2cN_0}\Big)^{2/3}
\vep^{\fr{1-\eta}{3}}.\eeas
For all $0<\vep\le C_5$ we have $0<h_{\vep}\le 1$ and from (\ref{4.54}) we obtain
\be
F_{t}(\{0\})\ge
N_0-\fr{3}{2\vep}R(t-1)-C_6
\vep^{\fr{1-\eta}{3}}\qquad \forall\,t\ge \tau_0+k_0+1.\lb{FR}\ee
Since $R(t-1)\le C_2 t^{-\ld/2}$, we choose $t_0\ge \tau_0+k_0+1$  large enough
such that
$\big(R(t-1)\big)^{\fr{3}{4-\eta}}<C_5$ for all $t\ge t_0$.
Then for every $t\ge t_0$, taking $\vep=\big(R(t-1)\big)^{\fr{3}{4-\eta}}$ we obtain from (\ref{FR}) that
\be
F_{t}(\{0\})\ge N_0-C_7\big(R(t-1)\big)^{\fr{1-\eta}{4-\eta}}\qquad \forall\, t\ge t_0.\lb{4.41}\ee
On the other hand using the inequality (\ref{4.49}) for $p=1$ we have
\bes&&
F_{t}(\{0\})\le N_{0,1}(F_t,\vep)
\le N_{0,1}(F_{{\rm be}},\vep)+\fr{1}{\vep}\|F_t-F_{{\rm be}}\|_1^{\circ}
\nonumber
\\
&&=F_{\rm be}(\{0\})+
\int_{(0,\vep]}\Big(1-\fr{x}{\vep}\Big)
\fr{1}{e^{x/\kappa}-1}\sqrt{x}\,{\rm d}x +\fr{1}{\vep}\|F_t-F_{{\rm be}}\|_1^{\circ}
\nonumber \\
&&\le N_0+2\kappa \sqrt{\vep}+
\fr{1}{\vep}R(t)
\qquad \forall\,t\ge 0,\quad \forall\, \vep>0\lb{4.59}
.\ees
Minimizing the right hand side of (\ref{4.59}) with respect to $\vep\in(0,\infty)$
gives
\be F_{t}(\{0\})\le N_0+C_8\big(R(t)\big)^{1/3}\le N_0+C_8\big(R(t-1)\big)^{\fr{1-\eta}{4-\eta}}\qquad
\forall\, t\ge t_0.\lb{4.42}\ee
Combining (\ref{4.41}) and (\ref{4.42})
we obtain (\ref{4.38}) with the constant $C=\max\{C_7, C_8\}$.
\end{proof}
\vskip2mm

Finally at the end of this section we finish the proof of Theorem \ref{theorem1.2}:

{\bf Proof of Theorem \ref{theorem1.2}.}  We need only prove the
algebraic decay rate of $\|F-F_{\rm be}\|_{1}$  since the algebraic decay rate of
$S(F_{\rm be})-S(F_t)$ has been proven by Theorem \ref{theorem3.2}.
Let
$C_i\,(i=1,2,...,8)$ denote finite positive constants that depend only on $N,E,b_0,\eta$ and $\ld$.   Since
$F_t$ conserves the mass and energy, it follows from
Lemma \ref{lemma3.1} and Theorem \ref{theorem3.2} that
\be \|F-F_{\rm be}\|_{1}^{\circ}\le C_1\big(S(F_{\rm be})-S(F_t)\big)^{1/2}\le C_2(1+t)^{-\ld/2}\qquad \forall\, t\ge 0.\lb{4.Final}\ee
Now if $\overline{T}/\overline{T}_c< 1$, then
by Lemma \ref{lemma3.1}, Theorem \ref{theorem4.8}, and (\ref{4.Final}) we have
\beas&&
\|F_t-F_{\rm be}\|_1\le  2|F_t(\{0\})-F_{\rm be}(\{0\})|
+C_{3}(\|F_t-F_{\rm be}\|_{1}^{\circ})^{1/3}\\
&&\le C_4(1+t)^{-\fr{1-\eta}{2(4-\eta)}\ld}+
C_5(1+t)^{-\ld/6}\le C_6(1+t)^{-\fr{1-\eta}{2(4-\eta)}\ld}\qquad \forall\, t\ge 0\eeas
while if
$\overline{T}/\overline{T}_c\ge 1$, then we have from Lemma \ref{lemma3.1} and  (\ref{4.Final})
that
$$\|F_t-F_{\rm be}\|_1\le C_7(\|F-F_{\rm be}\|_{1}^{\circ})^{1/3}\le C_8(1+t)^{-\ld/6}\le C_8(1+t)^{-\fr{1-\eta}{2(4-\eta)}\ld}\qquad \forall\, t\ge 0.$$
This completes the proof of Theorem \ref{theorem1.2}.
$\hfill\Box$

\begin{center}\section{Appendix}\end{center}

Here we prove some properties that have been used in the previous sections.

\noindent{\bf 6.1. Some integral equalities.}  We will use the  following integral formula:

(1) (Carleman Representation).  Let $\Psi$ be a Borel measurable function on ${\bRR}$ and it is nonnegative or satisfies some integrability such that the following integrals make sense)
\be \int_{{\mR}^3\times{\mS}^2}\Psi({\bf v}',{\bf v}_*'){\rm d}\og {\rm d}{\bf v}_*=2\int_{{\mR}^3}
\fr{{\rm d}{\bf x}}{|{\bf x}|^2}\int_{{\mR}^2
({\bf x})}\Psi({\bf {\bf v}-x},{\bf {\bf v}-y}){\rm d}{\bf y}\lb{5.1}\ee
for almost all ${\bf v}\in{\mR}^3$, where $({\bf v}',{\bf v}_*')$ is given in the
$\og$-representation (\ref{colli}) and ${\rm d}{\bf y}$ in $\int_{{\mR}^2
({\bf x})}\{\cdots\} {\rm d}{\bf y}$
is the Lebesgue measure element on the plan
${\mR}^2({\bf x})=\{{\bf y}\in {\mR}^3\,|\, {\bf y}\bot {\bf x}\}$.

(2) (see e.g.\cite{Lu2000}). Let $\Psi$ be continuous on $({\mR}^3\setminus\{0\})\times
({\mR}^3\setminus\{0\})$ and suppose that
$\Psi$ is nonnegative or generally such that the following
integrals makes sense. Then we have
\be\int_{{\mathbb R}^3}\fr{{\rm d}{\bf x}}{|{\bf x}|}
\int_{{\mathbb R}^2({\bf x})}\Psi({\bf x,y})
{\rm d}{\bf y}
=\int_{{\mathbb R}^3}\fr{{\rm d}{\bf y}}{|{\bf y}|}
\int_{{\mathbb R}^2({\bf y})}\Psi({\bf x,y})
{\rm d}{\bf x}.\lb{5.2}\ee

(3)  Let $\Psi$
be continuous on ${\mR}^2_{>0}$ and suppose that $\Psi$ is nonnegative or  generally
such that the following integrals make sense. Then
for any ${\bf x}\in {\mathbb R}^3\setminus\{0\}$ we have
\be\int_{{\mathbb R}^2({\bf x})}\Psi(|{\bf y}|, |{\bf v}-{\bf y}|)
{\rm d}{\bf y}=\int_{|{\bf v}_{\bf x}|}^{\infty}r_*'{\rm d}r_*'
\int_{0}^{2\pi}\Psi\Big(\Big|\sqrt{{r_*'}^2-|{\bf v}_{\bf x}|^2}+e^{{\rm i}\theta}
\sqrt{|{\bf v}|^2-|{\bf v}_{\bf x}|^2}\,\Big|,
r_*'\Big){\rm d}\theta\lb{5.3}\ee
where
${\bf v}_{\bf x}=({\bf v}\cdot\fr{{\bf x}}{|{\bf x}|}) \fr{{\bf x}}{|{\bf x}|}, {\rm i}=\sqrt{-1}.$
Here for real numbers $a,b,\theta$ we just use
$|a+ e^{{\rm i}\theta} b|=(a^2+b^2+2ab\cos(\theta))^{1/2}$ to shorten notation.
\vskip2mm

By identity $|{\bf v}_*|^2=|{\bf v}'|^2+|{\bf v}_*'|^2-|{\bf v}|^2$,
any function of $(...,|{\bf v}|^2/2,
|{\bf v}_*|^2/2,|{\bf v}'|^2/2, |{\bf v}_*'|^2/2\big)$ will be automatically written as a
 function of $(..., |{\bf v}|^2/2,|{\bf v}'|^2/2, |{\bf v}_*'|^2/2\big)$.

 In the following, for any $(x,y,z,s,\theta)\in {\mR}_{\ge 0}^4\times [0,2\pi]$, we denote as the above that  $x_*=(y+z-x)_{+}$ and let $Y_*=Y_*(x,y,z,s,\theta)$ be given by (\ref{Y}) and  $
\wt{Y_*}$ be defined by $Y_*(\cdot )$ with exchanging $y\leftrightarrow z$
i.e.
$$\wt{Y}_*=Y_*(x,z,y,s,\theta).$$

\begin{lemma}\lb{lemma5.1} Let $\Psi\in C({\mR}_{\ge 0}^5)$ be such that
the following integrals make sense ( for instance $\Psi\ge 0$ on ${\mR}_{\ge 0}^5$ or
$\Psi$ is such that integral is absolutely convergent). Then for any ${\bf v}\in {\mR}^3\setminus\{0\}$ we
have with $x=|{\bf v}|^2/2=r^2/2$ that
\bes&&\int_{{\mathbb R}^3\times{\mS}^2}\fr{|({\bf v}-{\bf v}_*)\cdot\omega|}{(4\pi)^2}
\Psi\big(|{\bf v}-{\bf v}'|, |{\bf v}-{\bf v}_*'|, |{\bf v}|^2/2,
 |{\bf v}'|^2/2, |{\bf v}_*'|^2/2\big){\rm d}\og{\rm d}{\bf v}_*\nonumber\\
&&=\fr{1}{4\pi\sqrt{x}}\int_{{\mR}_{\ge 0}^2}1_{\{y+z>x\}}{\rm d}y{\rm d}z
\int_{|\sqrt{x}-\sqrt{y}|\vee |\sqrt{x_*}-\sqrt{z}|}^{(\sqrt{x}+\sqrt{y})\wedge (\sqrt{x_*}+\sqrt{z})}
\int_{0}^{2\pi}\Psi\big(\sqrt{2} s, \sqrt{2} Y_*, x,y,z\big){\rm d}\theta {\rm d}s\nonumber\\
&&=
\fr{1}{4\pi\sqrt{x}}\int_{{\mR}_{\ge 0}^2}1_{\{y+z>x\}}{\rm d}y{\rm d}z
\int_{|\sqrt{x}-\sqrt{z}|\vee |\sqrt{x_*}-\sqrt{y}|}^{(\sqrt{x}+\sqrt{z})\wedge (\sqrt{x_*}
+\sqrt{y})}
\int_{0}^{2\pi}\Psi\big(\sqrt{2}\wt{Y_*}, \sqrt{2}s,x,y,z\big){\rm d}\theta {\rm d}s.
\lb{5.4}\ees
\end{lemma}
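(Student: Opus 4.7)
The plan is to start from the Carleman-type representation (\ref{5.1}), absorb the hard-sphere weight $|({\bf v}-{\bf v}_*)\cdot\og|=|{\bf v}-{\bf v}'|$ into the Jacobian, and then reduce the resulting integral over ${\bf x}\in\bR$ and ${\bf y}\in{\mathbb R}^2({\bf x})$ to the iterated scalar integral in $(s,y,z,\theta)$ on the right-hand side of (\ref{5.4}). The identifications to keep in mind throughout are $|{\bf x}|=|{\bf v}-{\bf v}'|=\sqrt{2}\,s$, $y=|{\bf v}'|^2/2=|{\bf v}-{\bf x}|^2/2$ and $z=|{\bf v}_*'|^2/2=|{\bf v}-{\bf y}|^2/2$.

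First, I would apply (\ref{5.1}) to the function $({\bf v}',{\bf v}_*')\mapsto (4\pi)^{-2}|{\bf v}-{\bf v}'|\,\Psi(|{\bf v}-{\bf v}'|,|{\bf v}-{\bf v}_*'|,x,|{\bf v}'|^2/2,|{\bf v}_*'|^2/2)$, which rewrites the left-hand side of (\ref{5.4}) as
$$\fr{2}{(4\pi)^2}\int_{\bR}\fr{{\rm d}{\bf x}}{|{\bf x}|}\int_{{\mathbb R}^2({\bf x})}\Psi(|{\bf x}|,|{\bf y}|,x,|{\bf v}-{\bf x}|^2/2,|{\bf v}-{\bf y}|^2/2)\,{\rm d}{\bf y}.$$
Then I introduce spherical coordinates for ${\bf x}$ with polar axis along ${\bf v}$, set $|{\bf x}|=\sqrt{2}\,s$, use rotation invariance around ${\bf v}$ to integrate out the azimuthal angle (contributing $2\pi$), and change the remaining angular variable $\alpha$ to $y=|{\bf v}-{\bf x}|^2/2$ via $\cos\alpha=(x+s^2-y)/(2s\sqrt{x})$; this reduces the ${\bf x}$-integral to $\fr{1}{\sqrt{x}}\int_0^\infty ds\int_{(\sqrt{x}-s)^2}^{(\sqrt{x}+s)^2}dy\,(\cdots)$.

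Next, I apply (\ref{5.3}) to the inner integral on ${\mathbb R}^2({\bf x})$ with $r_*'=|{\bf v}-{\bf y}|$ and the further substitution $z=(r_*')^2/2$. A direct computation using $|{\bf v}_{\bf x}|^2=2x\cos^2\alpha=(x-y+s^2)^2/(2s^2)$ shows simultaneously that $z$ ranges over $[(x-y+s^2)^2/(4s^2),\infty)$ and that $|{\bf y}|=\sqrt{2}\,Y_*$ with $Y_*$ exactly the expression in (\ref{Y}). The main obstacle is the Fubini reordering that moves $s$ inside the $(y,z)$-integral: for fixed $(y,z)$ the constraint $z\ge(x-y+s^2)^2/(4s^2)$, rewritten with $u=s^2$, becomes the quadratic inequality
$$u^2+2u(x-y-2z)+(x-y)^2\le 0,$$
whose discriminant equals $16z(y+z-x)$; real solutions therefore exist iff $y+z\ge x$, and when this holds the roots factor as $u_{\pm}=(\sqrt{z}\pm\sqrt{x_*})^2$, giving $|\sqrt{x_*}-\sqrt{z}|\le s\le\sqrt{x_*}+\sqrt{z}$. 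Intersecting with the range $|\sqrt{x}-\sqrt{y}|\le s\le\sqrt{x}+\sqrt{y}$ inherited from the $\alpha\mapsto y$ step yields exactly the bounds on $s$ in the first equality of (\ref{5.4}).

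For the second equality I would apply the symmetry (\ref{5.2}) to the intermediate formula so that ${\bf y}$ becomes the primary radial variable and ${\bf x}$ plays the role of the perpendicular one, and then rerun the spherical/Fubini argument verbatim with ${\bf y}$ in place of ${\bf x}$. Under this dual parameterization one has $|{\bf y}|=|{\bf v}-{\bf v}_*'|=\sqrt{2}\,s$, the angular substitution is done with $z=|{\bf v}-{\bf y}|^2/2$ instead of $y$, and the perpendicular integral from (\ref{5.3}) now produces $|{\bf x}|=\sqrt{2}\,\wt{Y_*}$ with $\wt{Y_*}=Y_*(x,z,y,s,\theta)$; the roles of $y$ and $z$ are correspondingly interchanged in all the bounds, which yields the second identity in (\ref{5.4}).
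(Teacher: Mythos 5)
Your proposal is correct and follows essentially the same route as the paper: Carleman's representation (\ref{5.1}) with the weight $|({\bf v}-{\bf v}_*)\cdot\og|=|{\bf v}-{\bf v}'|$ absorbed, the planar formula (\ref{5.3}), and the symmetry (\ref{5.2}) for the second identity. The only cosmetic differences are the order of operations (you pass to polar coordinates in ${\bf x}$ before invoking (\ref{5.3}), whereas the paper applies (\ref{5.3}) first and then shifts ${\bf x}\to{\bf v}-{\bf x}$) and your explicit discriminant computation of the $s$-range, which reproduces the equivalence $r_*'>\fr{|r^2-{r'}^2+s^2|}{2s}\Longleftrightarrow |r_*'-r_*|<s<r_*'+r_*$ that the paper states directly.
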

\begin{proof}  Fix any ${\bf v}=r\sg=\sqrt{2x}\,\sg\,\,(x>0, \sg\in {\bS}$).
Let $I({\bf v})$ be the left hand side of (\ref{5.4}).
Note that
$|({\bf v}-{\bf v}_*)\cdot\omega|=|{\bf v}-{\bf v}'|$. Then
using (\ref{5.1}),(\ref{5.3}) and simple changes of variables and denoting
$r_*=\sqrt{({r'}^2+{r_*'}^2-r^2)_{+}}$ we compute
\bes&&(4\pi)^2 I({\bf v})
=2\int_{{\mathbb R}^3}\fr{{\rm d}{\bf x}}{|{\bf x}|}\int_{{\mathbb R}^2({\bf x})}
\Psi\big(|{\bf x}|, |{\bf y}|,|{\bf v}|^2/2, |{\bf v}-{\bf x}|^2/2, |{\bf {\bf v}}-{\bf y}|^2/2\big)
{\rm d}{\bf y}\nonumber\\
&&=2\int_{{\mathbb R}^3}\fr{{\rm d}{\bf x}}{|{\bf x}|}
\int_{|{\bf v}_{\bf x}|}^{\infty} r_*'{\rm d}r_*'
\int_{0}^{2\pi}\Psi\Big(|{\bf x}|,
\Big|\sqrt{{r_*'}^2-|{\bf v}_{\bf x}|^2}+e^{{\rm i}\theta}\sqrt{|{\bf v}|^2-|{\bf v}_{\bf x}|^2}\,\Big|,
\fr{r^2}{2},\fr{|{\bf v-x}|^2}{2},
\fr{{r_*'}^2}{2}\Big){\rm d}\theta\nonumber
\\
&&=2\int_{{\mathbb R}^3}\fr{{\rm d}{\bf x}}{|{\bf v-x}|}
\int_{|{\bf v}\cdot \fr{{\bf v-x}}{|{\bf v-x}|}|}^{\infty} r_*'{\rm d}r_*'
\int_{0}^{2\pi}\nonumber\\
&&\times\Psi\Big(|{\bf v-x}|,
\Big|\sqrt{{r_*'}^2-|{\bf v}\cdot\fr{{\bf v-x}}{|{\bf v-x}|}|^2}
+e^{{\rm i}\theta}\sqrt{|{\bf v}|^2-| {\bf v}\cdot \fr{{\bf v-x}}{|{\bf v-x}|}|^2}\,\Big|,\fr{r^2}{2},
\fr{|{\bf x}|^2}{2},
\fr{{r_*'}^2}{2}\Big){\rm d}\theta\nonumber
\\
 &&=\fr{4\pi}{r}\int_{{\mR}_{\ge 0}^2, {r'}^2+{r_*'}^2>r^2} r'r_*'{\rm d}r'{\rm d}r_*'
\int_{|r'-r|\vee |r_*'-r_*|}^{(r'+r)\wedge (r_*+r_*')}{\rm d}s\nonumber
\\
&&\times
\int_{0}^{2\pi}\Psi\Big(s,
\Big|\sqrt{{r_*'}^2-\fr{(r^2-{r'}^2+s^2)^2}{4s^2}
}
+e^{{\rm i}\theta}\sqrt{r^2-\fr{(r^2-{r'}^2+s^2)^2}{4s^2}
}\,\Big|, \fr{r^2}{2},\fr{{r'}^2}{2}, \fr{{r_*'}^2}{2}\Big){\rm d}\theta\nonumber
\\
&&=
\fr{4\pi}{\sqrt{x}}\int_{{\mR}_{\ge 0}^2, y+z>x}
{\rm d}y{\rm d}z
\int_{|\sqrt{x}-\sqrt{y}|\vee |\sqrt{x_*}-\sqrt{z}|}^{(\sqrt{x}+\sqrt{y})\wedge (\sqrt{x_*}+\sqrt{z})}
\int_{0}^{2\pi}\Psi\big(\sqrt{2}s, \sqrt{2}Y_*,x,y,z\big){\rm d}\theta{\rm d}s\lb{6.new0}
\ees
where in the above calculation we have used the following properties: if
$r>0, r'>0, r_*'>0, r_*>0, -1<t<1, s>0$, then
\beas r_*'>\fr{|r^2-rr't|}{\sqrt{r^2+{r'}^2-2rr't}}
\Longrightarrow\, {r'}^2+{r_*'}^2>r^2;\quad  r_*'>\fr{|r^2-{r'}^2+s^2|}{2s}\,\,
\Longleftrightarrow\,\, |r_*'-r_*|<s<r_*'+r_*.\eeas
By the way, we also have the following relation
\bes&& |\sqrt{x}-\sqrt{y}|\vee |\sqrt{x_*}-\sqrt{z}| \le s\le (\sqrt{x}+\sqrt{y})\wedge (\sqrt{x_*}+\sqrt{z})\,\,\,{\rm and}\,\,\,s>0 \nonumber\\
&&\Longrightarrow\quad z-\fr{(x-y+s^2)^2}{4s^2}\ge 0\quad {\rm and}\quad
x-\fr{(x-y+s^2)^2}{4s^2}\ge 0\lb{5.pp}\ees
 which is useful when dealing with
$Y_*=Y_*(x,y,z,s,\theta)$.

Next using the formula (\ref{5.2}) and (\ref{6.new0}) with exchanges
$ {\bf x}\leftrightarrow {\bf y},\, y\leftrightarrow z$
we also have
\beas (4\pi)^2I({\bf v}) &=&2\int_{{\mathbb R}^3}\fr{{\rm d}{\bf y}}{|{\bf y}|}\int_{{\mathbb R}^2({\bf x})}
\Psi\big(|{\bf x}|, |{\bf y}|, |{\bf v}|^2/2, |{\bf v}-{\bf x}|^2/2, |{\bf {\bf v}}-{\bf y}|^2/2\big)
{\rm d}{\bf x}
\\
&=&2\int_{{\mathbb R}^3}\fr{{\rm d}{\bf x}}{|{\bf x}|}\int_{{\mathbb R}^2({\bf y})}
\Psi\big(|{\bf y}|, |{\bf x}|, |{\bf v}|^2/2, |{\bf v}-{\bf y}|^2/2, |{\bf {\bf v}}-{\bf x}|^2/2\big)
{\rm d}{\bf y}\\
&=&
\fr{4\pi}{\sqrt{x}}\int_{{\mR}_{\ge 0}^2, y+z>x}
{\rm d}y{\rm d}z
\int_{|\sqrt{x}-\sqrt{y}|\vee |\sqrt{x_*}-\sqrt{z}|}^{(\sqrt{x}+\sqrt{y})\wedge (\sqrt{x_*}+\sqrt{z})}
\int_{0}^{2\pi}\Psi\big(\sqrt{2}Y_*, \sqrt{2}s,x,z,y\big){\rm d}\theta{\rm d}s
\\
&=&
\fr{4\pi}{\sqrt{x}}\int_{{\mR}_{\ge 0}^2, y+z>x}
{\rm d}y{\rm d}z
\int_{|\sqrt{x}-\sqrt{z}|\vee |\sqrt{x_*}-\sqrt{y}|}^{(\sqrt{x}+\sqrt{z})\wedge (\sqrt{x_*}+\sqrt{y})}
\int_{0}^{2\pi}\Psi\big(\sqrt{2}\wt{Y_*}, \sqrt{2}s,x, y,z\big){\rm d}\theta{\rm d}s. \eeas
\end{proof}

\vskip2mm

\begin{lemma}\lb{lemma5.2} Let $\Psi\in C({\mR}_{\ge 0}^5)$. Then for any $y>0, z>0$
\bes&&
\int_{{\mS}^2\times{\mS}^2}{\rm d}\sg{\rm d}\sg_*
\int_{{\mS}^2}\fr{|({\bf v}-{\bf v}_*)\cdot\og|}{(4\pi)^2}\Psi\big(|{\bf v}-{\bf v}'|, |{\bf v}-{\bf v}_*'|, |{\bf v}|^2/2,
|{\bf v}'|^2/2, |{\bf v}_*'|^2/2\big)
{\rm d}\og\nonumber\\
&&=\fr{1}{\sqrt{2yz}}
\int_{0}^{y+z}{\rm d}x
\int_{|\sqrt{y}-\sqrt{x}|\vee |\sqrt{x_*}-\sqrt{z}|}^{(\sqrt{y}+\sqrt{x})\wedge (\sqrt{z}+
\sqrt{x_*})}
\int_{0}^{2\pi}\Psi\big(\sqrt{2} s, \sqrt{2} Y_*, x,y,z\big)
{\rm d}\theta {\rm d}s\lb{5.5}\ees
where in the left integral ${\bf v}=\sqrt{2y}\,\sg,\, {\bf v}_*=\sqrt{2z}\,\sg_*$.
\end{lemma}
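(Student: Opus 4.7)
The plan is to reduce Lemma~\ref{lemma5.2} to Lemma~\ref{lemma5.1} via rotational invariance combined with a delta-function identity that converts the surface integral over $\sg_*\in\bS$ into a volume integral over $\bR$. By rotation invariance, the double integral $\int_{\bS}{\rm d}\sg_*\int_{\bS}\{\cdots\}{\rm d}\og$ depends on ${\bf v}=\sqrt{2y}\,\sg$ only through $|{\bf v}|=\sqrt{2y}$, so the LHS equals $4\pi$ times the same double integral evaluated at an arbitrary point ${\bf v}$ with $|{\bf v}|^2/2=y$.

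Next I would rewrite the $\sg_*$-integration via spherical coordinates,
\[
\int_{\bS}\phi(\sqrt{2z}\,\sg_*)\,{\rm d}\sg_*
=\fr{1}{\sqrt{2z}}\int_{\bR}\phi({\bf v}_*)\,\dt(|{\bf v}_*|^2/2-z)\,{\rm d}{\bf v}_*,
\]
so that the LHS becomes $\fr{4\pi}{\sqrt{2z}}$ times a Lemma~\ref{lemma5.1}-type integral in which the test function is $\Psi$ weighted by $\dt(|{\bf v}_*|^2/2-z)$; this reduction is justified rigorously by mollifying the delta and using the continuity of $\Psi$ on compact sets. Using energy conservation $|{\bf v}|^2/2+|{\bf v}_*|^2/2=|{\bf v}'|^2/2+|{\bf v}_*'|^2/2$ together with $|{\bf v}|^2/2=y$, the delta enforces $|{\bf v}'|^2/2+|{\bf v}_*'|^2/2=y+z$. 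Applying Lemma~\ref{lemma5.1} with the fixed variable $y$ playing the role of ``$x$'' there, and then performing the trivial delta-integration in $\gamma:=|{\bf v}_*'|^2/2$, we are left with $\gamma=y+z-\beta$ and $\beta:=|{\bf v}'|^2/2\in[0,y+z]$. The indicator ${\bf 1}_{\{\beta+\gamma>y\}}$ is automatic, and $y_*:=(\beta+\gamma-y)_+=z$. Renaming $\beta=:x$ and setting $x_*=y+z-x$, the $s$-limits become exactly $|\sqrt{y}-\sqrt{x}|\vee|\sqrt{x_*}-\sqrt{z}|$ up to $(\sqrt{y}+\sqrt{x})\wedge(\sqrt{z}+\sqrt{x_*})$, matching the claimed bounds; the prefactor $\fr{1}{4\pi\sqrt{y}}\cdot\fr{4\pi}{\sqrt{2z}}=\fr{1}{\sqrt{2yz}}$ also matches.

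The main obstacle will be reconciling the arguments of $Y_*$ (and hence of $\Psi$) in the resulting expression with those in the statement: the computation naturally produces $Y_*(y,x,y+z-x,s,\theta)$, while the statement writes $Y_*(x,y,z,s,\theta)$. The identity that resolves this is a hidden symmetry of $Y_*$ which I would verify by direct algebra. Setting $u_1:=(y-x+s^2)/(2s)$ and $u_2:=(x-y+s^2)/(2s)=s-u_1$, so that $(u_1-u_2)s=y-x$, a short calculation shows
\[
y+z-x-u_1^2=z-u_2^2,\qquad y-u_1^2=x-u_2^2,
\]
whence
\[
Y_*(y,x,y+z-x,s,\theta)=\Big|\sqrt{y+z-x-u_1^2}+e^{{\rm i}\theta}\sqrt{y-u_1^2}\,\Big|=\Big|\sqrt{z-u_2^2}+e^{{\rm i}\theta}\sqrt{x-u_2^2}\,\Big|=Y_*(x,y,z,s,\theta).
\]
With this symmetry the two forms agree, and the identity of Lemma~\ref{lemma5.2} follows.
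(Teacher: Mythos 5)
You take a genuinely different route from the paper: you argue pointwise in $(y,z)$, using rotational invariance and the identity $\int_{\bS}\phi(\sqrt{2z}\,\sg_*)\,{\rm d}\sg_*=\fr{1}{\sqrt{2z}}\int_{\bR}\phi({\bf v}_*)\,\dt(|{\bf v}_*|^2/2-z)\,{\rm d}{\bf v}_*$ to feed a (mollified) delta into Lemma \ref{lemma5.1}, whereas the paper integrates both sides of (\ref{5.5}) against an arbitrary $\psi(y,z)\sqrt{yz}$, passes to the full velocity integral, uses the pre--post collisional change of variables $({\bf v},{\bf v}_*)\to({\bf v}',{\bf v}_*')$ to transfer $\psi$ onto $(|{\bf v}'|^2/2,|{\bf v}_*'|^2/2)$, applies Lemma \ref{lemma5.1}, and concludes by continuity and arbitrariness of $\psi$. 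Your prefactor, the $s$-limits, the mollification argument, and the algebraic identity $Y_*(y,x,y+z-x,s,\theta)=Y_*(x,y,z,s,\theta)$ are all correct.

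The gap is at the very last step, where you assert that ``with this symmetry the two forms agree.'' What your computation actually yields is
\[
\fr{1}{\sqrt{2yz}}\int_{0}^{y+z}{\rm d}x\int_{|\sqrt{y}-\sqrt{x}|\vee|\sqrt{x_*}-\sqrt{z}|}^{(\sqrt{y}+\sqrt{x})\wedge(\sqrt{z}+\sqrt{x_*})}{\rm d}s\int_{0}^{2\pi}\Psi\big(\sqrt{2}\,s,\,\sqrt{2}\,Y_*(x,y,z,s,\theta),\,y,\,x,\,y+z-x\big)\,{\rm d}\theta ,
\]
i.e.\ the third to fifth slots of $\Psi$ come out as $(y,x,x_*)$, while the right-hand side of (\ref{5.5}) as printed carries $(x,y,z)$. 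The symmetry of $Y_*$ only concerns the second slot and cannot repair this, and for a general continuous $\Psi$ the two expressions are genuinely different: take $\Psi(x_1,\dots,x_5)=\phi(x_3)$, for which the left-hand side of (\ref{5.5}) is a constant multiple of $\phi(y)$ while the printed right-hand side is an integral of $\phi$ over $[0,y+z]$. Passing from your slots $(y,x,x_*)$ to the printed $(x,y,z)$ is exactly the pre--post exchange $(e,e',e_*')\mapsto(e',e,e_*)$ of the last three arguments; in the paper this is effected by the collisional involution inside the duality argument, it is legitimate only when $\Psi$ (or the pairing it enters) tolerates that exchange, and it does not follow from anything you proved. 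Note also that it is your form, not the printed slot order, that produces $+\Dt\vp(x,y,z)$ when the lemma is subsequently applied to obtain (\ref{5.22}); so to complete your direct approach you must either prove the identity with the arguments you actually derived and reconcile it with the statement, or add the involution step (with the required symmetry of $\Psi$) explicitly.
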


\begin{proof}  It is easily seen that
both sides of (\ref{5.5}) are continuous in $(y,z)\in {\mR}_{>0}^2$.
Take any $\psi\in C_c({\mR}_{\ge 0}^2)$. We compute with change of variables and using Lemma \ref{lemma5.1}
\beas&&
\int_{{\mR}_{\ge 0}^2}\psi(y,z)\sqrt{y}\sqrt{z}\Big(
\int_{{\mS}^2\times{\mS}^2}{\rm d}\sg{\rm d}\sg_*
\int_{{\mS}^2}\fr{|({\bf v}-{\bf v}_*)\cdot\og|}{(4\pi)^2}\\
&&\times \Psi(|{\bf v}-{\bf v}'|, |{\bf v}-{\bf v}_*'|,|{\bf v}|^2/2,
|{\bf v}'|^2/2, |{\bf v}_*'|^2/2 )\Big|_{ {\bf v}=\sqrt{2y}\,\sg,\, {\bf v}_*=\sqrt{2z}\,\sg_*}
{\rm d}\og \Big){\rm d}y{\rm d}z\\
&&=\fr{1}{2}
\int_{{\mR}^3\times {\mR}^3\times{\mS}^2}\psi(|{\bf v}|^2/2, |{\bf v}_*|^2/2)
\fr{|({\bf v}-{\bf v}_*)\cdot\og|}{(4\pi)^2}\\
&&\times \Psi(|{\bf v}-{\bf v}'|, |{\bf v}-{\bf v}_*'|,|{\bf v}|^2/2,
|{\bf v}'|^2/2, |{\bf v}_*'|^2/2 )
{\rm d}\og {\rm d}{\bf v}{\rm d}{\bf v}_*
\\
&&=\fr{1}{2}
\int_{{\mR}^3\times {\mR}^3\times{\mS}^2}\psi(
|{\bf v}'|^2/2, |{\bf v}_*'|^2/2)\fr{|({\bf v}-{\bf v}_*)\cdot\og |}{(4\pi)^2}\\
&&\times \Psi(|{\bf v}-{\bf v}'|, |{\bf v}-{\bf v}_*'|,
|{\bf v}|^2/2,  |{\bf v}'|^2/2,|{\bf v}_*'|^2/2)
{\rm d}\og {\rm d}{\bf v}{\rm d}{\bf v}_*
\\
&&=
\int_{{\mR}_{\ge 0}^2}  \psi(y, z)\sqrt{yz}
\Big(\fr{1}{\sqrt{2yz}}\int_{0}^{y+z}{\rm d}x
\int_{|\sqrt{x}-\sqrt{y}|\vee |\sqrt{x_*}-\sqrt{z}|}^{(\sqrt{x}+\sqrt{y})\wedge (\sqrt{x_*}+\sqrt{z})}
\int_{0}^{2\pi}\Psi\big(\sqrt{2} s, \sqrt{2} Y_*, x,y,z\big){\rm d}\theta {\rm d}s
\Big){\rm d}y{\rm d}z.\eeas
Since $\psi $  is arbitrary, this implies (\ref{5.5}) by continuity.
\end{proof}
\vskip2mm

\begin{lemma}\lb{lemma5.3} Let $\Phi\in C({\mR}_{\ge 0}^2)$.
Then for any $x,y, z\ge 0$ satisfying $x_*:=y+z-x\ge 0$  we have
\be
\int_{|\sqrt{x}-\sqrt{z}|\vee |\sqrt{x_*}-\sqrt{y}|}^{(\sqrt{x}+\sqrt{z})\wedge (\sqrt{x_*}+\sqrt{y})}
{\rm d}s\int_{0}^{2\pi}\Phi(\sqrt{2}\wt{Y_*}, \sqrt{2}s)
{\rm d}\theta=\int_{|\sqrt{x}-\sqrt{y}|\vee |\sqrt{x_*}-\sqrt{z}|}^{(\sqrt{x}+\sqrt{y})\wedge (\sqrt{x_*}+\sqrt{z})} {\rm d}s
\int_{0}^{2\pi}\Phi(\sqrt{2}s, \sqrt{2}Y_*)
{\rm d}\theta\lb{5.7}\ee
and
\bes&&\int_{|\sqrt{x}-\sqrt{y}|\vee |\sqrt{x_*}-\sqrt{z}|}^{(\sqrt{x}+\sqrt{y})\vee (\sqrt{x_*}+\sqrt{z})}
 {\rm d}s\int_{0}^{2\pi}\fr{\sqrt{z} s}{\sqrt{y}Y_*+\sqrt{z}s}\Phi(\sqrt{2}s, \sqrt{2}Y_*)
{\rm d}\theta\nonumber\\
&&+
\int_{|\sqrt{x}-\sqrt{z}|\vee |\sqrt{x_*}-\sqrt{y}|}^{(\sqrt{x}+\sqrt{z})\wedge (\sqrt{x_*}+\sqrt{y})}
{\rm d}s\int_{0}^{2\pi}\fr{\sqrt{y} s}{\sqrt{z}\wt{Y_*}+\sqrt{y}s}\Phi(\sqrt{2}\wt{Y_*}, \sqrt{2}s)
{\rm d}\theta\nonumber\\
&&
=\int_{|\sqrt{x}-\sqrt{y}|\vee |\sqrt{x_*}-\sqrt{z}|}^{(\sqrt{x}+\sqrt{y})\vee (\sqrt{x_*}+\sqrt{z})}
{\rm d}s\int_{0}^{2\pi}\Phi(\sqrt{2}s, \sqrt{2}Y_*)
{\rm d}\theta. \lb{5.8}\ees
\end{lemma}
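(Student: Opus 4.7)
The plan is to derive both identities from Lemma \ref{lemma5.1}, which provides two equivalent parametrizations of the same Carleman-type integral. The strategy is to ``localize'' that equivalence into pointwise identities in $(x,y,z)$ by integrating against arbitrary test functions and then invoking continuity.

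\textbf{For (\ref{5.7}).} I would apply Lemma \ref{lemma5.1} with the separated choice $\Psi(r,\rho,x,y,z):=\psi(y,z)\Phi(r,\rho)$, where $\psi\in C_c(\mathbb{R}_{\ge 0}^2)$ is nonnegative with support inside $\{y+z>x\}$ and $x$ is held fixed. Since the left-hand side of (\ref{5.4}) is intrinsic (independent of which right-hand-side parametrization is used), the two right-hand expressions in (\ref{5.4}) must agree, giving
\[
\iint \psi(y,z)\,\mathbf{1}_{y+z>x}\bigl[L(x,y,z)-R(x,y,z)\bigr]\,dy\,dz=0,
\]
where $L$ and $R$ denote the left and right sides of (\ref{5.7}). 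Because $L,R$ are continuous in $(y,z)$ on the open set $\{y+z>x\}$ and $\psi$ is arbitrary, pointwise equality $L=R$ follows there. The boundary case $x_*=0$ (i.e.\ $y+z=x$) is handled by direct inspection: the inequality $\sqrt{x}\le\sqrt{y}+\sqrt{z}$ forces the lower and upper bounds on $s$ to collapse, so both integrals trivially vanish. The extension from nonnegative test functions to general continuous $\Phi$ is routine by splitting into positive and negative parts.

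\textbf{For (\ref{5.8}).} I would apply the already-established (\ref{5.7}) to the weighted function
\[
\widetilde{\Phi}(r,\rho):=\frac{\sqrt{y}\,\rho}{\sqrt{y}\,\rho+\sqrt{z}\,r}\,\Phi(r,\rho),
\]
with $y,z$ treated as fixed parameters. Direct substitution yields
\[
\int_{I_1}\frac{\sqrt{y}\,s}{\sqrt{z}\,\widetilde{Y_*}+\sqrt{y}\,s}\,\Phi(\sqrt{2}\widetilde{Y_*},\sqrt{2}s)\,d\theta\,ds
=\int_{I_2}\frac{\sqrt{y}\,Y_*}{\sqrt{y}\,Y_*+\sqrt{z}\,s}\,\Phi(\sqrt{2}s,\sqrt{2}Y_*)\,d\theta\,ds,
\]
where $I_1,I_2$ are the $\wedge$-bounded intervals of (\ref{5.7}). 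Writing $1=w_1+(1-w_1)$ with $w_1=\frac{\sqrt{z}\,s}{\sqrt{y}\,Y_*+\sqrt{z}\,s}$ and noting that $1-w_1$ equals the weight on the right-hand side above, the identity (\ref{5.8}) is obtained by adding $\int w_1\Phi(\sqrt{2}s,\sqrt{2}Y_*)$ to both sides of the equation just displayed.

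\textbf{Main obstacle.} The subtle point is the appearance of $\vee$ rather than $\wedge$ in the outer upper bounds on the first LHS integral and the RHS of (\ref{5.8}). On the excess range
\[
\bigl((\sqrt{x}+\sqrt{y})\wedge(\sqrt{x_*}+\sqrt{z}),\ (\sqrt{x}+\sqrt{y})\vee(\sqrt{x_*}+\sqrt{z})\bigr],
\]
the formula for $Y_*$ simplifies because one of $(x-u^2)_+$ or $(z-u^2)_+$ (with $u=(x-y+s^2)/(2s)$) is forced to zero, making $Y_*$ independent of $\theta$. A case analysis on which of $\sqrt{x}+\sqrt{y}$ and $\sqrt{x_*}+\sqrt{z}$ is larger, together with the observation that at the boundary points $s=\sqrt{x}+\sqrt{y}$ and $s=\sqrt{x_*}+\sqrt{z}$ one computes $u=\sqrt{x}$ and $u=\sqrt{z}$ respectively (both giving $Y_*=0$ whenever both square roots collapse), is required to verify that the excess contribution of $w_1\Phi(\sqrt{2}s,\sqrt{2}Y_*)$ to the first LHS integral equals the corresponding excess contribution of $\Phi(\sqrt{2}s,\sqrt{2}Y_*)$ to the RHS. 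This reduces (\ref{5.8}) to the $\wedge$-bounded identity that follows directly from (\ref{5.7}) applied to $\widetilde{\Phi}$, and it is this bookkeeping on the excess interval that constitutes the technical core of the proof.
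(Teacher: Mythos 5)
Your proof of (\ref{5.7}) is the paper's proof: fix $x$, apply the second equality in (\ref{5.4}) of Lemma \ref{lemma5.1} to the function $\Phi(|{\bf v}-{\bf v}'|,|{\bf v}-{\bf v}_*'|)\,\psi(|{\bf v}'|^2/2,|{\bf v}_*'|^2/2)$ with $\psi\in C_c({\mR}_{\ge 0}^2)$ arbitrary, and conclude by continuity, the case $\min\{x,x_*,y,z\}=0$ being trivial because of (\ref{1.difference}). Your device for (\ref{5.8}) is also the paper's: the paper applies (\ref{5.7}) to $\Phi_n(r,\rho)=\fr{\sqrt{2z}\,r}{\sqrt{2y}\,\rho+\sqrt{2z}\,r+1/n}\Phi(r,\rho)$ and lets $n\to\infty$; your $\wt{\Phi}$ carries the complementary weight, and adding $\int w_1\Phi$ to both sides is the same bookkeeping. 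One small repair: $\wt{\Phi}$ is not continuous at $(r,\rho)=(0,0)$, so you cannot quote (\ref{5.7}) for it directly; insert the $+1/n$ in the denominator as the paper does and pass to the limit by dominated convergence.

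The step you single out as the technical core, however, is one that would fail. On the excess interval between the $\wedge$- and the $\vee$-upper limit, exactly one of the two positive parts in (\ref{Y}) vanishes, but the other is strictly positive in the interior: if $\sqrt{x}+\sqrt{y}<\sqrt{x_*}+\sqrt{z}$ (equivalently $x<z$), then $u:=\fr{x-y+s^2}{2s}$ increases from $\sqrt{x}$ at $s=\sqrt{x}+\sqrt{y}$ to $\sqrt{z}$ at $s=\sqrt{x_*}+\sqrt{z}$, so $Y_*=\sqrt{z-u^2}>0$ there and the weight $\fr{\sqrt{z}\,s}{\sqrt{y}\,Y_*+\sqrt{z}\,s}$ is strictly less than $1$ whenever $y>0$. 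Hence the excess contributions of $w_1\Phi$ and of $\Phi$ do not match; for instance with $\Phi\equiv 1$, $x=0$, $y=z=1$ the second integral in (\ref{5.8}) vanishes (its range degenerates) while the first is strictly smaller than the right-hand side, so the displayed $\vee$-version is actually false. The resolution is that the $\vee$'s in the upper limits of the first left-hand integral and of the right-hand side of (\ref{5.8}) are misprints for $\wedge$: the paper's own proof obtains the identity from (\ref{5.7}), whose range carries $\wedge$, and the version of (\ref{5.8}) invoked later (in the computation of $I_1$ in the proof of Lemma \ref{lemma5.6}) has $\wedge$ in every upper limit. Your argument proves exactly that all-$\wedge$ identity and is essentially the paper's proof; the attempted matching of excess-interval contributions should simply be dropped rather than pursued.
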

\begin{proof}  Recall $(\sqrt{x}+\sqrt{y})\wedge (\sqrt{x_*}+\sqrt{z})
-|\sqrt{x}-\sqrt{y}|\vee |\sqrt{x_*}-\sqrt{z}|
=2\min\{\sqrt{x},\sqrt{x_*},\sqrt{y},\sqrt{z}\,\}$. If $ \min\{\sqrt{x},\sqrt{x_*}, \sqrt{y}, \sqrt{z}\}=0$, then the above
 integrals are all zero.

In the following we suppose  $ \min\{\sqrt{x},\sqrt{x_*}, \sqrt{y}, \sqrt{z}\}>0$.
Take any $\psi\in C_c({\mR}_{\ge 0}^2)$. Applying
 the second equality in (\ref{5.4}) of
 Lemma \ref{lemma5.1} to the function
$\Phi\big(|{\bf v}-{\bf v}'|, |{\bf v}-{\bf v}_*'|\big)
\psi\big(|{\bf v}'|^2/2, |{\bf v}_*'|^2/2\big)$ we have
\beas&&\int_{{\mR}_{\ge 0}^2, y+z>x}\psi(y,z)\Big(
\int_{|\sqrt{x}-\sqrt{y}|\vee |\sqrt{x_*}-\sqrt{z}|}^{(\sqrt{x}+\sqrt{y})\wedge (\sqrt{x_*}+\sqrt{z})}
{\rm d}s\int_{0}^{2\pi}\Phi\big(\sqrt{2} s, \sqrt{2} Y_*\big){\rm d}\theta \Big){\rm d}y{\rm d}z \\
&&=
\int_{{\mR}_{\ge 0}^2, y+z>x}\psi(y,z)\Big(
\int_{|\sqrt{x}-\sqrt{z}|\vee |\sqrt{x_*}-\sqrt{y}|}^{(\sqrt{x}+\sqrt{z})\wedge (\sqrt{x_*}
+\sqrt{y})}{\rm d}s
\int_{0}^{2\pi}\Phi\big(\sqrt{2}\wt{Y_*}, \sqrt{2}s\big){\rm d}\theta\Big){\rm d}y{\rm d}z.
\eeas
Since $\psi\in C_c({\mR}_{\ge 0}^2)$ is arbitrary and both sides of (\ref{5.7}) are
continuous in $(x,y,z)$, this implies that (\ref{5.7}) holds true.

To prove (\ref{5.8}), we fix $x,y,z$ mentioned in the lemma and apply Lemma \ref{lemma5.3} to continuous functions
$$(r, \rho)\mapsto \Phi_n(r,\rho)=\fr{\sqrt{2z}\,r}{\sqrt{2y}\,\rho+\sqrt{2z}\,r+1/n} \Phi(r, \rho),\quad
(r,\rho)\in{\mR}_{\ge 0}^2$$
to get the revelent equality and then letting $n\to \infty$ and using Lebesgue dominated convergence
we obtain the equality
\beas&&
\int_{|\sqrt{x}-\sqrt{y}|\vee |\sqrt{x_*}-\sqrt{z}|}^{(\sqrt{x}+\sqrt{y})\vee (\sqrt{x_*}+\sqrt{z})}
{\rm d}s\int_{0}^{2\pi}\fr{\sqrt{z}s}{\sqrt{y}\,Y_*+\sqrt{z}\,s}\Phi(\sqrt{2}s, \sqrt{2}Y_*)
{\rm d}\theta \\
&&=
\int_{|\sqrt{x}-\sqrt{z}|\vee |\sqrt{x_*}-\sqrt{y}|}^{(\sqrt{x}+\sqrt{z})\vee (\sqrt{x_*}+\sqrt{y})}
{\rm d}s\int_{0}^{2\pi}
\fr{\sqrt{z}\wt{Y_*}}
{\sqrt{y}s+\sqrt{z}\wt{Y_*}}\Phi(\sqrt{2}\wt{Y_*}, \sqrt{2}s)
{\rm d}\theta.
\eeas
From this and (\ref{5.7}) it is easily deduced (\ref{5.8}).
\end{proof}

\begin{remark}\lb{remark6.1}{\rm Applying (\ref{5.7}) to the function $\Phi(r,\rho)$ given in (\ref{Phi}) one sees that the function $W(x,y,z)$ (defined in (\ref{W1}),(\ref{W2}))
is symmetric in $y,z$, i.e.
$W(x,y,z)\equiv W(x,z,y)$ on ${\mR}_{\ge 0}^3$.}
\end{remark}
\vskip2mm

The following lemma deals with equalities for total integration where the integrand $\Psi$ can be
arbitrary nonnegative Lebesgue measurable function.

\begin{lemma}\lb{lemma5.new1} Let $0\le \Phi\in C_b({\mR}_{\ge 0}^2)$,
$\Psi:{\mR}_{\ge 0}^3\to {\mR}_{\ge 0}$ be Lebesgue measurable.
Then
\bes&&\int_{{\bRRS}}\fr{|({\bf v}-{\bf v}_*)\cdot\omega|}{(4\pi)^2}
\Phi\big(|{\bf v}-{\bf v}'|, |{\bf v}-{\bf v}_*'|)\Psi(|{\bf v}|^2/2,
 |{\bf v}'|^2/2, |{\bf v}_*'|^2/2\big){\rm d}\og{\rm d}{\bf v}_*{\rm d}{\bf v}\nonumber\\
&&=\sqrt{2}\int_{{\mR}_{\ge 0}^3}1_{\{y+z>x\}}\Psi(x,y,z)
\left(
\int_{|\sqrt{x}-\sqrt{y}|\vee |\sqrt{x_*}-\sqrt{z}|}^{(\sqrt{x}+\sqrt{y})\wedge (\sqrt{x_*}+\sqrt{z})}
\int_{0}^{2\pi}\Phi\big(\sqrt{2} s, \sqrt{2} Y_*\big){\rm d}\theta {\rm d}s\right){\rm d}x{\rm d}y{\rm d}z\nonumber\\
&&=
\sqrt{2}\int_{{\mR}_{\ge 0}^3}1_{\{y+z>x\}}\Psi(x,y,z)
\left(
\int_{|\sqrt{x}-\sqrt{z}|\vee |\sqrt{x_*}-\sqrt{y}|}^{(\sqrt{x}+\sqrt{z})\wedge (\sqrt{x_*}
+\sqrt{y})}
\int_{0}^{2\pi}\Phi\big(\sqrt{2}\wt{Y_*}, \sqrt{2}s\big){\rm d}\theta {\rm d}s\right){\rm d}x{\rm d}y{\rm d}z.\qquad
\lb{5.4new}\ees
\end{lemma}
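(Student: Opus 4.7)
The plan is to reduce the identities to the per-$\mathbf{v}$ integral formulas already proved in Lemma \ref{lemma5.1}, then to integrate out $\mathbf{v}\in\bR$ by radial coordinates. Since the integrand is everywhere nonnegative, Fubini--Tonelli applies freely, so the whole argument can be carried out without worrying about absolute convergence.

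First I would assume $\Psi\in C_c(\mR_{\ge 0}^3)$. In that case the function $(r,\rho,x,y,z)\mapsto \Phi(r,\rho)\Psi(x,y,z)$ belongs to $C(\mR_{\ge 0}^5)$ and is bounded, so Lemma \ref{lemma5.1} applies for every $\mathbf{v}\in\bR\setminus\{0\}$, giving an identity whose right-hand side is
$$\frac{1}{4\pi\sqrt{x}}\int_{\mR_{\ge 0}^2}\!1_{\{y+z>x\}}\Psi(x,y,z)\left(\int_{|\sqrt{x}-\sqrt{y}|\vee|\sqrt{x_*}-\sqrt{z}|}^{(\sqrt{x}+\sqrt{y})\wedge(\sqrt{x_*}+\sqrt{z})}\!\!\!\!\!\!\!\!\!\!\!\!\!\!\!\!{\rm d}s\int_0^{2\pi}\Phi(\sqrt{2}s,\sqrt{2}Y_*){\rm d}\theta\right){\rm d}y\,{\rm d}z,$$
with $x=|\mathbf{v}|^2/2$. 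I would then integrate this equality over $\mathbf{v}\in\bR$. Since the right-hand side depends on $\mathbf{v}$ only through $x=|\mathbf{v}|^2/2$, radial integration yields
$$\int_{\bR}F(|\mathbf{v}|^2/2)\,{\rm d}\mathbf{v}=4\sqrt{2}\pi\int_0^{\infty}F(x)\sqrt{x}\,{\rm d}x,$$
and the $\sqrt{x}$ factor cancels the $1/\sqrt{x}$ in the right-hand side, producing exactly the constant $\sqrt{2}$ claimed and the desired triple integral over $(x,y,z)$. This proves the first equality of (\ref{5.4new}) for $\Psi\in C_c(\mR_{\ge 0}^3)$. The second equality follows identically from the second formula in (\ref{5.4}) of Lemma \ref{lemma5.1}, or alternatively by exchanging $y\leftrightarrow z$ in the first equality and applying (\ref{5.7}) of Lemma \ref{lemma5.3}.

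The remaining step is to pass from $\Psi\in C_c(\mR_{\ge 0}^3)$ to an arbitrary nonnegative Lebesgue measurable $\Psi$. Both sides of (\ref{5.4new}) are linear and monotone in $\Psi$, so it suffices by the monotone convergence theorem to handle $\Psi=\mathbf{1}_{A}$ for a bounded Borel set $A\subset\mR_{\ge 0}^3$, and then to extend to arbitrary nonnegative Borel $\Psi$ in the standard way (simple functions $\uparrow\Psi$). For an open bounded set $A$ one can write $\mathbf{1}_{A}$ as a pointwise increasing limit of nonnegative $C_c$ functions and apply monotone convergence; the general Borel case follows by the $\pi$--$\lambda$ theorem (or outer/inner regularity of Lebesgue measure). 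Finally, nonnegative Lebesgue measurable $\Psi$ differ from Borel measurable ones only on Lebesgue null sets, which contribute nothing on either side because the reference measure on the right is ${\rm d}x\,{\rm d}y\,{\rm d}z$ and on the left it is the usual product measure on $\bRRS$ composed with the map $(\mathbf{v},\mathbf{v}_*,\og)\mapsto(|\mathbf{v}|^2/2,|\mathbf{v}'|^2/2,|\mathbf{v}_*'|^2/2)$.

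The only technical obstacle I anticipate is the last point: namely, verifying that the pullback under this collision map of a Lebesgue null set in $\mR_{\ge 0}^3$ has measure zero in $\bRRS$. This is essentially built into the derivation of Lemma \ref{lemma5.1} --- the Carleman representation (\ref{5.1}) and the subsequent change of variables used there express the collisional integration with the Jacobian that produces precisely the ${\rm d}x\,{\rm d}y\,{\rm d}z$ measure plus a bounded angular measure, so null sets in $(x,y,z)$ do pull back to null sets. Once this is recorded, the monotone convergence extension is routine and completes the proof.
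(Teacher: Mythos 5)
Your proof is correct, and it uses the same core ingredients as the paper (Lemma \ref{lemma5.1} for the per-${\bf v}$ identity, radial integration in ${\bf v}$ producing the constant $\sqrt{2}$, and Lemma \ref{lemma5.3} for the second equality), but the extension from nice integrands to arbitrary nonnegative Lebesgue measurable $\Psi$ goes by a genuinely different route. The paper proves the identity for nonnegative \emph{continuous} $\Psi$, then treats bounded, compactly supported measurable $\Psi$ via Lusin's theorem combined with Fatou's lemma (applying the resulting one-sided inequality to $|\Psi-\Psi_n|$ to upgrade it to an equality), and finishes by monotone truncation; you instead start from $C_c({\mR}_{\ge 0}^3)$, pass to indicators of bounded (relatively) open sets by monotone convergence, and then identify the two sides as Borel measures --- the pushforward of the collisional measure versus $\varrho\,{\rm d}x\,{\rm d}y\,{\rm d}z$ --- which agree on a generating $\pi$-system of sets of finite measure, so they coincide on all Borel sets; simple functions and monotone convergence then give all nonnegative Borel $\Psi$. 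This is arguably cleaner, since it avoids Lusin/Fatou entirely, and the required local finiteness is supplied by the bound $\varrho(x,y,z)\le C\,{\bf 1}_{\{y+z>x\}}\min\{\sqrt{x},\sqrt{x_*},\sqrt{y},\sqrt{z}\}$ already used in the paper. The one place where your write-up is thin is precisely the point the paper must also address: why a Lebesgue null set $Z\subset{\mR}_{\ge 0}^3$ pulls back under $({\bf v},{\bf v}_*,\og)\mapsto(|{\bf v}|^2/2,|{\bf v}'|^2/2,|{\bf v}_*'|^2/2)$ to a null set for the measure $\fr{|({\bf v}-{\bf v}_*)\cdot\og|}{(4\pi)^2}\Phi\,{\rm d}\og\,{\rm d}{\bf v}_*\,{\rm d}{\bf v}$; this is needed both for the equality and for measurability of the composed integrand with respect to the completed measure. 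Appealing to ``the Jacobian inside the proof of Lemma \ref{lemma5.1}'' is shakier than necessary, because that lemma is stated only for continuous integrands; within your own scheme the fact is immediate: once the identity holds for all nonnegative Borel $\Psi$, apply it to ${\bf 1}_{Z'}$ for a Borel null set $Z'\supset Z$ and note that $\varrho\,{\rm d}x\,{\rm d}y\,{\rm d}z$ is absolutely continuous with respect to Lebesgue measure, so the left-hand side vanishes and the pullback of $Z'$ is null. With that observation recorded, your reduction of the Lebesgue-measurable case to the Borel case is complete, and the whole argument stands.
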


\begin{proof}  We need only prove the first equality sign in (\ref{5.4new}) since, by Lemma \ref{lemma5.3}, the second equality  holds true. [In fact, using the same proof below one sees that
the first line in (\ref{5.4new}) also equals to the third line.]
For notation convenience we define the measure $\mu$ on ${\bRS}$ by
\beas
{\rm d}\mu({\bf v}_*,\og)= \fr{|({\bf v}-{\bf v}_*)\cdot\omega|}{(4\pi)^2}
\Phi\big(|{\bf v}-{\bf v}'|, |{\bf v}-{\bf v}_*'|){\rm d}\og{\rm d}{\bf v}_*\eeas
and let
\beas\varrho(x,y,z)=\sqrt{2}\cdot1_{\{y+z>x\}}
\int_{|\sqrt{x}-\sqrt{y}|\vee |\sqrt{x_*}-\sqrt{z}|}^{(\sqrt{x}+\sqrt{y})\wedge (\sqrt{x_*}+\sqrt{z})}
\int_{0}^{2\pi}\Phi\big(\sqrt{2} s, \sqrt{2} Y_*\big){\rm d}\theta {\rm d}s,\quad (x,y,z)\in{\mR}_{\ge 0}^3.\eeas
Then the first equality in (\ref{5.4new}) is written
\be\int_{{\bRRS}}\Psi(|{\bf v}|^2/2,
 |{\bf v}'|^2/2, |{\bf v}_*'|^2/2\big){\rm d}\mu({\bf v}_*,\og){\rm d}{\bf v}
 = \int_{{\mR}_{\ge 0}^3}\Psi(x,y,z)\varrho(x,y,z){\rm d}x{\rm d}y{\rm d}z.\lb{6.new2}
\ee
From (\ref{1.difference}) we have
\be 0\le \varrho(x,y,z)\le\sqrt{2}\cdot 4\pi\|\Phi\|_{\infty} 1_{\{y+z>x\}}\min\{\sqrt{x}, \sqrt{x_*}, \sqrt{y},\sqrt{z}\,\}\quad \forall\, (x,y,z)\in{\mR}_{\ge 0}^3.\lb{6.new1}\ee
Then it is easily proved that $\varrho$ is continuous on
${\mR}_{\ge 0}^3$. But in the present proof, we need only the inequality (\ref{6.new1}) and the fact that
 $\varrho$ is Lebesgue measurable on
${\mR}_{\ge 0}^3$.
Now let us prove (\ref{6.new2}).

 {\bf Step1.} We prove that if $0\le \Psi\in C({\mR}_{\ge 0}^3)$, then (\ref{6.new2})
 holds true. In fact, starting from the left hand side of (\ref{6.new2}), using Fubini theorem and changing variable ${\bf v}=\sqrt{2x}\,\wt{\og}, \wt{\og}\in{\bS}$,
 and then applying Lemma \ref{lemma5.1} to the nonnegative
 continuous function $(x_1,x_2,x_3,x_4,x_5)\mapsto
 \Phi(x_1,x_2)\Psi(x_3,x_4,x_5)$, one obtains the equality
(\ref{6.new2}).

{\bf Step2.} Suppose that $\Psi$ is Lebesgue measurable on ${\mR}_{\ge 0}^3$ satisfying
$$0\le \Psi(x,y,z)<M\quad {\rm and}\quad 0\le\Psi(x,y,z)\le M 1_{[0,R]^3}(x,y,z)\qquad \forall\, (x,y,z)\in {\mR}_{\ge 0}^3$$
for some constants $0<M,R<\infty$. In this case we prove that (\ref{6.new2}) holds true.

In fact for any $n\in {\mN}$, applying Lusin's theorem (see e.g. a  proof in
\cite{Rudin})
there exists a function $\Psi_n\in C({\bR})$ satisfying
$$0\le \Psi_n(x,y,z)<M\quad \forall\,(x,y,z)\in {\bR};\quad
{\rm supp} \Psi_n\subset [-1/n, R+1/n]^3$$
such that
$$mes(\{(x,y,z)\in [0,R]^3\,\,|\,\,\Psi(x,y,z)\neq \Psi_n(x,y,z)\})<\fr{1}{n}.$$
Then it is easily seen that
$\Psi_n\to \Psi \,(n\to\infty)$ in $L^1({\mR}_{\ge 0}^3)$.
So there is a subsequence
$\{\Psi_{n_k}\}_{k=1}^{\infty}$ and a null set $Z\subset {\mR}_{\ge 0}^3$ such that
$\Psi_{n_k}(x,y,z)\to \Psi(x,y,z)\,(k\to\infty)$ for all $(x,y,z)\in{\mR}_{\ge 0}
\setminus Z$. Note that from the bound in (\ref{6.new1}) we also have the weighted strong convergence:
$\Psi_n\to \Psi \,(n\to\infty)$ in $L^1({\mR}_{\ge 0}^3, \varrho(x,y,z){\rm d}x{\rm d}y{\rm d}z)$.
It is easily proved that the set
$$\wt{Z}=\{({\bf v},{\bf v}_*,\og)\in{\bRRS}\,|\, (|{\bf v}|^2/2,|{\bf v}'|^2/2,
|{\bf v}_*'|^2/2)\in Z\}$$
has measure zero with the measure ${\rm d}\mu({\bf v}_*,\og){\rm d}{\bf v}$ (here as usual we
may assume that the product measure ${\rm d}\mu({\bf v}_*,\og){\rm d}{\bf v}$ is a complete measure).
Thus  $\Psi_{n_k}(|{\bf v}|^2/2,
 |{\bf v}'|^2/2, |{\bf v}_*'|^2/2\big)\\
\to \Psi(|{\bf v}|^2/2,
 |{\bf v}'|^2/2, |{\bf v}_*'|^2/2\big) \,(k\to\infty)$ for all
 $({\bf v},{\bf v}_*,\og)\in{\bRRS}\setminus \wt{Z}$.  Since, by {\bf Step1}, $\Psi_{n_k}$ satisfy (\ref{6.new2}), it follows from Fatou's lemma that
 \be\int_{{\bRRS}}\Psi(|{\bf v}|^2/2,
 |{\bf v}'|^2/2, |{\bf v}_*'|^2/2\big){\rm d}\mu({\bf v}_*,\og){\rm d}{\bf v}
\le \int_{{\mR}_{\ge 0}^3}\Psi(x,y,z)\varrho(x,y,z){\rm d}x{\rm d}y{\rm d}z.\lb{6.new3}
\ee
Since $|\Psi-\Psi_n|$ have the same properties as
$\Psi$ (just by replacing $R$ with $R+1$), the inequality (\ref{6.new3})
holds also for $|\Psi-\Psi_n|$ and thus $\Psi_n\to \Psi\,(n\to\infty)$ in
$L^1({\bRRS},{\rm d}\mu({\bf v}_*,\og){\rm d}{\bf v})$. We then conclude that $\Psi$ satisfies the equality (\ref{6.new2}).

{\bf Step3.}  Let $\Psi$ be given in the lemma and let
$$\Psi_n(x,y,z)=\Psi(x,y,z)1_{\{\Psi(x,y,z)<n\}}1_{[0, n]^3}(x,y,z),\quad
(x,y,z)\in {\mR}_{\ge 0}^3,\,\,n\in{\mN}.$$
Then $\Psi_n$ satisfy the conditions in {\bf Step2} with $M=R=n$.
So (\ref{6.new2}) holds for all $\Psi_n$. Also we have
$ 0\le \Psi_n\le \Psi_{n+1} $ and
$\lim\limits_{n\to\infty}\Psi_n(x,y,z)=\Psi(x,y,z)$ for all $(x,y,z)\in {\mR}_{\ge 0}^3.$
Thus taking the limit $n\to\infty$ to (\ref{6.new2}) for $\Psi_n$
we conclude from Levi's monotone convergence theorem
that (\ref{6.new2}) holds true for $\Psi$.
\end{proof}
\vskip2mm

\noindent {\bf 6.2. Equivalence of Solutions.}
In \cite{Lu2004} the measure-valued isotropic solution is defined
 for the measure $\bar{F}\in {\cal B}_2^{+}({\mR}_{\ge 0})$ whose special case is ${\rm d}\bar{F}(r)=4\pi r^2f(r^2/2){\rm d}r$, and the corresponding integrands $J_{B}[\vp](r,r_*), K_{B}[\vp](r,r', r_*')$
for quadratic and cubic collision integrals are defined as follows:  denote
$$B(|{\bf v-v}_*|,\cos(\theta))=\fr{|{\bf v-v}_*|\cos(\theta)}{(4\pi)^2}\Phi(|{\bf v- v}_*|\cos(\theta), |{\bf v-v}_*|\sin(\theta))$$
$\theta=\arccos(|({\bf v-v}_*)\cdot\omega|/|{\bf {\bf v-v}_*}|)$, and
for any $\vp\in C_b^2({\mathbb R}_{\ge 0})$, let
\beas J_{B}[\vp](r,r_*) &=& \fr{2}{(4\pi)^2}
\int_{{\mS}^2\times{\mS}^2}{\rm d}\sg{\rm d}\sg_*\int_{0}^{\pi/2}B(|{\bf v-v}_*|,\cos(\theta))
\sin(\theta){\rm d}\theta\\
&\times& \int_{0}^{2\pi}\big(\vp(|{\bf v}'|^2/2)-\vp(|{\bf v}|^2/2)\big){\rm d}\vartheta\Big|_{{\bf v}=r\sg, {\bf v}_*=r_*\sg_*},\eeas
\beas&& K_{B}[\vp](r,r', r_*')=\fr{4}{(4\pi)^3}\int_{{\mS}^2\times{\mS}^2}
{\rm d}\sg{\rm d}\sg'\int_{0}^{2\pi}1_{\{r_*'>r|\sg\cdot\xi|\}}
\fr{\Dt\vp(r^2/2,{r'}^2/2,{r_*'}^2/2)}{X r_*'+Yr'}\Phi(X,Y){\rm d}\vartheta\\
&&{\rm if}\quad (r'-r)(r_*'-r)\neq 0, \\
&& K_{B}[\vp](r,r', r_*')=0\quad {\rm if}\quad (r'-r)(r_*'-r)=0  \eeas
where $({\bf v}',{\bf v}'_*)$ is given by the $\og$-representation (\ref{colli}),
\bes&&
|{\bf v}'|^2=r^2\sin^2(\theta)+r_*^2\cos^2(\theta)
-2rr_*\sin(\theta)\cos(\theta)
\sqrt{1-\la \sg,\sg_*\ra}\,\cos(\vartheta),\nonumber\\
&&|{\bf v}_*'|^2=r^2\cos^2(\theta)+r_*^2\sin^2(\theta)
+2rr_*\sin(\theta)\cos(\theta)
\sqrt{1-\la \sg,\sg_*\ra}\,\cos(\vartheta),\nonumber\\
&&\Dt\vp(r^2/2,{r'}^2/2,{r_*'}^2/2)=\vp(r^2/2)+\vp(r_*^2/2)-\vp({r'}^2/2)-\vp({r_*'}^2/2),
\nonumber\\
&& r_*=\sqrt{({r'}^2+{r_*'}^2-r^2)_{+}},\nonumber\\
&&
X=|r\sg-r'\sg'|,\quad Y=\Big|\sqrt{{r_*'}^2-|\la r\sg,\xi\ra|^2}+e^{{\rm i}\vartheta}
\sqrt{r^2-\la r\sg,\xi\ra^2}\Big|,\quad r_*'>|r\sg\cdot\xi|,\qquad \lb{5.14}\\
&&\xi=\fr{r\sg-r'\sg'}{|r\sg-r'\sg'|}\,\,\,{\rm if}\,\,\, r\sg\neq r'\sg';\quad
\xi=\sg\,\,\,{\rm if}\,\,\, r\sg=r'\sg'.\qquad  \dnumber\lb{5.15}\ees
Note that $r_*'>|r\sg\cdot\xi|$ implies ${r'}^2+{r_*'}^2>r^2$, and
using (\ref{diff}) we have $$|\Dt\vp(r^2/2,{r'}^2/2,{r_*'}^2/2)|\le \fr{1}{4}\|\vp''\|_{\infty}
|({r'}^2-r^2)({r_*'}^2-r^2)|$$ from which
one sees that if $(r'-r)(r_*'-r)\neq 0$ and
$r_*'>r|\sg\cdot\xi|$ then $X\ge |r'-r|>0, Y\ge |r_*'-r|>0$ and so
$K_{B}[\vp](r,r', r_*')$ is well defined on ${\mR}_{\ge 0}^3$. Also it has been proven in \cite{Lu2004} that $J_{B}[\vp], K_{B}[\vp]$ are continuous on ${\mR}_{\ge 0}^2, {\mR}_{\ge 0}^3$ respectively.

\begin{definition}\label{definition5.1}(\cite{Lu2004}) Let $B({\bf {\bf v-v}_*},\omega)$ be given by (\ref{kernel}), (\ref{Phi}). Let $\bar{F}_0\in {\cal B}_{2}^{+}({\mathbb R}_{\ge 0})$. We say that a
family $\{\bar{F}_t\}_{t\ge 0}\subset {\cal B}_{2}^{+}({\mathbb R}_{\ge 0})$, or simply $\bar{F}_t$, is a conservative  measure-valued isotropic solution of Eq.(\ref{Equation1}) on the time-interval $[0, \infty)$ with the initial datum $\bar{F}_t|_{t=0}=\bar{F}_0$ if

{\rm (i)} $\int_{{\mR}_{\ge 0}}(1, r^2){\rm d}\bar{F}_t(r)=
\int_{{\mR}_{\ge 0}}(1, r^2){\rm d}\bar{F}_0(r)$
 for all $t\in [0, \infty)$,

{\rm (ii)} for every $\varphi\in C^{2}_b({\mathbb R}_{\ge 0})$,  $t\mapsto \int_{{\mathbb R}_{\ge 0}}\varphi(r^2/2){\rm d}\bar{F}_t(r)$ belongs to
$C^1([0, \infty))$,

{\rm (iii)} for every $\varphi\in C^{2}_b({\mathbb R}_{\ge 0})$ and $t\in [0,\infty)$,
\beas&&\frac{{\rm d}}{{\rm d}t}\int_{{\mathbb R}_{\ge 0}}\varphi(r^2/2){\rm
d}\bar{F}_t(r)\nonumber\\
&&= \int_{{\mathbb R}_{\ge 0}^2}J_{B}[\varphi](r,r_*){\rm d}\bar{F}_t(r)
{\rm d}\bar{F}_t(r_*)+ \int_{{\mathbb R}_{\ge 0}^3}K_{B}[\varphi](r,r',r_*')
{\rm d}\bar{F}_t(r){\rm d}\bar{F}_t(r'){\rm d}\bar{F}_t(r_*').\eeas
\end{definition}

It has been proven in \cite{Lu2004} that for any $\bar{F}_0\in {\cal B}_{2}^{+}({\mathbb R}_{\ge 0})$,
the Eq.(\ref{Equation1}) with the initial datum $\bar{F}|_{t=0}=\bar{F}_0$ has a conservative  measure-valued isotropic solution on $[0,\infty)$
in the sense of Definition\ref{definition5.1}.

Let $F\in {\cal B}_1^{+}({\mathbb R}_{\ge 0}), \bar{F}\in {\cal B}_2^{+}({\mathbb R}_{\ge 0})$
be defined from each other by
\be F(A)=\fr{1}{4\pi\sqrt{2}}\int_{{\mathbb R}_{\ge 0}}{\bf 1}_{A}(r^2/2){\rm d}\bar{F}(r),\quad
\bar{F}(A)=4\pi\sqrt{2}\int_{{\mathbb R}_{\ge 0}}
{\bf 1}_{A}(\sqrt{2x}){\rm d}F(x)\lb{5.FF}\ee for all Borel sets $A\subset {\mR}_{\ge 0}$.
It is easily seen that if $F\in {\cal B}_1^{+}({\mathbb R}_{\ge 0}), \bar{F}\in {\cal B}_2^{+}({\mathbb R}_{\ge 0})$ are given through one of the equalities in (\ref{5.FF}), then
\be 4\pi\sqrt{2}\int_{{\mathbb R}_{\ge 0}}\vp(x){\rm
d}F(x)=\int_{{\mathbb R}_{\ge 0}}\vp(r^2/2){\rm d}\bar{F}(r)
\lb{5.20}\ee
for all Borel measurable functions $\vp$ on ${\mR}_{\ge 0}$ satisfying
$\sup\limits_{x\ge
0}(1+x)^{-1}|\vp(x)|<\infty$.
In the special case where $F$ and $\bar{F}$ are given by ${\rm d}F(x)= f(x)\sqrt{x}\,{\rm d}x$,
${\rm d}\bar{F}(r)=4\pi f(r^2/2) r^2{\rm d} r$,  the above relation is just the change of variable.

The following lemma and proposition show that the  Definition \ref{definition1.1} and
Definition \ref{definition5.1} are equivalent. This also ensures the existence of solutions in the sense of
Definition \ref{definition1.1}.

\begin{lemma}\lb{lemma5.6}  Let $B({\bf {\bf v-v}_*},\omega)$ be given by (\ref{kernel}), (\ref{Phi}), and let $\bar{F}\in {\cal B}_{2}^{+}({\mathbb R}_{\ge 0}), F\in {\cal B}_{1}^{+}({\mR}_{\ge 0})$
be given through one of the equalities in (\ref{5.FF}). Then for any
$\vp\in C_b^2({\mR}_{\ge 0})$ we have
\be\int_{{\mR}_{\ge 0}^2}J_{B}[\vp](r,r_*){\rm d}\bar{F}(r){\rm d}\bar{F}(r_*)
=4\pi\sqrt{2}\int_{{\mR}_{\ge 0}^2}{\cal J}[\vp](y,z)
{\rm d}F(y){\rm d}F(z), \lb{5.JJ}\ee
\be \int_{{\mR}_{\ge 0}^3}K_{B}[\vp](r,r',r_*'){\rm d}\bar{F}(r)
{\rm d}\bar{F}(r'){\rm d}\bar{F}(r_*')
=4\pi\sqrt{2}
\int_{{\mR}_{\ge 0}^3}{\cal K}[\vp](x,y,z)
{\rm d}F(x){\rm d}F(y){\rm d}F(z).\lb{5.KK}\ee
\end{lemma}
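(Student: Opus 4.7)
The plan is to first prove both identities in the special case where $F$ admits a smooth, compactly supported density $f$, so that $dF(x)=f(x)\sqrt{x}\,dx$ and $d\bar F(r)=4\pi r^{2}f(r^{2}/2)\,dr$, and then extend to general $F\in{\cal B}_{1}^{+}({\mR}_{\ge 0})$ by approximation. For the extension, I would use a Mehler-type regularization as in Lemma \ref{lemma.entropy} to produce a sequence of densities with matching mass and energy converging weakly to $\bar F$; both sides of (\ref{5.JJ}) and (\ref{5.KK}) then depend continuously on $F$ under weak convergence, since $J_{B}[\vp]$ and $K_{B}[\vp]$ are continuous on ${\mR}_{\ge 0}^{2}$ and ${\mR}_{\ge 0}^{3}$ by \cite{Lu2004}, while ${\cal J}[\vp]$ and ${\cal K}[\vp]$ satisfy the growth bounds of Remark \ref{remark}(2). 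In the density case, the task is to show that both sides of each identity coincide with the same 3D collision integral that the paper's introduction has already identified with $4\pi\sqrt{2}\int{\cal J}[\vp]\,d^{2}F$ and $4\pi\sqrt{2}\int{\cal K}[\vp]\,d^{3}F$ respectively.

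For the quadratic identity (\ref{5.JJ}) in the density case, I would substitute $d\bar F(r)=4\pi r^{2}f(r^{2}/2)\,dr$ into the LHS and fold the $\sg,\sg_{*}$ integrals hidden inside $J_{B}[\vp]$ together with the outer $d\bar F(r)d\bar F(r_{*})$ to produce the 3D form $\int ff_{*}\int_{\bS}B(\vp'-\vp)\,d\og\,d{\bf v}\,d{\bf v}_{*}$, using $\og\to -\og$ symmetry to extend the $\og$-integration from the hemisphere to the full $\bS$. The standard symmetrizations ${\bf v}\leftrightarrow{\bf v}_{*}$ and $({\bf v},{\bf v}_{*})\leftrightarrow({\bf v}',{\bf v}'_{*})$ then rewrite this as $\fr{1}{2}\int(\vp+\vp_{*}-\vp'-\vp'_{*})Bf'f'_{*}\,d{\bf v}\,d{\bf v}_{*}\,d\og$, which was already shown in the paper's introduction (via Lemma \ref{lemma5.2}) to equal $4\pi\sqrt{2}\int{\cal J}[\vp]\,dF\,dF$.

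For the cubic identity (\ref{5.KK}), Carleman's representation (Lemma \ref{lemma5.1}) converts the 3D cubic collision integral $\int\Dt\vp\cdot Bff'f'_{*}\,d{\bf v}\,d{\bf v}_{*}\,d\og$ to $\fr{2}{(4\pi)^{2}}\int d{\bf v}\,f(|{\bf v}|^{2}/2)\int\fr{d{\bf x}}{|{\bf x}|}\int_{{\mR}^{2}({\bf x})}\Phi f'f'_{*}\Dt\vp\,d{\bf y}$, with ${\bf v}'={\bf v}-{\bf x}$ and ${\bf v}'_{*}={\bf v}-{\bf y}$. Parametrizing ${\bf v}=r\sg$ and ${\bf v}'=r'\sg'$, using (\ref{5.3}) to convert the ${\bf y}$-integral into $r'_{*}dr'_{*}\,d\vartheta$ with indicator ${\bf 1}_{\{r'_{*}>r|\sg\cdot\xi|\}}$, and collecting the Jacobian factors into $d\bar F(r)d\bar F(r')d\bar F(r'_{*})$ produces one representation of the integrand with the asymmetric denominator $r'_{*}X$; swapping the roles of ${\bf x}$ and ${\bf y}$ via the identity (\ref{5.2}) yields a dual representation with denominator $r'Y$. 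The precise weighted identity (\ref{5.8}) of Lemma \ref{lemma5.3} then certifies that combining the two with weights $Xr'_{*}/(Xr'_{*}+Yr')$ and $Yr'/(Xr'_{*}+Yr')$ collapses these asymmetric denominators into the symmetric $Xr'_{*}+Yr'$ of $K_{B}[\vp]$; tracking constants then delivers the leading factor $4/(4\pi)^{3}$, completing the identification with the RHS of (\ref{5.KK}). The main obstacle is precisely this algebraic reconciliation of Carleman's intrinsic ${\bf v}'\leftrightarrow{\bf v}'_{*}$ asymmetry with the symmetric denominator $Xr'_{*}+Yr'$ appearing in $K_{B}[\vp]$, which rests critically on identity (\ref{5.8}); the quadratic case and the density-to-measure extension are comparatively routine.
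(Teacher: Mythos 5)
Your proposal is essentially correct in substance and rests on the same computational core as the paper's proof --- Carleman's representation (\ref{5.1}), the duality (\ref{5.2})--(\ref{5.3}), and above all the weighted identity (\ref{5.8}) of Lemma \ref{lemma5.3}, which you rightly single out as the mechanism that reconciles the asymmetric Carleman denominators with the symmetric $Xr_*'+Yr'$ in $K_{B}[\vp]$ --- but it is organized quite differently. The paper never passes through densities: it proves pointwise kernel identities, namely (\ref{5.22}) for the quadratic part (via Lemma \ref{lemma5.2}) and an explicit change-of-variables evaluation of $K_{B}[\vp](r,r',r_*')$ for the cubic part, and then integrates these identities directly against the arbitrary product measures ${\rm d}\bar F^{\otimes2}$, ${\rm d}\bar F^{\otimes3}$, symmetrizing $y\leftrightarrow z$ under the integral and treating the degenerate regions $r=0$ and $r'=0$ (the sets ${\cal R}_2,{\cal R}_3$, where the boundary formulas (\ref{W2}) enter) separately; this is exactly where the atoms of the measures, in particular a condensate at the origin, are handled. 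Your route --- prove the identities for smooth compactly supported densities by showing both sides equal the three-dimensional collision integrals, whose identification with $4\pi\sqrt{2}\int{\cal J}[\vp]\,{\rm d}^2F$ and $4\pi\sqrt{2}\int{\cal K}[\vp]\,{\rm d}^3F$ rests on Lemma \ref{lemma5.2} and Lemma \ref{lemma5.new1} and is independent of the present lemma, so no circularity arises --- then pass to general measures by Mehler regularization, buys a cleaner Fubini and change-of-variables setting and lets the boundary cases be absorbed by the limit, at the price of an extra approximation layer.

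The one step you must strengthen is that approximation layer. Weak convergence of the Mehler approximants against $C_b$ test functions does not by itself give convergence of $\int J_{B}[\vp]\,{\rm d}\bar F_n{\rm d}\bar F_n$, because $J_{B}[\vp](r,r_*)$ grows linearly in $r+r_*$ (the kernel is hard-sphere-like); continuity of $J_{B}[\vp]$ and $K_{B}[\vp]$, which is all you invoke, is not sufficient for unbounded integrands. What saves you is that the Mehler approximants conserve mass and energy, so the second moments of $\bar F_n$ are uniformly bounded; a truncation argument (splitting at $r+r_*\ge R$ and bounding the tail by $R^{-1}$ times the uniformly bounded energy) then closes the quadratic term, and for the cubic term you need in addition a polynomial growth bound on $K_{B}[\vp]$, obtainable from $X\ge|r'-r|$, $Y\ge|r_*'-r|$ and $|\Dt\vp|\le\fr14\|\vp''\|_{\infty}|({r'}^2-r^2)({r_*'}^2-r^2)|$ recalled before Definition \ref{definition5.1} (or from \cite{Lu2004}), while the right-hand sides are controlled by the bounds of Remark \ref{remark}(2). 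With these uniform estimates made explicit, your argument goes through and yields the same conclusion as the paper's direct measure-level computation.
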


\begin{proposition}\lb{prop.5.7} Let $F_t\in {\cal B}^{+}_1({\mathbb R}_{\ge 0})$,
$\bar{F}_t\in{\cal B}^{+}_2({\mathbb R}_{\ge 0})$ satisfy one of the
equalities in (\ref{5.FF}) for every $t\in[0,\infty)$.
Then $F_t$ is a conservative  measure-valued isotropic solution of Eq.(\ref{Equation1})
 in the sense of Definition \ref{definition1.1}, if and only if
$\bar{F}_t$ is  a conservative  measure-valued isotropic solution of Eq.(\ref{Equation1})
 in the sense of Definition \ref{definition5.1}.
\end{proposition}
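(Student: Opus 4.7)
\textbf{Proof proposal for Proposition \ref{prop.5.7}.}  The plan is to verify the three defining conditions (i)--(iii) term by term, since all the substantive work has already been packaged into the change-of-variable identity (\ref{5.20}) and into Lemma \ref{lemma5.6}. Because the correspondence $F\leftrightarrow \bar{F}$ in (\ref{5.FF}) is an involution, it suffices to prove one direction; the converse will follow by reading the argument backwards.

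First I would dispatch conditions (i) and (ii). Applying (\ref{5.20}) to $\vp\equiv 1$ gives $4\pi\sqrt{2}\,N(F_t)=\bar{F}_t({\mR}_{\ge 0})$; applying it to $\vp(x)=x$ gives $4\pi\sqrt{2}\,E(F_t)=\fr{1}{2}\int_{{\mR}_{\ge 0}}r^2{\rm d}\bar{F}_t(r)$. Hence (i) in Definition \ref{definition1.1} is equivalent to (i) in Definition \ref{definition5.1}. For (ii), note that both definitions use the same test-function class $C^2_b({\mR}_{\ge 0})$, only parametrised either through $\vp(x)$ (in the $F_t$ formulation) or through $\vp(r^2/2)$ (in the $\bar{F}_t$ formulation). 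Since (\ref{5.20}) gives
\[
\int_{{\mR}_{\ge 0}}\vp(x){\rm d}F_t(x)=\fr{1}{4\pi\sqrt{2}}\int_{{\mR}_{\ge 0}}\vp(r^2/2){\rm d}\bar{F}_t(r)\qquad \forall\,\vp\in C^2_b({\mR}_{\ge 0}),
\]
the two maps $t\mapsto \int\vp\,{\rm d}F_t$ and $t\mapsto \int\vp(r^2/2)\,{\rm d}\bar{F}_t$ differ by the fixed constant $4\pi\sqrt{2}$, so one is $C^1([0,\infty))$ iff the other is.

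Next I would handle condition (iii). Take any $\vp\in C^2_b({\mR}_{\ge 0})$. Differentiating the identity above in $t$ yields
\[
\fr{{\rm d}}{{\rm d}t}\int_{{\mR}_{\ge 0}}\vp(r^2/2){\rm d}\bar{F}_t(r)
=4\pi\sqrt{2}\,\fr{{\rm d}}{{\rm d}t}\int_{{\mR}_{\ge 0}}\vp(x){\rm d}F_t(x).
\]
Assuming $F_t$ satisfies (\ref{Equation3}), the right-hand side equals
\[
4\pi\sqrt{2}\int_{{\mR}_{\ge 0}^2}{\cal J}[\vp]\,{\rm d}^2F_t+4\pi\sqrt{2}\int_{{\mR}_{\ge 0}^3}{\cal K}[\vp]\,{\rm d}^3F_t,
\]
which by Lemma \ref{lemma5.6} (identities (\ref{5.JJ}) and (\ref{5.KK})) equals exactly
\[
\int_{{\mR}_{\ge 0}^2}J_B[\vp](r,r_*){\rm d}\bar{F}_t(r){\rm d}\bar{F}_t(r_*)+\int_{{\mR}_{\ge 0}^3}K_B[\vp](r,r',r_*'){\rm d}\bar{F}_t(r){\rm d}\bar{F}_t(r'){\rm d}\bar{F}_t(r_*').
\]
This is the evolution equation required by Definition \ref{definition5.1}. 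Reversing the chain of equalities (every identity used is an equality, not merely an inequality) gives the converse implication.

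The only conceivable obstacle is verifying that the integrability required to justify Lemma \ref{lemma5.6} is automatically provided by the hypotheses — i.e. that the double and triple integrals in (\ref{5.JJ}) and (\ref{5.KK}) are finite whenever $F_t\in {\cal B}_1^+({\mR}_{\ge 0})$ (equivalently $\bar{F}_t\in {\cal B}_2^+({\mR}_{\ge 0})$) and $\vp\in C_b^2({\mR}_{\ge 0})$. This follows because Remark \ref{remark} (2) ensures that $(1+\sqrt{y}+\sqrt{z})^{-1}{\cal J}[\vp]$ and ${\cal K}[\vp]$ are bounded continuous on their respective domains, and $\int(1+\sqrt{y}+\sqrt{z})\,{\rm d}F_t(y){\rm d}F_t(z)<\infty$ by Cauchy--Schwarz from $F_t\in {\cal B}_1^+({\mR}_{\ge 0})$. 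With this integrability check in hand, the argument above is entirely a bookkeeping exercise, and the proposition follows.
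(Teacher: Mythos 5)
Your argument is correct and matches the paper's route: the paper likewise disposes of Proposition \ref{prop.5.7} in one line, observing that it follows from the moment identity (\ref{5.20}) together with the collision-integral identities (\ref{5.JJ}), (\ref{5.KK}) of Lemma \ref{lemma5.6}, devoting all its effort to proving that lemma. Your explicit verification of conditions (i)--(iii) and the integrability remark are just the bookkeeping the paper leaves implicit.
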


{\bf Proof of Lemma \ref{lemma5.6} and Proposition \ref{prop.5.7}}.
From (\ref{5.20}) we see that Proposition \ref{prop.5.7} follows easily from
Lemma \ref{lemma5.6}.  So we need only prove  Lemma \ref{lemma5.6}.

We first prove that for any $\vp\in C_c^2({\mR})$ and any $(r,r_*)\in {\mR}_{\ge 0}^2$
\bes&& \int_{{\mS}^2\times {\mS}^2}{\rm d}\sg{\rm d}\sg_*\int_{{\mS}^2}
\fr{|({\bf v}-{\bf v}_*)\cdot\omega|}{(4\pi)^2}\Phi(|{\bf v}-{\bf v}'|, |{\bf v}-{\bf v}_*'|)\nonumber\\
&&\times \Big(\vp(|{\bf v}'|^2/2)+\vp(|{\bf v}_*'|^2/2)-\vp(|{\bf v}|^2/2)-\vp(|{\bf v}_*|^2/2)\Big)
\Big|_{{\bf v}=r\sg, {\bf v}_*=r_*\sg_*}{\rm d}\og
\nonumber\\
&&=4\pi\sqrt{2}{\cal J}[\vp](r^2/2, r_*^2/2).\lb{5.22}
\ees
It is easily seen that the right hand side of (\ref{5.22})
is also continuous in $(r,r_*)\in{\mR}_{\ge 0}^2$. Thus
we need only prove (\ref{5.22}) for all $(r,r_*)\in {\mR}_{>0}^2$.
Let $(r,r_*)\in {\mR}_{>0}^2$. Using Lemma \ref{5.2} to $ y=r^2/2, z=r_*^2/2$ and recalling
definition of ${\cal J}[\vp]$ we have
\beas&& {\rm the \,\,l.h.s.\,\, of\,\, (\ref{5.22})}=\fr{1}{\sqrt{2yz}}
\int_{0}^{y+z}{\rm d}x
\int_{|\sqrt{y}-\sqrt{x}|\vee |\sqrt{x_*}-\sqrt{z}|}^{(\sqrt{y}+\sqrt{x})\wedge (\sqrt{z}+
\sqrt{x_*})}{\rm d}s
\int_{0}^{2\pi}\Phi(\sqrt{2}s, \sqrt{2}Y_*)
\\
&&\times \big(\vp(x_*)+\vp(x)-\vp(z)-
\vp(y)\big){\rm d}\theta
\Big|_{{\bf v}=\sqrt{2z}\,\sg, {\bf v}_*=\sqrt{2y}\,\sg_*}
\\
&&=4\pi\sqrt{2}{\cal J}[\vp](y,z)=4\pi\sqrt{2}
{\cal J}[\vp](r^2/2, r_*^2/2).\eeas
Now first making exchanges $r\leftrightarrow  r_*,\,\sg \leftrightarrow
\sg_*$ and using
$\int_{0}^{2\pi}g(\cos\vartheta){\rm d}\vartheta=\int_{0}^{2\pi}g(-\cos\vartheta){\rm d}\theta
$ and then using  (\ref{5.22}) we compute
\beas&&\int_{{\mR}_{\ge 0}^2}J_{B}[\vp](r,r_*){\rm d}\bar{F}(r){\rm d}\bar{F}(r_*)
\\
&&=\fr{1}{(4\pi)^2}\int_{{\mR}_{\ge 0}^2}
\int_{{\mS}^2\times{\mS}^2}{\rm d}\sg{\rm d}\sg_*\int_{0}^{\pi/2}
B(|{\bf v}-{\bf v}_*|,\cos(\theta))\sin(\theta){\rm d}\theta\\
&&\times \int_{0}^{2\pi}\Big(\vp(|{\bf v}'|^2/2)+\vp(|{\bf v}_*'|^2/2)-
\vp(|{\bf v}|^2/2)-\vp(|{\bf v}_*|^2/2)\Big){\rm d}\vartheta
\Big|_{{\bf v}=r\sg, {\bf v}_*=r_*\sg_*}{\rm d}\bar{F}(r){\rm d}\bar{F}(r_*)
\\
&&=\fr{1}{(4\pi)^2}\fr{1}{2}\int_{{\mR}_{\ge 0}^2}
\int_{{\mS}^2\times{\mS}^2}{\rm d}\sg{\rm d}\sg_*\int_{{\bS}}
\fr{|({\bf v}-{\bf v}_*)\cdot\og|}{(4\pi)^2}\Phi(|{\bf v}-{\bf v}'|, |{\bf v}-{\bf v}_*'|)
\\
&&\times \Big(\vp(|{\bf v}'|^2/2)+\vp(|{\bf v}_*'|^2/2)-
\vp(|{\bf v}|^2/2-\vp(|{\bf v}_*|^2/2)\Big){\rm d}\og
\Big|_{{\bf v}=r\sg, {\bf v}_*=r_*\sg_*}{\rm d}\bar{F}(r){\rm d}\bar{F}(r_*)
\\
&&=\fr{1}{(4\pi)^2}\fr{1}{2}4\pi\sqrt{2}\int_{{\mR}_{\ge 0}^2}{\cal J}[\vp](r^2/2, r_*^2/2)
{\rm d}\bar{F}(r){\rm d}\bar{F}(r_*)\\
&&
=4\pi\sqrt{2}\int_{{\mR}_{\ge 0}^2}{\cal J}[\vp](y, z)
{\rm d}F(y){\rm d}F(z).
\eeas
Next by definition of $K_{B}[\vp](r,r', r_*')$ and
its properties mentioned above we see that for any $r\ge 0, r'\ge 0, r_*'\ge 0$
satisfying $(r'-r)(r_*'-r)= 0$, or
 ${r'}^2+{r_*'}^2-r^2\le 0$, or $r_*'=0$,
 then $K_{B}[\vp](r,r', r_*')=0$.
So we need only consider the integration domain
$${\cal R}=\{(r,r',r_*')\in{\mR}_{\ge 0}\,\,|\,\,
(r'-r)(r_*'-r)\neq 0\,\,{\rm and}\,\,
{r'}^2+{r_*'}^2-r^2> 0\,\,{\rm and} \,\, r_*'>0\}.$$
Then we have
\bes&&\int_{{\mR}_{\ge 0}^3}K_{B}[\vp](r,r',r_*'){\rm d}\bar{F}(r)
{\rm d}\bar{F}(r'){\rm d}\bar{F}(r_*')
=\int_{{\cal R}}K_{B}[\vp](r,r',r_*'){\rm d}\bar{F}(r)
{\rm d}\bar{F}(r'){\rm d}\bar{F}(r_*')\nonumber
\\
&&=\Big(\int_{{\cal R}_1}+\int_{{\cal R}_2}+\int_{{\cal R}_3}\Big)
K_{B}[\vp](r,r',r_*'){\rm d}\bar{F}(r)
{\rm d}\bar{F}(r'){\rm d}\bar{F}(r_*'):= I_1+I_2+I_3\lb{5.24}\ees
where ${\cal R}_1={\cal R}\cap\{r>0, r'>0\}, {\cal R}_2={\cal R}\cap\{r=0, r'>0\}, {\cal R}_3=
{\cal R}\cap \{r>0,r'=0\}$.

For the integrand $K_{B}[\vp](r,r', r_*')$ in the first term $I_1$ we compute using change of variables
(e.g. $t=\fr{r^2+{r'}^2-s^2}{2rr'}$)  and
then letting $x=r^2/2, y={r'}^2/2, z={r_*'}^2/2$ (for $(r,r', r_*')\in {\cal R}_1$)
that
\beas&&K_{B}[\vp](r,r', r_*')\\
&&=\fr{2}{4\pi}
\int_{-1}^{1}{\rm d}t\int_{0}^{2\pi}1_{\{r_*'>\fr{|r^2-rr't|}
{\sqrt{r^2+{r'}^2-2rr't}}\}}
\fr{\Dt\vp(r^2/2,{r'}^2/2,{r_*'}^2/2)}{\sqrt{r^2+{r'}^2-2rr't}r_*'+Yr'}
\Phi(\sqrt{r^2+{r'}^2-2rr't},
Y){\rm d}\theta
\\
&&=\fr{2}{4\pi}
\int_{|r-r'|}^{r+r'}\fr{s}{rr'}{\rm d}s\int_{0}^{2\pi}1_{\{r_*'>\fr{|r^2-{r'}^2+s^2|}{2s}\}}
\fr{\Dt\vp(r^2/2,{r'}^2/2,{r_*'}^2/2)}{sr_*'+Yr'} \Phi(s, Y){\rm d}\theta\\
&&=\fr{1}{4\pi}\fr{\Dt\vp(x,y,z)}{\sqrt{xyz}}
\int_{|\sqrt{x}-\sqrt{y}|\vee |\sqrt{x_*}-\sqrt{z}|}^{(\sqrt{x}+\sqrt{y})
\wedge (\sqrt{x_*}+\sqrt{z})}{\rm d}s\int_{0}^{2\pi}\fr{s\sqrt{z}}{s\sqrt{z}+Y_*\sqrt{y}}
\Phi(\sqrt{2} s, \sqrt{2}Y_*){\rm d}\theta.
\eeas
Then with exchange $y \leftrightarrow z $ and recalling $\Phi(r,\rho)\equiv \Phi(\rho,r)$,
and using Lemma \ref{lemma5.3} we have
\beas&& I_1=
(4\pi\sqrt{2})^3
\int_{{\mR}_{\ge 0}^3} {\bf 1}_{{\cal R}_1}(2\sqrt{x},\sqrt{2y}, \sqrt{2z})K_{B}[\vp](2\sqrt{x},\sqrt{2y}, \sqrt{2z})
{\rm d}F(x)
{\rm d}F(y){\rm d}F(z)\\
&&=\fr{(4\pi\sqrt{2})^3}{4\pi}
\int_{{\mR}_{\ge 0}^3, (y-x)(z-x)\neq 0, x>0,y>0,z>0,y+z>x}
\fr{\Dt\vp(x,y,z)}{\sqrt{xyz}}\\
&&\qquad \qquad\times \Bigg\{
\int_{|\sqrt{x}-\sqrt{y}|\vee |\sqrt{x_*}-\sqrt{z}|}^{(\sqrt{x}+\sqrt{y})
\wedge (\sqrt{x_*}+\sqrt{z})}{\rm d}s\int_{0}^{2\pi}\fr{s\sqrt{z}}{s\sqrt{z}+Y_*\sqrt{y}}
\Phi(\sqrt{2} s, \sqrt{2}Y_*){\rm d}\theta
\\
&&\qquad \qquad+
\int_{|\sqrt{x}-\sqrt{z}|\vee |\sqrt{x_*}-\sqrt{y}|}^{(\sqrt{x}+\sqrt{z})
\wedge (\sqrt{x_*}+\sqrt{y})}{\rm d}s\int_{0}^{2\pi}\fr{s\sqrt{y}}{s\sqrt{y}+\wt{Y}_*\sqrt{z}}
\Phi(\sqrt{2} s, \sqrt{2}\wt{Y}_*){\rm d}\theta
\Bigg\}{\rm d}F(x){\rm d}F(y){\rm d}F(z)
\\
&&=4\pi\sqrt{2}
\int_{{\mR}_{\ge 0}^3, (y-x)(z-x)\neq 0, x>0,y>0,z>0,y+z>x}
\fr{\Dt\vp(x,y,z)}{4\pi\sqrt{xyz}}\\
&&\qquad \qquad \times
\int_{|\sqrt{x}-\sqrt{y}|\vee |\sqrt{x_*}-\sqrt{z}|}^{(\sqrt{x}+\sqrt{y})
\wedge (\sqrt{x_*}+\sqrt{z})}{\rm d}s\int_{0}^{2\pi}
\Phi(\sqrt{2} s, \sqrt{2}Y_*){\rm d}\theta {\rm d}F(x){\rm d}F(y){\rm d}F(z)
\\
&&=4\pi\sqrt{2}\int_{{\mR}_{\ge 0}^3, x>0,y>0,z>0}
{\cal K}[\vp](x,y,z){\rm d}F(x){\rm d}F(y){\rm d}F(z)
\eeas
where here and below we used the fact that $(y-x)(z-x)=0$ or $y+z\le x\, \Longrightarrow {\cal K}[\vp](x,y,z)=0.$

To compute $I_2$ we recall definition of $X, Y$ in (\ref{5.14}),(\ref{5.15})
to see that $(r,r',r_*')\in {\cal R}_2 \Longrightarrow r=0,
X=r'>0, Y=r_*'>0,$
and so letting $x=0, y={r'}^2/2>0, z={r_*'}^2/2>0$  gives
\beas&& K_{B}[\vp](0,r', r_*')
=\fr{1}{(4\pi)^3}
\int_{-1}^{1}{\rm d}t\int_{0}^{2\pi}1_{\{r_*'>0\}}
\fr{\Dt\vp(0,{r'}^2/2,{r_*'}^2/2)}{r'r_*'}\Phi(r',r_*'){\rm d}\theta
\\
&&
=\fr{2\pi}{(4\pi)^3}W(0,y,z)\Dt\vp(0,y,z)
=\fr{2\pi}{(4\pi)^3}{\cal K}[\vp](0,y,z)\eeas
and so
\beas I_2 &=&
(4\pi\sqrt{2})^3
\int_{{\mR}_{\ge 0}^3}{\bf 1}_{{\cal R}_2}(\sqrt{2x},\sqrt{2y},\sqrt{2z}){\cal K}[\vp](\sqrt{2x},\sqrt{2y},\sqrt{2z})
{\rm d}F(x){\rm d}F(y){\rm d}F(z)\\
&=&4\pi\sqrt{2}
\int_{{\mR}_{\ge 0}^3, x=0, y>0,z>0}{\cal K}(x,y,z){\rm d}F(x)
{\rm d}F(y){\rm d}F(z).\eeas
For the integral $I_3$, we have $(r,r',r_*')\in {\cal R}_3\Longrightarrow r'=0$,
$r_*'>X=r>0, Y=\sqrt{{r_*'}^2-r^2}$ and so
letting $x=r^2/2>0, y=0, z={r_*'}^2/2>0$ gives
\beas&&K_{B}[\vp](r,0, r_*')
=\fr{2}{(4\pi)^3}
\int_{-1}^{1}{\rm d}t\int_{0}^{2\pi}1_{\{r_*'>r\}}
\fr{\Dt\vp(r^2/2,0,{r_*'}^2/2)}{rr_*'}
\Phi\big(r, \sqrt{{r_*'}^2-r^2}\,\big){\rm d}\theta
\\
&&=\fr{2}{(4\pi)^3}4\pi1_{\{z>x\}}
\fr{\Dt\vp(x,0,z)}{2\sqrt{xz}}\Phi(\sqrt{2x}, \sqrt{2(z-x)}\,)
=\fr{2}{(4\pi)^2}1_{\{z>x>0\}}{\cal K}[\vp](x,0,z).
\eeas
Then using the symmetry ${\cal K}[\vp](x,y,z)\equiv {\cal K}[\vp](x,z,y)$ we have
\beas I_3 &=&
(4\pi\sqrt{2})^3
\int_{{\mR}_{\ge 0}^3}{\bf 1}_{{\cal R}_3}(\sqrt{2x},\sqrt{2y},\sqrt{2z}){\cal K}[\vp](\sqrt{2x},\sqrt{2y},\sqrt{2z})
{\rm d}F(x){\rm d}F(y){\rm d}F(z)\\
&=&
4\pi\sqrt{2}\cdot 2
\int_{{\mR}_{\ge 0}^3, y=0, z>x>0 }{\cal K}[\vp](x,y,z)
{\rm d}F(x){\rm d}F(y){\rm d}F(z)\\
&=&
4\pi\sqrt{2}
\int_{{\mR}_{\ge 0}^3, y=0, z>x>0 }{\cal K}[\vp](x,y,z)
{\rm d}F(x){\rm d}F(y){\rm d}F(z)\\
&+&
4\pi\sqrt{2}
\int_{{\mR}_{\ge 0}^3, z=0, y>x>0 }{\cal K}[\vp](x,y,z)
{\rm d}F(x){\rm d}F(y){\rm d}F(z).
\eeas
Taking sum $I_1+I_2+I_3$ and using (\ref{5.24}) we obtain (\ref{5.KK}).$\hfill\Box$
\\

\noindent {\bf 6.3. Existence and positivity of some potential $U(|{\bf x}|)$.}

\begin{lemma}\lb{lemma5.10} Let $V\in C_b({\mR}_{\ge 0})\cap C^2({\mR}_{>0}), V_1(r)=V(r) r, $ and
suppose that  $\fr{{\rm d}^2}{{\rm d}r^2}V_1(r)$ is non-decreasing
in $(0,\infty)$ and there are constants $0<\dt<1, C>0$ such that
\beas \Big|\fr{{\rm d}}{{\rm d}r}V_1(r)\Big|\le \fr{C}{1+r^{2\dt}},\quad r>0;\quad \int_{0}^{\infty}(1+r^{\dt})\Big|\fr{{\rm d}^2}{{\rm d}r^2}V_1(r)\Big|{\rm d}r<\infty.\eeas
Let
\beas U(\rho)
=\fr{1}{2\pi^2\rho^3}\int_{0}^{\infty}\Big(-\,
\fr{{\rm d}^2}{{\rm d}r^2}V_1(r)\Big) \sin(\rho r) {\rm d}r,\quad\rho>0.\eeas
Then $U(\rho)\ge 0$ for all $\rho\in (0,\infty)$ and
$\wh{U}(\xi)=V(|\xi|)$ for all $\xi\in {\bR}$,
where $\wh{U}$ is the Fourier transform of the function
${\bf x}\mapsto U (|{\bf x}|)$ in terms of theory of generalized functions.

\end{lemma}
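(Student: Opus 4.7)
There are two things to verify: the pointwise non-negativity $U(\rho)\ge 0$ and the Fourier identity $\widehat{U}(\xi)=V(|\xi|)$. The strategy is entirely elementary once one observes that the hypotheses force $-V_1''(r)$ to be a non-negative, non-increasing function which is integrable with a first-order moment of size $r^\delta$; after that, both claims reduce to standard sine-transform calculations.

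\textbf{Positivity of $U$.} First I would note that the hypothesis that $V_1''$ is non-decreasing together with $\int_0^\infty|V_1''(r)|\,\mathrm{d}r<\infty$ (a consequence of $\int_0^\infty(1+r^\delta)|V_1''|\,\mathrm{d}r<\infty$) forces $V_1''(r)\le 0$ for all $r>0$: if $V_1''(r_0)>0$ at some point, monotonicity would give $V_1''\ge V_1''(r_0)>0$ on $[r_0,\infty)$, contradicting integrability. Hence $g(r):=-V_1''(r)\ge 0$ and $g$ is non-increasing. For fixed $\rho>0$ I would split
\[
\int_0^\infty g(r)\sin(\rho r)\,\mathrm{d}r=\sum_{m=0}^\infty \frac{1}{\rho}\int_0^\pi\bigl[g((2m\pi+u)/\rho)-g((2m\pi+\pi+u)/\rho)\bigr]\sin u\,\mathrm{d}u,
\]
so that each summand is non-negative because $g$ is non-increasing and $\sin u\ge 0$ on $[0,\pi]$. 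Dividing by $2\pi^2\rho^3>0$ gives $U(\rho)\ge 0$.

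\textbf{The Fourier identity.} For a radial, sufficiently regular function on $\mathbb{R}^3$ the Fourier transform admits the one-dimensional representation $\widehat{U}(\xi)=(4\pi/|\xi|)\int_0^\infty U(\rho)\rho\sin(|\xi|\rho)\,\mathrm{d}\rho$. Substituting the definition of $U$ and setting $R=|\xi|$, I would interchange the two integrations (Fubini applies because $\int_0^\infty|V_1''(r)|\,\mathrm{d}r<\infty$ and because $\rho^{-2}\sin(\rho r)\sin(\rho R)$ is bounded near $0$ by $rR$ and has an absolutely convergent tail thanks to the explicit closed form below) to obtain
\[
\widehat{U}(\xi)=\frac{2}{\pi R}\int_0^\infty(-V_1''(r))\left(\int_0^\infty\frac{\sin(\rho r)\sin(\rho R)}{\rho^2}\,\mathrm{d}\rho\right)\mathrm{d}r.
\]
The inner integral equals $\tfrac{\pi}{2}\min(r,R)$, by writing $2\sin(\rho r)\sin(\rho R)=\cos(\rho(R-r))-\cos(\rho(R+r))$ and applying the Frullani-type identity $\int_0^\infty\rho^{-2}(\cos(\alpha\rho)-\cos(\beta\rho))\,\mathrm{d}\rho=\tfrac{\pi}{2}(|\beta|-|\alpha|)$. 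Hence $\widehat{U}(\xi)=R^{-1}\int_0^\infty(-V_1''(r))\min(r,R)\,\mathrm{d}r$. Splitting the $r$-integral at $R$ and integrating by parts on each piece, using $V_1(0)=0$ and $V_1'(r)\to 0$ as $r\to\infty$ (the limit exists because $V_1''$ is integrable, and it must equal $0$ by the bound $|V_1'(r)|\le C/(1+r^{2\delta})$), one finds
\[
\int_0^R(-V_1''(r))\,r\,\mathrm{d}r+R\int_R^\infty(-V_1''(r))\,\mathrm{d}r=\bigl(V_1(R)-RV_1'(R)\bigr)+RV_1'(R)=V_1(R),
\]
so $\widehat{U}(\xi)=V_1(R)/R=V(R)=V(|\xi|)$, as required.

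\textbf{Main obstacle.} The delicate point is that neither $V(|\xi|)$ nor $U(|\mathbf{x}|)$ need lie in $L^1(\mathbb{R}^3)$, so $\widehat{U}$ must be understood in the tempered-distribution sense mentioned in the statement. To make this rigorous I would first verify that $U(|\mathbf{x}|)$ is tempered: direct estimation of the defining integral gives $|U(\rho)|\le C/\rho^2$ near $0$ (using $|\sin(\rho r)|\le \rho r$ and $\int|V_1''|\,r\,\mathrm{d}r<\infty$, which follows by combining $V_1'(\infty)=0$, $V_1(0)=0$ and integration by parts: $\int_0^\infty r|V_1''(r)|\mathrm{d}r\le \int_0^\infty|V_1'(r)|\,\mathrm{d}r+\text{boundary}$; alternatively one uses the $r^\delta$ moment) and $|U(\rho)|\le C/\rho^3$ for large $\rho$, both of which are integrable against $|\mathbf{x}|^2\,\mathrm{d}\mathbf{x}$ on the respective regimes after standard Fourier cut-offs. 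The pointwise identity $\widehat{U}(\xi)=V(|\xi|)$ derived above, valid for every $\xi\ne 0$, then upgrades to the distributional identity by pairing both sides against an arbitrary radial Schwartz function $\varphi$ and invoking Parseval/Fubini once more; the value at $\xi=0$ is controlled by continuity of $V$ at the origin. This bookkeeping is routine but is where most of the care is needed; the substantive content of the lemma is the identity of the inner sine integral with $\tfrac{\pi}{2}\min(r,R)$ and the alternating-series argument for positivity.
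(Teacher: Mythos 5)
Your route is genuinely different from the paper's and its core computation is sound: the paper never evaluates the sine--sine kernel, but instead damps $V$ by $e^{-\vep|\xi|}$, computes $U_{\vep}=\mathcal{F}^{-1}(e^{-\vep|\cdot|}V)$ explicitly by two integrations by parts in the one-dimensional integral, proves $U_\vep\ge 0$, the uniform bound $\sup_{0<\vep\le1}U_\vep(\rho)\le C\rho^{-(3-\dt)}$ and $U_\vep\to U$ pointwise, and then passes to the limit in $\int U_\vep\,\wh{\psi}=\int e^{-\vep|\xi|}V\psi$; the Abel factor makes all Fubini and dominated-convergence steps painless. You instead compute the composed sine transforms directly via $\int_0^\infty\rho^{-2}\sin(\rho r)\sin(\rho R)\,{\rm d}\rho=\fr{\pi}{2}\min(r,R)$ and recover $V_1(R)$ by integration by parts, which is a nice, more self-contained calculation; your positivity argument (period-wise cancellation for a non-negative non-increasing $L^1$ function) is the same in substance as the paper's second-mean-value-theorem lemma.

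Two points need repair, one of them a genuine error. First, the claim $\int_0^\infty r|V_1''(r)|\,{\rm d}r<\infty$ obtained by integrating by parts against $\int_0^\infty|V_1'|\,{\rm d}r$ is false under the stated hypotheses: for the paper's motivating potential $V(r)=(1+r^{\eta})^{-1}$ (so $\dt=\eta/2$) one has $|V_1''(r)|\asymp r^{-1-\eta}$ at infinity, so both $\int r|V_1''|\,{\rm d}r$ and $\int|V_1'|\,{\rm d}r$ diverge; consequently your bound $|U(\rho)|\le C\rho^{-2}$ near $0$ also fails when $\dt<1/2$ (the true behaviour is $\rho^{2\dt-3}$, and the correct available bound is $C\rho^{\dt-3}$, exactly the paper's $\rho^{-(3-\dt)}$). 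This matters because the same first-moment bound is what you invoke ("bounded near $0$ by $rR$") to justify Fubini in the main computation; the fix is to use $|\sin(\rho r)|\le(\rho r)^{\dt}$ throughout, which dominates the double integral by $r^{\dt}R\,\rho^{\dt-1}$ near $\rho=0$ and makes both the Fubini step and the temperedness of $U$ follow from the hypothesis $\int(1+r^{\dt})|V_1''|\,{\rm d}r<\infty$ — you mention this only as an aside, but it must be the main route. Second, since $U(|{\bf x}|)$ decays only like $|{\bf x}|^{-3}$ and is not in $L^1(\bR)$, the radial formula $\wh{U}(\xi)=\fr{4\pi}{R}\int_0^\infty U(\rho)\rho\sin(R\rho)\,{\rm d}\rho$ is not a priori the distributional Fourier transform, so deriving a "pointwise identity" from it and then "upgrading" is circular as written; you need an honest truncation argument (pair $U{\bf 1}_{\{|{\bf x}|\le N\}}$ with test functions, use that its genuine $L^1$-Fourier transform converges, with a locally integrable dominating bound in $\xi$, to the function given by your formula), or simply adopt the paper's $e^{-\vep|\xi|}$ regularization, which settles this identification with one application of dominated convergence.
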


\begin{proof}  To prove the positivity of $U$ we will use the following property:
if $f\in L^1((0,\infty))$ is non-negative and non-increasing in $(0,\infty)$, then
\be \int_{0}^{\infty} f(r)\sin(\rho r){\rm d}r\ge 0\qquad \forall\, \rho>0.\lb{5.po}\ee
The proof of (\ref{5.po}) is easy: first for any $0<a<b<\infty$, according to the second mean-value
theorem of integration, there is $c\in (a,b)$ such that $\int_{a}^{b} f(r)\sin(\rho r){\rm d}r=f(a)\int_{a}^{c}\sin(\rho r){\rm d}r$, then use the fact that $0\le f\in L^1((0,\infty))$ implies  $\liminf\limits_{a\to 0+} f(a) a^2=0$.
These deduce (\ref{5.po}).

To prove the lemma we use approximation: for any $\vep>0$, let
$$U_{\vep}({\bf x})=
(2\pi)^{-3}\int_{\bR} e^{-\vep |\xi|}V(|\xi|)e^{{\rm i}{\bf x}\cdot\xi}
{\rm d}\xi,\quad {\bf x}\in {\bR}.$$
$U_{\vep}({\bf x})$ is obviously radially symmetric, so if we denote
$U_{\vep}(\rho)=U_\vep({\bf x})|_{|{\bf x}|=\rho}$, then
$$U_{\vep}(\rho)=
\fr{1}{2\pi^2\rho}
  \int_{0}^{\infty} e^{-\vep r}V_1(r) \sin(\rho r){\rm d}r,\quad \rho>0.$$
Next using integration by parts twice we deduce
\beas&&
\int_{0}^{\infty} e^{-\vep r}V_1(r) \sin(\rho r){\rm d}r
=\fr{1}{\vep}
\int_{0}^{\infty} e^{-\vep r}\fr{{\rm d}}{{\rm d}r}V_1(r) \sin(\rho r){\rm d}r\\
&&
+\fr{1}{\vep^2}
\int_{0}^{\infty} e^{-\vep r}\fr{{\rm d}}{{\rm d}r}V_1(r) \cos(\rho r) \rho {\rm d}r
-\fr{\rho^2}{\vep^2}
\int_{0}^{\infty} e^{-\vep r}V_1(r) \sin(\rho r){\rm d}r.\eeas
For the integral that contains  $\cos(\rho r)$, integrating by parts again and reorganizing the results we obtain
\beas U_{\vep}(\rho)
&=&\fr{1}{2\pi^2\rho(\vep^2+\rho^2)}\Big(2\vep
\int_{0}^{\infty} e^{-\vep r}\fr{{\rm d}}{{\rm d}r}V_1(r) \sin(\rho r){\rm d}r
+\int_{0}^{\infty}e^{-\vep r}(-
\fr{{\rm d}^2}{{\rm d}r^2}V_1(r)) \sin(\rho r) {\rm d}r
\Big)\\
&=& U_{1,\vep}(\rho)+U_{2,\vep}(\rho),\quad \rho>0.\eeas
From the assumption we see that $-\fr{{\rm d}^2}{{\rm d}r^2}V_1(r)$ is non-increasing
in $(0,\infty)$ and so by integrability we have
$-\fr{{\rm d}^2}{{\rm d}r^2}V_1(r)\ge \lim\limits_{R\to\infty}(-\fr{{\rm d}^2}{{\rm d}r^2}V_1(R))=0$ for all $r>0.$
From this we see that $\fr{{\rm d}}{{\rm d}r}V_1(r)$ is non-increasing in $(0,\infty)$
and so
$\fr{{\rm d}}{{\rm d}r}V_1(r)\ge \lim\limits_{R\to\infty}\fr{{\rm d}}{{\rm d}r}V_1(R)=0$ for all $r>0$.
 Thus the functions $-\fr{{\rm d}^2}{{\rm d}r^2}V_1(r), e^{-\vep r}\fr{{\rm d}}{{\rm d}r}V_1(r), e^{-\vep r}(-\fr{{\rm d}^2}{{\rm d}r^2}V_1(r))$ are non-negative and non-increasing in $(0,\infty)$
and belong to $L^1((0,\infty))$.
By (\ref{5.po}) we conclude $U(\rho)\ge 0, U_{1,\vep}(\rho)\ge 0, U_{2,\vep}(\rho)\ge 0$ hence $U_{\vep}(\rho)\ge 0$ for all $\rho>0$
($\forall\, \vep>0$).
Next we show that
\be \sup_{0<\vep\le 1}U_{\vep}(\rho)\le C_3
\fr{1}{\rho^{3-\dt}},\quad \lim_{\vep\to 0}U_{\vep}(\rho)=U(\rho)
\quad \forall\, \rho>0\lb{5.41}\ee
Here and below the constants $C_i>0\,(i=1,2,3,4) $ depend only on
$V, C$ and $\dt$.

In fact for all $\vep>0,\rho>0$  we have
\be0\le U_{1,\vep}(\rho)
\le \fr{C}{\pi^2\rho^3}\vep
\int_{0}^{\infty} e^{-\vep r}\fr{(\rho r)^{\dt}}{r^{2\dt}}{\rm d}r
=\fr{C_1}{\rho^{3-\dt}}\vep^{\dt}
\lb{5.42}\ee
and (using
$|\sin(x)|\le x^{\dt}$ for all $x\ge 0$)
\be
\fr{1}{2\pi^2\rho(\vep^2+\rho^2)}e^{-\vep r}\Big|
\fr{{\rm d}^2}{{\rm d}r^2}V_1(r))\Big||\sin(\rho r)|
\le\fr{C_2}{\rho^{3-\dt}}r^{\dt}\Big|
\fr{{\rm d}^2}{{\rm d}r^2}V_1(r))\Big|,\quad r>0.\lb{5.43}\ee
This gives the first inequality in (\ref{5.41}).
From (\ref{5.43}) and Lebesgue dominated convergence we obtain
$$\lim_{\vep\to 0+}U_{2,\vep}(\rho)=\lim_{\vep\to 0+}
\fr{1}{2\pi^2\rho(\vep^2+\rho^2)}
\int_{0}^{\infty}e^{-\vep r}\Big(-\,\fr{{\rm d}^2}{{\rm d}r^2}V_1(r)\Big) \sin(\rho r) {\rm d}r
=U(\rho)$$
for all $\rho>0$.
This together with (\ref{5.42}) gives (\ref{5.41}).

Now let ${\cal S}({\bR})$ be the class of Schwartz functions on ${\bR}$.
For any $\psi\in {\cal S}({\bR})$ we have $\wh{\psi}\in {\cal S}({\bR})$ and it
holds the inverse formula
$$\psi(\xi)=2\pi)^{-3}\int_{{\bR}}
\wh{\psi}({\bf x})e^{{\rm i}{\bf x}\cdot \xi}{\rm d}{\bf x}
\qquad \forall\,\xi\in{\bR}$$
from which and Fubini theorem we obtain
\be \int_{{\bR}}U_{\vep} (|{\bf x}|)
\wh{\psi}({\bf x}){\rm d}{\bf x}=
\int_{{\bR}}e^{-\vep |\xi|} V(|\xi|)\psi(\xi){\rm d}\xi,\quad \vep>0.\lb{5.44}\ee
Since
$$ \sup_{0<\vep\le 1}| U_{\vep} (|{\bf x}|)
 \widehat{\psi}({\bf x})|\le C_3|\widehat{\psi}({\bf x})|\fr{1}{|{\bf x}|^{3-\dt}},\quad
\sup_{\vep>0}|e^{-\vep |\xi|}V(|\xi|)\psi(\xi)|\le C_4|\psi(\xi)|,$$
$$ \lim_{\vep\to 0+}U_{\vep} (|{\bf x}|)
\wh{\psi}({\bf x})=U (|{\bf x}|)\wh{\psi}({\bf x}),\quad
\lim_{\vep\to 0+}
 e^{-\vep |\xi|}V(|\xi|)\psi(\xi)=V(|\xi|)\psi(\xi) $$
 for all ${\bf x}\in {\bR}\setminus\{0\}, \xi\in{\bR}$,
 it follows from (\ref{5.44}) and Lebesgue dominated convergence that
$$\int_{{\bR}}U (|{\bf x}|)
\wh{\psi}({\bf x}){\rm d}{\bf x}=
 \int_{{\bR}}V(|\xi|)\psi(\xi){\rm d}\xi\qquad \forall\, \psi\in {\cal S}({\bR}).$$
Thus according to the definition of the Fourier transform of generalized functions we conclude
$\wh{U}(\xi)=V(|\xi|). $
 \end{proof}
\vskip2mm

If we choose $V(r)=\fr{1}{1+ r^{\eta}}, 0<\eta<1$, then it is easily checked that
$V$ satisfies the conditions in Lemma \ref{lemma5.10}  with $\dt=\eta/2$.
\\

{\bf Acknowledgments}. We are grateful to a referee for helpful suggestions on the
 presentation of the paper. This work was supported by National Natural Science
Foundation of China Grant No.11771236.

{\bf Note:} This is the second renew version of the original paper which has been published in {\it Journal of Statistical Physics}, {\bf 175}(2019), no. 2, 289-350. In this second renew version, the only main change is that, under the extended Assumption \ref{assp} (i.e. the interval 
$0\le \eta<1$ is extended to the whole interval $0\le \eta<\infty$), the statement and proof 
of Proposition \ref{prop4.2} are designed to be as similar as possible to the original
version.  A typo is corrected: in page 330 of the original paper mentioned above, ``applying Lemma 2.3 in \cite{Lu2013}"
is corrected to ``applying Lemma 2.3 in \cite{Lu2016}".
\\

\end{document}